\newtheorem{thm}{Theorem}[section]
\newtheorem{prop}[thm]{Proposition}
\newtheorem{lem}[thm]{Lemma}
\newtheorem{cor}[thm]{Corollary}
\newtheorem{exam}[thm]{Example}
\newtheorem{rmk}[thm]{Remark}
\newtheorem{dfn}[thm]{Definition}
\numberwithin{equation}{section}
\newcommand{\frakm}{{\mathfrak m}}
\newcommand{\frakX}{{\mathfrak X}}
\newcommand{\frakY}{{\mathfrak Y}}
\newcommand{\bC}{{\mathbb C}}
\newcommand{\bG}{{\mathbb G}}
\newcommand{\bL}{{\mathbb L}}
\newcommand{\bN}{{\mathbb N}}
\newcommand{\bQ}{{\mathbb Q}}
\newcommand{\bZ}{{\mathbb Z}}
\newcommand{\calC}{{\mathcal C}}
\newcommand{\calD}{{\mathcal D}}
\newcommand{\calE}{{\mathcal E}}
\newcommand{\calF}{{\mathcal F}}
\newcommand{\calH}{{\mathcal H}}
\newcommand{\calL}{{\mathcal L}}
\newcommand{\calM}{{\mathcal M}}
\newcommand{\calO}{{\mathcal O}}
\newcommand{\calP}{{\mathcal P}}
\newcommand{\rA}{{\mathrm A}}
\newcommand{\rC}{{\mathrm C}}
\newcommand{\rE}{{\mathrm E}}
\newcommand{\rH}{{\mathrm H}}
\newcommand{\rK}{{\mathrm K}}
\newcommand{\rL}{{\mathrm L}}
\newcommand{\rM}{{\mathrm M}}
\newcommand{\rR}{{\mathrm R}}
\newcommand{\rW}{{\mathrm W}}
\newcommand{\Zp}{{\bZ_p}}
\newcommand{\Qp}{{\bQ_p}}
\newcommand{\Cp}{{\bC_p}}
\newcommand{\BdRp}{{\mathrm{B_{dR}^+}}}
\newcommand{\Rinf}{{\widehat R_{\infty}}}
\newcommand{\Rinfp}{{\widehat R_{\infty}^+}}
\newcommand{\OXp}{{\widehat \calO_X^+}}
\newcommand{\OX}{{\widehat \calO_X}}
\newcommand{\OC}{{\calO\bC}}                           
\newcommand{\Coker}{{\mathrm{Coker}}}       
\newcommand{\dlog}{{\mathrm{dlog}}}         
\newcommand{\Ext}{{\mathrm{Ext}}}           
\newcommand{\Frac}{{\mathrm{Frac}}}         
\newcommand{\Gal}{{\mathrm{Gal}}}           
\newcommand{\Hom}{{\mathrm{Hom}}}           
\newcommand{\id}{{\mathrm{id}}}             
\newcommand{\Ima}{{\mathrm{Im}}}            
\newcommand{\Ker}{{\mathrm{Ker}}}           
\newcommand{\RGamma}{{\mathrm{R\Gamma}}}    
\newcommand{\Rlim}{{\mathrm{R}\underleftarrow{\lim}}} 
\newcommand{\Spa}{{\mathrm{Spa}}}           
\newcommand{\Spf}{{\mathrm{Spf}}}           
\newcommand{\Sym}{{\mathrm{Sym}}}           
\newcommand{\Tor}{{\mathrm{{Tor}}}}         
\newcommand{\GL}{{\mathrm{GL}}}             
\newcommand{\cl}{{\mathrm{cl}}}             
\newcommand{\cts}{{\mathrm{cts}}}           
\newcommand{\et}{{\mathrm{\acute{e}t}}}    
\newcommand{\proet}{\mathrm{pro\acute{e}t}} 
\newcommand{\ya}{{\rangle}}
\newcommand{\za}{{\langle}}
\begin{document}
\title{A $p$-adic Simpson correspondence for rigid analytic varieties}

\author{Yupeng Wang}

\address{Morningside Center of Mathematics No.505, Chinese Academy of Sciences, Zhongguancun East Road 55, Haidian, Beijing, 100190, China.}
\email{wangyupeng@amss.ac.cn}

\keywords{$p$-adic Simpson correspondence, period sheaf, small generalised representation, small Higgs bundles.}

\subjclass[2020]{Primary 14F30,14G22.}

\maketitle

\begin{abstract}
  In this paper, we establish a $p$-adic Simpson correspondence on the arena of Liu-Zhu for rigid analytic varieties $X$ over $\Cp$ with a liftable good reduction by constructing a new period sheaf on $X_{\proet}$. To do so, we use the theory of cotangent complex after Beilinson and Bhatt. Then we give an integral decompletion theorem and complete the proof by local calculations. Our construction is compatible with the previous works of Faltings and Liu-Zhu.
\end{abstract}

\tableofcontents

\section{Introduction}\label{Sec-Introduction}
 In the theory of complex geometry, for a compact K\"{a}hler manifold $X$, in \cite{Sim} Simpson established a tensor equivalence between the category of semisimple flat vector bundles on $X$ and the category of polystable Higgs bundles with vanishing Chern classes. Nowadays, such a correspondence is known as the non-abelian Hogde theory or the Simpson correspondence. There is a good theory of Simpson correspondence for smooth varieties in characteristic $p>0$ admitting a lifting modulo $p^2$ (cf. \cite{OV}). So we ask for a $p$-adic analogue of Simpson's correspondence. 
 
 The first step is due to Deninger-Werner \cite{DW}. They gave a partial analogue of classical Narasimhan-Seshadri theory by studying parallel transport for vector bundles for curves.  At the same time, Faltings \cite{Fal2} constructed an equivalence between the category of small generalised representations and the category of small Higgs bundles for schemes $\frakX_0$ with toroidal singularities over $\calO_k$, the ring of integers of some $p$-adic local field $k$, under a certain deformation assumption. His method was elaborated and generalized by Abbes-Gros-Tsuji \cite{AGT} and related with the integral $p$-adic Hodge theory by Morrow-Tsuji \cite{MT} recently. When $X$ is a rigid analytic space over $k$, Liu-Zhu \cite{LZ} related a Higgs bundle on $X_{\hat{\bar k},\et}$ to each $\Qp$-local system on $X_{\et}$ and proved that the resulting Higgs field must be nilpotent (cf. \cite[Theorem 2.1]{LZ}). Their work was generalized to the logarithmic case in \cite{DLLZ22}. However, their Higgs functor is not an equivalence, so it is still open to classify Higgs bundles coming from representations. In \cite{Heu}, for smooth rigid spaces $X$ over $\hat{\bar k}$, Heuer established an equivalence between the category of one-dimensional $\hat{\bar k}$-representation of the fundamental group $\pi_1(X)$ and the category of pro-finite-\'etale Higgs bundles.  Using his method, Heuer-Mann-Werner \cite{HMW} constructed a Simpson correspondence for abeloids over $\hat{\bar k}$.
 
 In this paper, we establish an equivalence between the category of small generalised representations (Definition \ref{Dfn-small generalised representation}) and the category of small Higgs bundles (Definition \ref{Dfn-small Higgs bundle}) for rigid analytic varieties $X$ with liftable (see Notations) good reductions $\frakX$ over $\calO_{\Cp}$ in the arena of the work of Liu-Zhu. Our construction is global and the main ingredient is a new overconvergent period sheaf $\OC^{\dag}$ endowed with a canonical Higgs field $\Theta$ on $X_{\proet}$, which can be viewed as a kind of $p$-adic complete version of the peroid sheaf $\OC$ due to Hyodo \cite{Hy}. The main theorem is stated as follows:

\begin{thm}[Theorem \ref{p-adic Simpson}]\label{main theorem}  Assume $a\geq \frac{1}{p-1}$. Let $\frakX$ be a liftable smooth formal scheme over $\calO_{\Cp}$ of relative dimension $d$ with the rigid generic fibre $X$ and $\nu:X_{\proet}\to\frakX_{\et}$ be the natural projection of sites. Then there is an overconvergent period sheaf $\OC^{\dagger}$ endowed with a canonical Higgs field $\Theta$ such that the following assertions are true:
  \begin{enumerate}
      \item For any $a$-small generalised representation $\calL$ of rank $l$ on $X_{\proet}$, let $\Theta_{\calL}:=\id_{\calL}\otimes\Theta$ be the induced Higgs field on $\calL\otimes_{\widehat \calO_X}\calO\bC^{\dagger}$,
      then $\rR\nu_*(\calL\otimes_{\OX}\OC^{\dagger})$ is discrete. Denote $\calH(\calL):=\nu_*(\calL\otimes_{\OX}\OC^{\dagger})$ and $\theta_{\calH(\calL)} = \nu_*\Theta_{\calL}$. Then $(\calH(\calL),\theta_{\calH(\calL)})$ is an $a$-small Higgs bundle of rank $l$.
      
      \item For any $a$-small Higgs bundle $(\calH,\theta_{\calH})$ of rank $l$ on $\frakX_{\et}$, let $\Theta_{\calH} := \id_{\calH}\otimes\Theta+\theta_{\calH}\otimes\id_{\calO\bC^{\dagger}}$ be the induced Higgs field on $\calH\otimes_{\calO_{\frakX}}\calO\bC^{\dagger}$ and denote 
      \[
      \calL(\calH,\theta_{\calH}) = (\calH\otimes_{\calO_{\frakX}}\OC^{\dagger})^{\Theta_{\calH}=0}.
      \]
      Then $\calL(\calH,\theta_{\calH})$ is an $a$-small generalised representation of rank $l$.
      
      \item The functor $\calL\mapsto(\calH(\calL),\theta_{\calH(\calL)})$ induces an equivalence from the category of $a$-small generalised representations to the category of $a$-small Higgs bundles, whose quasi-inverse is given by $(\calH,\theta_{\calH})\mapsto \calL(\calH,\theta_{\calH})$. The equivalence preserves tensor products and dualities and identifies the Higgs complexes
      \[
      {\rm HIG}(\calL\otimes_{\OX}\OC^{\dagger},\Theta_{\calL})\simeq {\rm HIG}(\calH(\calL)\otimes_{\calO_{\frakX}}\OC^{\dagger},\Theta_{\calH(\calL)}).
      \]
      
      \item Let $\calL$ be an $a$-small generalised representation with associated Higgs bundle $(\calH,\theta_{\calH})$. Then there is a canonical quasi-isomorphism
      \[\rR\nu_*(\calL)\simeq{\rm HIG}(\calH,\theta_{\calH}),\]
      where ${\rm HIG}(\calH,\theta_{\calH})$ is the Higgs complex induced by $(\calH,\theta_{\calH})$. In particular, $\rR\nu_*(\calL)$ is a perfect complex of $\calO_{\frakX}[\frac{1}{p}]$-modules concentrated in degree $[0,d]$.
      
      \item Assume $f:\frakX\to\frakY$ is a smooth morphism between liftable smooth formal schemes over $\calO_{\Cp}$. Let $\widetilde \frakX$ and $\widetilde \frakY$ be the fixed $A_2$-liftings of $\frakX$ and $\frakY$, respectively. Assume $f$ lifts to an $A_2$-morphism $\widetilde f: \widetilde \frakX\to\widetilde \frakY$, then the equivalence in (3) is compatible with the pull-back along $f$.
  \end{enumerate}
\end{thm}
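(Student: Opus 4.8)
The plan is to construct the overconvergent period sheaf $\OC^\dagger$ via the cotangent complex machinery of Beilinson–Bhatt, descend an integral decompletion theorem, and then verify the five assertions by reducing everything to explicit local computations on an affine formal scheme with an étale map to a torus (a "small" or "framed" affine piece). The global structure of the argument mirrors Scholze's approach to the Hodge–Tate spectral sequence combined with Liu–Zhu's arena, but the new input is that $\OC^\dagger$ carries both a connection/Higgs field $\Theta$ and enough overconvergence to make the cohomology discrete and the decompletion integral.

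\smallskip

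First I would recall the construction of $\OC^\dagger$ and its Higgs field $\Theta$, together with the Poincaré-type lemma identifying $\OC^\dagger$ with a free $\OX$-module on the "coordinate" sections coming from a fixed $A_2$-lifting $\widetilde\frakX$, so that locally $\OC^\dagger \cong \OX[V_1,\dots,V_d]^\dagger$ with $\Theta$ acting by $\partial/\partial V_i$ against the basis $d\log T_i$ of $\Omega^1_{\frakX}$. The engine is the \emph{integral decompletion theorem}: on a small affine $\frakU = \Spf(R^+)$ with its $\Zp^d$-tower $R^+_\infty$, the inclusion of the Higgs/Koszul complex over $R^+$ into $\RGamma(\Gamma, R^+_\infty \otimes (\text{period data}))$ is a quasi-isomorphism up to bounded $p$-power torsion, uniformly in the module provided the twisting cocycle is $a$-small with $a \geq \frac{1}{p-1}$. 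This is where the hypothesis $a\ge\frac{1}{p-1}$ is used: it makes the relevant exponential/logarithm series converge and the Sen-type operator invertible on the complement of the invariants.

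\smallskip

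With decompletion in hand, assertion (1) follows by computing $\rR\nu_*(\calL\otimes\OC^\dagger)$ locally: the $a$-smallness of $\calL$ means it is represented by a matrix close to $1$ in the $\Gamma$-action, the twist by $\OC^\dagger$ absorbs the $\gamma_i - 1$ discrepancies, and the decompletion theorem collapses the continuous cohomology onto $R^+[\tfrac1p]$ in degree $0$, giving a finite projective module $\calH(\calL)$ with nilpotent Higgs field $\theta_{\calH(\calL)}$; one checks $a$-smallness of the output by tracking the norm estimates. Assertion (2) is the reverse local computation: solving $\Theta_{\calH}=0$ on $\calH\otimes\OC^\dagger$ amounts to exponentiating the (small, nilpotent) Higgs field to produce a $\Gamma$-equivariant trivialization, yielding a rank-$l$ generalized representation, again with controlled smallness. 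Assertion (3) — that the two functors are mutually quasi-inverse and monoidal — is then a formal consequence: the composite $\calL \mapsto \calH(\calL) \mapsto \calL(\calH(\calL))$ is locally the identity because $\OC^\dagger$ is faithfully flat-ish over $\OX$ with $(\OC^\dagger)^{\Theta=0}=\OX$ (a Poincaré lemma statement), and compatibility with $\otimes$ and duals is inherited from the corresponding property of $\OC^\dagger$; the identification of Higgs complexes is built into the construction since $\Theta_{\calH(\calL)}$ is by definition $\nu_*\Theta_{\calL}$. Assertion (4) follows by combining (1) and (3): $\rR\nu_*(\calL) = \rR\nu_*((\calL\otimes\OC^\dagger)^{\Theta_\calL = 0})$ and the Poincaré lemma resolution $0 \to \OC^\dagger \xrightarrow{\Theta} \OC^\dagger\otimes\Omega^1 \to \cdots$ turns this into $\mathrm{HIG}(\calH,\theta_{\calH})$, which is a Koszul complex on $d$ commuting operators over $\calO_\frakX[\tfrac1p]$, hence perfect in degrees $[0,d]$.

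\smallskip

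For assertion (5), the functoriality under a lifted smooth morphism $\widetilde f\colon \widetilde\frakX \to \widetilde\frakY$, the plan is to produce a canonical morphism $f^*\OC^\dagger_\frakY \to \OC^\dagger_\frakX$ of period sheaves compatible with the Higgs fields — this uses exactly that $f$ lifts to $A_2$-level, so the universal property of the cotangent-complex construction gives the map on coordinate sections $V^\frakY_j \mapsto \sum (\partial T^\frakX_i / \partial T^\frakY_j)\,V^\frakX_i$ — and then observe that pull-back of $a$-small objects along $f$ stays $a$-small (the estimates only improve under étale-locally-standard-smooth maps), so both functors commute with $f^*$ up to the natural isomorphism induced by $f^*\OC^\dagger_\frakY \to \OC^\dagger_\frakX$. \textbf{The main obstacle} I anticipate is the integral decompletion theorem with \emph{uniform} torsion bounds independent of the representation $\calL$ and compatible with the overconvergent (rather than merely $p$-adically complete) structure: controlling the radius of overconvergence under the $\Gamma$-action and showing the Sen operator stays invertible on $\OC^\dagger$ requires delicate estimates on the Tate twists and on the interaction between the "$\dagger$" and the topology of $\Gamma$-cohomology. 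Everything else is bookkeeping on top of Liu–Zhu and Scholze, but that decompletion step — and checking it glues from the local small charts to a global statement on $\frakX_\et$ — is the technical heart.
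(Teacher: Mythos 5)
Your proposal follows essentially the same route as the paper: integral Faltings' extension via $p$-complete cotangent complexes and the $A_2$-lifting, the overconvergent period sheaf with its Poincaré-lemma resolution of $\OX$ (which indeed fails for any single radius and forces the $\dagger$), an integral decompletion theorem on small affines as the engine, local Simpson via $\theta=-\log\gamma$ and $\gamma=\exp(-\theta)$, and globalization by almost purity plus Cartan--Leray, with (5) reduced to the lifted map $f^*\OC^{\dagger}_{\frakY}\to\OC^{\dagger}_{\frakX}$. One small correction: the Higgs field attached to a general $a$-small generalised representation is \emph{small} but not nilpotent (nilpotence is special to the Liu--Zhu setting of $\Qp$-local systems), and the convergence of the exponential in step (2) comes from the estimate $\nu_p(\theta_i)\geq a+\nu_p(\rho_k)$ rather than from nilpotence.
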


Note that when $\calL = \OX$, we get $(\calH(\OX),\theta_{\calH(\OX)}) = (\calO_{\frakX}[\frac{1}{p}],0)$. So our result can be viewed as a generalization of \cite[Proposition 3.23]{Sch3}. Theorem \ref{main theorem} (3) also provides a way to compute the pro-\'etale cohomology for a small generalised representation $\calL$. More precisely, we get a quasi-isomorphism
\[\RGamma(X_{\proet},\calL)\simeq \RGamma(\frakX_{\et},{\rm HIG}(\calH(\calL),\theta_{\calH(\calL)})).\]
If moreover, $\frakX$ is proper, then we get a finiteness result on pro-\'etale cohomology of small generalised representations.
\begin{cor}
  Keep notations as Theorem \ref{main theorem} and assume furthermore $\frakX$ is proper. Then for any $a$-small generalised representation $\calL$, $\RGamma(X_{\proet},\calL)$ is concentrated in degree $[0,2d]$ and has cohomologies as finite dimensional $\Cp$-spaces.
\end{cor}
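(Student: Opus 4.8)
The plan is to feed Theorem \ref{main theorem}(4) into the finiteness theorem for coherent cohomology on the proper rigid analytic variety $X$, so that almost no work beyond the main theorem is required. First I would recall the Leray identity $\RGamma(X_{\proet},\calL)\simeq\RGamma(\frakX_{\et},\rR\nu_*\calL)$: the global sections functor on $X_{\proet}$ is the composite of $\nu_*$ with global sections on $\frakX_{\et}$, and $\nu_*$ preserves injectives (being right adjoint to the exact $\nu^{-1}$), so the Grothendieck spectral sequence for the composite of derived functors collapses to this quasi-isomorphism. Combining it with Theorem \ref{main theorem}(4), which gives $\rR\nu_*\calL\simeq {\rm HIG}(\calH,\theta_{\calH})$ for the $a$-small Higgs bundle $(\calH,\theta_{\calH})$ attached to $\calL$, yields
\[
\RGamma(X_{\proet},\calL)\simeq\RGamma\!\left(\frakX_{\et},{\rm HIG}(\calH,\theta_{\calH})\right),
\]
exactly as recorded just before the statement. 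The right-hand side is the hypercohomology of the finite complex ${\rm HIG}(\calH,\theta_{\calH})$, namely $\calH\to \calH\otimes_{\calO_\frakX}\Omega^1_{\frakX/\calO_{\Cp}}[\tfrac1p]\to\cdots\to \calH\otimes_{\calO_\frakX}\Omega^d_{\frakX/\calO_{\Cp}}[\tfrac1p]$, placed in cohomological degrees $0,\dots,d$ because $\frakX$ has relative dimension $d$; each term is a coherent $\calO_\frakX[\tfrac1p]$-module since $\calH$ is a vector bundle on $\frakX_{\et}$ by definition of a small Higgs bundle.

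Next I would pass to the rigid generic fibre $X$, which is proper and smooth of dimension $d$ over $\Cp$, so that each term of the Higgs complex corresponds to a coherent $\calO_X$-module and $\RGamma(\frakX_{\et},-)$ of it is computed by the coherent cohomology of its analytification on $X$. Running the (stupid-filtration) hypercohomology spectral sequence
\[
E_1^{i,j}=H^j\!\left(X,\calH\otimes_{\calO_\frakX}\Omega^i_{\frakX/\calO_{\Cp}}\!\left[\tfrac1p\right]\right)\Longrightarrow H^{i+j}(X_{\proet},\calL),\qquad 0\le i\le d,
\]
Grothendieck vanishing gives $E_1^{i,j}=0$ for $j\notin[0,d]$, whence $H^n(X_{\proet},\calL)=0$ for $n\notin[0,2d]$; and Kiehl's finiteness theorem for proper rigid analytic varieties gives that each $E_1^{i,j}$ is a finite-dimensional $\Cp$-vector space, hence so is each $H^n(X_{\proet},\calL)$.

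I do not expect a genuine obstacle: granting Theorem \ref{main theorem}(4), the corollary is the combination of the structural fact that $(\calH,\theta_{\calH})$ is built from an honest vector bundle on $\frakX$ — so that ${\rm HIG}(\calH,\theta_{\calH})$ is a bounded complex of coherent sheaves concentrated in degrees $[0,d]$ — with Kiehl's theorem on the proper rigid space $X$. The only point that deserves care is the comparison between coherent cohomology on the formal model $\frakX$, whose base ring $\calO_{\Cp}$ is non-Noetherian, and coherent cohomology on its rigid generic fibre $X$; this is exactly why the finiteness step is run on $X$ rather than directly on $\frakX_{\et}$.
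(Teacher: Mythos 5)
Your proposal is correct and takes essentially the same route as the paper: the paper's own proof is the one\nobreakdash-liner ``this follows from Theorem \ref{p-adic Simpson}(4) directly,'' and the details you supply — Leray for $\nu$, the hypercohomology spectral sequence of the length-$d$ Higgs complex of coherent $\calO_{\frakX}[\frac1p]$-modules, Grothendieck vanishing on the generic fibre and Kiehl's finiteness theorem — are exactly the ones the author spells out in the remark following the corollary.
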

 
 The overconvergent period sheaf $\OC^{\dagger}$ (with respect to a certain lifting of $\frakX$) has $\OC$ as a subsheaf. Indeed, it is a direct limit of certain $p$-adic completions of $\OC$.
 In particular, when $\frakX$ comes from a scheme $\frakX_0$ over $\calO_k$ and the generalised representation $\calL$ comes from a $\Zp$-local system on the rigid generic fibre $X_0$ of $\frakX_0$, our construction coincides with the work of Liu-Zhu (Remark \ref{compare with Liu-Zhu}). On the other hand, $\OC^{\dagger}$ is related with an obstruction class $\cl(\calE^+)$ solving a certain deformation problem (Remark \ref{Faltings obstruction} and Proposition \ref{Compare obstruction}). Since the class $\cl(\calE^+)$ is exactly the one used to establish the Simpson correspondence in \cite{Fal2}, our construction is compatible with the works of Faltings and Abbes-Gros-Tsuji (Remark \ref{compare with Faltings}). These answer a question appearing in \cite[Remark 2.5]{LZ}. Another answer was announced in \cite{YZ} in a different way.
 
 Since we need to take $p$-adic completions of $\OC$, we have to find its integral models. Note that $\OC$ is a direct limit of symmetric products of Faltings' extension, which was constructed for varieties by Faltings \cite{Fal1} at first and revisited by Scholze \cite{Sch2} in the rigid analytic case. So we are reduced to finding an integral version of Faltings' extension.
 To do so, we use the method of cotangent complex which was established and developed in \cite{Qui},\cite{Ill1},\cite{Ill2},\cite{GR} etc., and was systematically used in the $p$-adic theory by \cite{Sch1},\cite{Bei}, \cite{Bha} etc.. Finally, the proof of Theorem \ref{main theorem} is based on some explicit local calculations, especially an integral decompletion theorem  (Theorem \ref{Strong Decompletion}) for small representations, which can be regarded as a generalization of \cite[Appendix A]{DLLZ22}.
 
 \subsection{Notations}
 
 Let $k$ be a complete discrete valuation field of mixed characteristics $(0,p)$ with ring of integers $\calO_k$ and perfect residue field $\kappa$. We normalise the valuation on $k$ by setting $\nu_p(p)=1$ and the associated norm is given by $\|\cdot\| = p^{-\nu_p(\cdot)}$. We denote $k_0 = \Frac(\rW(\kappa))$ the maximal absolutely unramified subfield of $k$. Put $\calD_k = \calD_{k/k_0}$ the relative differential ideal of $\calO_k$ over $\rW(\kappa)$.

 Let $\overline{k}$ be a fixed algebraic closure of $k$ and $\Cp = \widehat{\overline{k}}$ be its $p$-adic completion. We denote by $\calO_{\Cp}$ (resp. $\frakm_{\Cp}$) the ring of integers of $\Cp$ (resp, the maximal ideal of $\calO_{\Cp}$). In this paper, when we write $p^aA$ for some $\calO_{\Cp}$-module $A$, we always assume $a\in\bQ$. An $\calO_{\Cp}$-module $M$ is called {\bf almost vanishing} if it is $\frakm_{\Cp}$-torsion and in this case, we write $M^{\rm al}=0$. A morphism $f:M\rightarrow N$ of $\calO_{\Cp}$-modules is {\bf almost injective} (resp. {\bf almost surjective}) if $\Ker(f)^{\rm al} = 0$ (resp. $\Coker(f)^{\rm al}=0$). A morphism is an {\bf almost isomorphism} if it is both almost injective and almost surjective.

 We choose a sequence $\{1, \zeta_p, \dots, \zeta_{p^n}, \dots\}$ such that $\zeta_{p^n}$ is a primitive $p^n$-th root of unity in $\overline k$ satisfying $\zeta_{p^{n+1}}^p = \zeta_{p^n}$ for every $n\geq 0$. For every $\alpha\in\bZ[\frac{1}{p}]\cap(0,1)$, one can (uniquely) write $\alpha = \frac{t(\alpha)}{p^{n(\alpha)}}$ with $\gcd(t(\alpha),p) = 1$ and $n(\alpha)\geq 1$. Then we define that $\zeta^{\alpha} := \zeta_{p^{n(\alpha)}}^{t(\alpha)}$ when $\alpha\neq 0$ and that $\zeta^{\alpha}=1$ when $\alpha = 0$.

 We always fix an element $\rho_k\in \Cp$ with $\nu_p(\rho_k) = \nu_p(\calD_k)+\frac{1}{p-1}$.
 Let $\rA_{\inf,k} = \rW(\calO_{\bC_p^{\flat}})\otimes_{\rW(\kappa)}\calO_k$ be the period ring of Fontaine. Then there is a surjective homomorphism $\theta_k: \rA_{\inf,k}\rightarrow \calO_{\Cp}$ whose kernel is a principal ideal by \cite[Proposition 3.1.9]{FF}. We fix a generator $\xi_k$ of $\Ker(\theta_k)$. For instance, when $k = k_0$ is absolutely unramified, then we choose $\rho_{k} = \zeta_p-1$ and $\xi_k = \frac{[\epsilon]-1}{[\epsilon]^{\frac{1}{p}}-1}$ for $\epsilon = (1, \zeta_p, \zeta_{p^2}, \cdots)\in \calO_{\Cp}^{\flat}$. Put $\rA_2 = \rA_{\inf,k}/\xi_k^2$ and denote Fontaine's $p$-adic analogue of $2\pi i$ by $t=\log[\epsilon]$.
 
 For a $p$-adic formal scheme $\frakX$ over $\calO_{\Cp}$, we say it is {\bf smooth} if it is formally smooth and locally of topologically finite type. We say $\frakX$ is {\bf liftable} if it admits a lifting $\widetilde \frakX$ to $\Spf(\rA_2)$. In this paper, we always assume $\frakX$ is liftable. Let $X$ be the rigid analytic generic fibre of $\frakX$ and denote by $\nu: X_{\proet}\rightarrow \frakX_{\et}$ the natural projection of sites. Let $\OXp$ ane $\OX$ be the completed structure sheaves on $X_{\proet}$ in the sense of \cite[Definitiuon 4.1]{Sch2}. Both of them can be viewed as $\calO_{\frakX}$-algebras via the projection $\nu$.
 
 Let $K$ be an object in the derived category of complexes of $\Zp$-modules. We denote by $\hat K$ the derived $p$-adic completion $\Rlim_nK\otimes_{\Zp}\Zp/p^n$. In particular, for a morphism $A\to B$ of $\Zp$-algebras, we denote the derived $p$-adic completion of cotangent complex $\rL_{B/A}$ by $\widehat \rL_{B/A}$. In this paper, for two complexes $K_1$ and $K_2$ of (sheaves of) modules, we write $K_1\simeq K_2$ if they are quasi-isomorphic. For two modules or sheaves $M_1$ and $M_2$, we write $M_1\cong M_2$ if they are isomorphic.
 
 \subsection{Organization}
 In Section \ref{Sec 2}, we construct the integral Faltings' extension by using $p$-complete cotangent complexes and explain how it is related to the deformation theory. At the end of this section we construct the desired overconvergent sheaf. In Section \ref{Sec 3}, we prove an integral decompletion theorem for small representations. In Section \ref{Sec 4}, we establish a local version of Simpson correspondence. We first consider the trivial representation and then reduce the general case to this special case. Finally, in Section \ref{Sec 5}, we state  and prove our main theorem. The appendix specifies some notations and includes some elementary facts that were used in previous sections.

\section*{Acknowledgments}
 The paper consists of main results of the author's Ph.D. Thesis in Peking University. The author expresses his deepest gratitude to his advisor, Ruochuan Liu, for suggesting this topic, for useful advice on this paper, and for his warm encouragement and generous and consistent help during the whole time of the author's Ph.D. study. The author thanks David Hansen, Gal Porat and Mao Sheng for their comments on the earlier draft. The auhor also thanks anonymous referees for their careful reading, professional comments and valuable suggestions to improve this paper.

\section{Integral Faltings' extension and period sheaves}\label{Sec 2}
  We construct the overconvergent period sheaf $\OC^{\dagger}$ in this section. In order to do so, we have to construct an integral version of Faltings' extension at first.
  \subsection{Integral Faltings' extension}
  We first discuss the properties of the cotangent complex.
  The following Lemmas are well-known, but for the convenience of readers, we include their proofs here.
  \begin{lem}\label{regular sequence}
    Let $A$ be a ring. Suppose that $(f_1, \dots, f_n)$ is a regular sequence in $A$ and generates the ideal $I = (f_1,... ,f_n)$, then $\rL_{(A/I)/A}\simeq (I/I^2)[1]$.
  \end{lem}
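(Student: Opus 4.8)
The plan is to reduce the entire computation to the cotangent complex of a polynomial ring by means of flat (Tor-independent) base change. The only inputs I would take for granted are that $\rL_{R[t_1,\dots,t_n]/R}=\Omega_{R[t_1,\dots,t_n]/R}$ is free of rank $n$ and concentrated in homological degree $0$, that $\rL_{R/R}=0$, and the standard base-change and transitivity triangles for $\rL$. Applying transitivity to the composite $\bZ\to\bZ[t_1,\dots,t_n]\xrightarrow{t_j\mapsto 0}\bZ$, whose middle term $\rL_{\bZ/\bZ}$ vanishes, gives
\[
\rL_{\bZ/\bZ[t_1,\dots,t_n]}\;\simeq\;\bigl(\Omega_{\bZ[t_1,\dots,t_n]/\bZ}\otimes_{\bZ[t_1,\dots,t_n]}\bZ\bigr)[1]\;\simeq\;\bZ^{\,n}[1],
\]
a free $\bZ$-module of rank $n$ placed in homological degree $1$ (here $\bZ=\bZ[t_1,\dots,t_n]/(t_1,\dots,t_n)$).

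The second step is to realise $A/I$ as a derived base change of $\bZ$. Send $\bZ[t_1,\dots,t_n]\to A$ by $t_j\mapsto f_j$ and $\bZ[t_1,\dots,t_n]\to\bZ$ by $t_j\mapsto 0$. The Koszul complex on $(t_1,\dots,t_n)$ resolves $\bZ$ over the polynomial ring; tensoring it up along $t_j\mapsto f_j$ produces the Koszul complex on $(f_1,\dots,f_n)$, and since $(f_1,\dots,f_n)$ is a regular sequence in $A$ this complex resolves $A/I$ and is acyclic in positive degrees. Hence $\Tor^{\bZ[t_1,\dots,t_n]}_{i}(\bZ,A)=0$ for all $i>0$, the cocartesian square of rings so obtained is Tor-independent, and $A/I\simeq\bZ\otimes^{\mathbb{L}}_{\bZ[t_1,\dots,t_n]}A$. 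Flat base change for the cotangent complex then yields
\[
\rL_{(A/I)/A}\;\simeq\;\rL_{\bZ/\bZ[t_1,\dots,t_n]}\otimes^{\mathbb{L}}_{\bZ}(A/I)\;\simeq\;\bZ^{\,n}[1]\otimes_{\bZ}(A/I)\;\simeq\;(A/I)^{n}[1];
\]
in particular $\rL_{(A/I)/A}$ is concentrated in homological degree $1$. Any complex concentrated in a single degree is canonically quasi-isomorphic to its homology in that degree, and $\rH_1(\rL_{(A/I)/A})=I/I^2$ holds for every ideal $I$; therefore $\rL_{(A/I)/A}\simeq(I/I^2)[1]$, and as a by-product $I/I^2$ is seen to be free of rank $n$ over $A/I$.

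I do not anticipate a real obstacle, as the result is classical, but two points deserve attention. First, the base-change theorem for $\rL$ demands genuine Tor-independence, not merely a cocartesian square of rings; so the vanishing of $\Tor^{\bZ[t_1,\dots,t_n]}_{>0}(\bZ,A)$ must actually be established, and this is exactly the place where the regular-sequence hypothesis is consumed. Second, one must avoid the circular temptation of importing the freeness of the conormal module of a regular sequence, or the acyclicity of its Koszul resolution, from a reference that itself deduces these from the present lemma; in the argument above the former is an output and the latter is re-derived directly from the definition of regularity. If one prefers, the global base change can be replaced by induction on $n$ through the transitivity triangle for $A\to A/(f_1,\dots,f_i)\to A/(f_1,\dots,f_{i+1})$: at each step the two outer terms are free modules concentrated in degree $1$, the connecting map is then, up to a shift, a morphism into such a module placed one degree higher and so represents an element of a vanishing $\Ext^1$, the triangle splits, and one is reduced to the case $n=1$ of a single nonzerodivisor — which is the one-variable instance of the base change above.
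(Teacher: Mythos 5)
Your proposal is correct and follows essentially the same route as the paper: map $\bZ[t_1,\dots,t_n]\to A$ by $t_j\mapsto f_j$, use the regular-sequence hypothesis to get $\Tor^{\bZ[t_1,\dots,t_n]}_{>0}(\bZ,A)=0$ and hence flat base change for $\rL$, and compute $\rL_{\bZ/\bZ[t_1,\dots,t_n]}$ from the transitivity triangle whose middle term vanishes. The only differences are cosmetic (you base-change after computing the universal case rather than before, and you spell out the Koszul-complex justification of Tor-independence that the paper leaves implicit).
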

  \begin{proof}
  Regard $A$ as a $\bZ[X_1, \dots, X_n]$-algebra by mapping $X_i$ to $f_i$ for every $i$. Since $f_1, \dots, f_n$ is a regular sequence in $A$, for any $i\geq 1$, we have
  \[
  \Tor^{\bZ[X_1, \cdots, X_n]}_i(\bZ,A)=0.
  \]
  It follows from \cite[8.8.4]{Wei} that 
  \[
  \rL_{(A/I)/A}\simeq \rL_{\bZ/\bZ[X_1, \cdots, X_n]}\otimes^L_{\bZ[X_1, \cdots, X_n]}
  A.\]
  So we may assume $A = \bZ[X_1, \cdots, X_n]$ and $I = (X_1, \cdots, X_n)$. From homomorphisms $\bZ\rightarrow A \rightarrow A/I$ of rings, we get an exact triangle
  \[\xymatrix@C=0.4cm{
    \rL_{A/\bZ}\otimes^LA/I\ar[r] & \rL_{(A/I)/\bZ}\ar[r] & \rL_{(A/I)/A}\ar[r]&
  }.\]
  The middle term is trivial since $A/I=\bZ$ and hence we deduce that
  \[
  \rL_{(A/I)/A}\simeq(\rL_{A/\bZ}\otimes^{L}_A\bZ)[1]\simeq (I/I^2)[1]
  \]
  as desired.
\end{proof}
\begin{lem}\label{Fontaine theorem}
  \begin{enumerate}
      \item The map $\dlog: \mu_{p^{\infty}} \to \Omega^1_{\calO_{\bar k}/\calO_k}, \zeta_{p^n}\mapsto \frac{d\zeta_{p^n}}{\zeta_{p^n}}$ induces an isomorphism
    \[
      \dlog: \overline k/\rho_k^{-1}\calO_{\bar k}\otimes\Zp(1)\rightarrow\Omega^1_{\calO_{\bar k}/\calO_k},
    \]
    where $\Zp(1)$ denotes the Tate twist.
    \item $\rL_{\calO_{\overline k}/\calO_k}\simeq \Omega^1_{\calO_{\overline k}/\calO_k}[0]$.
    
    \item $\widehat \rL_{\calO_{\Cp}/\calO_k}\simeq \frac{1}{\rho_k}\calO_{\Cp}(1)[1]$.
  \end{enumerate}
\end{lem}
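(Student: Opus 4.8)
The plan is to obtain (2) directly from Lemma~\ref{regular sequence}, to treat (1) as the classical arithmetic input (Fontaine's theorem), and to deduce (3) formally from (1), (2) and elementary properties of derived $p$-adic completion.

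\emph{Part (2).} Write $\calO_{\overline k}=\colim_L\calO_L$ over the finite subextensions $L/k$ inside $\overline k$. Since $\kappa$ is perfect, each $\calO_L$ is monogenic over $\calO_k$, say $\calO_L\cong\calO_k[x]/(f)$ with $f$ monic of degree $[L:k]$; as $k$ has characteristic $0$ we have $f'\neq 0$ with $\deg f'<\deg f$, so $f'(x)$ is a nonzero, hence a nonzerodivisor, element of the domain $\calO_L$. Lemma~\ref{regular sequence} applied to the regular element $f$ gives $\rL_{\calO_L/\calO_k[x]}\simeq(f)/(f^2)[1]\simeq\calO_L[1]$; plugging this and $\rL_{\calO_k[x]/\calO_k}\otimes^L_{\calO_k[x]}\calO_L\simeq\calO_L\,dx$ into the transitivity triangle for $\calO_k\to\calO_k[x]\to\calO_L$ produces the exact sequence
\[0\longrightarrow H_1\bigl(\rL_{\calO_L/\calO_k}\bigr)\longrightarrow\calO_L\xrightarrow{\,f'(x)\,}\calO_L\,dx\longrightarrow H_0\bigl(\rL_{\calO_L/\calO_k}\bigr)\longrightarrow0 ,\]
whence $H_1(\rL_{\calO_L/\calO_k})=0$ and $H_0(\rL_{\calO_L/\calO_k})\cong\Omega^1_{\calO_L/\calO_k}$. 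As the cotangent complex commutes with filtered colimits, $\rL_{\calO_{\overline k}/\calO_k}\simeq\Omega^1_{\calO_{\overline k}/\calO_k}[0]$.

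\emph{Part (1).} This is Fontaine's theorem; I would cite it, but here is the outline. The map $\zeta\mapsto d\zeta/\zeta$ is a homomorphism $\mu_{p^\infty}\to\Omega^1_{\calO_{\overline k}/\calO_k}$ whose target is $p$-divisible and $p$-power torsion (from $d(y^p)=p\,y^{p-1}dy$, and any $dx$ with $x\in\calO_{\overline k}$ being killed by an integral power of $p$), so it extends to the displayed source after the $\Zp(1)$-twist and one must prove bijectivity. For $k=k_0$ unramified: $\calO_{k_0}[\zeta_{p^n}]$ is monogenic over $\calO_{k_0}$ with different $\bigl(\Phi_{p^n}'(\zeta_{p^n})\bigr)$ and $\Phi_{p^n}'(\zeta_{p^n})=p^n/\bigl(\zeta_{p^n}(\zeta_p-1)\bigr)$, so $\Omega^1_{\calO_{k_0}[\zeta_{p^n}]/\calO_{k_0}}$ is cyclic on $\dlog\zeta_{p^n}$ with annihilator of valuation $n-\tfrac1{p-1}$; since $\dlog\zeta_{p^n}=p\,\dlog\zeta_{p^{n+1}}$, the colimit over the cyclotomic tower is $\bigl(k_0(\zeta_{p^\infty})/\rho_{k_0}^{-1}\calO_{k_0(\zeta_{p^\infty})}\bigr)(1)$ with $\nu_p(\rho_{k_0})=\tfrac1{p-1}$, the twist coming from $\sigma(\dlog\zeta_{p^n})=\chi_{\cyc}(\sigma)\dlog\zeta_{p^n}$, and Fontaine's analysis of the deeply ramified base $k_0(\zeta_{p^\infty})$ shows that passing further up to $\calO_{\overline k}$ changes nothing. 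For general $k$, by (2) the transitivity triangle for $\calO_{k_0}\to\calO_k\to\calO_{\overline k}$ degenerates to a short exact sequence
\[0\longrightarrow\calO_{\overline k}/\calD_k\calO_{\overline k}\longrightarrow\Omega^1_{\calO_{\overline k}/\calO_{k_0}}\longrightarrow\Omega^1_{\calO_{\overline k}/\calO_k}\longrightarrow0 ,\]
with first term the cyclic submodule generated by $d\pi_k$ for a uniformizer $\pi_k$ of $\calO_k$; since in $\overline k/\rho_{k_0}^{-1}\calO_{\overline k}$ a cyclic submodule is determined by the annihilator of a generator and $\nu_p(\rho_k)=\nu_p(\calD_k)+\tfrac1{p-1}$, the quotient is $\overline k/\rho_k^{-1}\calO_{\overline k}\otimes\Zp(1)$, compatibly with $\dlog$.

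\emph{Part (3), and the main obstacle.} Derived $p$-complete the transitivity triangle for $\calO_k\to\calO_{\overline k}\to\calO_{\Cp}$. As $\calO_{\Cp}$ is $p$-torsion free and flat over $\calO_{\overline k}$ with $\calO_{\Cp}/p=\calO_{\overline k}/p$, base change gives $\rL_{\calO_{\Cp}/\calO_{\overline k}}\otimes^L_{\calO_{\overline k}}\calO_{\overline k}/p\simeq\rL_{(\calO_{\overline k}/p)/(\calO_{\overline k}/p)}=0$, hence $\widehat\rL_{\calO_{\Cp}/\calO_{\overline k}}=0$ and $\widehat\rL_{\calO_{\Cp}/\calO_k}\simeq\widehat{\rL_{\calO_{\overline k}/\calO_k}\otimes^L_{\calO_{\overline k}}\calO_{\Cp}}$. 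By (2), flatness, and then (1), the latter complex is $(\Cp/\rho_k^{-1}\calO_{\Cp})(1)$ in degree $0$; applying $\widehat{(-)}$ to $0\to\rho_k^{-1}\calO_{\Cp}\to\Cp\to\Cp/\rho_k^{-1}\calO_{\Cp}\to0$ and using $\widehat\Cp=0$ (as $\Cp$ is a $\Qp$-vector space) while $\rho_k^{-1}\calO_{\Cp}$ is already $p$-complete gives $\widehat{\Cp/\rho_k^{-1}\calO_{\Cp}}\simeq\rho_k^{-1}\calO_{\Cp}[1]$, so $\widehat\rL_{\calO_{\Cp}/\calO_k}\simeq\tfrac1{\rho_k}\calO_{\Cp}(1)[1]$. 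One may instead factor through $\rA_{\inf,k}$: since $\calO_{\Cp}^\flat$ is perfect one has $\widehat\rL_{\rA_{\inf,k}/\calO_k}=0$, so Lemma~\ref{regular sequence} applied to the nonzerodivisor $\xi_k$ identifies $\widehat\rL_{\calO_{\Cp}/\calO_k}$ with $(\xi_k)/(\xi_k^2)[1]$, reducing (3) to matching $(\xi_k)/(\xi_k^2)$ with $\tfrac1{\rho_k}\calO_{\Cp}(1)$. In all approaches the only non-formal ingredient is the arithmetic heart of (1) — the cyclotomic different computation together with the deeply-ramified input showing $\calO_{\overline k}$ adds nothing over $\calO_{k_0(\zeta_{p^\infty})}$; this is Fontaine's theorem, which I would cite rather than reprove, the remaining steps being transitivity triangles, Lemma~\ref{regular sequence}, and bookkeeping with derived $p$-completion.
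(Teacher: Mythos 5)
Your proposal is correct and follows essentially the same route as the paper, which simply cites Fontaine for (1), cites Beilinson for (2), and deduces (3) by derived $p$-completion; your extra work consists of reproving (2) by the standard monogenic-presentation argument and spelling out the completion bookkeeping for (3), both of which are accurate.
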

\begin{proof}
  \begin{enumerate}
      \item This is \cite[Th\'eor\`eme 1']{Fon}.
      \item This is \cite[Theorem 1.3]{Bei}.
      \item This follows from $(1)$ and $(2)$ after taking derived $p$-completions on both sides.
  \end{enumerate}
\end{proof}
\begin{cor}\label{base coefficients}
  \begin{enumerate}
      \item $\widehat \rL_{\calO_{\Cp}/A_{\inf,k}}[-1]\simeq \frac{1}{\rho_k}\calO_{\Cp}(1)[0]\simeq \xi_k A_{\inf,k}/\xi_k^2A_{\inf,k}[0]$.
      \item $\widehat \rL_{\calO_{\Cp}/A_2}\simeq \frac{1}{\rho_k}\calO_{\Cp}(1)[1]\oplus\frac{1}{\rho_k^2}\calO_{\Cp}(2)[2].$
  \end{enumerate}
\end{cor}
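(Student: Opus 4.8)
The plan is to compute both derived $p$-complete cotangent complexes by feeding the known value of $\widehat{\rL}_{\calO_{\Cp}/\calO_k}$ from Lemma \ref{Fontaine theorem}(3) into the transitivity triangles for the towers $A_{\inf,k}\to\calO_k\to\calO_{\Cp}$ and $A_2\to\calO_k\to\calO_{\Cp}$ (or more directly the towers ending in $A_{\inf,k}$ and $A_2$), combined with the description of $\rL_{(A/I)/A}$ for a regular-sequence quotient from Lemma \ref{regular sequence}. For part (1), I would use that $\calO_k$ is a quotient of $A_{\inf,k}=\rW(\calO_{\bC_p^\flat})\otimes_{\rW(\kappa)}\calO_k$ — in fact $\theta_k$ exhibits $\calO_{\Cp}$ as $A_{\inf,k}/\xi_k$ with $\xi_k$ a nonzerodivisor — so Lemma \ref{regular sequence} gives $\rL_{\calO_{\Cp}/A_{\inf,k}}\simeq(\xi_k A_{\inf,k}/\xi_k^2 A_{\inf,k})[1]$, and since this module is already $p$-adically complete (it is free of rank one over $\calO_{\Cp}=A_{\inf,k}/\xi_k$) the derived $p$-completion does nothing. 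This gives the right-hand identification in (1) immediately; the middle identification $\xi_k A_{\inf,k}/\xi_k^2A_{\inf,k}\cong\frac{1}{\rho_k}\calO_{\Cp}(1)$ then follows by comparing with Lemma \ref{Fontaine theorem}(3) via the transitivity triangle
\[
\widehat{\rL}_{\calO_k/A_{\inf,k}}\otimes^L_{\calO_k}\calO_{\Cp}\longrightarrow \widehat{\rL}_{\calO_{\Cp}/A_{\inf,k}}\longrightarrow \widehat{\rL}_{\calO_{\Cp}/\calO_k}\longrightarrow,
\]
noting that $\calO_k$ is smooth — indeed finite étale after inverting $p$, and more to the point $\calO_k = \rW(\kappa)[\pi]/(E(\pi))$ is a complete intersection over $\rW(\kappa)\subset A_{\inf,k}$ with $\rL_{\calO_k/A_{\inf,k}}$ supported in degrees $\le 0$ — so the connecting map forces the degree shift, and one tracks the twist through $\dlog$.

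For part (2), the plan is to apply Lemma \ref{regular sequence} again to the presentation $A_2 = A_{\inf,k}/\xi_k^2$. Here $\xi_k^2$ is \emph{not} a nonzerodivisor-generated ideal in the naive sense needed — rather I would use the transitivity triangle for $A_{\inf,k}\to A_2\to\calO_{\Cp}$:
\[
\widehat{\rL}_{A_2/A_{\inf,k}}\otimes^L_{A_2}\calO_{\Cp}\longrightarrow \widehat{\rL}_{\calO_{\Cp}/A_{\inf,k}}\longrightarrow \widehat{\rL}_{\calO_{\Cp}/A_2}\longrightarrow.
\]
By Lemma \ref{regular sequence} (with $n=1$, $f_1=\xi_k^2$, which is a regular element since $\xi_k$ is), $\rL_{A_2/A_{\inf,k}}\simeq (\xi_k^2 A_{\inf,k}/\xi_k^4 A_{\inf,k})[1]$; base-changing to $\calO_{\Cp}=A_{\inf,k}/\xi_k$ and $p$-completing yields $\widehat{\rL}_{A_2/A_{\inf,k}}\otimes^L_{A_2}\calO_{\Cp}\simeq (\xi_k^2/\xi_k^3)\otimes_{A_2}^L\calO_{\Cp}$; since $\calO_{\Cp}$ has Tor-amplitude $[-1,0]$ over $A_2$ (it is $A_2/\xi_k$ with $\xi_k^2=0$, so $\calO_{\Cp}$ is \emph{not} flat and $\mathrm{Tor}_1^{A_2}(\calO_{\Cp},\calO_{\Cp})\cong\calO_{\Cp}$), this derived tensor product contributes in both degrees $1$ and $2$. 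Plugging in (1), $\widehat{\rL}_{\calO_{\Cp}/A_{\inf,k}}\simeq\frac{1}{\rho_k}\calO_{\Cp}(1)[1]$, the long exact sequence of cohomology sheaves then splits — because the degree-$2$ term of $\widehat{\rL}_{\calO_{\Cp}/A_2}$ can only come from $\widehat{\rL}_{A_2/A_{\inf,k}}\otimes^L\calO_{\Cp}$ and the degree-$1$ part receives $\frac{1}{\rho_k}\calO_{\Cp}(1)$ plus possibly a subquotient — and careful bookkeeping of the two $\xi_k$-torsion contributions identifies the degree-$1$ piece as $\frac{1}{\rho_k}\calO_{\Cp}(1)$ and the degree-$2$ piece as $\frac{1}{\rho_k^2}\calO_{\Cp}(2)$, the twists being forced by the square of the extension class. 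Alternatively, and perhaps more cleanly, I would use the Künneth/base-change formula $\widehat{\rL}_{\calO_{\Cp}/A_2}\simeq \widehat{\rL}_{\calO_{\Cp}/A_{\inf,k}}\otimes^L_{\calO_{\Cp}}\mathrm{Sym}\big(\widehat{\rL}_{\calO_{\Cp}/A_{\inf,k}}[-1]\big)$-type identity coming from $A_2 = A_{\inf,k}/\xi_k^2$ being a square-zero-type thickening, truncated appropriately; concretely $\rL_{\calO_{\Cp}/A_2}$ is computed from the Koszul resolution and gives $\frac{1}{\rho_k}\calO_{\Cp}(1)[1]\oplus (\frac{1}{\rho_k}\calO_{\Cp}(1))^{\otimes 2}[2]$, and $(\frac{1}{\rho_k}\calO_{\Cp}(1))^{\otimes 2}=\frac{1}{\rho_k^2}\calO_{\Cp}(2)$.

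The main obstacle I anticipate is the splitting of the two-term Postnikov filtration on $\widehat{\rL}_{\calO_{\Cp}/A_2}$ and pinning down the Tate twists correctly: a priori the transitivity triangle only gives a filtration with graded pieces $\frac{1}{\rho_k}\calO_{\Cp}(1)[1]$ and $\frac{1}{\rho_k^2}\calO_{\Cp}(2)[2]$, and one must argue the connecting map $\frac{1}{\rho_k^2}\calO_{\Cp}(2)[2]\to \frac{1}{\rho_k}\calO_{\Cp}(1)[2]$ vanishes. This follows because the potential obstruction lives in $\Hom_{\calO_{\Cp}}(\frac{1}{\rho_k^2}\calO_{\Cp}(2),\frac{1}{\rho_k}\calO_{\Cp}(1)) = \rho_k\calO_{\Cp}(-1)$, and a Galois-equivariance (or Frobenius-weight) argument shows any such map with the required equivariance must be zero — or, bypassing this entirely, one observes that $A_2\to A_{\inf,k}/\xi_k$ factors the \emph{canonical} square-zero extension $A_{\inf,k}/\xi_k^2\to A_{\inf,k}/\xi_k$ whose cotangent complex is \emph{formally} $(I/I^2)[1]$ with $I=\xi_k A_2$, and then the exact sequence $0\to \xi_k^2/\xi_k^3\to A_2/\xi_k^3\to A_2/\xi_k^2\to 0$ combined with Lemma \ref{regular sequence} applied to $A_{\inf,k}/\xi_k^3 \to \calO_{\Cp}$ and $A_2\to\calO_{\Cp}$ manufactures the direct sum decomposition honestly. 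I would also need to double-check that all modules in sight are already derived $p$-complete (they are finite free $\calO_{\Cp}$-modules) so that $\widehat{(-)}$ commutes with the formation of the triangles and with $\otimes^L$, which is routine but worth a sentence.
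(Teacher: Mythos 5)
Your overall strategy (Lemma \ref{regular sequence} fed into transitivity triangles) is the same as the paper's, and your identification $\widehat\rL_{\calO_{\Cp}/A_{\inf,k}}\simeq(\xi_kA_{\inf,k}/\xi_k^2A_{\inf,k})[1]$ is fine, but two steps as written would fail. First, to pin down $\xi_kA_{\inf,k}/\xi_k^2A_{\inf,k}\cong\frac{1}{\rho_k}\calO_{\Cp}(1)$ you invoke a transitivity triangle for ``$A_{\inf,k}\to\calO_k\to\calO_{\Cp}$'' featuring $\rL_{\calO_k/A_{\inf,k}}$; but there is no ring map $A_{\inf,k}\to\calO_k$ ($\calO_k$ embeds into $A_{\inf,k}=\rW(\calO_{\bC_p^{\flat}})\otimes_{\rW(\kappa)}\calO_k$; it is not a quotient of it), so that triangle does not exist, and the ensuing discussion of $\calO_k$ as a complete intersection over $\rW(\kappa)\subset A_{\inf,k}$ is moot. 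The correct tower is $\calO_k\to A_{\inf,k}\to\calO_{\Cp}$, and the key input --- which you never state --- is the vanishing $\widehat\rL_{A_{\inf,k}/\calO_k}=0$ (a consequence of the perfectness of $\calO_{\bC_p^{\flat}}$), which forces $\widehat\rL_{\calO_{\Cp}/A_{\inf,k}}\simeq\widehat\rL_{\calO_{\Cp}/\calO_k}$ and hence imports the twist from Lemma \ref{Fontaine theorem}(3).

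Second, in part (2) the bookkeeping of the first term of the triangle is wrong. Lemma \ref{regular sequence} gives $\rL_{A_2/A_{\inf,k}}\simeq(\xi_k^2A_{\inf,k}/\xi_k^4A_{\inf,k})[1]$, and $\xi_k^2A_{\inf,k}/\xi_k^4A_{\inf,k}$ is a \emph{free} $A_2$-module of rank one; hence $\rL_{A_2/A_{\inf,k}}\otimes^L_{A_2}\calO_{\Cp}\simeq(\xi_k^2A_{\inf,k}/\xi_k^3A_{\inf,k})[1]\simeq\frac{1}{\rho_k^2}\calO_{\Cp}(2)[1]$ is concentrated in a single degree. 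There is no $\Tor_1$ contribution: you replaced the free module $\xi_k^2/\xi_k^4$ by its reduction $\xi_k^2/\xi_k^3$ \emph{before} deriving the tensor product, and your claim that $\calO_{\Cp}$ has Tor-amplitude $[-1,0]$ over $A_2$ is false in any case (the periodic resolution $\cdots\xrightarrow{\xi_k}A_2\xrightarrow{\xi_k}A_2\to\calO_{\Cp}$ shows $\Tor_i^{A_2}(\calO_{\Cp},\calO_{\Cp})\cong\calO_{\Cp}$ for all $i\geq 0$). The degree-$2$ piece of $\widehat\rL_{\calO_{\Cp}/A_2}$ arises as the shift of this one-degree term inside the cone, not from a derived tensor product. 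Finally, the splitting you worry about is immediate and needs no Galois-weight argument: the first arrow $\frac{1}{\rho_k^2}\calO_{\Cp}(2)[1]\to\frac{1}{\rho_k}\calO_{\Cp}(1)[1]$ is the conormal map induced by the inclusion $(\xi_k^2)\subset(\xi_k)$, sending the generator $\xi_k^2$ to the class of $\xi_k^2$ in $\xi_kA_{\inf,k}/\xi_k^2A_{\inf,k}$, which is zero; the triangle therefore degenerates to the stated direct sum. (Your Tate-twist argument could be made to work with care, but it is unnecessary, and your ``Koszul resolution'' alternative is not a resolution, since $\xi_k$ is a zero-divisor in $A_2$.)
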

\begin{proof}
  \begin{enumerate}
       \item Considering the morphisms $\calO_k\to A_{\inf,k}\to\calO_{\Cp}$ of rings, we have an exact triangle
      \[
        \rL_{A_{\inf,k}/\calO_k}\widehat \otimes^L_{A_{\inf,k}}\calO_{\Cp} \to\widehat \rL_{\calO_{\Cp}/\calO_k}\to \widehat \rL_{\calO_{\Cp}/A_{\inf,k}}\to .
      \]
      Since
      \[
      \widehat \rL_{A_{\inf,k}/\calO_k}\simeq  \rL_{A_{\inf}/\rW(\kappa)}\widehat \otimes_{\rW(\kappa)}^L\calO_k = 0,
      \]
      the first quasi-isomorphism follows from Lemma \ref{Fontaine theorem} (3). Now, the second quasi-isomorphism is straightforward from Lemma \ref{regular sequence}.
      \item Considering the morphisms $A_{\inf,k}\to A_2\to\calO_{\Cp}$ of rings, we have the exact triangle
      \[
        \rL_{A_2/A_{\inf,k}}\widehat \otimes^L_{A_2}\calO_{\Cp}\to \widehat \rL_{\calO_{\Cp}/A_{\inf,k}} \to \widehat \rL_{\calO_{\Cp}/A_2}\to .
      \]
      Combining Lemma \ref{regular sequence} with $(1)$, the above exact triangle reduces to 
      \[
        \xi_k^2A_{\inf,k}/\xi_k^4A_{\inf,k}\otimes_{A_2}\calO_{\Cp}[1]\to \xi_k A_{\inf,k}/\xi_k^2A_{\inf,k}[1] \to \widehat \rL_{\calO_{\Cp}/A_2}\to .
      \]
      Now we complete the proof by noting that the first arrow is trivial.
  \end{enumerate}
\end{proof}
   We identify $\calO_{\Cp}(1)$ with $\calO_{\Cp}t$, where $t$ is Fontaine's $p$-adic analogue of $2\pi i$. It follows from Lemma \ref{Fontaine theorem} (1) that the sequence $\{\dlog(\zeta_{p^n})\}_{n\geq 0}$ can be identified with the element $t\in \frac{1}{\rho_k}\calO_{\Cp}(1)$. If we regard $A_{\inf,k}$ as a subring of $\BdRp$ and identify $t\BdRp/t^2\BdRp$ with $\Cp(1)$, then Corollary \ref{base coefficients} says that $t$ and $\rho_k\xi_k$ in $\Cp(1)$ differ by a $p$-adic unit in $\calO_{\Cp}^{\times}$.
\begin{rmk}
  The corollary is still true if one replaces $\Cp$ by any closed subfield $K\subset \Cp$ containing $\mu_{p^{\infty}}$. All results in this paper hold for $K$ instead of $\Cp$.
\end{rmk}

Now we construct the integral Faltings' extension in the local case. We fix some notations as follows:

Let $\frakX = \Spf(R^+)$ be a smooth formal scheme over $\Spf(\calO_{\Cp})$ endowed with an \'etale morphism 
  \[\Box: \frakX\to \hat \bG_m^d = \Spf(\calO_{\Cp}\za \underline T^{\pm 1}\ya),\] 
  where $\calO_{\Cp}\za\underline T^{\pm 1}\ya = \calO_{\Cp}\za T_1^{\pm 1},\dots, T_d^{\pm 1}\ya.$ We say $\frakX$ is {\bf small} in this case.
  Let $X = \Spa(R,R^+)$ be the rigid analytic generic fibre of $\frakX$ and $X_{\infty} = \Spa(\Rinf,\Rinfp)$ be the affinoid perfectoid space associated to the base-change of $X$ along the Galois cover
  \[
  \bG_{m,\infty}^d = \Spa(\Cp\za\underline T^{\pm \frac{1}{p^{\infty}}}\ya,\calO_{\Cp}\za\underline T^{\pm\frac{1}{p^{\infty}}}\ya) \to \bG_m^d = \Spa(\Cp\za\underline T^{\pm 1}\ya,\calO_{\Cp}\za\underline T^{\pm 1}\ya).
  \]
  Denote by $\Gamma$ the Galois group of the cover $X_{\infty}\to X$ and let $\gamma_i$ be in $\Gamma$ satisfying 
  \begin{equation}\label{gamma action on chart}
        \gamma_i(T_j^{\frac{1}{p^n}}) = \zeta_{p^n}^{\delta_{ij}}T_j^{\frac{1}{p^n}}
  \end{equation}
  for any $1\leq i,j\leq d$ and $n\geq 0$. Here, $\delta_{ij}$ denotes the Kronecker's delta. Then $\Gamma\cong \Zp\gamma_1\oplus\dots\oplus\Zp\gamma_d$.
Let $\widetilde R^+$ be a lifting of $R^+$ along $A_2\to \calO_{\Cp}$. Then the morphisms $\widetilde R^+\to R^+\to \widehat R_{\infty}^+$ of rings give an exact triangle of $p$-complete cotangent complexes
\begin{equation}\label{exact triangle for samll case}
     \rL_{R^+/\widetilde R^+} \widehat \otimes^L_{R^+} \Rinfp\to\widehat \rL_{\Rinfp/\widetilde R^+}\to\widehat \rL_{\Rinfp/R^+}\to.
\end{equation}
The first term is easy to handle. Indeed, combining \cite[8.8.4]{Wei} with Corollary \ref{base coefficients} (2), we deduce that 
\[
  \rL_{R^+/\widetilde R^+}\widehat \otimes^L_{R^+} \Rinfp\simeq \frac{1}{\rho_k}\Rinfp(1)[1]\oplus\frac{1}{\rho_k^2}\Rinfp(2)[2].
\]
Now we compute the third term of (\ref{exact triangle for samll case}).
\begin{lem}\label{third term}
  We have $\widehat \rL_{\Rinfp/R^+}\simeq \widehat \Omega^1_{R^+}\otimes_{R^+}\Rinfp[1]$, where $\widehat \Omega^1_{R^+}$ denotes the module of formal differentials of $R^+$ over $\calO_{\Cp}$.
\end{lem}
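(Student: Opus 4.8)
Here is my plan for proving Lemma \ref{third term}.

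The plan is to reduce to the model case of the torus via the \'etale chart $\Box$, and then to settle that case by an explicit colimit computation; the shift $[1]$ will fall out of the effect of derived $p$-completion on a $p$-divisible torsion module concentrated in degree $0$. Set $S^+=\calO_{\Cp}\za\underline T^{\pm 1}\ya$, $S_n^+=\calO_{\Cp}\za\underline T^{\pm 1/p^n}\ya$, let $S_\infty^{+,\circ}=\bigcup_n S_n^+$, and let $S_\infty^+$ be its $p$-adic completion. Since $\Box$ is \'etale, $R^+$ is the $p$-completion of an \'etale $S^+$-algebra, so $\widehat \rL_{R^+/S^+}=0$ and $\widehat\Omega^1_{R^+}\cong\widehat\Omega^1_{S^+}\otimes_{S^+}R^+$; and by the construction of $X_\infty$, $\Rinfp$ is the $p$-adic completion of an \'etale $S_\infty^{+,\circ}$-algebra, so $\widehat \rL_{\Rinfp/S_\infty^+}=0$ as well. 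Feeding these vanishings into the base-change triangles for $S^+\to R^+\to\Rinfp$ and for $S^+\to S_\infty^+\to\Rinfp$ gives
\[
\widehat \rL_{\Rinfp/R^+}\;\simeq\;\widehat \rL_{\Rinfp/S^+}\;\simeq\;\widehat \rL_{S_\infty^+/S^+}\,\widehat \otimes^L_{S_\infty^+}\,\Rinfp ,
\]
so, as also $\widehat\Omega^1_{R^+}\otimes_{R^+}\Rinfp\cong\widehat\Omega^1_{S^+}\otimes_{S^+}\Rinfp$, it is enough to prove $\widehat \rL_{S_\infty^+/S^+}\simeq\widehat\Omega^1_{S^+}\otimes_{S^+}S_\infty^+[1]$.

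For the finite levels, set $B:=S^+\za\underline U^{\pm 1}\ya$ and write $S_n^+=B/I$ with $I=(U_1^{p^n}-T_1,\dots,U_d^{p^n}-T_d)$ a regular sequence. By Lemma \ref{regular sequence} and the base-change triangle for $S^+\to B\to S_n^+$, the complex $\rL_{S_n^+/S^+}$ is represented by $[\,I/I^2\to\Omega^1_{B/S^+}\otimes_B S_n^+\,]$ with differential $U_i^{p^n}-T_i\mapsto p^nU_i^{p^n-1}\,dU_i$; since the $U_i$ are units and $S_n^+$ is $p$-torsion-free this differential is injective, so $\rL_{S_n^+/S^+}\simeq\Omega^1_{S_n^+/S^+}[0]$, free of rank $d$ over $S_n^+/p^n$ on the classes $\dlog(T_i^{1/p^n})$, with the transition maps in $n$ sending $\dlog(T_i^{1/p^n})\mapsto p\,\dlog(T_i^{1/p^{n+1}})$.

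Passing to the colimit, and using that the cotangent complex commutes with filtered colimits and that derived $p$-completion is unaffected by $p$-adically completing the ring, one gets $\widehat \rL_{S_\infty^+/S^+}\simeq\widehat M$ where $M=\colim_n\Omega^1_{S_n^+/S^+}\cong\bigoplus_{i=1}^d N_i$ and each $N_i$ is the colimit of $S_1^+/p\xrightarrow{p}S_2^+/p^2\xrightarrow{p}\cdots$. Such an $N_i$ is $p$-divisible and $p^\infty$-torsion, hence $N_i\otimes^L_{\Zp}\Zp/p^m\simeq N_i[p^m][1]$ and $\widehat N_i\simeq(\Rlim_m N_i[p^m])[1]$. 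A short bookkeeping of generators and transition maps — the transition $N_i[p^{m+1}]\to N_i[p^m]$ induced by $\Zp/p^{m+1}\to\Zp/p^m$ is multiplication by $p$, while $p\,\dlog(T_i^{1/p^{m+1}})=\dlog(T_i^{1/p^m})$ — identifies $N_i[p^m]\cong S_\infty^{+,\circ}/p^m$ compatibly, so $\Rlim_m N_i[p^m]=S_\infty^+$ and $\widehat N_i\simeq S_\infty^+[1]$ with distinguished generator $\dlog(T_i)$. Summing over $i$ gives $\widehat M\simeq(S_\infty^+)^{\oplus d}[1]\cong\widehat\Omega^1_{S^+}\otimes_{S^+}S_\infty^+[1]$, which together with the first paragraph proves the lemma.

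The reduction to the torus and the computation of the finite layers are essentially formal. The real content is the last step, where one must (i) notice that $M$ is $p$-divisible — this is exactly what forces derived $p$-completion to shift it into degree $1$ — and (ii) track the generators and transition maps carefully enough to pin down $\widehat M$ as the free module on the $\dlog(T_i)$ in the expected way, which is what is needed when this term is later spliced into the triangle (\ref{exact triangle for samll case}). A smaller but genuine point is to check that $\Rinfp=R^+\widehat\otimes_{S^+}S_\infty^+$ holds on the nose (\'etale over the normal ring $S_n^+$ requires no normalisation), so that the lemma is an honest quasi-isomorphism and not merely an almost one.
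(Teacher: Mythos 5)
Your proposal is correct and follows essentially the same route as the paper: reduce to the torus by étale invariance of the $p$-completed cotangent complex, identify each finite level $\rL_{S_n^+/S^+}$ with $\Omega^1$ in degree $0$, pass to the colimit to get a $p$-divisible $p^\infty$-torsion module, and observe that derived $p$-completion shifts it into degree $1$. The only differences are cosmetic — you compute the finite layers via a regular-sequence presentation rather than the smoothness transitivity triangle, and you spell out the final completion bookkeeping that the paper compresses into ``the result follows by taking $p$-completions.''
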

\begin{proof}
  Since $R^+$ is \'etale over $\calO_{\Cp}\za\underline T^{\pm 1}\ya$, thanks to \cite[Lemma 3.14]{BMS1}, we are reduced to the case $R^+=\calO_{\Cp}\za\underline T^{\pm 1}\ya$. For any $n\geq 0$, put $A^+_n = \calO_{\Cp}[\underline T^{\pm \frac{1}{p^n}}]$ and denote $A_{\infty}^+ = \varinjlim_nA_n^+$. Since all rings involved are $p$-torsion free, we get
  \[
  \widehat \rL_{\widehat R_{\infty}^+/R^+} \simeq \widehat \rL_{A_{\infty}^+/A_0^+}.
  \]
  By \cite[Chapitre II(1.2.3.4)]{Ill1}, we see that 
  \[
  \rL_{A_{\infty}^+/A_0^+} = \varinjlim_n\rL_{A_n^+/A_0^+}.
  \]
  Since all $A_n^+$'s are smooth over $\calO_{\Cp}$, from the exact triangle
  \[
  \rL_{A_0^+/\calO_{\Cp}}\otimes^L_{A_0^+}A_n^+\to\rL_{A_n^+/\calO_{\Cp}}\to\rL_{A_n^+/A_0^+}\to,
  \]
  we deduce that 
  \[
    \rL_{A_n^+/A_0^+}\simeq A_n^+\otimes_{A_0^+}\frac{1}{p^n}\Omega^1_{A_0^+}/\Omega^1_{A_0^+}[0],
  \]
  where we identify $\Omega^1_{A_n^+}$ with $A_n^+\otimes_{A_0^+}\frac{1}{p^n}\Omega^1_{A_0^+}$. Therefore, we get 
  \[
    \rL_{A_{\infty}^+/A_0^+} \simeq A_{\infty}^+\otimes_{A_0^+}\Omega^1_{A_0^+}\otimes_{\Zp}(\Qp/\Zp)[0].
  \]
  Now the result follows by taking $p$-completions.
\end{proof}
Since $R^+$ admits a lifting $\widetilde R^+$ to $A_2$, the composition 
\[
  \widehat \rL_{\Rinfp/R^+}\simeq \widehat \rL_{\rA_2(\widehat R_{\infty}^{+})/\widetilde R^+}\widehat \otimes^L_{\rA_2(\widehat R_{\infty}^{+})}\Rinfp \to \widehat \rL_{\Rinfp/\widetilde R^+}
\]
defines a section of $\widehat \rL_{\Rinfp/\widetilde R^+}\to\widehat \rL_{\Rinfp/R^+}$. Since the exact triangle $(\ref{exact triangle for samll case})$ is $\Gamma$-equivariant, by taking cohomologies along (\ref{exact triangle for samll case}), we get the following proposition.
\begin{prop}\label{local Faltings extension}
  There exists a $\Gamma$-equivariant short exact sequence of $\Rinfp$-modules
  \begin{equation}\label{exact sequence for small case}
      0\to \frac{1}{\rho_k}\Rinfp(1)\to E^+\to \Rinfp\otimes_{R^+}\widehat \Omega_{R^+}^1\to 0,
  \end{equation}
  where $E^+ = \rH^{-1}(\widehat \rL_{\Rinfp/\widetilde R^+})$. Moreover, the above exact sequence admits a (non-$\Gamma$-equivariant) section such that $E^+\cong \frac{1}{\rho_k}\Rinfp(1)\oplus \Rinfp\otimes_{R^+}\widehat \Omega_{R^+}^1$ as $\widehat R_{\infty}^+$-modules.
\end{prop}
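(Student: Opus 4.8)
The plan is to read the statement off the long exact cohomology sequence of the $\Gamma$-equivariant exact triangle (\ref{exact triangle for samll case}), using the descriptions of its outer terms already recorded. Write $F_1:=\rL_{R^+/\widetilde R^+}\widehat\otimes^L_{R^+}\Rinfp$ for its first term and $F_3:=\widehat\rL_{\Rinfp/R^+}$ for its third. By \cite[8.8.4]{Wei} and Corollary \ref{base coefficients}(2) one has $F_1\simeq\frac{1}{\rho_k}\Rinfp(1)[1]\oplus\frac{1}{\rho_k^2}\Rinfp(2)[2]$, so $\rH^{-1}(F_1)=\frac{1}{\rho_k}\Rinfp(1)$ and $\rH^{0}(F_1)=0$ (the summand $\frac{1}{\rho_k^2}\Rinfp(2)[2]$ lives in degree $-2$ and will play no role). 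By Lemma \ref{third term} one has $F_3\simeq(\Rinfp\otimes_{R^+}\widehat\Omega^1_{R^+})[1]$, so $\rH^{-1}(F_3)=\Rinfp\otimes_{R^+}\widehat\Omega^1_{R^+}$ and $\rH^{-2}(F_3)=0$.

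Then I would write out the long exact sequence of $F_1\to\widehat\rL_{\Rinfp/\widetilde R^+}\to F_3\to F_1[1]$ around degree $-1$. The vanishing $\rH^{-2}(F_3)=0$ makes $\rH^{-1}(F_1)\to\rH^{-1}(\widehat\rL_{\Rinfp/\widetilde R^+})$ injective, and the vanishing $\rH^{0}(F_1)=0$ makes $\rH^{-1}(\widehat\rL_{\Rinfp/\widetilde R^+})\to\rH^{-1}(F_3)$ surjective; hence with $E^+:=\rH^{-1}(\widehat\rL_{\Rinfp/\widetilde R^+})$ the long exact sequence collapses to exactly
\[0\to\frac{1}{\rho_k}\Rinfp(1)\to E^+\to\Rinfp\otimes_{R^+}\widehat\Omega^1_{R^+}\to 0.\]
Because the triangle (\ref{exact triangle for samll case}) is $\Gamma$-equivariant, this sequence is $\Gamma$-equivariant; here $\Gamma$ acts through $\Rinfp$, and the Tate twists carry the trivial action since $\Gamma$ fixes $\mu_{p^\infty}\subset\Cp$.

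For the splitting I would apply $\rH^{-1}$ to the section of $\widehat\rL_{\Rinfp/\widetilde R^+}\to\widehat\rL_{\Rinfp/R^+}$ constructed just before the statement out of the lift $\widetilde R^+\to\rA_2(\Rinfp)$ of $R^+\to\Rinfp$; this produces an $\Rinfp$-linear splitting of the surjection above, and hence the asserted decomposition $E^+\cong\frac{1}{\rho_k}\Rinfp(1)\oplus\Rinfp\otimes_{R^+}\widehat\Omega^1_{R^+}$ of $\Rinfp$-modules. Finally I would note that this splitting is not $\Gamma$-equivariant: in the chart $\Box$ the lift sends $T_i$ to its Teichm\"uller representative, and $\gamma_i$ carries this representative to $[\epsilon]$ times itself with $[\epsilon]\neq1$ in $A_2$, so the image of $\widetilde R^+$ is not stable under $\Gamma$. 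The argument is short given Corollary \ref{base coefficients} and Lemma \ref{third term}; the only points needing care are the two vanishings that collapse the long exact sequence to a short one, and keeping the $\Gamma$-action straight so as to separate the $\Gamma$-equivariance of the extension from the non-equivariance of the $A_2$-splitting. I do not expect a genuine obstacle.
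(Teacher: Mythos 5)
Your proposal is correct and follows the paper's own proof essentially verbatim: the short exact sequence is extracted from the long exact cohomology sequence of the triangle (\ref{exact triangle for samll case}) using the identifications of its outer terms from Corollary \ref{base coefficients}(2) and Lemma \ref{third term}, and the splitting is $\rH^{-1}$ of the section built from the lift $\widetilde R^+\to\rA_2(\Rinfp)$. Your closing remark on the non-equivariance of the splitting matches the paper's Example \ref{Exam-Faltings obstruction}.
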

\begin{rmk}
  When $R^+$ is the base-change of some formal smooth $\calO_k$-algebra $R_0^+$ of topologically finite type along $\calO_k\to\calO_{\Cp}$, then it admits a canonical lifting $\widetilde R^+ = R_0^+\widehat \otimes_{\calO_k}A_2$. After inverting $p$, the resulting $E^+$ becomes the usual Faltings' extension and the corresponding sequence (\ref{exact sequence for small case}) is even $\Gal(\bar k/k)$-equivariant.
\end{rmk}

  We describe the $\Gamma$-action on $E^+$. For any $1\leq i\leq d$, by the proof of Lemma \ref{third term}, the compatible sequence $\{\dlog(T_i^{\frac{1}{p^n}})\}_{n\geq 0}$ defines an element $x_i\in E^+$, which goes to $\dlog T_i$ via the projection $E^+\to \Rinfp\otimes_{R^+}\widehat \Omega^1_{R^+}$. Since $\Gamma$ acts on $T_i$'s via (\ref{gamma action on chart}), we deduce that for any $1\leq i,j\leq d$,
  \[
  \gamma_i(x_j) = x_j+\delta_{ij}.
  \]
  In summary, we have the following proposition.
  \begin{prop}\label{good basis}
    The $\Rinfp$-module $E^+$ is free of rank $d+1$ and has a basis $\frac{t}{\rho_k}, x_1, \dots, x_d$ such that
    \begin{enumerate}
        \item for any $1\leq i\leq d$, $x_i$ is a lifting of $\dlog(T_i)\in \Rinfp\otimes_{R^+}\widehat \Omega^1_{R^+}$ and that
        \item for any $1\leq i,j\leq d$, $\gamma_i(x_j) = x_j+\delta_{ij}t$.
    \end{enumerate}
    Moreover, let $c: \Gamma \to \Hom_{R^+}(\widehat \Omega_{R^+}^1,\frac{1}{\rho_k}\Rinfp(1))$ be the map carrying $\gamma_i$ to $c(\gamma_i)$, which sends $\dlog(T_j)$ to $\delta_{ij}t$. Then the cocycle determined by $c$ in $\rH^1(\Gamma,\Hom_{R^+}(\widehat \Omega^1_{R^+},\frac{1}{\rho_k}\Rinfp(1)))$ coincides with the extension class represented by $E^+$ in $\Ext_{\Gamma}^1(\Rinfp\otimes_{R^+}\widehat \Omega^1_{R^+},\frac{1}{\rho_k}\Rinfp(1))$ via the canonical isomorphism
    \[
      \rH^1(\Gamma,\Hom_{R^+}(\widehat \Omega^1_{R^+},\frac{1}{\rho_k}\Rinfp(1)))\cong \Ext_{\Gamma}^1(\Rinfp\otimes_{R^+}\widehat \Omega^1_{R^+},\frac{1}{\rho_k}\Rinfp(1)).
    \]
  \end{prop}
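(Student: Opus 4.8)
The plan is to read off all of the assertions from the $\Gamma$-equivariant short exact sequence (\ref{exact sequence for small case}) of Proposition \ref{local Faltings extension}, together with the elements $x_i = \{\dlog(T_i^{1/p^n})\}_{n\geq 0}\in E^+$ exhibited just before the statement. For the freeness and item (1): since $R^+$ is \'etale over $\calO_{\Cp}\za\underline T^{\pm 1}\ya$, the module $\widehat\Omega^1_{R^+}$ is free of rank $d$ over $R^+$ on $\dlog(T_1),\dots,\dlog(T_d)$, so $\Rinfp\otimes_{R^+}\widehat\Omega^1_{R^+}$ is free over $\Rinfp$ and (\ref{exact sequence for small case}) splits $\Rinfp$-linearly; hence $E^+$ is free of rank $d+1$. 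Choosing the $\Rinfp$-linear section $s$ with $s(\dlog(T_i)) = x_i$, the family $\frac{t}{\rho_k}, x_1,\dots,x_d$ is obtained from the evident basis $\frac{t}{\rho_k}, s(\dlog T_1),\dots, s(\dlog T_d)$ by a unipotent change of coordinates, hence is again an $\Rinfp$-basis, and by construction $x_i$ maps to $\dlog(T_i)$ in the quotient of (\ref{exact sequence for small case}).

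For item (2) I would unwind how $\Gamma$ acts on $E^+ = \rH^{-1}(\widehat\rL_{\Rinfp/\widetilde R^+})$: it acts through the $\Gamma$-equivariance of (\ref{exact triangle for samll case}), i.e.\ through its (trivial) action on $R^+$, its action on $\Rinfp$, and its action on the chart coordinates $T_j^{1/p^n}$ given by (\ref{gamma action on chart}). Because $X$ lies over $\Cp\supset\mu_{p^\infty}$, $\Gamma$ fixes every $\zeta_{p^n}$ and hence fixes the system $t = \{\dlog(\zeta_{p^n})\}_{n\geq 0}\in\frac{1}{\rho_k}\Rinfp(1)$ (with the identification fixed after Corollary \ref{base coefficients}); applying $\gamma_i$ to the system defining $x_j$ and using (\ref{gamma action on chart}) then gives
\[
\gamma_i(x_j) = \{\dlog(\zeta_{p^n}^{\delta_{ij}}T_j^{1/p^n})\}_{n} = \delta_{ij}\{\dlog(\zeta_{p^n})\}_n + \{\dlog(T_j^{1/p^n})\}_n = x_j + \delta_{ij}t .
\]
Consistently, $\gamma_i$ fixes the integral coordinate $T_j$, hence the image of $x_j$ in the quotient of (\ref{exact sequence for small case}), so the difference $\gamma_i(x_j)-x_j$ indeed lies in the subobject $\frac{1}{\rho_k}\Rinfp(1)$.

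For the final ("Moreover") assertion I would argue as follows. Since $\Rinfp\otimes_{R^+}\widehat\Omega^1_{R^+}$ is free over $\Rinfp$, the groups $\underline{\Ext}^{>0}_{\Rinfp}(\Rinfp\otimes_{R^+}\widehat\Omega^1_{R^+},-)$ vanish, so the Grothendieck spectral sequence for the composite $\Hom_\Gamma(\Rinfp\otimes_{R^+}\widehat\Omega^1_{R^+},-) = (-)^{\Gamma}\circ\Hom_{\Rinfp}(\Rinfp\otimes_{R^+}\widehat\Omega^1_{R^+},-)$ degenerates to the stated canonical isomorphism $\rH^1(\Gamma,\Hom_{R^+}(\widehat\Omega^1_{R^+},\frac{1}{\rho_k}\Rinfp(1)))\cong\Ext^1_\Gamma(\Rinfp\otimes_{R^+}\widehat\Omega^1_{R^+},\frac{1}{\rho_k}\Rinfp(1))$; under it the class of the extension (\ref{exact sequence for small case}) corresponds to the class of the $1$-cocycle $\gamma\mapsto\big(\omega\mapsto \gamma\cdot s(\omega) - s(\gamma\cdot\omega)\big)$ attached to any $\Rinfp$-linear section $s$. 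Evaluating this cocycle for the section $s(\dlog T_i) = x_i$ and using the computation in item (2) shows that it sends $\gamma_i$ to the homomorphism $\dlog(T_j)\mapsto\delta_{ij}t$, i.e.\ to $c(\gamma_i)$; since a continuous $1$-cocycle on $\Gamma\cong\Zp^d$ is pinned down by its values on the $\gamma_i$, this cocycle agrees with the one determined by $c$, as claimed. The step requiring the most care is to make precise the element $x_j\in\rH^{-1}(\widehat\rL_{\Rinfp/\widetilde R^+})$ and the $\Gamma$-action on it used above: one must trace the system $\{\dlog(T_j^{1/p^n})\}_n$ through the identifications in the proof of Lemma \ref{third term} — the colimit $\rL_{A_\infty^+/A_0^+} = \varinjlim_n\rL_{A_n^+/A_0^+}$ and the passage $\widehat{(\Qp/\Zp)}\simeq\Zp[1]$ under derived $p$-completion — and check that $\Gamma$-equivariance is preserved throughout. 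Everything else is formal.
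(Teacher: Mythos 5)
Your proposal is correct and follows essentially the same route as the paper: items (1)–(2) are read off from the splitting in Proposition \ref{local Faltings extension} and the explicit action of $\gamma_i$ on the compatible systems $\{\dlog(T_j^{1/p^n})\}_n$ and $\{\dlog(\zeta_{p^n})\}_n$, and the ``moreover'' part is the standard identification of the extension class with the cocycle $\gamma\mapsto(\dlog T_j\mapsto \gamma(x_j)-x_j)$ attached to the section $s(\dlog T_j)=x_j$, which by (2) is exactly $c$. The extra details you supply (freeness of $\widehat\Omega^1_{R^+}$, the degenerate spectral sequence giving $\rH^1(\Gamma,\Hom)\cong\Ext^1_\Gamma$) are points the paper leaves implicit.
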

  \begin{proof}
    It remains to prove the ``moreover'' part. By (1), the extension class of $E^+$ is represented by the cocycle 
    \[
    f:\Gamma\to \Hom_{\Rinfp}(\Rinfp\otimes_{R^+}\widehat \Omega_{R^+}^1,\frac{1}{\rho_k}\Rinfp(1))\cong \Hom_{R^+}(\widehat \Omega_{R^+}^1,\frac{1}{\rho_k}\Rinfp(1))
    \]
    such that $f(\gamma)(\dlog(T_i)) = \gamma(x_i)-x_i$ for any $\gamma\in \Gamma$ and any $i$. However, by (2), $f$ is exactly $c$. We are done.
  \end{proof}

 Now we extend the above construction to the global case. 
 Let $\frakX$ be a smooth formal scheme over $\calO_{\Cp}$ with a fixed lifting $\widetilde \frakX$ to $A_2$. Denote by $X$ its rigid analytic generic fibre over $\Cp$. We regard both $\calO_{\frakX}$ and $\calO_{\widetilde \frakX}$ as sheaves on $X_{\proet}$ via the projection $\nu: X_{\proet}\to\frakX_{\et}$ (note that $\frakX$ and $\widetilde \frakX$ has the same \'etale site).
 Considering morphisms of sheaves of rings $\calO_{\widetilde \frakX}\to\calO_{\frakX}\to\OXp$, we get an exact triangle 
 \begin{equation}\label{exact triangle in global case}
   \rL_{\calO_{\frakX}/\calO_{\widetilde \frakX}}\widehat \otimes_{\calO_{\frakX}}^L\OXp\to\widehat \rL_{\OXp/\calO_{\widetilde \frakX}}\to\rL_{\OXp/\calO_{\frakX}}\to.
 \end{equation}
 Similar to the local case, the first term becomes
 \[
   \rL_{\calO_{\frakX}/\calO_{\widetilde \frakX}}\widehat \otimes_{\calO_{\frakX}}^L\OXp\simeq \rL_{\calO_{\Cp}/A_2}\otimes^L_{\calO_{\Cp}}\OXp
 \]
 and the composition 
 \[
 \widehat \rL_{\OXp/\calO_{\widetilde \frakX}}\simeq \widehat \rL_{\rA_2(\widehat \calO_X^{+})/\calO_{\widetilde \frakX}}\widehat \otimes^L_{\rA_2(\widehat \calO_X^{+})}\OXp\to \widehat \rL_{\OXp/\calO_{\widetilde \frakX}}
 \]
 defines a section of $\widehat \rL_{\OXp/\calO_{\widetilde \frakX}}\to\rL_{\OXp/\calO_{\frakX}}$.
 
 We claim that 
 \begin{equation}\label{claim on third term}
   \widehat \rL_{\OXp/\calO_{\frakX}}\simeq \OXp\otimes_{\calO_{\frakX}}\widehat \Omega_{\frakX}^1[1].
 \end{equation}
 Granting this, taking cohomologies along (\ref{exact triangle in global case}), we get the following theorem.
\begin{thm}\label{Integral Faltings extension}
  There is an exact sequence of sheaves of $\OXp$-modules
  \begin{equation}\label{exact sequence in global case}
      0\to\frac{1}{\rho_k}\OXp(1)\to\calE^+\to\OXp\otimes_{\calO_{\frakX}}\widehat \Omega^1_{\frakX}\to 0,
  \end{equation}
  where $\calE^+ =\rH^{-1}(\widehat \rL_{\OXp/\calO_{\widetilde \frakX}})$.
\end{thm}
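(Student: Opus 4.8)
The plan is to reduce Theorem~\ref{Integral Faltings extension} to two inputs — the claim (\ref{claim on third term}) computing the third term of the ($p$-completed) triangle (\ref{exact triangle in global case}), and the identification of its first term — and then to read off the short exact sequence (\ref{exact sequence in global case}) from the associated long exact cohomology sequence. The first input is the sheaf-theoretic counterpart of Lemma~\ref{third term} and is where essentially all the work lies; the rest is formal.

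First I would prove the claim $\widehat \rL_{\OXp/\calO_{\frakX}}\simeq \OXp\otimes_{\calO_{\frakX}}\widehat \Omega^1_{\frakX}[1]$. Both sides are complexes of sheaves of $\OXp$-modules on $X_{\proet}$, and the assertion is local on $\frakX_{\et}$, so one may assume $\frakX=\Spf(R^+)$ is small with its toric chart $\Box$, so that $X_{\infty}=\Spa(\Rinf,\Rinfp)\in X_{\proet}$ is affinoid perfectoid. The affinoid perfectoids $V=\Spa(S,S^+)$ pro-\'etale over $X_{\infty}$ form a basis of $X_{\proet}/X_{\infty}$, on which $\OXp(V)=S^+$ and $\calO_{\frakX}(V)=R^+$. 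For such $V$, factoring $R^+\to\Rinfp\to S^+$ and invoking the well-known vanishing $\widehat \rL_{S^+/\Rinfp}\simeq 0$ for a morphism of (integral) perfectoid rings (\cite{Bha}), one obtains
\[
  \widehat \rL_{S^+/R^+}\simeq \widehat \rL_{\Rinfp/R^+}\,\widehat \otimes^L_{\Rinfp}S^+\simeq \widehat \Omega^1_{R^+}\otimes_{R^+}S^+[1]
\]
by Lemma~\ref{third term}. Since the cotangent complex is compatible with localization of topoi and all the identifications above are induced by the canonical map to $\widehat \Omega^1$, hence compatible with the transition maps in the basis, the restriction of $\widehat \rL_{\OXp/\calO_{\frakX}}$ to $X_{\proet}/X_{\infty}$ is the sheafification of $V\mapsto \widehat \Omega^1_{R^+}\otimes_{R^+}S^+[1]$, i.e. $\OXp\otimes_{\calO_{\frakX}}\widehat \Omega^1_{\frakX}[1]$; because $\widehat \Omega^1_{\frakX}$ is canonical, these local identifications glue over a cover of $X$ by small affines, which proves (\ref{claim on third term}).

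For the first term of (\ref{exact triangle in global case}): since $\calO_{\widetilde\frakX}$ lifts $\calO_{\frakX}$, base change for the cotangent complex (\cite[8.8.4]{Wei}) identifies $\rL_{\calO_{\frakX}/\calO_{\widetilde\frakX}}\widehat \otimes^L_{\calO_{\frakX}}\OXp$ with $\widehat \rL_{\calO_{\Cp}/\rA_2}\otimes^L_{\calO_{\Cp}}\OXp$, which by Corollary~\ref{base coefficients}~(2) is $\frac{1}{\rho_k}\OXp(1)[1]\oplus\frac{1}{\rho_k^2}\OXp(2)[2]$. Feeding this and (\ref{claim on third term}) into (\ref{exact triangle in global case}) and taking the long exact cohomology sequence: the term $\OXp\otimes_{\calO_{\frakX}}\widehat \Omega^1_{\frakX}[1]$ is concentrated in degree $-1$, so its $\rH^{-2}$ and $\rH^{0}$ vanish, while the first term contributes $\frac{1}{\rho_k}\OXp(1)$ in degree $-1$ and $0$ in degree $0$; the exact sequence then collapses to
\[
  0\to \tfrac{1}{\rho_k}\OXp(1)\to \calE^+\to \OXp\otimes_{\calO_{\frakX}}\widehat \Omega^1_{\frakX}\to 0,\qquad \calE^+=\rH^{-1}(\widehat \rL_{\OXp/\calO_{\widetilde\frakX}}),
\]
which is (\ref{exact sequence in global case}). (The section of $\widehat \rL_{\OXp/\calO_{\widetilde\frakX}}\to\widehat \rL_{\OXp/\calO_{\frakX}}$ constructed before the statement shows in addition that this sequence splits as $\OXp$-modules, paralleling Proposition~\ref{local Faltings extension}.)

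The main obstacle is the claim (\ref{claim on third term}): one must pass from the ring-level computation of Lemma~\ref{third term} to the sheafy $p$-complete cotangent complex $\widehat \rL_{\OXp/\calO_{\frakX}}$ on $X_{\proet}$. This rests on (a) the existence of a basis of $X_{\proet}$ by affinoid perfectoids pro-\'etale over small affines together with compatibility of the cotangent complex with localization of topoi, and (b) the vanishing of the relative $p$-complete cotangent complex between perfectoid rings, which is exactly what allows one to bootstrap from Lemma~\ref{third term}; once these are in place the sheafification and gluing are routine because everything is phrased through the globally defined module $\widehat \Omega^1_{\frakX}$.
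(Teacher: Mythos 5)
Your proposal is correct and follows essentially the same route as the paper: the first term of the triangle is identified via base change and Corollary \ref{base coefficients}(2), the claim (\ref{claim on third term}) is reduced locally to $\widehat{\rL}_{S^+/R^+}\simeq \widehat{\rL}_{\Rinfp/R^+}\widehat{\otimes}_{\Rinfp}S^+$ using the perfectoid vanishing of \cite[Lemma 3.14]{BMS1} together with Lemma \ref{third term}, and the short exact sequence is then read off from the long exact cohomology sequence. No gaps.
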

\begin{rmk}\label{Faltings obstruction}
   Apply $\rR\Hom(-,\frac{1}{\rho_k}\OXp(1))$ to the exact triangle (\ref{exact triangle in global case}) and consider the induced long exact sequence
   \[
    \cdots\to \Ext^1(\widehat \rL_{\calO_{\frakX}/\calO_{\widetilde \frakX}}\widehat \otimes_{\calO_{\frakX}}\OXp,\frac{1}{\rho_k}\OXp(1))\xrightarrow{\partial}\Ext^2(\widehat \rL_{\OXp/\calO_{\frakX}},\frac{1}{\rho_k}\OXp(1))\to \cdots
   \]
   and the commutative diagram
    \[
   \xymatrix@C=0.4cm{
   \Ext^1(\widehat \rL_{\calO_{\frakX}/\calO_{\widetilde \frakX}}\widehat \otimes_{\calO_{\frakX}}\OXp,\frac{1}{\rho_k}\OXp(1))\ar[d]^{\cong}\ar[r]^{\partial}& \Ext^2(\widehat \rL_{\OXp/\calO_{\frakX}},\frac{1}{\rho_k}\OXp(1))\ar[d]^{\cong}\\
   \Hom(\frac{1}{\rho_k}\calO_{\frakX}(1),\frac{1}{\rho_k}\OXp(1))\ar[r]^{\partial}& \Ext^1(\OXp\otimes_{\calO_{\frakX}}\widehat \Omega^1_{\calO_{\frakX}},\frac{1}{\rho_k}\OXp(1)).
   }
   \]
   Then the extension class $[\calE^+]$ associated to $\calE^+$ is the image of the natural inclusion
   $\frac{1}{\rho_k}\calO_{\frakX}(1)\to\frac{1}{\rho_k}\OXp(1)$ via the connecting map $\partial$. By construction, it is the obstruction class to lift $\OXp$ (as a sheaf of $\calO_{\frakX}$-algebras) to a sheaf of $\calO_{\widetilde \frakX}$-algebras in the sense of \cite[III Proposition 2.1.2.3]{Ill1}. In particular, $\calE^+$ depends on the choice of $\widetilde \frakX$. When $\frakX$ comes from a smooth formal scheme $\frakX_0$ over $\calO_k$ and $\widetilde \frakX $ is the base-change of $\frakX_0$ along $\calO_k\to A_2$, the $\calE^+$ coincides with the usual Faltings' extension after inverting $p$. So we call $\calE^+$ the {\bf integral Faltings's extension} (with respect to the lifting $\widetilde \frakX$).
\end{rmk}
 It remains to prove the claim (\ref{claim on third term}). 
\begin{lem}
  With notations as above, we have 
  \[
  \widehat \rL_{\OXp/\calO_{\frakX}}\simeq \OXp\otimes_{\calO_{\frakX}}\widehat \Omega^1_{\frakX}.
  \]
\end{lem}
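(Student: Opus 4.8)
The plan is to reduce the global statement \eqref{claim on third term} to the local computation already carried out in Lemma \ref{third term}. Since the cotangent complex, its derived $p$-completion, and the module of formal differentials are all compatible with localization on $\frakX_{\et}$, it suffices to establish the quasi-isomorphism after passing to a basis of the pro-\'etale site. Concretely, I would work over a small affine piece $\frakU = \Spf(R^+)$ admitting an \'etale chart $\Box\colon \frakU\to\hat\bG_m^d$, and over it choose the pro-\'etale cover $\frakU_\infty$ with perfectoid coordinate ring $\Rinfp$ as in the local setup. The sheaf $\OXp$ evaluated on this cover is (up to taking a suitable colimit over the tower and $p$-completing) the ring $\Rinfp$, so the stalk-level computation of $\widehat\rL_{\OXp/\calO_{\frakX}}$ on $\frakU_\infty$ is governed by $\widehat\rL_{\Rinfp/R^+}$.

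The key steps, in order, are the following. First I would recall that a basis of $X_{\proet}$ is given by affinoid perfectoids lying over small affine opens of $\frakX$, and that $\widehat\rL_{\OXp/\calO_{\frakX}}$ is the sheafification of the presheaf $U\mapsto \widehat\rL_{\OXp(U)/\calO_{\frakX}(U)}$; so it is enough to identify this presheaf on such a basis. Second, on a perfectoid cover $\frakU_\infty$ of a small $\frakU = \Spf(R^+)$ I would invoke Lemma \ref{third term} to get $\widehat\rL_{\Rinfp/R^+}\simeq \widehat\Omega^1_{R^+}\otimes_{R^+}\Rinfp[1]$. Third, I would use the \'etale (hence formally \'etale) nature of $\frakU\to\hat\bG_m^d$ together with \cite[Lemma 3.14]{BMS1} (or base change for cotangent complexes) to see that the formation of both sides commutes with the \'etale localization and with the colimit along the coordinate tower; in particular $\widehat\Omega^1_{R^+} = \widehat\Omega^1_{\frakX}(\frakU)$. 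Fourth, I would note that these identifications are functorial in the perfectoid cover (the class $x_i = \{\dlog(T_i^{1/p^n})\}_n$ is canonical once the chart is fixed, and transition maps are the evident ones), so they glue to a morphism of sheaves $\OXp\otimes_{\calO_{\frakX}}\widehat\Omega^1_{\frakX}[1]\to\widehat\rL_{\OXp/\calO_{\frakX}}$ which is a quasi-isomorphism because it is so on a basis. This yields \eqref{claim on third term}, and hence, reading off $\rH^{-1}$, also the version stated in the lemma with the shift absorbed.

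The main obstacle I expect is the bookkeeping around derived $p$-completion and the colimit: $\OXp$ is not literally $\Rinfp$ but a colimit over a pro-\'etale tower followed by $p$-adic completion, and one must check that forming $\widehat\rL_{-/\calO_{\frakX}}$ commutes with this colimit (using \cite[Chapitre II(1.2.3.4)]{Ill1} for the colimit and the usual compatibility of cotangent complexes with filtered colimits) and that $p$-completion may be performed last. A secondary point is verifying that the construction is independent of the chosen \'etale chart, so that the locally defined quasi-isomorphisms are compatible on overlaps; here one uses that any two charts differ by an \'etale map, under which $\widehat\Omega^1$ transforms in the evident way and the class attached to $\dlog$ of the coordinates is intrinsic modulo that identification. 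Once these technical compatibilities are in place the argument is a direct globalization of Lemma \ref{third term}, so I would present it briefly and refer back to the local computation for the substance.
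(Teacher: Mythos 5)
Your proposal is correct and follows essentially the same route as the paper: localize to a small affine $\Spf(R^+)$, invoke the local computation of $\widehat{\rL}_{\Rinfp/R^+}$ (Lemma \ref{third term}), and use \cite[Lemma 3.14]{BMS1} to propagate the answer over the pro-\'etale site. The only point where the paper is slightly sharper is that it checks the identification not just on the distinguished tower $X_\infty$ but on an arbitrary affinoid perfectoid $U=\Spa(S,S^+)$ over $X_\infty$, using the perfectoid base-change quasi-isomorphism $\widehat{\rL}_{\Rinfp/R^+}\widehat\otimes_{\Rinfp}S^+\simeq\widehat{\rL}_{S^+/R^+}$ (i.e.\ $\widehat{\rL}_{S^+/\Rinfp}=0$), which is exactly the role \cite[Lemma 3.14]{BMS1} should play in your basis argument.
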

\begin{proof}
  
  Since the problem is local on $X_{\proet}$, by the proof of \cite[Corollary 4.7]{Sch2}, we may assume $\frakX = \Spf(R)$ is small and are reduced to showing for any perfectoid affinoid space $U = \Spa(S,S^+)\in X_{\proet}/X_{\infty}$, 
  \begin{equation}\label{equ-third term}
    \widehat \rL_{S^+/R^+}\simeq S^+\otimes_{R^+}\widehat \Omega^1_{R^+}.
  \end{equation}
  Since both $S^+$ and $\Rinfp$ are perfectoid rings, by \cite[Lemma 3.14]{BMS1}, we have a quasi-isomorphism
  \[
    \widehat \rL_{\Rinfp/R^+}\widehat \otimes_{\Rinfp}S^+\to \widehat \rL_{S^+/R^+}.
  \]
  Combining this with Lemma \ref{third term}, we get (\ref{equ-third term}) as desired. 
\end{proof}
\subsection{Faltings' extension as obstruction class}
  In this subsection, we shall give another description of the integral Faltings' extension from the perspective of deformation theory.
  To make notations clear, in this subsection, for a sheaf $S$ of $A_2$-algebras, we always identify $\xi_k A_2$ with $\frac{1}{\rho_k}S(1)$. 
  Before moving on, we recall some basic results due to Illusie. Although their statements are given in terms of rings, all results still hold for ring topoi.
  
  Let $A$ be a ring with an ideal $I\triangleleft A$ satisfying $I^2=0$. Put $\overline A = A/I$ and fix a flat $\overline A$-algebra $\overline B$. A natural question is whether there exists a flat $A$-algebra $B$ whose reduction modulo $I$ is $\overline B$.

\begin{thm}[\emph{\cite[III Proposition 2.1.2.3]{Ill1}}]\label{Deformation I}
  There exists an obstruction class $\cl\in \Ext^2(\rL_{\overline B/\overline A},\overline B\otimes_{\overline A}I)$ such that $\overline B$ lifts to some flat $A$-algebra $B$ if and only if $\cl=0$. In this case, the set of isomorphism classes of such deformations forms a torsor under $\Ext^1(\rL_{\overline B/\overline A},\overline B\otimes_{\overline A}I)$ and the group of automorphisms of a fixed deformation is $\Hom(\rL_{\overline B/\overline A},\overline B\otimes_{\overline A}I)$.
\end{thm}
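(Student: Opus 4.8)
The plan is to reduce the general statement to the split case by means of the cotangent complex, using the standard functoriality of $\rL_{-/-}$ and the interpretation of its cohomology as Ext-groups in the derived category. First I would treat the polynomial (free) case: if $\overline B = \overline A[\underline X]/\overline J$ is a presentation, choose a polynomial lift $P = A[\underline X]$ together with an arbitrary set-theoretic lift of the generators of $\overline J$ to an ideal $J\subset P$, so that $B_0 = P/J$ is an $A$-algebra reducing to $\overline B$ but not necessarily flat. The obstruction to flatness of such a $B_0$, and the failure of independence of the choices, is measured by the truncation $\tau_{\geq 1}$ of $\rL_{\overline B/\overline A}$; this is exactly the content of \cite[III]{Ill1}, and I would cite the comparison between the (co)simplicial resolution computing $\rL$ and the naive cotangent complex to identify $\rH^{-1}(\rL_{\overline B/\overline A})$ and $\rH^{-2}(\rL_{\overline B/\overline A})$ with the relevant obstruction and automorphism modules.

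The key steps, in order, are: (1) build a simplicial free resolution $P_\bullet\to\overline B$ over $\overline A$ and lift it termwise (unoncanonically) to a simplicial free $A$-algebra $\widetilde P_\bullet$, which is automatically flat over $A$; (2) observe that $\widetilde P_\bullet\otimes_A\overline A$ and $P_\bullet$ are two simplicial resolutions of $\overline B$, hence the obstruction to extending the augmentation $\widetilde P_\bullet\to ?$ to a flat deformation lives in a hypercohomology group that, by the standard spectral sequence, is $\Ext^2(\rL_{\overline B/\overline A},\overline B\otimes_{\overline A}I)$ — here one uses $I^2=0$ so that $\overline B\otimes_{\overline A}I = B\otimes_A I$ for any flat lift $B$; (3) when the class vanishes, choose an actual deformation and compare any two deformations $B,B'$: their difference is a derivation-type datum, and the usual argument (lifting isomorphisms stepwise along the simplicial degree) shows the set of isomorphism classes is a torsor under $\Ext^1(\rL_{\overline B/\overline A},\overline B\otimes_{\overline A}I)$; (4) identify the automorphism group of a fixed deformation with $\Hom(\rL_{\overline B/\overline A},\overline B\otimes_{\overline A}I) = \mathrm{Der}_{\overline A}(\overline B,\overline B\otimes_{\overline A}I)$, since an automorphism of $B$ over $A$ restricting to the identity mod $I$ is precisely $\id + I\cdot(\text{derivation})$.

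I expect the main obstacle to be the bookkeeping in step (2): making the obstruction class canonical (independent of the chosen termwise lift $\widetilde P_\bullet$) and checking it is genuinely additive/functorial requires the full machinery of the derived category of the ring topos and the comparison of different resolutions, which is where \cite{Ill1} does real work. For the purposes of this paper, however, all of this is exactly \cite[III Proposition 2.1.2.3]{Ill1}, so I would not reprove it in detail; I would instead state it, remark that Illusie's proof is written for rings but goes through verbatim for ringed topoi (the cotangent complex, simplicial resolutions, and the Ext-formalism are all available there), and refer the reader to \emph{loc.\ cit.} The only point I would emphasize is the translation $\overline B\otimes_{\overline A}I \cong B\otimes_A I$ valid because $I^2=0$, which is what lets us phrase the torsor and automorphism statements purely in terms of the special fibre data $(\overline A,\overline B,I)$.
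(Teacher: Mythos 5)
The paper gives no proof of this statement at all: it is quoted verbatim as \cite[III Proposition 2.1.2.3]{Ill1}, with only the surrounding remark that Illusie's arguments for rings carry over to ringed topoi. Your proposal ultimately does exactly the same thing — defer to Illusie after a plausible sketch of his argument — so it matches the paper's approach.
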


 If $B$ and $C$ are flat $A$-algebras with reductions $\overline B$ and $\overline C$ respectively and if $\overline f: \overline B\rightarrow\overline C$ is a morphism of $\overline A$-algebras, then one can ask whether there exists an deformation $f: B\rightarrow C$ of $\overline f$ along $A\to \overline A$.

\begin{thm}[\emph{\cite[III Proposition 2.2.2]{Ill1}}]\label{Deformation II}
  There is an obstruction class $\cl\in \Ext^1(\rL_{\overline B/\overline A},\overline C\otimes_{\overline A}I)$ such that $\overline f$ lifts to a morphism $f: B\rightarrow C$ if and only if $\cl=0$. In this case, the set of all lifts forms a torsor under $\Hom(\rL_{\overline B/\overline A},\overline C\otimes_{\overline A}I)$.
\end{thm}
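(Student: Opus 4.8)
The plan is to deduce this from the cotangent‑complex formalism for lifting morphisms along square‑zero extensions, following \cite[Chapitre III]{Ill1}. First I would reformulate the problem: set $g := \overline f\circ q : B \to \overline C$, where $q : B \twoheadrightarrow \overline B$ is the reduction, and regard $\overline C$ as an $A$‑algebra through $A \twoheadrightarrow \overline A \to \overline C$, so that $g$ is a morphism of $A$‑algebras. Then a morphism $f : B \to C$ of $A$‑algebras lifts $\overline f$ if and only if its composite with $C \twoheadrightarrow \overline C$ equals $g$; hence the set of lifts of $\overline f$ is exactly the set of $A$‑algebra lifts of $g$ along the surjection $C \twoheadrightarrow \overline C$, which is a square‑zero extension with kernel $J := IC$ (since $I^2 = 0$).

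The body of the argument then has three steps. (i) As $C$ is flat over $A$ and $I$ is an $\overline A$‑module, tensoring $0 \to I \to A \to \overline A \to 0$ with $C$ identifies $J = I\otimes_A C \cong I\otimes_{\overline A}\overline C$, compatibly with the $\overline B$‑module structures induced by $\overline f$, so $J \cong \overline C\otimes_{\overline A}I$ as $\overline B$‑modules. (ii) The fundamental obstruction theory for lifting a ring homomorphism along a square‑zero extension, taken relative to the base extension $A\twoheadrightarrow\overline A$ (\cite[Chapitre III]{Ill1}, which holds equally for ringed topoi), produces a canonical class $\cl \in \Ext^1_B(\rL_{B/A}, J)$ whose vanishing is equivalent to the existence of an $A$‑algebra lift of $g$, and shows that when $\cl = 0$ the lifts form a torsor under $\Hom_B(\rL_{B/A}, J) \cong \Hom_B(\Omega^1_{B/A}, J)$ — the last identification being automatic because $\rL_{B/A}$ sits in non‑positive degrees, and matching the elementary observation that two lifts of $g$ differ by an $A$‑derivation $B \to J$ (here one uses $J^2=0$). (iii) Since $B$ is flat over $A$, base change of cotangent complexes (\cite[8.8.4]{Wei}) gives $\rL_{B/A}\otimes^L_B\overline B \simeq \rL_{\overline B/\overline A}$, whence $\Ext^i_B(\rL_{B/A}, J) \cong \Ext^i_{\overline B}(\rL_{\overline B/\overline A}, J)$ for all $i$ because $J$ is a $\overline B$‑module; taking $i = 0,1$ and substituting $J \cong \overline C\otimes_{\overline A}I$ converts the class and torsor of step (ii) into precisely the class $\cl \in \Ext^1(\rL_{\overline B/\overline A},\overline C\otimes_{\overline A}I)$ and the torsor under $\Hom(\rL_{\overline B/\overline A},\overline C\otimes_{\overline A}I)$ in the statement.

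The step I expect to be the real work is (ii): the assertion that $\Ext^1$ of the cotangent complex governs the obstruction to lifting a morphism along a square‑zero extension, together with the explicit construction of the obstruction class and the $\Ext^1$‑versus‑extensions dictionary, is exactly where the cotangent‑complex machinery is needed; one must also be somewhat careful to phrase it in the ringed‑topos setting, since that is the version invoked later in the paper. By contrast, steps (i) and (iii) are routine bookkeeping with the flatness hypotheses, entirely parallel to the reductions made in the proof of Theorem \ref{Deformation I}.
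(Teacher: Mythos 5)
The paper does not prove this statement: it is quoted verbatim from Illusie (\cite[III Proposition 2.2.2]{Ill1}) and used as a black box, so there is no in-paper argument to compare against. Your reduction is nevertheless correct and is essentially how the result is obtained from the fundamental theorem of the cotangent complex: identifying lifts of $\overline f$ with $A$-algebra lifts of $g=\overline f\circ q$ along the square-zero extension $C\twoheadrightarrow \overline C$ with kernel $IC\cong \overline C\otimes_{\overline A}I$ (flatness of $C$), invoking the obstruction class in $\Ext^1_B(\rL_{B/A}, IC)$ and the torsor structure under $\Hom_B(\rL_{B/A},IC)\cong \operatorname{Der}_A(B,IC)$, and then using flat base change $\rL_{B/A}\otimes^L_B\overline B\simeq \rL_{\overline B/\overline A}$ together with adjunction to land in $\Ext^1(\rL_{\overline B/\overline A},\overline C\otimes_{\overline A}I)$. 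All three steps are sound, and the ringed-topos caveat you flag is exactly the form in which the paper later applies the result.
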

We only focus on the case where $(A,I) = (A_2,(\xi))$. Let $\frakX$ be a smooth formal scheme over $\calO_{\Cp}$ and denote 
\[
  {\rm ob}(\frakX)\in \Ext^2(\widehat \rL_{\calO_{\frakX}/\calO_{\Cp}},\frac{1}{\rho_k}\calO_{\frakX}(1))
\]
the obstruction class to lift $\frakX$ to a flat $A_2$-scheme (e.g. \cite[III Th\'eor\`eme 2.1.7]{Ill1}). Consider the exact triangle
\[
  \rL_{\calO_{\Cp}/A_2}\widehat \otimes^L_{\calO_{\Cp}}\calO_{\frakX}\to\widehat \rL_{\calO_{\frakX}/A_2}\to \widehat \rL_{\calO_{\frakX}/\calO_{\Cp}}
\]
and the induced long exact sequence
\begin{equation*}
\begin{split}\cdots&\to\Ext^1(\widehat \rL_{\calO_{\frakX}/A_2},\frac{1}{\rho_k}\calO_{\frakX}(1))\to\Ext^1( \rL_{\calO_{\Cp}/A_2}\widehat \otimes^L_{\calO_{\Cp}}\calO_{\frakX},\frac{1}{\rho_k}\calO_{\frakX}(1))\\
&\xrightarrow{\partial}\Ext^2(\widehat \rL_{\calO_{\frakX}/\calO_{\Cp}},\frac{1}{\rho_k}\calO_{\frakX}(1))\to\cdots.
 \end{split}
\end{equation*}
 The ${\rm ob(\frakX)}$ is the image of identity morphism of $\frac{1}{\rho_k}\calO_{\frakX}(1)$ under $\partial$ via the canonical isomorphism 
 \[
   \Ext^1(\rL_{\calO_{\Cp}/A_2}\widehat \otimes^L_{\calO_{\Cp}}\calO_{\frakX},\frac{1}{\rho_k}\calO_{\frakX}(1))\cong\Hom(\frac{1}{\rho_k}\calO_{\frakX}(1),\frac{1}{\rho_k}\calO_{\frakX}(1)).
 \]
 If moreover, $\frakX$ is liftable and $\widetilde \frakX$ is such a lifting, then ${\rm ob}(\frakX)=0$ and $\widetilde \frakX$ defines a class 
 \[[\widetilde \frakX]\in \Ext^1(\widehat \rL_{\calO_{\frakX}/A_2},\frac{1}{\rho_k}\calO_{\frakX}(1))\] which goes to the identity map of $\frac{1}{\rho_k}\calO_{\frakX}(1)$. Indeed, $[\widetilde \frakX]$ is the image of the identity map of $\frac{1}{\rho_k}\calO_{\frakX}(1)$ via the morphism
 \[
   \Ext^1(\widehat \rL_{\calO_{\frakX}/\calO_{\widetilde \frakX}},\frac{1}{\rho_k}\calO_{\frakX}(1))\to\Ext^1(\widehat \rL_{\calO_{\frakX}/A_2},\frac{1}{\rho_k}\calO_{\frakX}(1)).
 \]
 
 We also consider the similar deformation problem for $\OXp$. Since $\OXp$ is locally perfectoid, thanks to \cite[Lemma 3.14]{BMS1}, $\widehat \rL_{\OXp/\calO_{\Cp}} = 0$ and hence we get a quasi-isomorphism
 \[
   \rL_{\calO_{\Cp}/A_2}\widehat \otimes^L_{\calO_{\Cp}}\OXp\simeq \widehat \rL_{\OXp/A_2}.
 \]
 In particular, we have an isomorphism 
 \[\Ext^1(\widehat \rL_{\OXp/A_2},\frac{1}{\rho_k}\OXp(1))\cong\Hom(\frac{1}{\rho_k}\OXp(1),\frac{1}{\rho_k}\OXp(1)).\]
 Therefore, $\OXp$ admits a canonical lifting, which turns out to be $\rA_2(\widehat \calO_X^{+})$ and there is a unique class
 \[[X]\in \Ext^1(\widehat \rL_{\OXp/A_2},\frac{1}{\rho_k}\OXp(1))\] 
 corresponding to the identity map of $\frac{1}{\rho_k}\OXp(1)$.
 
 Regard $[\widetilde \frakX]$ and $[X]$ as classes in $\Ext^1(\widehat \rL_{\calO_{\frakX}/A_2},\frac{1}{\rho_k}\OXp(1))$ via the canonical morphisms induced by
$
 \frac{1}{\rho_k}\calO_{\frakX}(1)\to \frac{1}{\rho_k}\OXp(1)
$
 and 
$
   \widehat \rL_{\calO_{\frakX}/A_2} \to \widehat \rL_{\OXp/A_2},
$
 respectively. Then as shown in \cite[III Proposition 2.2.4]{Ill1}, the difference 
 \[
 \cl(\calE^+): = [\widetilde \frakX]-[X]
 \]
 belongs to
 \[
  \Ext^1(\widehat \rL_{\calO_{\frakX}/\calO_{\Cp}},\frac{1}{\rho_k}\OXp(1))\cong  \Ext^1(\widehat \Omega^1_{\calO_{\frakX}/\calO_{\Cp}}\otimes_{\calO_{\frakX}}\OXp,\frac{1}{\rho_k}\OXp(1))
 \]
 via the injection 
 \[
   \Ext^1(\widehat \rL_{\calO_{\frakX}/\calO_{\Cp}},\frac{1}{\rho_k}\OXp(1))\to \Ext^1(\widehat \rL_{\calO_{\frakX}/A_2},\frac{1}{\rho_k}\OXp(1))
 \]
 and $\cl(\calE^+)$ is the obstruction answering whether there is an $A_2$-morphism from $\calO_{\widetilde \frakX}$ to $\rA_2(\widehat \calO_X^{+})$ which lifts the $\calO_{\Cp}$-morphism $\calO_{\frakX}\to\OXp$ as described in Theorem \ref{Deformation II}. 
 
 Recall we have another obstruction class $[\calE^+]$ described in Remark \ref{Faltings obstruction}. We claim that it coincides with the class $\cl(\calE^+)$ constructed above.
 \begin{prop}\label{Compare obstruction}
   $\cl(\calE^+) = [\calE^+]$.
 \end{prop}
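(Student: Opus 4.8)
The plan is to show that both classes $[\calE^+]$ and $\cl(\calE^+)$ are the images of the same element under compatible connecting maps in a single diagram built from the octahedral axiom applied to the tower $\calO_{\widetilde\frakX}\to\calO_{\frakX}\to\OXp$ (equivalently, from the two exact triangles relating $\widehat\rL_{\OXp/\calO_{\widetilde\frakX}}$, $\widehat\rL_{\OXp/\calO_{\frakX}}$, $\widehat\rL_{\calO_{\frakX}/\calO_{\widetilde\frakX}}$ and the triangle involving $\widehat\rL_{\OXp/A_2}$, $\widehat\rL_{\calO_{\frakX}/A_2}$). First I would unwind the definitions on each side. By Remark \ref{Faltings obstruction}, $[\calE^+]$ is $\partial$ applied to the natural inclusion $\iota\colon\frac{1}{\rho_k}\calO_{\frakX}(1)\to\frac{1}{\rho_k}\OXp(1)$, where $\partial$ is the connecting map of $\rR\Hom(-,\frac{1}{\rho_k}\OXp(1))$ on the triangle (\ref{exact triangle in global case}); after the identifications $\widehat\rL_{\calO_{\frakX}/\calO_{\widetilde\frakX}}\widehat\otimes\OXp\simeq\frac{1}{\rho_k}\OXp(1)[1]\oplus(\dots)$ and $\widehat\rL_{\OXp/\calO_{\frakX}}\simeq\OXp\otimes\widehat\Omega^1_{\frakX}[1]$, it lands in $\Ext^1(\OXp\otimes\widehat\Omega^1_{\frakX},\frac{1}{\rho_k}\OXp(1))$. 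On the other side, $\cl(\calE^+)=[\widetilde\frakX]-[X]$ is, by \cite[III Proposition 2.2.4]{Ill1}, the obstruction to lifting $\calO_{\frakX}\to\OXp$ to an $A_2$-morphism $\calO_{\widetilde\frakX}\to\rA_2(\widehat\calO_X^+)$, sitting in $\Ext^1(\widehat\rL_{\calO_{\frakX}/\calO_{\Cp}},\frac{1}{\rho_k}\OXp(1))\cong\Ext^1(\widehat\Omega^1_{\calO_{\frakX}}\otimes\OXp,\frac{1}{\rho_k}\OXp(1))$, the same target.

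Next I would identify the two connecting maps. The key observation is that the section of $\widehat\rL_{\OXp/\calO_{\widetilde\frakX}}\to\widehat\rL_{\OXp/\calO_{\frakX}}$ coming from $\widehat\rL_{\rA_2(\widehat\calO_X^+)/\calO_{\widetilde\frakX}}\widehat\otimes\OXp$ (used to define $\calE^+=\rH^{-1}$) is exactly the datum recording the canonical lift $\rA_2(\widehat\calO_X^+)$ of $\OXp$ together with the morphism $\calO_{\widetilde\frakX}\to\rA_2(\widehat\calO_X^+)$ one is trying to build. Concretely, splicing the triangle (\ref{exact triangle in global case}) with the triangle $\rL_{\calO_{\Cp}/A_2}\widehat\otimes\OXp\to\widehat\rL_{\OXp/A_2}\to\widehat\rL_{\OXp/\calO_{\Cp}}=0$ and the analogue for $\calO_{\frakX}$, one gets a big commutative diagram of $\Ext$-groups in which both $\partial$'s factor through the boundary map of the triangle $\widehat\rL_{\calO_{\frakX}/A_2}\widehat\otimes\OXp\to\widehat\rL_{\OXp/A_2}\to\widehat\rL_{\OXp/\calO_{\frakX}}$. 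I would then trace the class $\iota\in\Hom(\frac{1}{\rho_k}\calO_{\frakX}(1),\frac{1}{\rho_k}\OXp(1))$ through this diagram: going one way around it yields $[\calE^+]$ (by Remark \ref{Faltings obstruction}'s commutative square), going the other way it yields $[\widetilde\frakX]-[X]$, because $[\widetilde\frakX]$ and $[X]$ are by construction the preimages of $\iota$ (resp. of $\id_{\frac{1}{\rho_k}\OXp(1)}$) along the maps $\Ext^1(\widehat\rL_{\calO_{\frakX}/\calO_{\widetilde\frakX}},-)\to\Ext^1(\widehat\rL_{\calO_{\frakX}/A_2},-)$ and $\Ext^1(\widehat\rL_{\OXp/A_2},-)\cong\Hom(\dots)$. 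Commutativity of the diagram (which is formal, from functoriality of the cotangent-complex triangles and of connecting maps) then forces $[\calE^+]=\cl(\calE^+)$.

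The main obstacle is bookkeeping: making the various identifications $\widehat\rL_{\calO_{\frakX}/\calO_{\widetilde\frakX}}\widehat\otimes\OXp\simeq\frac{1}{\rho_k}\OXp(1)[1]\oplus\frac{1}{\rho_k^2}\OXp(2)[2]$ (from Corollary \ref{base coefficients}(2)) and $\widehat\rL_{\OXp/\calO_{\frakX}}\simeq\OXp\otimes\widehat\Omega^1_{\frakX}[1]$ compatible across all the triangles simultaneously, and checking that the projection onto the $\frac{1}{\rho_k}\OXp(1)[1]$-summand on the source side matches the section that defines $\calE^+$. I expect this to be cleanest by reducing to the local small case $\frakX=\Spf(R^+)$, where by Proposition \ref{good basis} everything is explicit: $E^+$ has basis $\frac{t}{\rho_k},x_1,\dots,x_d$, the lift $\rA_2(\widehat\calO_X^+)$ is concrete, and the obstruction to extending $\widetilde R^+\to\rA_2(\widehat\calO_X^+)$ is visibly the cocycle $c$ of Proposition \ref{good basis}; then one checks the two descriptions agree on generators and glues. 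Since all the maps involved are natural, the local identification globalizes, giving $\cl(\calE^+)=[\calE^+]$.
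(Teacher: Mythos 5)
Your proposal is correct and follows essentially the same route as the paper: build the commutative diagram of cotangent-complex triangles for the tower $\rA_2\to\calO_{\widetilde\frakX}\to\calO_{\frakX}\to\OXp$, apply $\rR\Hom(-,\frac{1}{\rho_k}\OXp(1))$, observe that $[\widetilde\frakX]=\alpha^*(\id)$ and $[X]=\beta^*(\id)$ with $\beta^*(\id)$ dying under the common connecting map, and conclude $\cl(\calE^+)=\partial(\alpha^*(\id))=[\calE^+]$. The local reduction you mention at the end is not needed for the argument (the paper records it separately, in Example \ref{Exam-Faltings obstruction}, only as an explicit verification).
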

 \begin{proof}
   Note that we have a commutative diagram of morphisms of cotangent complexes
  \begin{equation}\label{Big diagram}
  \xymatrix@C=0.4cm{
   & \rL_{\calO_{\widetilde \frakX}/\rA_2}\widehat \otimes^{L}_{\calO_{\widetilde \frakX}}\widehat \calO^+_{X}\ar@{=}[d]\ar[r]&\rL_{\calO_{\frakX}/\rA_2}\widehat \otimes^{L}_{\calO_{\frakX}}\widehat \calO^+_{X}\ar[d]^{\beta}\ar[r]^{\alpha}&\rL_{\calO_{\frakX}/\calO_{\widetilde \frakX}}\widehat \otimes_{\calO_{\frakX}}^{L}\widehat \calO^+_{X}\ar[r]^{\qquad\qquad+1}\ar[d]& \\
   & \rL_{\calO_{\widetilde \frakX}/\rA_2}\widehat \otimes^{L}_{\calO_{\widetilde \frakX}}\widehat \calO^+_{X}\ar[r]\ar[dr]^{\simeq}_{-1}&\widehat \rL_{\widehat \calO^+_{X}/\rA_2}\ar[d]\ar[r]& \widehat \rL_{\widehat \calO^+_{X}/\calO_{\widetilde \frakX}}\ar[d]\ar[r]^{\qquad+1}&\\
   &&\widehat \rL_{\widehat \calO^+_{X}/\calO_{\frakX}}\ar[d]^{+1}\ar@{=}[r]&\widehat \rL_{\widehat \calO^+_{X}/\calO_{\frakX}}\ar[d]^{+1}\\
   &&&&
  }
  \end{equation}
  where the notations ``$+1$'' and ``$-1$'' denote the shifts of dimensions.
  
  Consider the resulting diagram from applying $\rR\Hom(-,\frac{1}{\rho_k}\OXp(1))$ to (\ref{Big diagram}).
  Denote the identity map of $\frac{1}{\rho_k}\OXp(1)$ by $\id$. By construction, $[\calE^+]$ is the image of $\id$ via the connecting map induced by the triangle
  \[\rL_{\calO_{\frakX}/\calO_{\widetilde \frakX}}\widehat \otimes_{\calO_{\frakX}}^{L}\OXp\to\widehat \rL_{\OXp/\calO_{\widetilde \frakX}}\to\widehat \rL_{\OXp/\calO_{\frakX}}.\]
  By the commutativity of diagram (\ref{Big diagram}), $[\calE^+]$ is also the image of $\alpha^*(\id)$ via the connecting map $\partial$ induced by the triangle
  \[\rL_{\calO_{\frakX}/A_2}\widehat \otimes_{\calO_{\frakX}}^{L}\OXp\to\widehat \rL_{\OXp/A_2}\to\widehat \rL_{\OXp/\calO_{\frakX}}.\]
  
  On the other hand, by the constructions of $[\widetilde \frakX]$ and $[X]$, as elements in 
  \[
  \Ext^1(\rL_{\calO_{\frakX}/\rA_2}\widehat \otimes^{L}_{\calO_{\frakX}}\OXp,\frac{1}{\rho_k}\OXp(1)),
  \]
  we have $[\widetilde \frakX] = \alpha^*(\id)$ and $[X] = \beta^*(\id)$; here, for the second equality, we identify 
  \[
    \Hom(\frac{1}{\rho_k}\OXp(1),\frac{1}{\rho_k}\OXp(1)) = \Ext^1(\widehat \rL_{\calO_{\Cp}/A_2}\widehat \otimes_{\calO_{\Cp}}^L\OXp,\frac{1}{\rho_k}\OXp(1))
  \]
  with $\Ext^1(\widehat \rL_{\OXp/A_2}\widehat \otimes_{\calO_{\Cp}}^L\OXp,\frac{1}{\rho_k}\OXp(1))$. So we have 
  \[\cl(\calE^+) = \alpha^*(\id)-\beta^*(\id)\in \Ext^1(\rL_{\calO_{\frakX}/A_2}\widehat \otimes^L_{\calO_{\frakX}}\OXp,\frac{1}{\rho_k}\OXp(1)).\]
  However, the diagram
  \[
  \xymatrix@C=0.45cm{
    \widehat \rL_{\rA_2(\widehat \calO_X^{+})/\calO_{\widetilde \frakX}}\ar[d]\ar[rr]^{+1}&&\rL_{\calO_{\widetilde \frakX}/\rA_2}\widehat \otimes^{L}_{\calO_{\widetilde \frakX}}\OXp\ar[d]\\
    \widehat \rL_{\OXp/\calO_{\frakX}}\ar[r]^{+1\quad}&\rL_{\calO_{\frakX}/\rA_2}\widehat \otimes^{L}_{\calO_{\widetilde \frakX}}\OXp\ar[r]&\rL_{\calO_{\frakX}/\calO_{\Cp}}\widehat \otimes^L_{\calO_{\frakX}}\OXp
    }
  \]
  induces a commutative diagram
  \[
    \xymatrix@C=0.45cm{
      \Ext^1(\rL_{\calO_{\frakX}/\calO_{\Cp}}\widehat \otimes^L_{\calO_{\frakX}}\OXp,\frac{1}{\rho_k}\OXp(1))\ar[r]^{\subset}\ar[dr]^{\cong}&\Ext^1(\rL_{\calO_{\frakX}/A_2}\widehat \otimes^L_{\calO_{\frakX}}\OXp,\frac{1}{\rho_k}\OXp(1))\ar[d]^{\partial}\\
      &\Ext^2(\widehat \rL_{\OXp/\calO_{\frakX}},\frac{1}{\rho_k}\OXp(1)).
    }
  \]
  In particular, as elements in $\Ext^1(\rL_{\calO_{\frakX}/\calO_{\Cp}}\widehat \otimes^L_{\calO_{\frakX}}\OXp,\frac{1}{\rho_k}\OXp(1))$, we have
  \[
  \cl(\calE^+) = \partial(\alpha^*(\id)-\beta^*(\id)) = \partial(\alpha^*(\id)) = [\calE^+]
  \]
  as desired. So we are done.
 \end{proof}
\begin{rmk}
  When $\frakX$ is small affine and comes from a formal scheme over $\calO_k$, the obstruction class $\cl(\calE^+)$ was considered as \emph{Higgs-Tate extension associated to $\widetilde \frakX$} in \cite[I. 4.3]{AGT}.
\end{rmk}
 \begin{exam}\label{Exam-Faltings obstruction}
   Let $R^+ = \calO_{\Cp}\za\underline T^{\pm 1}\ya$ and $\widetilde R^+ = A_2\za\underline T^{\pm 1}\ya$ for simplicity. Consider the $A_2$-morphism $\widetilde \psi:\widetilde R^+\to\rA_2(\widehat R_{\infty}^{+})$, which sends $T_i$ to $[T_i^{\flat}]$ for all $i$, where $T_i^{\flat}\in \widehat R_{\infty}^{\flat+}$ is determined by the compatible sequence $(T_i^{\frac{1}{p^n}})_{n\geq 0}$. Then $\widetilde \psi$ is a lifting of the inclusion $R^+\to\Rinfp$, but is not $\Gamma$-equivariant. For any $\gamma\in\Gamma$, $\gamma\circ\widetilde \psi$ is another lifting. By Theorem \ref{Deformation II}, their difference
   $c(\gamma): = \gamma\circ\widetilde \psi-\widetilde \psi$
   belongs to $\Hom_{R^+}(\widehat \Omega^1_{R^+},\frac{1}{\rho_k}\Rinfp(1))$. One can check that for any $1\leq i,j\leq 1$, 
   \[c(\gamma_i)(\dlog(T_j))=\frac{(\gamma_i-1)([T_j^{\flat}])}{T_j} = \delta_{ij}([\epsilon]-1) = \delta_{ij}t,\]
   where the last equality follows from the fact that $[\epsilon]-1-t\in t^2\BdRp$. By construction, the cocycle $c:\Gamma\to\Hom_{R^+}(\widehat \Omega^1_{R^+},\frac{1}{\rho_k}\Rinfp(1))$ is exactly the class $\cl(\calE^+)$. Comparing this with Proposition \ref{good basis}, we deduce that $\cl(\calE^+)=[\calE^+]$ in this case.
 \end{exam}
  As an application of Proposition \ref{Compare obstruction}, we study the behavior of integral Faltings' extension under the pull-back. 
  
  Let $f:\frakX\to\frakY$ be a formally smooth morphism of liftable smooth formal schemes. Fix liftings $\widetilde \frakX$ and $\widetilde \frakY$ of $\frakX$ and $\frakY$, respectively. Denote by $\calE_X^+$ and $\calE_Y^+$ the corresponding integral Faltings' extensions. Then the pull-back of $\calE_X^+$ along the injection 
  \[
  f^*\widehat \Omega^1_{\frakY}\otimes_{\calO_{\frakX}}\OXp \to \widehat \Omega^1_{\frakX}\otimes_{\calO_{\frakX}}\OXp
  \]
  defines an extension $\calE_1^+$ of 
  $\widehat \Omega^1_{\frakY}\otimes_{\calO_{\frakY}}\OXp\footnote{Here, the tensor product $\widehat \Omega^1_{\frakY}\otimes_{\calO_{\frakY}}\OXp$ should be understood as $f^{-1}\widehat \Omega^1_{\frakY}\otimes_{f^{-1}\calO_{\frakY}}\OXp$. The same thing also applies to sheaves like $\calO_X^+\otimes_{\widehat \calO_Y^+}\calE_Y^+$, $\widehat \calO_X^+\otimes_{\widehat \calO_Y^+}\calO\bC_{Y,\rho}^+$, $\widehat \calO_X^+\otimes_{\widehat \calO_Y^+}\calO\widehat \bC_{Y,\rho}^+$, $\widehat \calO_X^+\otimes_{\widehat \calO_Y^+}\calO\bC^{\dagger,+}_{Y,\rho}$, etc..}\cong f^*\widehat \Omega^1_{\frakY}\otimes_{\calO_{\frakX}}\OXp$
  by $\frac{1}{\rho_k}\OXp(1)$. We denote its extension class by 
  \[\cl_1\in \Ext^1(\widehat \Omega^1_{\frakY}\otimes_{\calO_{\frakY}}\OXp,\frac{1}{\rho_k}\OXp(1)).\]
  On the other hand, the tensor product $\calE_2^+ = \calE_Y^+\otimes_{\widehat \calO^+_Y}\OXp$ induced by applying $-\otimes_{\widehat \calO_Y^+}\OXp$ to 
  \[0\to\frac{1}{\rho_k}\widehat \calO_Y^+(1)\to\calE_Y^+\to\widehat \calO_Y^+\otimes_{\calO_{\frakY}}\widehat \Omega^1_{\frakY}\to 0\]
  is also an extension of 
  $\widehat \Omega^1_{\frakY}\otimes_{\calO_{\frakY}}\OXp$
  by $\frac{1}{\rho_k}\OXp(1)$ and we denote the associated extension class by 
  \[\cl_2\in \Ext^1(\widehat \Omega^1_{\frakY}\otimes_{\calO_{\frakY}}\OXp,\frac{1}{\rho_k}\OXp(1)).\]
  Then it is natural to ask whether $\calE_1^+\cong\calE_2^+$ (equivalently, $\cl_1 = \cl_2$).
  \begin{prop}\label{compatible under pull-back}
    Keep notations as above. If $f:\frakX\to\frakY$ lifts to an $A_2$-morphism $\widetilde f:\widetilde \frakX\to \widetilde \frakY$, then $\cl_1 = \cl_2$.
  \end{prop}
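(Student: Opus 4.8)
The plan is to reduce the equality $\cl_1 = \cl_2$ to the functoriality of Illusie's obstruction classes (Theorem \ref{Deformation II}), after re-expressing both $\cl_1$ and $\cl_2$ as such obstruction classes by means of Proposition \ref{Compare obstruction}; the hypothesis will enter only through the existence of the lift $\widetilde f$.

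First I would identify $\cl_1$. Since $f$ is formally smooth, the transitivity triangle for $\calO_{\Cp}\to\calO_{\frakY}\to\calO_{\frakX}$ is the locally split short exact sequence $0\to f^*\widehat \Omega^1_{\frakY}\to\widehat \Omega^1_{\frakX}\to\widehat \Omega^1_{\frakX/\frakY}\to 0$, whose first arrow, after $-\otimes_{\calO_{\frakX}}\OXp$, is precisely the injection used to define $\calE_1^+$. Hence $\cl_1$ is the pull-back of $[\calE_X^+]$ along that arrow, and by Proposition \ref{Compare obstruction}, $[\calE_X^+] = \cl(\calE_X^+)$ is the obstruction (in the sense of Theorem \ref{Deformation II}, for $(A,I) = (\rA_2,(\xi_k))$) to lifting the $\calO_{\Cp}$-algebra morphism $\calO_{\frakX}\to\OXp$ to an $\rA_2$-algebra morphism $\calO_{\widetilde \frakX}\to\rA_2(\OXp)$. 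Writing $\delta\colon f^*\widehat \rL_{\calO_{\frakY}/\calO_{\Cp}}\to\widehat \rL_{\calO_{\frakX}/\calO_{\Cp}}$ for the canonical map of that transitivity triangle --- which becomes the above arrow under the identifications $\widehat \rL_{\calO_{\frakX}/\calO_{\Cp}}\simeq\widehat \Omega^1_{\frakX}[0]$, $f^*\widehat \rL_{\calO_{\frakY}/\calO_{\Cp}}\simeq f^*\widehat \Omega^1_{\frakY}[0]$ and base change to $\OXp$ --- we get $\cl_1 = \delta^*\cl(\calE_X^+)$ in $\Ext^1(\widehat \Omega^1_{\frakY}\otimes_{\calO_{\frakY}}\OXp,\frac{1}{\rho_k}\OXp(1))$.

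Next I would identify $\cl_2$. By Proposition \ref{Compare obstruction} applied to $\frakY$, $[\calE_Y^+] = \cl(\calE_Y^+)$ is the obstruction to lifting $\calO_{\frakY}\to\widehat \calO_Y^+$ to an $\rA_2$-algebra morphism $\calO_{\widetilde \frakY}\to\rA_2(\widehat \calO_Y^+)$. The map $X\to Y$ induces a functorial $\rA_2$-algebra morphism $\rA_2(\widehat \calO_Y^+)\to\rA_2(\OXp)$ reducing modulo $\xi_k$ to $\widehat \calO_Y^+\to\OXp$; so by functoriality of the obstruction class of Theorem \ref{Deformation II} under base change of the target, $\cl_2 = [\calE_Y^+]\otimes_{\widehat \calO_Y^+}\OXp$ is the obstruction to lifting the composite $\calO_{\frakY}\to\widehat \calO_Y^+\to\OXp$ --- equivalently $\calO_{\frakY}\to\calO_{\frakX}\to\OXp$, these two agreeing by compatibility of the structure sheaves with $X\to Y$ --- to an $\rA_2$-algebra morphism $\calO_{\widetilde \frakY}\to\rA_2(\OXp)$.

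Finally I would feed the factorisation $\bigl(\calO_{\frakY}\to\OXp\bigr) = \bigl(\calO_{\frakX}\to\OXp\bigr)\circ\bigl(\calO_{\frakY}\to\calO_{\frakX}\bigr)$ into the composition formula for obstruction classes (see \cite[III]{Ill1}). The hypothesis that $f$ lifts to $\widetilde f$ says exactly that $\calO_{\frakY}\to\calO_{\frakX}$ lifts to $\calO_{\widetilde \frakY}\to\calO_{\widetilde \frakX}$, so the obstruction of this second factor vanishes and the formula collapses to $\cl\bigl(\calO_{\frakY}\to\OXp\bigr) = \delta^*\cl\bigl(\calO_{\frakX}\to\OXp\bigr) = \delta^*\cl(\calE_X^+)$, with the \emph{same} canonical map $\delta$ as in the first step, both being induced by the transitivity triangle of $\calO_{\Cp}\to\calO_{\frakY}\to\calO_{\frakX}$. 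Comparing with the first two steps gives $\cl_1 = \delta^*\cl(\calE_X^+) = \cl_2$; since $\cl_1$ and $\cl_2$ classify extensions of $\widehat \Omega^1_{\frakY}\otimes_{\calO_{\frakY}}\OXp$ by $\frac{1}{\rho_k}\OXp(1)$, this is equivalent to $\calE_1^+\cong\calE_2^+$.

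I expect the main difficulty to be organisational rather than conceptual: invoking correctly the two functoriality properties of Illusie's obstruction calculus (base change of the target, and composition when one factor lifts), and verifying that the two occurrences of the map $\delta$ genuinely coincide once all base changes along $\calO_{\frakX}\to\OXp$ and the identification $\xi_k\rA_2\cong\frac{1}{\rho_k}S(1)$ have been made. If one prefers to avoid the composition formula, a variant is to imitate the diagram chase in the proof of Proposition \ref{Compare obstruction}: write $\cl(\calE_X^+) = [\widetilde \frakX]-[X]$ and $\cl(\calE_Y^+) = [\widetilde \frakY]-[Y]$ in the appropriate $\Ext^1$-groups of $\rA_2$-cotangent complexes, observe that $[X]$ always pulls back from $[Y]$ because $\rA_2(-)$ is functorial, and that the existence of $\widetilde f$ forces $[\widetilde \frakX]$ to be the pull-back of $[\widetilde \frakY]$; taking the pull-back of the difference then yields $\cl_1 = \cl_2$.
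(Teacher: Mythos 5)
Your proposal is correct and follows essentially the same route as the paper: the paper proves the slightly more general identity $\cl(f)=\cl_1-\cl_2$ (Proposition \ref{twist compatibility}) by exactly the bookkeeping you sketch in your final ``variant'' --- writing $\cl(\calE_X^+)=[\widetilde\frakX]-[X]$ and $\cl(\calE_Y^+)=[\widetilde\frakY]-[Y]$ via Proposition \ref{Compare obstruction}, and noting that $[X]-[Y]=0$ because $\widehat\calO_Y^+\to\OXp$ lifts canonically to $\rA_2(\widehat\calO_Y^+)\to\rA_2(\OXp)$ --- and then specializes to $\cl(f)=0$ when $\widetilde f$ exists. Your main route, packaging the same functoriality as Illusie's composition formula for obstruction classes, is an equivalent rearrangement and introduces no gap.
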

  We are going to prove this proposition in the rest of this subsection.
  
  By Theorem \ref{Deformation II}, there exists an obstruction class
  \[
  \cl(f)\in \Ext^1(\widehat \rL_{\calO_{\frakY}/\calO_{\Cp}},\frac{1}{\rho_k}\calO_{\frakX}(1))
  \]
  to lift $f$ along the surjection $A_2\to\calO_{\Cp}$. Before moving on, let us recall the definition of $\cl(f)$.
  
  Let $[\widetilde \frakX]$ and $[\widetilde \frakY]$ be classes similarly defined as before and regard them as elements in $\Ext^1(\widehat \rL_{\calO_{\frakY}/A_2},\frac{1}{\rho_k}\calO_{\frakX}(1))$ via the obvious morphisms. Then similar to the construction of $\cl(\calE^+)$, one can check that 
  \[
  \cl(f) = [\widetilde \frakX]-[\widetilde \frakY]
  \]
  via the injection 
  \[
    \Ext^1(\widehat \rL_{\calO_{\frakY}/\calO_{\Cp}},\frac{1}{\rho_k}\calO_{\frakX}(1)) \to \Ext^1(\widehat \rL_{\calO_{\frakY}/A_2},\frac{1}{\rho_k}\calO_{\frakX}(1)).
  \]
  For simplicity, we still denote by $\cl(f)$ its image in 
  \[ \Ext^1(\widehat \rL_{\calO_{\frakY}/\calO_{\Cp}},\frac{1}{\rho_k}\OXp(1))\cong \Ext^1(\widehat \Omega^1_{\frakY}\otimes_{\calO_{\frakY}}\OXp,\frac{1}{\rho_k}\OXp(1)) \]
  via the natural map $\frac{1}{\rho_k}\calO_{\frakX}(1)\to \frac{1}{\rho_k}\OXp(1)$. Then the following proposition is true.
  \begin{prop}\label{twist compatibility}
    $\cl(f) = \cl_1-\cl_2$.
  \end{prop}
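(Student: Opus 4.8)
The plan is to trace all three classes $\cl(f)$, $\cl_1$, and $\cl_2$ back to the single class $[\widetilde\frakX]-[\widetilde\frakY]$ living in an appropriate $\Ext^1$ group, and to realize $\cl_1-\cl_2$ as the image of this difference under the map induced by $f^*\widehat\Omega^1_{\frakY}\to\widehat\Omega^1_{\frakX}$ pulled back to $\OXp$. The conceptual point is that $\cl_1$ measures the failure of the pulled-back Faltings extension on $\frakX$ (built from $\widetilde\frakX$) to be the tensor-extension of the Faltings extension on $\frakY$ (built from $\widetilde\frakY$); this failure is precisely the obstruction to lifting $f$ itself, since $\calE^+$ is, by Proposition \ref{Compare obstruction}, the obstruction to lifting $\OXp$ compatibly with the given lift of the base.

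First I would set up the commutative diagram of cotangent complexes for the tower $A_2\to\calO_{\Cp}\to\calO_{\frakY}\to\calO_{\frakX}$, together with its variant involving $\calO_{\widetilde\frakY}$ and $\calO_{\widetilde\frakX}$, analogous to diagram (\ref{Big diagram}). Applying $\rR\Hom(-,\frac{1}{\rho_k}\OXp(1))$ and chasing, I would show that under the identifications
\[
\Ext^1(\widehat\rL_{\calO_{\frakY}/\calO_{\Cp}},\tfrac{1}{\rho_k}\OXp(1))\cong\Ext^1(\widehat\Omega^1_{\frakY}\otimes_{\calO_{\frakY}}\OXp,\tfrac{1}{\rho_k}\OXp(1)),
\]
the class $\cl(f)=[\widetilde\frakX]-[\widetilde\frakY]$ maps to $\cl_1-\cl_2$. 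Concretely: $\cl_2$ is by definition the image of $[X]-[\widetilde\frakY]$-type data, i.e. it is $\cl(\calE_Y^+)$ pushed forward along $\OXp\to\OXp$, which by Proposition \ref{Compare obstruction} equals $[\widetilde\frakY]-[X]$; and $\cl_1$ is the pullback along $\alpha_{\frakX}$ of $[\calE_X^+]=[\widetilde\frakX]-[X]$ restricted to the subobject $f^*\widehat\Omega^1_{\frakY}$. Subtracting, the $[X]$ terms cancel and one is left with $[\widetilde\frakX]-[\widetilde\frakY]$, restricted along $f^*\widehat\Omega^1_{\frakY}\otimes\OXp\hookrightarrow\widehat\Omega^1_{\frakX}\otimes\OXp$, which is exactly the definition of $\cl(f)$.

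The main obstacle I expect is bookkeeping: making the identifications of the various $\Ext^1$-groups genuinely compatible, so that "$[\widetilde\frakX]$ regarded in this group" really does restrict to "$[\widetilde\frakX]$ regarded in that group" along the correct arrows, and checking that the factorization of $\widehat\rL_{\calO_{\frakX}/\calO_{\Cp}}$ through $f^*\widehat\rL_{\calO_{\frakY}/\calO_{\Cp}}\to\widehat\rL_{\calO_{\frakX}/\calO_{\Cp}}$ is compatible with the transition maps used to define $[\calE_X^+]$ and $[\calE_Y^+]$. This is where the lift $\widetilde f:\widetilde\frakX\to\widetilde\frakY$ is used in an essential way — without it there is no single element $[\widetilde\frakX]-[\widetilde\frakY]$ in the relevant group mapping simultaneously to the two quantities, because the definition of $\cl(f)$ as $[\widetilde\frakX]-[\widetilde\frakY]$ presupposes both lifts sit over a common base morphism. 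Once the diagram chase is organized correctly the identity $\cl(f)=\cl_1-\cl_2$ should fall out formally, and Proposition \ref{compatible under pull-back} is then immediate: if $\widetilde f$ exists, $\cl(f)=0$, hence $\cl_1=\cl_2$.
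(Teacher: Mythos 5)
Your overall strategy is the same as the paper's: by Proposition \ref{Compare obstruction}, $\cl_1$ is the restriction of $\cl(\calE_X^+)=[\widetilde \frakX]-[X]$ along $f^*\widehat \Omega^1_{\frakY}\otimes\OXp\to\widehat \Omega^1_{\frakX}\otimes\OXp$, $\cl_2$ is the pushforward of $\cl(\calE_Y^+)=[\widetilde \frakY]-[Y]$ along $\frac{1}{\rho_k}\widehat \calO_Y^+(1)\to\frac{1}{\rho_k}\OXp(1)$, and one subtracts. Two points in your write-up need attention.

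First, you record the pushforward of $\cl(\calE_Y^+)$ as $[\widetilde \frakY]-[X]$ so that ``the $[X]$ terms cancel.'' But the class occurring in $\cl(\calE_Y^+)$ is $[Y]$, the class of the canonical lifting $\rA_2(\widehat \calO_Y^{+})$ of $\widehat \calO_Y^+$, not $[X]$. You still have to check that its image in the common group $\Ext^1(\widehat \rL_{\calO_{\frakY}/\rA_2},\frac{1}{\rho_k}\OXp(1))$ coincides with the image of $[X]$, i.e.\ that $[X]-[Y]=0$. This is the one place where a nonformal input enters: the inclusion $\widehat \calO_Y^+\to\OXp$ admits a canonical $A_2$-lifting $\rA_2(\widehat \calO_Y^{+})\to\rA_2(\widehat \calO_X^{+})$ (functoriality of Fontaine's construction on perfectoids), which by Theorem \ref{Deformation II} forces the two canonical classes to agree. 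Without this observation the cancellation is unjustified.

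Second, your closing remark that the lift $\widetilde f$ is ``used in an essential way'' is incorrect, and if it were true it would make the argument circular. Proposition \ref{twist compatibility} is stated and proved \emph{without} assuming $f$ lifts: the classes $[\widetilde \frakX]$ and $[\widetilde \frakY]$, and hence $\cl(f)=[\widetilde \frakX]-[\widetilde \frakY]$, are defined from the individual liftings of $\frakX$ and $\frakY$ alone, and $\cl(f)$ is precisely the obstruction to the existence of $\widetilde f$. Proposition \ref{compatible under pull-back} is then the special case $\cl(f)=0$. If your computation genuinely required $\widetilde f$, you would only be proving $0=\cl_1-\cl_2$ under a hypothesis that already trivializes $\cl(f)$, which is not the content of the proposition. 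As it happens, the chain of identifications you outline never actually invokes $\widetilde f$, so this is a misstatement about where hypotheses are used rather than a flaw in the computation itself.
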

  \begin{proof}
    By the constructions of $\calE_1^+$ and $\calE_2^+$, we see that $\cl_1$ is the image of $\cl(\calE_X^+)$ via the morphism 
    \[
    \Ext^1(\widehat \Omega^1_{\frakX},\frac{1}{\rho_k}\OXp(1))\to \Ext^1(\widehat \Omega^1_{\frakY}\otimes_{\calO_{\frakY}}\calO_{\frakX},\frac{1}{\rho_k}\OXp(1))
    \]
    induced by 
    \[
    \rL_{\calO_{\frakY}/\calO_{\Cp}}\widehat \otimes^L_{\calO_{\frakY}}\calO_{\frakX} \to \widehat \rL_{\calO_{\frakX}/\calO_{\Cp}},
    \]
    and that $\cl_2$ is the image of $\cl(\calE_Y^+)$ via the morphism
    \[
    \Ext^1(\widehat \Omega^1_{\frakY}\otimes_{\calO_{\frakY}}\widehat \calO_Y^+,\frac{1}{\rho_k}\widehat \calO_Y^+(1))\to \Ext^1(\widehat \Omega^1_{\frakY}\otimes_{\calO_{\frakY}}\OXp,\frac{1}{\rho_k}\OXp(1))
    \]
    induced by the inclusion $\frac{1}{\rho_k}\widehat \calO_Y^+(1) \to\frac{1}{\rho_k}\OXp(1)$. 
    
    Now by Proposition \ref{Compare obstruction}, we have 
    \[\cl_1-\cl_2 = \cl(\calE_X^+)-\cl(\calE_Y^+) = ([\widetilde \frakX]-[\widetilde \frakY]) - ([X]-[Y]).\]
    However, the inclusion $\widehat \calO_Y^+\to \OXp$ admits a canonical $A_2$-lifting, namely $\rA_2(\widehat \calO_Y^{+})\to \rA_2(\widehat \calO_X^{+})$. So we deduce that $[X]-[Y] = 0$, which completes the proof.
  \end{proof}
  Now, Proposition \ref{compatible under pull-back} is a special case of Proposition \ref{twist compatibility}.
  
  \begin{cor}\label{relative Faltings extension}
    Assume $f:\frakX\to\frakY$ admits a lifting along $A_2\to\calO_{\Cp}$, then there is an exact sequence of sheaves of $\OXp$-modules
    \begin{equation}\label{relative exact sequence}
      0\to\OXp\otimes_{\widehat \calO_Y^+}\calE_Y^+\to\calE_X^+\to \OXp\otimes_{\calO_{\frakX}}\widehat \Omega^1_{\frakX/\frakY}\to 0,
    \end{equation}
    where $\widehat \Omega^1_{\frakX/\frakY}$ is the module of relative differentials.
  \end{cor}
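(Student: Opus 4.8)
The plan is to build the sequence (\ref{relative exact sequence}) by splicing together the two absolute integral Faltings' extensions $\calE_X^+$ and $\calE_Y^+$, with Proposition \ref{compatible under pull-back} supplying the only nonformal input. First I would record what formal smoothness of $f$ gives: the conormal sequence
\[
0\to f^*\widehat \Omega^1_{\frakY}\to\widehat \Omega^1_{\frakX}\to\widehat \Omega^1_{\frakX/\frakY}\to 0
\]
is short exact, and since $\widehat \Omega^1_{\frakY}$ is finite locally free over $\calO_{\frakY}$ it is Zariski-locally split; hence it remains short exact after applying $-\otimes_{\calO_{\frakX}}\OXp$. In particular the map $f^*\widehat \Omega^1_{\frakY}\otimes_{\calO_{\frakX}}\OXp\to\widehat \Omega^1_{\frakX}\otimes_{\calO_{\frakX}}\OXp$ is injective with cokernel $\widehat \Omega^1_{\frakX/\frakY}\otimes_{\calO_{\frakX}}\OXp$.

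Next I would unwind the two extensions introduced before Proposition \ref{compatible under pull-back}. By construction $\calE_1^+$ is the pull-back of the sequence (\ref{exact sequence in global case}) for $\frakX$ along that injection, i.e. $\calE_1^+=\calE_X^+\times_{(\widehat \Omega^1_{\frakX}\otimes_{\calO_{\frakX}}\OXp)}\big(f^*\widehat \Omega^1_{\frakY}\otimes_{\calO_{\frakX}}\OXp\big)$. Thus $\calE_1^+$ sits in a short exact sequence $0\to\frac{1}{\rho_k}\OXp(1)\to\calE_1^+\to f^*\widehat \Omega^1_{\frakY}\otimes_{\calO_{\frakX}}\OXp\to 0$, and the canonical map $\calE_1^+\to\calE_X^+$ is injective (because the map it pulls back along is injective) with cokernel isomorphic to $\widehat \Omega^1_{\frakX/\frakY}\otimes_{\calO_{\frakX}}\OXp$ by a snake-lemma chase against the locally split sequence of the previous paragraph. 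On the other side, $\calE_2^+=\calE_Y^+\otimes_{\widehat \calO_Y^+}\OXp$ sits in $0\to\frac{1}{\rho_k}\OXp(1)\to\calE_2^+\to\widehat \Omega^1_{\frakY}\otimes_{\calO_{\frakY}}\OXp\to 0$, and under the identification $\widehat \Omega^1_{\frakY}\otimes_{\calO_{\frakY}}\OXp\cong f^*\widehat \Omega^1_{\frakY}\otimes_{\calO_{\frakX}}\OXp$ both $\calE_1^+$ and $\calE_2^+$ are extensions of the same sheaf by $\frac{1}{\rho_k}\OXp(1)$.

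The key step is then immediate: since $f$ lifts to an $A_2$-morphism $\widetilde f\colon\widetilde \frakX\to\widetilde \frakY$, Proposition \ref{compatible under pull-back} gives $\cl_1=\cl_2$ in $\Ext^1(\widehat \Omega^1_{\frakY}\otimes_{\calO_{\frakY}}\OXp,\frac{1}{\rho_k}\OXp(1))$. As extension classes classify extensions up to isomorphism inducing the identity on sub- and quotient-object, this produces an isomorphism of extensions $\calE_2^+\xrightarrow{\ \sim\ }\calE_1^+$. Composing it with the inclusion $\calE_1^+\hookrightarrow\calE_X^+$ yields the left-hand map $\OXp\otimes_{\widehat \calO_Y^+}\calE_Y^+=\calE_2^+\hookrightarrow\calE_X^+$ of (\ref{relative exact sequence}) as a monomorphism restricting to the identity on $\frac{1}{\rho_k}\OXp(1)$, and its cokernel is $\OXp\otimes_{\calO_{\frakX}}\widehat \Omega^1_{\frakX/\frakY}$ by the cokernel computation above; this is exactly (\ref{relative exact sequence}).

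I expect the only real subtlety to be bookkeeping rather than any genuine obstacle: one must keep straight the chain of identifications $\widehat \Omega^1_{\frakY}\otimes_{\calO_{\frakY}}\OXp\cong f^{-1}\widehat \Omega^1_{\frakY}\otimes_{f^{-1}\calO_{\frakY}}\OXp\cong f^*\widehat \Omega^1_{\frakY}\otimes_{\calO_{\frakX}}\OXp$ (the convention flagged in the footnote) and check that the isomorphism $\calE_2^+\cong\calE_1^+$ coming from Proposition \ref{compatible under pull-back} is compatible with them, so that the composite $\calE_2^+\hookrightarrow\calE_X^+$ indeed induces the obvious maps on the associated graded pieces. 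Everything else is manipulation of short exact sequences. (One could, alternatively, bypass Proposition \ref{compatible under pull-back} and argue directly with the octahedral configuration of cotangent-complex triangles for $\calO_{\widetilde \frakY}\to\calO_{\widetilde \frakX}\to\OXp$ and $\calO_{\frakY}\to\calO_{\frakX}\to\OXp$, but invoking the already-established comparison of obstruction classes is cleaner.)
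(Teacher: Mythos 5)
Your proof is correct and is essentially the paper's argument written out in full: the paper's own proof is the one-line observation that the corollary follows from Proposition \ref{compatible under pull-back} together with the definitions of $\calE_1^+$ and $\calE_2^+$, which is exactly the splicing you carry out. (One tiny slip in justification: the local splitness of the conormal sequence after tensoring with $\OXp$ comes from $\widehat \Omega^1_{\frakX/\frakY}$ being locally free, not from $\widehat \Omega^1_{\frakY}$ being so, but this does not affect the argument.)
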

  \begin{proof} 
  This follows from the Proposition \ref{compatible under pull-back} combined with the definitions of $\calE_1^+$ and $\calE_2^+$.
  \end{proof}
\subsection{Period sheaves}
  Now, we define the desired period sheaf $\calO\bC^{\dag}$ as mentioned in Introduction. The construction generalizes the previous work of Hyodo \cite{Hy}. 
  
  Let $\frakX = \Spf(R^+)$ be a small smooth formal scheme and $\widetilde \frakX = \Spf(\widetilde R^+)$ be a fixed $A_2$-lifting. Let $E^+$ be the integral Faltings' extension introduced in Proposition \ref{local Faltings extension}. Define $E_{\rho_k}^+ = \rho_k E^+(-1)$. Then it fits into the following exact sequence
  \[0\to\Rinfp\to E^+_{\rho_k}\to\rho_k\Rinfp\otimes_{R^+}\widehat \Omega^1_{R^+}(-1)\to 0.\]
  For any $\rho\in \rho_k\calO_{\Cp}$, denote by $E_{\rho}^+$ the pull-back of $E_{\rho_k}^+$ along the inclusion 
  \[
  \rho\Rinfp\otimes_{R^+}\widehat \Omega^1_{R^+}(-1)\to\rho_k\Rinfp\otimes_{R^+}\widehat \Omega^1_{R^+}(-1),
  \]
  then it fits into the following $\Gamma$-equivariant exact sequence
  \begin{equation}\label{value exact sequence}
  0\to\Rinfp\to E^+_{\rho}\to\rho\Rinfp\otimes_{R^+}\widehat \Omega^1_{R^+}(-1)\to 0.
  \end{equation}
  By Proposition \ref{good basis}, $E_{\rho}^+$ admits an $\Rinfp$-basis $1, \frac{\rho x_1}{t}, \dots, \frac{\rho x_d}{t}$. Let $E = E^+_{\rho}[\frac{1}{p}]$, which fits into the induced exact sequence
  \[
    0\to\Rinf\to E\to\Rinf\otimes_{R^+}\widehat \Omega^1_{R^+}(-1)\to 0.
  \]
  Then it is independent of the choice of $\rho$ and has $E_{\rho}^+$ as a sub-$\Rinfp$-module. Moreover, it admits an $\Rinf$-basis
  \[1, y_1 = \frac{x_1}{t}, \dots, y_d = \frac{x_d}{t}
  \]
  such that $\gamma_i(y_j)=y_j+\delta_{ij}$ for any $1\leq i,j\leq d$. Define $S_{\infty} = \varinjlim_n\Sym^n_{\Rinf}E$. Then by similar arguments used in \cite[Section I]{Hy}, we have the following result.
  \begin{prop}\label{Hyodo's result} 
    There exists a canonical Higgs field 
    \[
    \Theta:S_{\infty}\to S_{\infty}\otimes_{\Rinf}\widehat \Omega_{R^+}^1(-1)
    \]
    on $S_{\infty}$ such that the induced Higgs complex is a resolution of $\Rinf$. The $\Theta$ is induced by taking alternative sum along the projection $E\to \Rinf\otimes_{R^+}\widehat \Omega^1_{R^+}(-1)$ and if we denote by $Y_i$ the image of $y_i$ in $S_{\infty}$, then there is a $\Gamma$-equivariant isomorphism 
    \[
    \iota: S_{\infty}\xrightarrow{\cong}\Rinf[Y_1,\dots,Y_d]
    \]
    such that $\Theta = \sum_{i=1}^d\frac{\partial}{\partial Y_i}\otimes\frac{\dlog T_i}{t}$ via this isomorphism, where $\Rinf[Y_1,\dots,Y_d]$ is the polynomial ring on free variables $Y_i$'s over $\Rinf$.
  \end{prop}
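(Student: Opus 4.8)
The plan is to make the algebra $S_{\infty}$ and the map $\iota$ completely explicit, transport the construction of Hyodo through $\iota$, and deduce that the Higgs complex is a resolution from the Poincar\'e lemma for a polynomial ring over a $\bQ$-algebra; this is the argument of \cite[Section~I]{Hy} adapted to the base ring $\Rinf$.

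First I would record that, by the description of $E=E^+_{\rho}[\tfrac1p]$ above, $E$ is a free $\Rinf$-module on $1_E, y_1, \dots, y_d$, where $1_E$ is the image of $1$ under the inclusion $\Rinf\hookrightarrow E$ and $y_i=x_i/t$. Hence $\Sym_{\Rinf}E\cong \Rinf[1_E, y_1,\dots,y_d]$, and since the transition maps of $\varinjlim_n\Sym^n_{\Rinf}E$ are multiplication by the section $1_E$, one gets $S_{\infty}\cong \Sym_{\Rinf}(E)/(1_E-1)\cong \Rinf[Y_1,\dots,Y_d]$, with $Y_i$ the image of $y_i$; this is $\iota$. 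Because the exact sequence $(\ref{value exact sequence})$ (and its $p$-inverted version) is $\Gamma$-equivariant and $\gamma_i(1_E)=1_E$, the ideal $(1_E-1)$ is $\Gamma$-stable, so $\Gamma$ acts on $S_{\infty}$; the relation $\gamma_i(y_j)=y_j+\delta_{ij}1_E$ shows this action is the $\gamma$-semilinear ring automorphism with $\gamma_i(Y_j)=Y_j+\delta_{ij}$, and $\iota$ is tautologically $\Gamma$-equivariant.

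For $\Theta$, let $\pi\colon E\to \Rinf\otimes_{R^+}\widehat\Omega^1_{R^+}(-1)$ be the projection in the exact sequence defining $E$; it kills $1_E$ and sends $y_i$ to $\dlog T_i/t$. The Leibniz rule extends $\pi$ to a unique $\widehat\Omega^1_{R^+}(-1)$-valued derivation on $\Sym_{\Rinf}E$ --- the ``alternating sum along $\pi$'' of the statement --- and, since it annihilates $1_E-1$, it descends to a derivation $\Theta$ of $S_{\infty}$ (equivalently, $\pi(1_E)=0$ makes it compatible with the colimit). As $\Gamma$ fixes $R^+$ and $t$, it acts trivially on $\widehat\Omega^1_{R^+}(-1)$, so $\Theta$ inherits the $\Gamma$-equivariance of $\pi$. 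Transporting along $\iota$, $\Theta$ is the $\Rinf$-linear derivation of $\Rinf[\underline Y]$ with $\Theta(Y_i)=\dlog T_i/t$, i.e.\ $\Theta=\sum_{i=1}^d\tfrac{\partial}{\partial Y_i}\otimes\tfrac{\dlog T_i}{t}$. Since $R^+$ is \'etale over the torus, $\widehat\Omega^{\bullet}_{R^+}$ is the exterior algebra on $\dlog T_1,\dots,\dlog T_d$, so the $\dlog T_i/t$ anticommute; together with $[\partial/\partial Y_i,\partial/\partial Y_j]=0$ this gives $\Theta\wedge\Theta=0$, so $\Theta$ is a Higgs field.

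It remains to identify the Higgs complex: via $\iota$ (matching $\tfrac{\dlog T_{i_1}}{t}\wedge\cdots$ with $dY_{i_1}\wedge\cdots$), ${\rm HIG}(S_{\infty},\Theta)$ is the algebraic de Rham complex $\Omega^{\bullet}_{\Rinf[Y_1,\dots,Y_d]/\Rinf}$, and $\Rinf\hookrightarrow S_{\infty}$ becomes the inclusion of constants. Having inverted $p$, $\Rinf$ is a $\bQ$-algebra, so $\Omega^{\bullet}_{\Rinf[Y_i]/\Rinf}$ has cohomology $\Rinf$ in degree $0$ and $0$ in degree $1$; since $\Omega^{\bullet}_{\Rinf[\underline Y]/\Rinf}$ is the tensor product over $\Rinf$ of $d$ such complexes of free $\Rinf$-modules, K\"unneth gives $H^0=\Rinf$ and $H^{i}=0$ for $i>0$, i.e.\ the Higgs complex is a ($\Gamma$-equivariant) resolution of $\Rinf$. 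I expect the only non-formal point to be that inverting $p$ is exactly what the Poincar\'e lemma requires (in characteristic $p$ the kernel of $d/dY$ would be strictly larger); everything else is bookkeeping about the colimit, the quotient by $(1_E-1)$, and the trivial $\Gamma$-action on the Tate-twisted differentials, and one should note that this recovers Hyodo's construction after the evident base change.
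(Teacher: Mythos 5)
Your proposal is correct and follows essentially the same route as the paper, which gives no details here and simply defers to ``similar arguments used in \cite[Section I]{Hy}'': identifying $S_{\infty}=\varinjlim_n\Sym^n_{\Rinf}E$ with the dehomogenization $\Sym_{\Rinf}(E)/(1_E-1)\cong\Rinf[Y_1,\dots,Y_d]$, transporting the derivation induced by the projection $E\to\Rinf\otimes_{R^+}\widehat\Omega^1_{R^+}(-1)$, and invoking the polynomial Poincar\'e lemma over the $\bQ$-algebra $\Rinf$ plus K\"unneth. Your remark that inverting $p$ is what makes the Poincar\'e lemma work is exactly the point the paper revisits later when contrasting this with the $p$-completed rings $\widehat S^+_{\infty,\rho}$, whose Higgs complexes fail to be resolutions.
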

   Since we have $\Rinfp$-lattices $E_{\rho}^+$'s of $E$, inspired by Proposition \ref{Hyodo's result}, we make the following definition.
 \begin{dfn}\label{local period sheaf} 
 For any $\rho\in\rho_k\calO_{\Cp}$, define
 \begin{enumerate}
     \item $S_{\infty,\rho}^+ = \varinjlim_n\Sym^n_{\Rinfp}E_{\rho}^+$;
     \item $\widehat S_{\infty,\rho}^+ = \varprojlim_n S_{\infty,\rho}^+/p^n$;
     \item $S_{\infty}^{\dagger,+} = \varinjlim_{\nu_p(\rho)>\nu_p(\rho_k)}\widehat S_{\infty,\rho}^+$ and $S_{\infty}^{\dagger} = S_{\infty}^{\dagger,+}[\frac{1}{p}]$.
 \end{enumerate}
 \end{dfn}
 For any $\rho_1,\rho_2\in\rho_k\calO_{\Cp}$ satisfying $\nu_p(\rho_1)\geq \nu_p(\rho_2)$, we have $E_{\rho_1}^+\subset E_{\rho_2}^+\subset E$. So Proposition \ref{Hyodo's result} implies that $S_{\infty,\rho_1}^+\subset S_{\infty,\rho_2}^+\subset S_{\infty}$. Moreover, the restriction of $\Theta$ to $S_{\infty,\rho}^+$ (for $\rho\in\rho_k\calO_{\Cp}$) induces a Higgs field on $S_{\infty,\rho}^+$, which is identified with $\Rinfp[\rho Y_1,\dots,\rho Y_d]$ via the canonical isomorphism $\iota$. In this case, we still have $\Theta=\sum_{i=1}^d\frac{\partial}{\partial Y_i}\otimes\frac{\dlog T_i}{t}$. Since $\Theta$ is continuous, it extends to $\widehat S_{\infty,\rho}^+$ and thus we have the following corollary.
 
 \begin{cor}\label{complete Hyodo}
   For any $\rho\in\rho_k\calO_{\Cp}$, there exists a canonical Higgs field 
    \[
    \Theta:\widehat S_{\infty,\rho}^+\to \widehat S_{\infty,\rho}^+\otimes_{\Rinfp}\widehat \Omega_{R^+}^1(-1)
    \]
    on $\widehat S_{\infty,\rho}^+$. Moreover, there is a $\Gamma$-equivariant isomorphism 
    \[
    \iota: \widehat S_{\infty,\rho}^+\xrightarrow{\cong}\Rinfp\za \rho Y_1, \dots, \rho Y_d\ya
    \]
    such that $\Theta = \sum_{i=1}^d\frac{\partial}{\partial Y_i}\otimes\frac{\dlog T_i}{t}$ via this isomorphism, where $\Rinfp\za \rho Y_1,\dots,\rho Y_d\ya$ is the $p$-adic completion of $\Rinfp[\rho Y_1,\dots ,\rho Y_d]$.
 \end{cor}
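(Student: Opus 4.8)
The plan is to reduce Corollary \ref{complete Hyodo} to Proposition \ref{Hyodo's result} by a $p$-adic completion argument, carefully tracking that the Higgs field is continuous and hence extends. First I would recall the explicit description from Proposition \ref{Hyodo's result}: the isomorphism $\iota: S_{\infty}\xrightarrow{\cong}\Rinf[Y_1,\dots,Y_d]$ together with $\Theta = \sum_{i=1}^d\frac{\partial}{\partial Y_i}\otimes\frac{\dlog T_i}{t}$. Restricting $\iota$ to the $\Rinfp$-lattice $E_\rho^+\subset E$ (which has basis $1,\frac{\rho x_1}{t},\dots,\frac{\rho x_d}{t}$, i.e. $\rho y_1,\dots,\rho y_d$ in the notation of Proposition \ref{Hyodo's result}), taking symmetric powers and passing to the colimit, one obtains a $\Gamma$-equivariant isomorphism $\iota: S_{\infty,\rho}^+\xrightarrow{\cong}\Rinfp[\rho Y_1,\dots,\rho Y_d]$; this is a formal consequence of the fact that $\Sym^n_{\Rinfp}E_\rho^+$ is the free $\Rinfp$-module on monomials of degree $n$ in the $\rho Y_i$, and colimits commute with $\Sym$.

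Next I would verify that $\Theta$ restricts to $S_{\infty,\rho}^+$. The operator $\frac{\partial}{\partial Y_i}$ sends a monomial $(\rho Y_1)^{a_1}\cdots(\rho Y_d)^{a_d}$ to $a_i\rho^{-1}(\rho Y_1)^{a_1}\cdots(\rho Y_i)^{a_i-1}\cdots(\rho Y_d)^{a_d}$, and the factor $\rho^{-1}$ is compensated by the fact that $\frac{\dlog T_i}{t}\in\frac{1}{\rho_k}\widehat\Omega^1_{R^+}(-1)$ while we are only asserting the target to be $\widehat S_{\infty,\rho}^+\otimes_{\Rinfp}\widehat\Omega^1_{R^+}(-1)$ with $\widehat\Omega^1_{R^+}(-1)\subset\frac{1}{\rho_k}\widehat\Omega^1_{R^+}(-1)$ — one checks the image lies in $S_{\infty,\rho}^+\otimes_{\Rinfp}\widehat\Omega^1_{R^+}(-1)$ because $\rho\in\rho_k\calO_{\Cp}$, so $\rho\cdot\frac{1}{\rho_k}\in\calO_{\Cp}$ absorbs the discrepancy. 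So $\Theta$ preserves the lattice, and the Leibniz rule and $\Theta\wedge\Theta=0$ are inherited from $S_\infty$. Then, since $\Theta$ is $\Rinfp$-linear in the appropriate sense and is visibly continuous for the $p$-adic topology (it is determined by its action on the topological generators $\rho Y_i$, which it sends into $\widehat S_{\infty,\rho}^+\otimes\widehat\Omega^1_{R^+}(-1)$), it extends uniquely by continuity to the $p$-adic completion $\widehat S_{\infty,\rho}^+ = \varprojlim_n S_{\infty,\rho}^+/p^n$, and the extended operator still satisfies the Leibniz rule and squares to zero since these identities pass to the completion. Finally, $p$-adically completing the isomorphism $\iota$ yields the $\Gamma$-equivariant isomorphism $\widehat S_{\infty,\rho}^+\xrightarrow{\cong}\Rinfp\za\rho Y_1,\dots,\rho Y_d\ya$, and the formula $\Theta = \sum_{i=1}^d\frac{\partial}{\partial Y_i}\otimes\frac{\dlog T_i}{t}$ persists because both sides are continuous and agree on the dense subring $\Rinfp[\rho Y_1,\dots,\rho Y_d]$.

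The main obstacle I anticipate is purely bookkeeping rather than conceptual: one must be careful about which $\calO_{\Cp}$-lattice the coefficient $\frac{\dlog T_i}{t}$ lives in relative to the various twists $(-1)$, and make sure that the denominators $\rho^{-1}$ produced by $\frac{\partial}{\partial Y_i}$ are genuinely cancelled so that $\Theta$ really does preserve $S_{\infty,\rho}^+$ (and not merely $S_{\infty,\rho_k}^+$ or $S_\infty$). Once the lattice is seen to be $\Theta$-stable, everything else — $\Gamma$-equivariance, the polynomial identification, continuity, and the passage to the completion — is routine, since $p$-adic completion is exact on the relevant finitely-presented modules and commutes with finite direct sums. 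I do not expect to need any input beyond Proposition \ref{Hyodo's result}, Proposition \ref{good basis}, and the elementary fact that a continuous derivation-like operator on a ring extends uniquely to its $p$-adic completion.
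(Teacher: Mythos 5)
Your proposal follows the paper's own route exactly: restrict Hyodo's construction (Proposition \ref{Hyodo's result}) to the lattice $S_{\infty,\rho}^+\cong\Rinfp[\rho Y_1,\dots,\rho Y_d]$, note that $\Theta$ preserves it, and extend to the $p$-adic completion by continuity. One small correction to your bookkeeping in the middle step: $\frac{\partial}{\partial Y_i}$ sends $(\rho Y_i)^{a_i}$ to $a_i\rho\,(\rho Y_i)^{a_i-1}$, i.e.\ it multiplies by $\rho$ rather than by $\rho^{-1}$ (and $\frac{\dlog T_i}{t}$ already lies in $\widehat\Omega^1_{R^+}(-1)$ with no denominator), so the stability of the lattice under $\Theta$ is even more immediate than you feared and no cancellation against $\rho_k$ is needed.
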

 After taking inductive limit among $\{\rho\in\rho_k\calO_{\Cp}|\nu_p(\rho)>\nu_p(\rho_k)\}$, we get the following corollary. 
 \begin{cor}\label{overconvergent resolution}
   There exists a canonical Higgs field 
    \[
    \Theta:S_{\infty}^{\dag,+}\to S_{\infty}^{\dag,+}\otimes_{\Rinfp}\widehat \Omega_{R^+}^1(-1)
    \]
    on $S_{\infty}^{\dag,+}$. Moreover, there is a $\Gamma$-equivariant isomorphism 
    \[
    \iota: S_{\infty}^{\dag,+}\xrightarrow{\cong}\varinjlim_{\nu_p(\rho)>\nu_p(\rho_k)}\Rinfp\za \rho Y_1,\dots,\rho Y_d\ya
    \]
    such that $\Theta = \sum_{i=1}^d\frac{\partial}{\partial Y_i}\otimes\frac{\dlog T_i}{t}$ via this isomorphism. After inverting $p$, the induced Higgs complex ${\rm HIG}(S_{\infty}^{\dagger},\Theta)$
    \begin{equation}\label{local resolution}
    S_{\infty}^{\dagger}\xrightarrow{\Theta}S_{\infty}^{\dagger}\otimes_{R^+}\widehat \Omega^1_{R^+}(-1)\xrightarrow{\Theta}S_{\infty}^{\dagger}\otimes_{R^+}\widehat \Omega^2_{R^+}(-2)\to\cdots
    \end{equation}
    is a resolution of $\Rinf$.
 \end{cor}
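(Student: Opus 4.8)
The plan is to deduce the existence of $\Theta$ and $\iota$ (together with the stated formula) from Corollary~\ref{complete Hyodo} by passing to the filtered colimit over $I:=\{\rho\in\rho_k\calO_{\Cp}\mid\nu_p(\rho)>\nu_p(\rho_k)\}$, directed by $\rho_1\leq\rho_2\Leftrightarrow\nu_p(\rho_1)\geq\nu_p(\rho_2)$ (so that $\widehat S_{\infty,\rho_1}^+\to\widehat S_{\infty,\rho_2}^+$), and then to prove the resolution statement by a direct analysis of the resulting complex. First I would record that the data of Corollary~\ref{complete Hyodo} is compatible with the transition maps of the system $(\widehat S_{\infty,\rho}^+)_{\rho\in I}$: each $\Theta$ is the continuous extension of the restriction to $S_{\infty,\rho}^+$ of the \emph{single} field $\Theta$ on $S_\infty$ of Proposition~\ref{Hyodo's result}, so the $\Theta$'s are compatible; and the $\iota$'s are compatible because, for $\nu_p(\rho_1)\geq\nu_p(\rho_2)$, the transition on the target side is the evident continuous inclusion $\Rinfp\za\rho_1 Y_1,\dots,\rho_1 Y_d\ya\hookrightarrow\Rinfp\za\rho_2 Y_1,\dots,\rho_2 Y_d\ya$ carrying $\rho_1 Y_i$ to $(\rho_1/\rho_2)\rho_2 Y_i$ (well defined since $\rho_1/\rho_2\in\calO_{\Cp}$), and all these maps are $\Gamma$-equivariant. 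Taking $\varinjlim$ then produces the $\Gamma$-equivariant Higgs field $\Theta$ and isomorphism $\iota$ on $S_\infty^{\dagger,+}$, and both $\Theta\wedge\Theta=0$ and the formula $\Theta=\sum_{i=1}^d\frac{\partial}{\partial Y_i}\otimes\frac{\dlog T_i}{t}$ pass through the colimit; this settles everything except the resolution statement.

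For that last assertion we invert $p$. Via $\iota$ we may write $S_\infty^{\dagger}\cong A_d:=\varinjlim_{\rho\in I}\Rinf\za\rho Y_1,\dots,\rho Y_d\ya$, where $\Rinf\za\rho Y_1,\dots,\rho Y_d\ya:=\Rinfp\za\rho Y_1,\dots,\rho Y_d\ya[\tfrac1p]$; since $\widehat\Omega^1_{R^+}$ is free over $R^+$ on $\dlog T_1,\dots,\dlog T_d$, the complex ${\rm HIG}(S_\infty^{\dagger},\Theta)$ is identified with the Koszul complex $K(A_d;\tfrac{\partial}{\partial Y_1},\dots,\tfrac{\partial}{\partial Y_d})$ of the pairwise commuting $\Rinf$-linear derivations $\tfrac{\partial}{\partial Y_i}$. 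Writing this Koszul complex as an iterated mapping cone and inducting on the number of variables, the resolution statement reduces to showing, for each $1\leq j\leq d$, that $\tfrac{\partial}{\partial Y_j}$ is surjective on $M_{j-1}:=\varinjlim_{\rho\in I}\Rinf\za\rho Y_j,\dots,\rho Y_d\ya$ with kernel $M_j:=\varinjlim_{\rho\in I}\Rinf\za\rho Y_{j+1},\dots,\rho Y_d\ya$; granting this, one gets $K(A_d;\tfrac{\partial}{\partial Y_1},\dots,\tfrac{\partial}{\partial Y_j})\simeq M_j[0]$ for every $j$, and $j=d$ (where $M_d=\Rinf$) is exactly the claim. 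This is in substance a one-variable statement — the remaining $Y_{j+1},\dots,Y_d$ enter only as inert parameters in the estimates below — so it suffices to treat $B:=\varinjlim_{\rho\in I}\Rinf\za\rho Y\ya$ and show $\Ker(\tfrac{\partial}{\partial Y}\colon B\to B)=\Rinf$ and that $\tfrac{\partial}{\partial Y}\colon B\to B$ is surjective.

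This one-variable statement is the crux. The kernel is trivial over $\Rinf$: if $\tfrac{\partial}{\partial Y}\bigl(\sum_{n\geq0}a_n(\rho Y)^n\bigr)=0$ then $na_n\rho^n=0$ for all $n\geq1$, and after inverting $p$ the integer $n$ is a unit in $\Rinf\supset\bQ$ and $\rho$ is a unit in $\Rinf\supset\Cp$, so $a_n=0$ for $n\geq1$. For surjectivity, take $g=\sum_{n\geq0}b_n(\rho Y)^n\in\Rinf\za\rho Y\ya$ for some $\rho\in I$, so that there is an $N\geq0$ with $\nu_p(b_n)\geq-N$ for all $n$ and $\nu_p(b_n)\to\infty$. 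Its formal antiderivative $f=\sum_{n\geq1}\frac{b_{n-1}}{n\rho}(\rho Y)^n$ satisfies $\frac{\partial f}{\partial Y}=g$, and for any $\rho'$ with $\nu_p(\rho_k)<\nu_p(\rho')<\nu_p(\rho)$ — which exists precisely because $\nu_p(\rho)>\nu_p(\rho_k)$ — the coefficient of $(\rho' Y)^n$ in $f$ has $p$-adic valuation
\[
\nu_p(b_{n-1})+(n-1)\nu_p(\rho)-n\,\nu_p(\rho')-\nu_p(n)\ \geq\ -N+n\bigl(\nu_p(\rho)-\nu_p(\rho')\bigr)-\nu_p(\rho)-\nu_p(n),
\]
which is bounded below (the term linear in $n$ dominates $\nu_p(n)=O(\log n)$) and tends to $+\infty$; hence $f\in p^{-M}\Rinfp\za\rho' Y\ya\subset B$ for a suitable $M$, as needed. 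I expect this final point to be where the real work lies, and it is here that overconvergence is indispensable: the individual completed complexes of Corollary~\ref{complete Hyodo} are \emph{not} resolutions — an antiderivative of a function convergent on the disc of radius $|\rho|^{-1}$ need only converge on a strictly smaller disc — and it is exactly the hypothesis $\nu_p(\rho)>\nu_p(\rho_k)$ that leaves room to pass from $\rho$ to such a $\rho'$.
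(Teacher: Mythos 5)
Your proof is correct and follows essentially the same route as the paper: reduce via $\iota$ to a Koszul complex in the operators $\frac{\partial}{\partial Y_i}$, treat one variable at a time, and exploit the strict inequality $\nu_p(\rho)>\nu_p(\rho_k)$ to pass from $\rho$ to a smaller $\rho'$ so that the (rescaled) antiderivative converges — your displayed valuation estimate is exactly the paper's condition ``$N+i\nu_p(\rho_1)-i\nu_p(\rho_2)-\nu_p(i+1)\geq 0$ and $\to+\infty$.'' The only difference is bookkeeping: the paper computes the integral cohomology of each $\rK(\Rinfp\za\rho Y_j\ya;\frac{\partial}{\partial Y_j})$ explicitly (using derived $p$-completeness to justify the completed K\"unneth step) and then shows $\rH^{\geq1}$ becomes $p^\infty$-torsion in the colimit, whereas you invert $p$ first and argue surjectivity directly by induction on the variables, which cleanly sidesteps that completeness discussion.
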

 \begin{proof}
   It remains to prove the Higgs complex ${\rm HIG}(S_{\infty}^{\dagger},\Theta)$ is a resolution of $\Rinf$.
   For any $\rho\in\rho_k\calO_{\Cp}$, consider the Higgs complexes 
   \[
   {\rm HIG}(\widehat S_{\infty,\rho}^{+},\Theta): 
   \widehat S_{\infty,\rho}^{+}\xrightarrow{\Theta}\widehat S_{\infty,\rho}^{+}\otimes_{R^+}\widehat \Omega^1_{R^+}(-1)\xrightarrow{\Theta}\widehat S_{\infty,\rho}^{+}\otimes_{R^+}\widehat \Omega^2_{R^+}(-2)\to\cdots
   \]
   and
   \[
     {\rm HIG}(S_{\infty}^{\dagger,+},\Theta): 
   S_{\infty}^{\dagger,+}\xrightarrow{\Theta}\widehat S_{\infty}^{\dagger,+}\otimes_{R^+}\widehat \Omega^1_{R^+}(-1)\xrightarrow{\Theta}S_{\infty}^{\dagger,+}\otimes_{R^+}\widehat \Omega^2_{R^+}(-2)\to\cdots.
   \]
   Then we have
   \[{\rm HIG}(S_{\infty}^{\dagger},\Theta)={\rm HIG}(S_{\infty}^{\dagger,+},\Theta)[\frac{1}{p}]=\varinjlim_{\nu_p(\rho)>\nu_p(\rho_k)}{\rm HIG}(\widehat S_{\infty,\rho}^{+},\Theta)[\frac{1}{p}]. \]
   By Corollary \ref{complete Hyodo}, ${\rm HIG}(\widehat S_{\infty,\rho}^{+},\Theta)$ is computed by the Koszul complex
   \[ \rK(\Rinfp\za\rho Y_1,\dots,\rho Y_d\ya; \frac{\partial}{\partial Y_1}, \dots,\frac{\partial}{\partial Y_d})\simeq \rK(\Rinfp\za\rho Y_1\ya;\frac{\partial}{\partial Y_1})\widehat \otimes_{\Rinfp}^L\dots\widehat \otimes_{\Rinfp}^L\rK(\Rinfp\za\rho Y_d\ya;\frac{\partial}{\partial Y_d}),\]
   via the canonical isomorphism $ \iota $. Note that for any $j$,
\[
  \rH^i(\rK(\Rinfp\za\rho Y_j\ya; \frac{\partial}{\partial Y_j})) = 
  \left\{\begin{array}{rcl}
   \Rinfp,  &i=0\\
   \Rinfp\za \Lambda_{j,\rho}\ya/\Rinfp\za\Lambda_{j,\rho},I,+\ya,  &i=1\\
   0,& i\geq 2
  \end{array}\right.
\]
is derived $p$-complete by Proposition \ref{derived vs classical}, where $\Rinfp\za \Lambda_{j,\rho}\ya$ and $\Rinfp\za\Lambda_{j,\rho},I,+\ya$ are defined as in Definition \ref{p-complete module} for $\Lambda_{j,\rho} = \{\rho^nY_j^n\}_{n\geq 0}$ and $I = \{\nu_p(n+1)\}_{n\geq 0}$. We deduce that for any $i\geq 0$,
\[
\rH^i( \rK(\Rinfp\za\rho Y_1,\dots,\rho Y_d\ya; \frac{\partial}{\partial Y_1}, \dots,\frac{\partial}{\partial Y_d})) = \wedge^i_{\Rinfp}(\oplus_{j=1}^d\Rinfp\za \Lambda_{j,\rho}\ya/\Rinfp\za\Lambda_{j,\rho},I,+\ya).
\]
In particular, we get 
\[\rH^0({\rm HIG}(S_{\infty}^{\dagger,+},\Theta))=\varinjlim_{\nu_p(\rho)>\nu_p(\rho_k)}\rH^0({\rm HIG}(\widehat S_{\infty,\rho}^{+},\Theta)) = \Rinfp.\]

It remains to show that for any $i\geq 1$, 
\[\varinjlim_{\nu_p(\rho)>\nu_p(\rho_k)}\rH^i({\rm HIG}(\widehat S_{\infty,\rho}^{+},\Theta))\cong\varinjlim_{\nu_p(\rho)>\nu_p(\rho_k)} \wedge^i_{\Rinfp}(\oplus_{j=1}^d\Rinfp\za \Lambda_{j,\rho}\ya/\Rinfp\za\Lambda_{j,\rho},I,+\ya)\]
is $p^{\infty}$-torsion. To do so, it suffices to prove that for any $\nu_p(\rho_1)>\nu_p(\rho_2)>\nu_p(\rho_k)$, there is an $N\geq 0$ such that 
\[
p^N\Rinfp\za \Lambda_{j,\rho_1}\ya\subset \Rinfp\za\Lambda_{j,\rho_2},I,+\ya.
\]
By Remark \ref{describe of completion}, we only need to find an $N$ such that the following conditions hold:
\begin{enumerate}
    \item for any $i\geq 0$, $N+i\nu_p(\rho_1)-i\nu_p(\rho_2)-\nu_p(i+1)\geq 0$;
    \item $\lim_{i\to+\infty}(N+i\nu_p(\rho_1)-i\nu_p(\rho_2)-\nu_p(i+1) )= +\infty$.
\end{enumerate}
Since $\nu_p(\rho_1)>\nu_p(\rho_2)$, such an $N$ exists. This completes the proof.
 \end{proof}
\begin{rmk}
\begin{enumerate}
  \item In the proof of Corollary \ref{overconvergent resolution}, we have seen that for any $\rho\in\rho_k\calO_{\Cp}$, the Higgs complex ${\rm HIG}(S_{\infty,\rho}^+[\frac{1}{p}],\Theta)$ is not a resolution of $\Rinf$.
  \item For any $1\leq i\leq d$, the $p^{\infty}$-torsion of $\rH^i({\rm HIG}(S_{\infty}^{\dagger,+},\Theta))$ is unbounded.
\end{enumerate}
\end{rmk}
\begin{rmk}\label{Lie algebra}
  Since for any $1\leq i,j\leq d$, $\gamma_i(Y_j)=Y_j+\delta_{ij}$, one can check that
  $\frac{\partial}{\partial Y_i} = \log\gamma_i$ on $S_{\infty}^{\dagger}$.
  So the Higgs field is $\Theta = \sum_{i=1}^d\log\gamma_i\otimes\frac{\dlog T_i}{t}$.
\end{rmk}

\begin{rmk}
  A similar local construction of $S_{\infty}^{\dagger,+}$ also appeared in \cite[I.4.7]{AGT}.
\end{rmk}
  There is a global story by using Theorem \ref{Integral Faltings extension} instead of Proposition \ref{local Faltings extension}.
 Put $\calE_{\rho_k}^+ = \rho_k\calE^+(-1)$ and for any $\rho\in \rho_k\calO_{\Cp}$, denote by $\calE_{\rho}^+$ the pull-back of $\calE_{\rho_k}^+$ along the inclusion 
\[
  \rho\OXp\otimes_{\calO_{\frakX}}\widehat \Omega^1_{\frakX}(-1)\to \rho_k\OXp\otimes_{\calO_{\frakX}}\widehat \Omega^1_{\frakX}(-1).
\]
Then it fits into the following exact sequence
\begin{equation}\label{twist exact sequence}
0\to\OXp\to\calE_{\rho}^+\to\rho\OXp\otimes_{\calO_{\frakX}}\widehat \Omega^1_{\frakX}(-1)\to 0.
\end{equation}
As an analogue of Definition \ref{local period sheaf} in the local case, we define period sheaves as follows:
 \begin{dfn}\label{global period sheaf} 
 For any $\rho\in\rho_k\calO_{\Cp}$, define
 \begin{enumerate}
     \item $\OC_{\rho}^+ = \varinjlim_n\Sym^n_{\OXp}\calE_{\rho}^+$;
     \item $\calO\widehat \bC_{\rho}^+ = \varprojlim_n \OC_{\rho}^+/p^n$;
     \item $\OC^{\dagger,+} = \varinjlim_{\nu_p(\rho)>\nu_p(\rho_k)}\calO\widehat \bC_{\rho}^+$ and $\OC^{\dagger} = \OC^{\dagger,+}[\frac{1}{p}]$.
 \end{enumerate}
 \end{dfn}
\begin{thm}\label{period sheaf}
  There is a canonical Higgs field $\Theta$ on $\OC^{\dagger,+}$ such that the induced Higgs complex ${\rm HIG}(\OC^{\dagger},\Theta)$: 
  \begin{equation}
    \OC^{\dagger}\xrightarrow{\Theta}\OC^{\dagger}\otimes_{\calO_{\frakX}}\widehat \Omega^1_{\frakX}(-1)\xrightarrow{\Theta}\OC^{\dagger}\otimes_{\calO_{\frakX}}\widehat \Omega^2_{\frakX}(-2)\to \cdots
  \end{equation}
  is a resolution of $\OX$. Moreover, when $\frakX = \Spf(R^+)$ is small affine, there is an isomorphism 
  \[\iota: \OC^{\dagger,+}_{\mid X_{\infty}}\to\varinjlim_{\nu_p(\rho)>\nu_p(\rho_k)}\OXp\za\rho Y_1, \dots, \rho Y_d\ya_{\mid X_{\infty}}\]
  such that the Higgs field $\Theta = \sum_{i=1}^d\frac{\partial}{\partial Y_i}\otimes\frac{\dlog T_i}{t}$.
\end{thm}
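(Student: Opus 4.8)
The plan is to reduce the global statement of Theorem \ref{period sheaf} to the already-established local computations. First I would construct the Higgs field $\Theta$ on $\OC^{\dagger,+}$: applying $\Sym^n$ to the extension (\ref{twist exact sequence}), passing to the colimit over $n$, then $p$-completing and passing to the colimit over $\rho$, produces $\OC^{\dagger,+}$, and the Higgs field should be the ``derivation'' induced by the canonical projection $\calE_\rho^+\to\rho\OXp\otimes_{\calO_\frakX}\widehat\Omega^1_\frakX(-1)$ extended as a graded derivation on the symmetric algebra, exactly as in Proposition \ref{Hyodo's result} and Corollaries \ref{complete Hyodo}, \ref{overconvergent resolution}. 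One must check $\Theta\wedge\Theta=0$ (i.e. that it is genuinely a Higgs field); this is formal from the fact that the target bundle $\widehat\Omega^1_\frakX$ is ``constant'' with respect to the symmetric-algebra variables, and it can be checked locally where it follows from Corollary \ref{overconvergent resolution}.

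Next I would prove that ${\rm HIG}(\OC^\dagger,\Theta)$ is a resolution of $\OX$. Since the claim is about quasi-isomorphism of complexes of sheaves on $X_{\proet}$, it suffices to check it on a basis of the site, so by the proof of \cite[Corollary 4.7]{Sch2} I may restrict to a small affine $\frakX=\Spf(R^+)$ and, further, to the pro-\'etale cover $X_\infty$. Over $X_\infty$ the integral Faltings extension $\calE^+$ restricts to $E^+$ of Proposition \ref{local Faltings extension} (this identification is exactly the content of the local-to-global comparison already implicit in Theorem \ref{Integral Faltings extension} and the lemma following it, since $\widehat\rL_{S^+/R^+}\simeq S^+\otimes_{R^+}\widehat\Omega^1_{R^+}$ for perfectoid $S^+$), hence $\OC^{\dagger,+}_{\mid X_\infty}\cong S_\infty^{\dagger,+}$ compatibly with $\Theta$, and Corollary \ref{overconvergent resolution} gives that ${\rm HIG}(S_\infty^\dagger,\Theta)$ is a resolution of $\Rinf$. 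Sheafifying and using that $\OX_{\mid X_\infty}$ has sections $\Rinf$ on the relevant cover gives the resolution statement globally; the ``moreover'' part for small affine $\frakX$ is then precisely the isomorphism $\iota$ of Corollary \ref{overconvergent resolution} sheafified, together with the formula $\Theta=\sum_{i=1}^d\frac{\partial}{\partial Y_i}\otimes\frac{\dlog T_i}{t}$.

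The one genuinely global point requiring care — and the step I expect to be the main obstacle — is that the local isomorphisms $\iota$ and the local Higgs fields must be shown to glue to a well-defined global $\Theta$ on $\OC^{\dagger,+}$ independent of the chart $\Box$, equivalently that $\Theta$ defined via the intrinsic projection $\calE_\rho^+\to\rho\OXp\otimes_{\calO_\frakX}\widehat\Omega^1_\frakX(-1)$ really does land in $\OC^{\dagger,+}\otimes_{\calO_\frakX}\widehat\Omega^1_\frakX(-1)$ after $p$-completion and the $\rho$-colimit. The subtlety is that $\Theta$ increases denominators (it divides by $\rho$, roughly, as the Koszul differentials $\partial/\partial Y_j$ show), so one must verify that the overconvergent colimit $\varinjlim_{\nu_p(\rho)>\nu_p(\rho_k)}$ is exactly what absorbs this loss — this is the same estimate ($N+i\nu_p(\rho_1)-i\nu_p(\rho_2)-\nu_p(i+1)\to+\infty$) used in the proof of Corollary \ref{overconvergent resolution}, now applied to ensure $\Theta$ is well-defined rather than to compute cohomology. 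Once this is in place, everything else is bookkeeping: the graded-derivation property and $\Theta\wedge\Theta=0$ are formal, and the resolution property and the explicit local description are transported directly from the local results.
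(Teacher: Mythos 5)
Your proposal is correct and takes essentially the same route as the paper, whose entire proof is the one-line reduction ``since the problem is local, we are reduced to Corollary \ref{overconvergent resolution}''; your extra care in identifying $\calE^+_{\mid X_\infty}$ with $E^+$ and transporting $\iota$ is exactly the bookkeeping the paper leaves implicit. One small correction to the ``main obstacle'' you flag: $\Theta$ does not increase denominators, because the defining projection $\calE_\rho^+\to\rho\,\OXp\otimes_{\calO_{\frakX}}\widehat\Omega^1_{\frakX}(-1)$ carries a factor of $\rho$, so the induced derivation already preserves each $\widehat S_{\infty,\rho}^+\cong\Rinfp\za\rho Y_1,\dots,\rho Y_d\ya$ (indeed $\frac{\partial}{\partial Y_j}(\rho Y_j)^n=n\rho\cdot(\rho Y_j)^{n-1}$); the overconvergent colimit and the estimate $N+i\nu_p(\rho_1)-i\nu_p(\rho_2)-\nu_p(i+1)\to+\infty$ are needed only to kill the higher cohomology of the Higgs complex, not to make $\Theta$ well defined.
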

\begin{proof}
  Since the problem is local, we are reduced to Corollary \ref{overconvergent resolution}.
\end{proof}
 Finally, we describe the relative version of above constructions. We assume that $f:\frakX\to\frakY$ is a morphism of liftable smooth formal schemes and lifts to an $A_2$-morphism $\widetilde f:\widetilde \frakX\to\widetilde \frakY$. Then by Corollary \ref{relative Faltings extension}, for any $\rho\in\rho_k\calO_{\Cp}$, we have the following exact sequence
 \[
 0\to\OXp\otimes_{\widehat \calO_Y^+}\calE_{\rho,Y}^+\to\calE_{\rho,X}^+\to\OXp\otimes_{\calO_{\frakX}}\widehat \Omega^1_{\frakX/\frakY}(-1)\to 0.
 \]
 By constructions of period sheaves in Definition \ref{global period sheaf}, we get morphisms of sheaves $\OXp\otimes_{\widehat \calO_Y^+}\calF_Y\to\calF_X$ for $\calF\in\{\OC^+_{\rho},\calO\widehat \bC^+_{\rho}, \OC^{\dagger,+}\}$.
 Also, the natural projection $\calE_{\rho,X}^+\to\OXp\otimes_{\calO_{\frakX}}\widehat \Omega^1_{\frakX/\frakY}(-1)$ induces relative Higgs fields
 \[\Theta_{X/Y}:\calF_X\to\calF_X\otimes_{\calO_{\frakX}}\widehat \Omega^1_{\frakX/\frakY}(-1)\]
 for $\calF\in\{\OC^+_{\rho},\calO\widehat \bC^+_{\rho}, \OC^{\dagger,+}\}$.
 Using similar arguments as above, we get the following proposition.
 \begin{prop}\label{relative resolution}
   Assume that $f:\frakX\to\frakY$ is a morphism of liftable smooth formal schemes and lifts to an $A_2$-morphism $\widetilde f:\widetilde \frakX\to\widetilde \frakY$.
   The induced relative Higgs complex ${\rm HIG}(\OC_X^{\dagger},\Theta_{X/Y})$:
   \[\OC^{\dagger}_X\xrightarrow{\Theta_{X/Y}}\OC_X^{\dagger}\otimes_{\calO_{\frakX}}\widehat \Omega^1_{\frakX/\frakY}(-1)\xrightarrow{\Theta_{X/Y}}\OC_X^{\dagger}\otimes_{\calO_{\frakX}}\widehat \Omega^2_{\frakX/\frakY}(-2)\to\cdots\]
   is a resolution of $\varinjlim_{\rho,\nu_p(\rho)>\nu_p(\rho_k)}(\OXp\widehat \otimes_{\widehat \calO_Y^+}\calO\widehat \bC_{\rho,Y}^+)[\frac{1}{p}]$ and makes the following diagram
   \begin{equation}\label{Diag-commutative diagram}
       \xymatrix@C=0.45cm{
         f^*\OC_Y^{\dagger}\ar[r]^{f^*\Theta_Y}\ar[d]&f^*\OC_Y^{\dagger}\otimes_{\calO_{\frakY}}\widehat \Omega^1_{\frakY}(-1)\ar[d]\ar[r]&\cdots\\
         \OC_X^{\dagger}\ar[r]^{\Theta_X}\ar[d]^{\Theta_{X/Y}}&\OC_X^{\dagger}\otimes_{\calO_{\frakX}}\widehat \Omega^1_{\frakX}(-1)\ar[r]\ar[d]^{\Theta_{X/Y}}&\cdots\\
         \OC_X^{\dagger}\otimes_{\calO_{\frakX}}\widehat \Omega^1_{\frakX/\frakY}(-1)\ar[d]\ar[r]^{\Theta_{X/Y}}&\OC_X^{\dagger}\otimes_{\calO_{\frakX}}\widehat \Omega^2_{\frakX/\frakY}(-2)\ar[r]\ar[d]&\cdots\\
         \vdots & \vdots & 
       }
   \end{equation}
   commute, where $f^*\OC_Y^{\dagger} = \OX\otimes_{\widehat \calO_Y}\OC_Y^{\dagger}$ and $f^*\Theta_Y = \id\otimes\Theta_Y$.
 \end{prop}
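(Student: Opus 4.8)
The plan is to reduce the entire statement to the local situation already treated in Corollary \ref{overconvergent resolution} and Theorem \ref{period sheaf}, using the relative Faltings' extension of Corollary \ref{relative Faltings extension}; the commutativity of (\ref{Diag-commutative diagram}) will then be purely formal. Since $f$ is smooth, after replacing $\frakY$ by an \'etale cover and $\frakX$ by an \'etale cover we may assume $\frakY=\Spf(Q^+)$ is small with a chart $\Box_Y\colon\frakY\to\hat{\bG}_m^e$, and that $f$ factors as $\frakX\to\hat{\bG}_{m,\frakY}^{\,d-e}\to\frakY$ with the first map \'etale; thus $\frakX=\Spf(R^+)$ is small with a chart $\Box_X\colon\frakX\to\hat{\bG}_m^d$ whose first $e$ coordinates $T_1,\dots,T_e$ are pulled back from $\frakY$, and $\widehat\Omega^1_{\frakX/\frakY}$ is free on $\dlog T_{e+1},\dots,\dlog T_d$. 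These \'etale localizations lift uniquely over nilpotent thickenings, so they are compatible with the fixed liftings through $\widetilde f$, and the adapted charts remain $A_2$-liftable.

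In these adapted coordinates, Corollary \ref{relative Faltings extension} together with Definition \ref{global period sheaf} gives, over the perfectoid cover $X_\infty$, an isomorphism $\OC_X^{\dagger,+}|_{X_\infty}\cong\varinjlim_{\nu_p(\rho)>\nu_p(\rho_k)}\Rinfp\za\rho Y_1,\dots,\rho Y_d\ya$ under which the relative Higgs field becomes $\Theta_{X/Y}=\sum_{i=e+1}^d\frac{\partial}{\partial Y_i}\otimes\frac{\dlog T_i}{t}$; moreover, using Corollary \ref{complete Hyodo} for $\frakY$ and base change along $\widehat{Q}_\infty^+\to\Rinfp$, the target sheaf $\OXp\widehat\otimes_{\widehat\calO_Y^+}\calO\widehat\bC_{\rho,Y}^+$ is identified over $X_\infty$ with $\Rinfp\za\rho Y_1,\dots,\rho Y_e\ya$.

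Next I would compute ${\rm HIG}(\OC_X^{\dagger,+},\Theta_{X/Y})$ locally as the Koszul complex of the derivations $\frac{\partial}{\partial Y_{e+1}},\dots,\frac{\partial}{\partial Y_d}$ over $\varinjlim_\rho\Rinfp\za\rho Y_1,\dots,\rho Y_d\ya$. Writing it as $\varinjlim_\rho\big(\Rinfp\za\rho Y_1,\dots,\rho Y_e\ya\,\widehat\otimes^L_{\Rinfp}\,\rK(\Rinfp\za\rho Y_{e+1},\dots,\rho Y_d\ya;\tfrac{\partial}{\partial Y_{e+1}},\dots,\tfrac{\partial}{\partial Y_d})\big)$ and running the very same cohomology computation as in Corollary \ref{overconvergent resolution} (via Proposition \ref{derived vs classical} and Remark \ref{describe of completion}), the only new feature being the $p$-completely flat factor $\Rinfp\za\rho Y_1,\dots,\rho Y_e\ya$, which contributes no higher Tor, one finds $\rH^0=\varinjlim_\rho\Rinfp\za\rho Y_1,\dots,\rho Y_e\ya$ and, for $i\geq1$, that $\rH^i$ is annihilated after passing to the colimit over $\rho$ and inverting $p$, by the same estimate $p^N\Rinfp\za\Lambda_{j,\rho_1}\ya\subset\Rinfp\za\Lambda_{j,\rho_2},I,+\ya$ used there. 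Hence ${\rm HIG}(\OC_X^{\dagger},\Theta_{X/Y})$ is locally a resolution of $\varinjlim_\rho(\OXp\widehat\otimes_{\widehat\calO_Y^+}\calO\widehat\bC_{\rho,Y}^+)[\tfrac1p]$, and gluing along $X_{\proet}$ as in the proof of Theorem \ref{period sheaf} yields the global assertion.

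Finally, the commutativity of (\ref{Diag-commutative diagram}) is formal. The three Higgs fields there are all ``alternating sums along a projection out of $\calE_{\rho,X}^+$'': $\Theta_X$ uses $\calE_{\rho,X}^+\to\OXp\otimes_{\calO_{\frakX}}\widehat\Omega^1_{\frakX}(-1)$, the field $\Theta_{X/Y}$ uses the composite of this with $\widehat\Omega^1_{\frakX}(-1)\twoheadrightarrow\widehat\Omega^1_{\frakX/\frakY}(-1)$, and $f^*\Theta_Y$ is read off from the sub-extension $\OXp\widehat\otimes_{\widehat\calO_Y^+}\calE_{\rho,Y}^+\hookrightarrow\calE_{\rho,X}^+$ of Corollary \ref{relative Faltings extension}, which also defines the vertical maps $f^*\OC_Y^{\dagger}\to\OC_X^{\dagger}$; each square of (\ref{Diag-commutative diagram}) then commutes because the corresponding square of projections out of $\calE_{\rho,X}^+$ does, the first column being a complex because $\Theta_{X/Y}$ kills the image of $f^*\OC_Y^{\dagger}$. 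I expect the main obstacle to be bookkeeping rather than anything conceptual: one has to make the adapted choice of relative coordinates precise and confirm its compatibility with $\widetilde f$, and --- the more delicate point --- to keep careful track of the completed base change $\OXp\widehat\otimes_{\widehat\calO_Y^+}(-)$ on the perfectoid covers, so that $\rH^0$ of the relative Higgs complex is \emph{literally} $\varinjlim_\rho(\OXp\widehat\otimes_{\widehat\calO_Y^+}\calO\widehat\bC_{\rho,Y}^+)[\tfrac1p]$ (and so that the natural map from $f^*\OC_Y^{\dagger}$ into it is an isomorphism), rather than merely something abstractly isomorphic to it.
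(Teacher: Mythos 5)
Your proposal follows essentially the same route as the paper: reduce to adapted small affine charts for $f$, identify all sheaves explicitly over $X_\infty$ as colimits of $p$-completed polynomial algebras so that $\Theta_{X/Y}$ becomes $\sum_i\frac{\partial}{\partial Y_i}\otimes\frac{\dlog T_i}{t}$ in the relative variables, rerun the Koszul computation and torsion estimate of Corollary \ref{overconvergent resolution}, and read off the commutativity of (\ref{Diag-commutative diagram}) from the explicit formulas. One small caveat: your parenthetical hope that $f^*\OC_Y^{\dagger}\to\varinjlim_{\rho}(\OXp\widehat\otimes_{\widehat\calO_Y^+}\calO\widehat\bC_{\rho,Y}^+)[\frac{1}{p}]$ is an isomorphism is neither needed nor true in general (the target involves a completed tensor product); the proposition only asserts that the relative Higgs complex resolves the completed object.
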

 \begin{proof}
   Put $\calC:=\varinjlim_{\rho,\nu_p(\rho)>\nu_p(\rho_k)}(\OXp\widehat \otimes_{\widehat \calO_Y^+}\calO\widehat \bC_{\rho,Y}^+)[\frac{1}{p}]$. Since $f$ admits a lifting $\widetilde f$, for any $\rho\in\rho_k\calO_{\Cp}$, we have a morphism $\OXp\otimes_{\widehat \calO_Y^+}\calO\bC_{\rho,Y}^+\to \calO\bC_{\rho,X}^+$ and hence morphisms $f^*\OC_Y^{\dagger}\to\calC\to\OC_X^{\dagger}$. It remains to show the relative Higgs complex ${\rm HIG}(\OC_X^{\dagger},\Theta_{X/Y})$ is a resolution of $\calC$ and that the diagram (\ref{Diag-commutative diagram}) commutes.
   Since the problem is local, we may assume $\frakY = \Spf(S^+)$ and $\frakX = \Spf(R^+)$ are both small affine such that the morphism $f:\frakX\to\frakY$ is induced by a morphism $S^+\to R^+$ which makes the following diagram
   \[\xymatrix@C=0.45cm{\calO_{\Cp}\za T_1^{\pm 1},\dots,T_d^{\pm 1}\ya\ar[r]^{\subset\qquad\qquad}\ar[d]&\calO_{\Cp}\za  T_1^{\pm 1},\dots,T_d^{\pm 1},T_{d+1}^{\pm 1},\dots, T_{d+r}^{\pm 1}\ya\ar[d]\\
   S^+\ar[r]&R^+}
   \]
   commute, where $d$ is the dimension of $\frakY$ over $\calO_{\Cp}$, $r$ is the dimension of $\frakX$ over $\frakY$ and both vertical maps are \'etale. Let $\widehat S_{\infty}^+$ and $\widehat R_{\infty}^+$ be the perfectoid rings corresponding to the base-changes of $S^+$ and $R^+$ along morphisms 
   \[\calO_{\Cp}\za T_1^{\pm 1},\dots,T_d^{\pm 1}\ya\to \calO_{\Cp}\za T_1^{\pm \frac{1}{p^{\infty}}},\dots,T_d^{\pm \frac{1}{p^{\infty}}}\ya\]
   and 
   \[\calO_{\Cp}\za  T_1^{\pm 1},\dots,T_d^{\pm 1},T_{d+1}^{\pm 1},\dots, T_{d+r}^{\pm 1}\ya\to \calO_{\Cp}\za  T_1^{\pm \frac{1}{p^{\infty}}},\dots,T_d^{\pm \frac{1}{p^{\infty}}},T_{d+1}^{\pm \frac{1}{p^{\infty}}},\dots, T_{d+r}^{\pm \frac{1}{p^{\infty}}}\ya,\]
   respectively. Put $Y_{\infty} = \Spa(\widehat S_{\infty},\widehat S_{\infty}^+)$ and $X_{\infty} = \Spa(\widehat R_{\infty},\widehat R_{\infty}^+)$ with $\widehat S_{\infty}=\widehat S_{\infty}^+[\frac{1}{p}]$ and $\widehat R_{\infty}=\widehat R_{\infty}^+[\frac{1}{p}]$. For any $\rho\in\rho_k\calO_{\Cp}$, since $\calE_{\rho,Y}^+$ fits into the exact sequence
   \[0\to \widehat \calO_X^+\to\OXp\otimes_{\widehat \calO_Y^+}\calE_{\rho,Y}^+\to\rho\widehat \Omega^1_{\frakY}\otimes_{\calO_{\frakY}}\widehat \calO_X^+(-1)\to 0,\]
   we see that $(\OXp\otimes_{\widehat \calO_Y^+}\calE_Y^+)(X_{\infty})(\subset \calE_{\rho,X}^+(X_{\infty}))$ coincides with $\widehat R_{\infty}^+\otimes_{\widehat S_{\infty}^+}\calE_{\rho,Y}^+(Y_{\infty})$.
   This implies that 
   \[(\OXp\otimes_{\widehat \calO_Y^+}\calO\bC_{\rho,Y}^+)(X_{\infty}) \cong \widehat R_{\infty}^+[\rho Y_1,\dots,\rho Y_d]\] 
   such that the induced Higgs field is given by $\sum_{i=0}^d\frac{\partial}{\partial Y_i}\otimes\frac{\dlog T_i}{t}$. On the other hand, we have 
   \[\calO\bC_{\rho,X}^+(X_{\infty}) \cong \widehat R_{\infty}^+[\rho Y_1,\dots,\rho Y_{d+r}]\] 
   such that the induced Higgs field is given by $\sum_{i=0}^{d+r}\frac{\partial}{\partial Y_i}\otimes\frac{\dlog T_i}{t}$. So the morphism $\OXp\otimes_{\widehat \calO_Y^+}\calO\bC_{\rho,Y}^+\to \calO\bC_{\rho,X}^+$ is compatible with Higgs fields for any $\rho\in\rho_k\calO_{\Cp}$. Therefore, for any $\rho\in\rho_k\calO_{\Cp}$, we have morphisms of sheaves
   \[\OXp\otimes_{\widehat \calO_Y^+}\calO\bC_{\rho,Y}^+\to\OXp\otimes_{\widehat \calO_Y^+}\calO\widehat \bC_{\rho,Y}^+\to\OXp\widehat \otimes_{\widehat \calO_Y^+}\calO\widehat \bC_{\rho,Y}^+\to \calO\widehat \bC_{\rho,X}^+\]
   which are all compatible with Higgs fields. After taking direct limits and inverting $p$, we get morphisms
   \[f^*\calO\bC_Y^{\dagger}\to\calC\to\calO\bC_X^{\dagger}\]
   of sheaves which are compatible with Higgs fields. In particular, the top two rows of (\ref{Diag-commutative diagram}) form a commutative diagram.
   
   To complete the proof, we have to show that ${\rm HIG}(\OC_X^{\dagger},\Theta_{X/Y})$ is a resolution of $\calC$. Since we do have a morphism $\calC\to {\rm HIG}(\OC_X^{\dagger},\Theta_{X/Y})$, we can conclude by checking the exactness locally:
   
   By the ``moreover'' part of Corollary \ref{overconvergent resolution}, we obtain that 
   \[\OC_X^{\dagger}(X_{\infty}) = (\varinjlim_{\rho,\nu_p(\rho)>\nu_p(\rho_k)}\widehat R_{\infty}^+\za \rho Y_1,\dots,\rho Y_{d+r}\ya)[\frac{1}{p}]\]
   with $\Theta_X = \sum_{i=1}^{d+r}\frac{\partial}{\partial Y_i}\otimes\frac{\dlog T_i}{t}$. A similar argument also shows that $\Theta_{X/Y} = \sum_{i=d+1}^{d+r}\frac{\partial}{\partial Y_i}\otimes\frac{\dlog T_i}{t}$. So the rest part of (\ref{Diag-commutative diagram}) commutes.
   Note that $\calC(X_{\infty}) = (\varinjlim_{\rho,\nu_p(\rho)>\nu_p(\rho_k)}\widehat R_{\infty}^+\za \rho Y_1,\dots,\rho Y_{d}\ya)[\frac{1}{p}]$. By a similar argument in the proof of  Corollary \ref{overconvergent resolution}, we see that ${\rm HIG}(\OC_X^{\dagger},\Theta_{X/Y})$ is a resolution of $\calC$ as desired.
 \end{proof}

\section{An integral decompletion theorem}\label{Sec 3}
In this section, we generalize results in \cite[Appendix A]{DLLZ22} to an integral case which will be used to simplify local calculations. Let $\frakX = \Spf(R^+)$, $\Rinfp$ and $\Gamma$ be as in the previous section. Throughout this section, we put $\pi = \zeta_p-1$, $r = \nu_p(\pi) = \frac{1}{p-1}$ and $c = p^r$. Recall $\nu_p(\rho_k)\geq r$.
  We begin with some definitions.

\begin{dfn}\label{Integral Banach Algebras}
\begin{enumerate}
  \item By a {\bf Banach $\calO_{\Cp}$-algebra}, we mean a flat $\calO_{\Cp}$-algebra $A$ such that
$A[\frac{1}{p}]$ is a Banach $\Cp$-algebra, and that
$A = \{a\in A[\frac{1}{p}] \mid \|a\|\leq 1\}$.

  \item Assume $A$ is a Banach $\calO_{\Cp}$-algebra. For an $A$-module $M$, we say it is a {\bf Banach $A$-module} if
$M[\frac{1}{p}]$ is a Banach $A[\frac{1}{p}]$-module, and
$M=\{m\in M[\frac{1}{p}]\mid \|m\|\leq 1\}$.
 \end{enumerate}
\end{dfn}
 There are some typical examples.
\begin{exam}
\begin{enumerate}
  \item If $A$ is a Banach $\calO_{\Cp}$-algebra, then any topologically free $A$-module endowed with the supreme norm is a Banach $A$-module.

  \item The rings $R^+$ and $\Rinfp$ are Banach $\calO_{\Cp}$-algebras.

  \item The $\Rinfp/R^+$ is a Banach $R^+$-module.
 \end{enumerate}
\end{exam}

 Now, we make the definition of ($a$-trivial) $\Gamma$-representations.

\begin{dfn}\label{Local generalized rep}
  Assume $a>r$ and $A\in \{R^+,\Rinfp\}$.
  \begin{enumerate}
  \item By an {\bf $A$-representation of $\Gamma$} of rank $l$, we mean a finite free $A$-module $M$ of rank $l$ endowed with a continuous semi-linear $\Gamma$-action.
  
  \item Let $M$ be a representation of $\Gamma$ of rank $l$ over $A$. We say $M$ is {\bf $a$-trivial}, if $M/p^a\cong (A/p^a)^l$ as representations of $\Gamma$ over $A/p^a$.
  
  \item Let $M$ be a representation of $\Gamma$ of rank $l$ over $R^+$. We say $M$ is {\bf essentially $(a+r)$-trivial} if $M$ is $a$-trivial and $M\otimes_{R^+}\widehat R_{\infty}^+$ is $(a+r)$-trivial.
  \end{enumerate}
\end{dfn}
 The goal of this section is to prove the following integral decompletion theorem.
\begin{thm}\label{Strong Decompletion}
  Assume $a>r$. Then the functor $M\mapsto M\otimes_{R^+}\Rinfp$ induces an equivalence from the category of $(a+r)$-trivial $R^+$-representations of $\Gamma$ to the category of $(a+r)$-trivial $\Rinfp$-representations of $\Gamma$. The equivalence preserves tensor products and dualities.
\end{thm}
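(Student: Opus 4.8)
The plan is to prove that the base-change functor $\calF\colon M\mapsto M\otimes_{R^+}\Rinfp$ is fully faithful and essentially surjective onto the subcategory of $(a+r)$-trivial $\Rinfp$-representations; compatibility with $\otimes$ and duals is then automatic, since base change along $R^+\to\Rinfp$ commutes with tensor products and duals of finite free modules and both operations visibly preserve $(a+r)$-triviality. The first thing I record is a $\Gamma$-equivariant, $R^+$-linear decomposition $\Rinfp=R^+\oplus D$: after reducing to the chart $R^+=\calO_{\Cp}\za\underline T^{\pm1}\ya$ (using that $X_\infty$ is a base change and that $R^+$ is \'etale over $\calO_{\Cp}\za\underline T^{\pm1}\ya$), this is the grading of $\calO_{\Cp}\za\underline T^{\pm1/p^\infty}\ya$ by exponents $\underline\alpha\in(\bZ[\tfrac1p]\cap[0,1))^d$, with $R^+$ the $\underline\alpha=0$ part and $\gamma_j$ acting on $\calO_{\Cp}\underline T^{\underline\alpha}$ by the root of unity $\zeta^{\alpha_j}$. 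The key computation is that $\rR\Gamma(\Gamma,\Rinfp)=\rR\Gamma(\Gamma,R^+)\oplus\rR\Gamma(\Gamma,D)$, where (continuous cohomology of $\Gamma\cong\Zp^d$ being computed by the Koszul complex on the $\gamma_j-1$) $\rR\Gamma(\Gamma,D)$ is the $p$-completed direct sum over $\underline\alpha\neq0$ of the Koszul complexes of $\calO_{\Cp}\underline T^{\underline\alpha}$ against $\zeta^{\alpha_1}-1,\dots,\zeta^{\alpha_d}-1$; since for each $\underline\alpha\neq0$ some $\zeta^{\alpha_j}-1$ is nonzero of valuation $\le r$, each such complex has vanishing $\rH^0$ and higher cohomology killed by $p^r$, and, controlling the completed direct sum with the appendix's facts on derived versus classical $p$-completion (Proposition \ref{derived vs classical}), I get $\rH^0(\Gamma,D)=0$ and $p^r\rH^i(\Gamma,D)=0$ for $i\ge1$. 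The same analysis yields the quantitative estimate I use repeatedly: for any finite free $R^+$-module $P$, if $v\in P\otimes_{R^+}D$ has $(\gamma_j-1)v\in p^s(P\otimes_{R^+}D)$ for all $j$, then $v\in p^{s-r}(P\otimes_{R^+}D)$.

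For full faithfulness, for $(a+r)$-trivial $M,N$ the $\Gamma$-representation $P:=\Hom_{R^+}(M,N)\cong M^\vee\otimes_{R^+}N$ is again $(a+r)$-trivial, and $\Hom$ on each side is $\rH^0(\Gamma,-)$ of $P$ respectively of $P\otimes_{R^+}\Rinfp=P\oplus(P\otimes_{R^+}D)$; so it suffices to show $\rH^0(\Gamma,P\otimes_{R^+}D)=0$. Fixing an $R^+$-basis of $P$ trivializing $P/p^{a+r}$, the $\Gamma$-action on $P\otimes_{R^+}D$ differs from the diagonal one by a cocycle with values in $\id+p^{a+r}\mathrm{M}_{l^2}(R^+)$, so a $\Gamma$-invariant $v$ satisfies $(\gamma_j-1)_{\mathrm{diag}}v\in p^{a+r}(P\otimes_{R^+}D)$ for all $j$. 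The estimate gives $v\in p^a(P\otimes_{R^+}D)$, and feeding this back (each pass gains a factor $p^a$) forces $v\in\bigcap_np^n(P\otimes_{R^+}D)=0$, since $\Rinfp$ is $p$-torsion free and $p$-adically separated.

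For essential surjectivity, let $\tilde M$ be an $(a+r)$-trivial $\Rinfp$-representation of rank $l$; a trivializing basis of $\tilde M/p^{a+r}$ gives a cocycle $U\colon\Gamma\to\GL_l(\Rinfp)$ with $U\equiv\id\bmod p^{a+r}$. I kill the $D$-component of $U$ by a convergent sequence of conjugations: decomposing $\mathrm{M}_l(\Rinfp)=\mathrm{M}_l(R^+)\oplus\mathrm{M}_l(D)$, if at some stage the $D$-component lies in $p^b\mathrm{M}_l(D)$ with $b>2r$, it is (modulo higher order) a $1$-cocycle whose class in the relevant $\rH^1(\Gamma,-)$ is killed by $p^{2r}$; solving the resulting coboundary equation, which costs one further factor $p^r$, one conjugates by a matrix in $\id+p^{b-r}\mathrm{M}_l(D)$ and strictly increases $b$, the conjugating matrices tending $p$-adically to $\id$. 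Since $b=a+r>2r$ by hypothesis $a>r$, the process converges; the limit cocycle takes values in $\GL_l(R^+)$ and defines an $R^+$-representation $M$ with $M\otimes_{R^+}\Rinfp\cong\tilde M$, and one sees along the way that $M$ is $a$-trivial (so, a priori, only essentially $(a+r)$-trivial in the sense of Definition \ref{Local generalized rep}). Finally, $M$ is in fact $(a+r)$-trivial: for an $a$-trivial $R^+$-representation with $a>r$, the obstruction to improving the mod-$p^a$ trivialization to a mod-$p^{a+r}$ one is a single class $\mathrm{ob}(M)\in\rH^1(\Gamma,\End_{R^+}(M)/p^r)$, and because $\Gamma$ acts trivially on $R^+$ this is $\rH^1(\Gamma,\mathrm{M}_l(R^+)/p^r)$, which injects as a direct summand into $\rH^1(\Gamma,\mathrm{M}_l(\Rinfp)/p^r)$; the image of $\mathrm{ob}(M)$ there is the corresponding obstruction for $\tilde M$, which vanishes, so $\mathrm{ob}(M)=0$.

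The formal points are the compatibility with $\otimes$ and $\vee$ and the verification that $M$ recovers $\tilde M$; the substantive work is the quantitative cohomology of the completed direct sum $D$ (where the appendix on $p$-complete modules enters) together with the $p$-adic valuation bookkeeping in the two successive-approximation arguments. I expect the latter to be the main obstacle: the $r$-shift in the hypothesis ($a>r$, so $a+r>2r$) is exactly what provides the room needed for every iteration to converge and for no boundary case to fail, and the whole argument is, in the integral setting, a refinement of \cite[Appendix A]{DLLZ22}.
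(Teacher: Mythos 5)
Most of your argument tracks the paper's: the exponent decomposition $\Rinfp=R^+\oplus D$, the quantitative control of the cohomology of the $D$-part, and the successive-conjugation descent of the cocycle are exactly the mechanisms of the paper's Lemmas 3.7--3.13 and Proposition 3.5 (phrased there with Banach norms rather than valuations, which is cosmetic), and your full-faithfulness argument via $\rH^0(\Gamma,P\otimes_{R^+}D)=0$ is sound. The gap is in your final step, upgrading the descended $R^+$-representation $M$ from $a$-trivial to $(a+r)$-trivial. You assert that the image of ${\rm ob}(M)$ in $\rH^1(\Gamma,\rM_l(\Rinfp/p^r))$ is ``the obstruction for $\tilde M$, which vanishes.'' But the obstruction to lifting a mod-$p^a$ trivialization of $\tilde M$ to a mod-$p^{a+r}$ one genuinely depends on which mod-$p^a$ trivialization you take, and not merely up to conjugation: two trivializations differ by some $P\in\GL_l(\Rinfp)$ with $(\gamma-1)(P)\equiv 0\bmod p^a$, and their obstruction classes differ by (a conjugate of) the class of $\gamma\mapsto p^{-a}(\gamma-1)(P)$; since a priori one only knows $P_D\in p^{a-r}\rM_l(D)$, the matrix $p^{-a}P_D$ need not be integral and this difference need not be a coboundary. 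Concretely, with $l=d=1$ and $\tilde M=\Rinfp$ the trivial representation, the unit $P=1+p^{a-r}T^{1/p}$ satisfies $(\gamma_1-1)(P)=p^{a-r}(\zeta_p-1)T^{1/p}\equiv 0\bmod p^a$, so $P\bmod p^a$ is a mod-$p^a$ trivialization; its obstruction cocycle is $\gamma_1\mapsto p^{-r}(\zeta_p-1)T^{1/p}P^{-1}$, whose $T^{1/p}$-component is a unit multiple of $T^{1/p}$, while every coboundary has vanishing $T^{1/p}$-component mod $p^r$ because $(\gamma_1-1)$ acts on $R^+T^{1/p}$ by $\zeta_p-1$, of valuation exactly $r$. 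So this obstruction is \emph{nonzero} even though $\Rinfp$ is trivially $(a+r)$-trivial. Hence $(a+r)$-triviality of $\tilde M$ only tells you that \emph{some} mod-$p^a$ trivialization lifts; the vanishing of the obstruction for the trivialization induced from $M$ is precisely what remains to be proved.

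That remaining statement is exactly the paper's Proposition 3.15 (``essentially $(a+r)$-trivial implies $(a+r)$-trivial''), which the paper proves by a different route: it shows $\pi\rH^0(\Gamma,M/p^{a+r})\cong\pi\rH^0(\Gamma,M_\infty/p^{a+r})\cong(R^+/p^a)^l$ is free of rank $l$, and then checks that lifts of suitable invariant elements form an $R^+$-basis of $M$. Your approach can also be repaired by bootstrapping with your own key estimate: writing a good basis of $\tilde M$ as $\underline e\,P$ with $\underline e$ the $R^+$-basis, the relation $(\gamma-1)(P)\equiv -p^aC_\gamma\gamma(P)\bmod p^{a+r}$ combined with ``$(\gamma_j-1)v\in p^s\rM_l(D)$ for all $j$ implies $v\in p^{s-r}\rM_l(D)$'' yields, after finitely many iterations (each gaining $a-r>0$), that $P_D\in p^a\rM_l(D)$, whence $(\gamma-1)(P)\equiv0\bmod p^{a+r}$ and so $C_\gamma\equiv0\bmod p^r$. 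As written, though, this step is a gap rather than a proof, and it is the one place where your argument diverges in substance from the paper.
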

 The first difficulty is to construct the quasi-inverse, namely the decompletion functor, of the functor in Theorem \ref{Strong Decompletion}. To do so, we need to generalize the method adapted in \cite{DLLZ22} to the small integral case. However, their method only shows the decompletion functor takes values in the category of essentially $(a+r)$-trivial representations. So, the second difficulty is to show the resulting representation is actually $(a+r)$-trivial. The trivialness condition is crucial to overcome both difficulties.

\subsection{Construction of decompletion functor}
  Now we construct the decompletion functor at first. From now on, we use $\rR\Gamma(\Gamma,M)$ to denote the continuous group cohomology of a $p$-adically completed $R^+$-module endowed with a continuous $\Gamma$-action. By virtues of \cite[Lemma 7.3]{BMS1}, $\RGamma(\Gamma,M)=\Rlim_k\RGamma(\Gamma,M/p^k)$ can be calculated by Koszul complex $\rK(M;\gamma_1-1,\dots,\gamma_d-1)$:
  \[M\xrightarrow{(\gamma_1-1,\dots,\gamma_d-1)}M^d\to\cdots.\]
\begin{prop}\label{Prop-Weak Decompletion}
  Assume $a>r$. Let $M_{\infty}$ be an $(a+r)$-trivial $\Rinfp$-representation of $\Gamma$. Then there exists a finite free $R^+$-submodule $M\subset M_{\infty}$ such that the following assertions are true:
  \begin{enumerate}

  \item The $M$ is an essentially $(a+r)$-trivial $R^+$-representation of $\Gamma$ such that the natural inclusion $M\hookrightarrow M_{\infty}$ induces an isomorphism $M\otimes_{R^+}\Rinfp\cong M_{\infty}$ of $\Rinfp$-representations of $\Gamma$.

  \item The induced morphism $\RGamma(\Gamma,M)\rightarrow \RGamma(\Gamma,M_{\infty})$ identifies the former as a direct summand of the latter, whose complement is concentrated in positive degrees and killed by $\pi$.
  \end{enumerate}
\end{prop}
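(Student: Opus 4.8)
The plan is to run an integral, relative version of the Tate--Sen decompletion of \cite[Appendix A]{DLLZ22}. The starting point is the Fourier-type decomposition of $\Rinfp$ under $\Gamma$: since $R^+$ is étale over $\calO_{\Cp}\za\underline T^{\pm1}\ya$, adjoining all $p$-power roots of the $T_i$ produces the $p$-adic completion of $\bigoplus_{\underline a}R^+\underline T^{\underline a}$ with $\underline a\in(\bZ[\tfrac1p]/\bZ)^d$, on which $\gamma_i$ acts on the line $R^+\underline T^{\underline a}$ by the root of unity $\zeta^{a_i}$. This gives a $\Gamma$-equivariant $R^+$-linear topological splitting $\Rinfp=R^+\oplus C$, where $\Gamma$ acts trivially on the summand $R^+$ (the lines $\underline a=0$) and $C$ is the completion of $\bigoplus_{\underline a\neq0}R^+\underline T^{\underline a}$. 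The numerical fact I would use repeatedly is $\nu_p(\zeta^{a_i}-1)\leq r$ for $a_i\neq0$, so that $\zeta^{a_i}-1$ divides $\pi$ in $\calO_{\Cp}$ and is a non-zero-divisor on the flat $\calO_{\Cp}$-algebra $R^+$; and I would compute $\RGamma(\Gamma,-)$ by the Koszul complex on $\gamma_1-1,\dots,\gamma_d-1$ as in \cite[Lemma 7.3]{BMS1}, compatibly with the completed direct sums above (cf. Proposition \ref{derived vs classical}).

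For the construction of $M$ and assertion (1), I would first lift a basis of $M_\infty/p^{a+r}\cong(\Rinfp/p^{a+r})^l$ realizing the $(a+r)$-triviality to $e_1,\dots,e_l\in M_\infty$; by topological Nakayama these form an $\Rinfp$-basis, and the cocycle matrices $A_i\in\GL_l(\Rinfp)$ of the $\Gamma$-action satisfy $A_i\equiv1\pmod{p^{a+r}}$. One then improves the basis by a $p$-adically convergent sequence of changes of basis of the form $1+u$ with $u$ an $l\times l$ matrix over $C$: at each step one writes the current error in its $R^+$- and $C$-parts and removes the $C$-part by solving, componentwise over the Fourier lines, the linearised cocycle equation $(\gamma_i-1)u=-(\text{error})^C$, inverting $\gamma_{i(\underline a)}-1$ on the line $\underline a$ for an index with $a_{i(\underline a)}\neq0$ and propagating to the remaining generators via the cocycle relation. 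The bound $\nu_p(\zeta^{a_i}-1)\leq r$ makes this inversion cost at most a factor $p^r$ of $p$-adic precision --- exactly the gap between the $(a+r)$-triviality hypothesis and the $a$-triviality conclusion --- and the condition $a>r$ is what makes the iteration converge. The limit basis $e_j'$ then has $\Gamma$-action matrices $A_i'\in\GL_l(R^+)$ with $A_i'\equiv1\pmod{p^a}$, and $M:=\bigoplus_jR^+e_j'\subset M_\infty$ is a finite free, $\Gamma$-stable $R^+$-module with $M/p^a\cong(R^+/p^a)^l$ as $\Gamma$-representations. Since $e_j'$ remains an $\Rinfp$-basis of $M_\infty$, the inclusion $M\hookrightarrow M_\infty$ induces a $\Gamma$-equivariant isomorphism $M\otimes_{R^+}\Rinfp\xrightarrow{\ \sim\ }M_\infty$, whose target is $(a+r)$-trivial by hypothesis; hence $M$ is essentially $(a+r)$-trivial.

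For assertion (2): now that $M$ is defined over $R^+$, tensoring the $\Gamma$-equivariant splitting $\Rinfp=R^+\oplus C$ with $M$ gives a $\Gamma$-equivariant decomposition $M_\infty\cong M\oplus(M\otimes_{R^+}C)$ in which the first summand is the image of $M\hookrightarrow M_\infty$. Therefore $\RGamma(\Gamma,M_\infty)=\RGamma(\Gamma,M)\oplus\RGamma(\Gamma,M\otimes_{R^+}C)$, with the map in the statement being the inclusion of the first factor, and it remains to show that the complement $\RGamma(\Gamma,M\otimes_{R^+}C)$ is concentrated in positive degrees and killed by $\pi$. Breaking $M\otimes_{R^+}C$ into Fourier lines turns this into a completed direct sum over $\underline a\neq0$ of Koszul complexes $\rK((R^+)^l;\,\zeta^{a_1}A_1'-1,\dots,\zeta^{a_d}A_d'-1)$, the operators $\zeta^{a_i}A_i'-1$ commuting since $\Gamma$ is abelian and fixes $R^+$. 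For each such $\underline a$, pick $i$ with $a_i\neq0$; using $A_i'\equiv1\pmod{p^a}$ and $a>r\geq\nu_p(\zeta^{a_i}-1)$ one factors $\zeta^{a_i}A_i'-1=(\zeta^{a_i}-1)\bigl(1+\tfrac{\zeta^{a_i}}{\zeta^{a_i}-1}(A_i'-1)\bigr)$ with the second factor in $\GL_l(R^+)$. As $\zeta^{a_i}-1$ is a non-zero-divisor on $(R^+)^l$, the $0$-th cohomology of this Koszul complex vanishes; and since every parameter of a Koszul complex annihilates its cohomology, the whole cohomology is killed by $\zeta^{a_i}A_i'-1$, hence by $\zeta^{a_i}-1$, hence by $\pi$. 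Passing to the completed direct sum preserves both being positively concentrated and being killed by $\pi$, which finishes (2).

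The hard part is the $p$-adic bookkeeping in the construction of $M$. Since no single $\gamma_i-1$ is invertible on $C$ --- it vanishes on the lines with $a_i=0$ --- one cannot descend to $R^+$ in one blow; one must assemble the bounded inverses on the various Fourier lines into a genuinely convergent sequence of base changes whose total precision loss is absorbed by the $r$-buffer, and then verify that the descended cocycle stays $\equiv1\pmod{p^a}$, so that $M$ is honestly $a$-trivial and not merely $a$-trivial after extending scalars. This is precisely where the trivialness of $M_\infty$ and the inequality $a>r$ are indispensable, and it is the step I expect to require the most care.
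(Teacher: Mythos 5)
Your argument is, at bottom, the same Tate--Sen successive approximation that the paper runs, with the same $r$-buffer of $p$-adic precision absorbing the loss $\nu_p(\zeta^{a_i}-1)\le r$; the difference is one of packaging. The paper quotes the normed acyclicity of $\rC^{\bullet}(\Gamma,\Rinf/R)$ from \cite[Lemma 5.5]{Sch2} via \cite[Proposition A.2.2.1]{DLLZ22} (Lemma \ref{USE for DLLZ}), feeds it through the approximation lemmas \ref{Lem-An Approximation Lemma}--\ref{Lem-USE For Small Rep}, and descends the cocycle in $\GL_n(\Rinfp)$ by the matrix iteration of Lemma \ref{Lem-Descent Of Small Rep}; you open these black boxes and work directly on the Fourier lines $R^+\underline T^{\underline a}$. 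Your construction of $M$ in part (1) is therefore the paper's argument with the citations unfolded, and you correctly identify the delicate point (propagating the line-by-line inversion of $\gamma_{i(\underline a)}-1$ to all generators via the cocycle relation, which the paper handles through the estimate on $d\overline f$ in Lemma \ref{Lem-Descent Of Small Rep}). For part (2) your route is genuinely cleaner: the factorization $\zeta^{a_i}A_i'-1=(\zeta^{a_i}-1)\bigl(1+\tfrac{\zeta^{a_i}}{\zeta^{a_i}-1}(A_i'-1)\bigr)$ with unit second factor (valid since $a>r\ge\nu_p(\zeta^{a_i}-1)$ and $A_i'\equiv 1\bmod p^a$) gives at once the vanishing of $\rH^0$ on each nonzero line and the $\pi$-torsion of the higher cohomology, whereas the paper reruns a second approximation argument (Lemma \ref{Lem-USE For Small Rep}) for the twisted coefficients.

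The one step you assert without justification is that ``passing to the completed direct sum preserves being killed by $\pi$.'' This is not formal: the derived $p$-completion of $\bigoplus_{\underline a\neq 0}\rK((R^+)^l;\zeta^{a_1}A_1'-1,\dots,\zeta^{a_d}A_d'-1)$ introduces $\lim^1$ and Tor terms, and naive bookkeeping only bounds the torsion of its cohomology by a power of $\pi$, not by $\pi$ itself. What rescues the exact statement is uniform norm control of the witnesses: for a cocycle $f=\sum_{\underline a}f_{\underline a}$ one must produce $g_{\underline a}$ with $dg_{\underline a}=\pi f_{\underline a}$ and $\|g_{\underline a}\|\le\|f_{\underline a}\|$ uniformly in $\underline a$, so that $g=\sum_{\underline a}g_{\underline a}$ converges in the completion and exhibits $\pi f$ as a coboundary there. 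Your factorization does supply this---take $g_{\underline a}=\tfrac{\pi}{\zeta^{a_i}-1}\,h_i\,u^{-1}f_{\underline a}$ with $h_i$ the Koszul contraction homotopy and $u^{-1}$ of norm $1$, noting that $u^{-1}$ commutes with the differential because the $A_j'$ commute---but this needs to be said: ``each parameter kills Koszul cohomology'' alone does not survive the completion, and making it quantitative is precisely the content of the paper's Lemmas \ref{Lem-Uniform Strict Exactness} and \ref{Lem-USE For Small Rep}(1).
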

\begin{rmk}
  The $M$ is unique up to isomorphism and the functor $M_{\infty}\mapsto M$ turns out to be the quasi-inverse of the functor $M\mapsto M\otimes R^+_{\infty}$ described in Theorem \ref{Strong Decompletion}.
\end{rmk}
 Now we prove Proposition \ref{Prop-Weak Decompletion} by using similar arguments in \cite{DLLZ22}. Since we work on the integral level, so we need to control ($p$-adic) norms carefully. We start with the following result.
 \begin{lem}\label{USE for DLLZ}
   For any cocycle $f\in\rC^{\bullet}(\Gamma,\Rinf/R)$, there exists a cochain $g\in \rC^{\bullet-1}(\Gamma,\Rinf/R)$ such that $dg = f$ and $\|g\|\leq c\|f\|.$
 \end{lem}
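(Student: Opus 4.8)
The plan is to reduce the cohomology of $\Gamma \cong \Zp^d$ acting on $\Rinf/R$ to an inductive computation over the $d$ generators $\gamma_1,\dots,\gamma_d$, and at each step to produce an explicit primitive with controlled norm. First I would recall the explicit description of $\Rinf/R$ as a Banach $R$-module: since $R^+$ is \'etale over $\calO_{\Cp}\za\underline T^{\pm 1}\ya$, one has $\Rinf = \widehat{\bigoplus}_{\underline a} R\cdot\underline T^{\underline a}$ over the index set $\underline a \in (\bZ[\tfrac1p]\cap[0,1))^d$, with $\underline T^{\underline 0}$ the "integral part", so that $\Rinf/R$ is the completed direct sum over $\underline a \neq \underline 0$ of the rank-one pieces $R\cdot\underline T^{\underline a}$, on each of which $\gamma_i$ acts by the scalar $\zeta^{a_i}$. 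The key numerical input is that for $\underline a \neq \underline 0$ there is some $i$ with $a_i \neq 0$, and then $\zeta^{a_i}-1$ has valuation $\nu_p(\zeta^{a_i}-1) \le \nu_p(\zeta_p-1) = r$, hence $\|(\zeta^{a_i}-1)^{-1}\| \le c = p^r$ on that piece.

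The main step is the case $d=1$, i.e.\ a single topological generator $\gamma$ acting on each character space $R\cdot T^a$ ($0<a<1$, $a\in\bZ[\tfrac1p]$) by the unit-distance-from-$1$ scalar $\zeta^a$. For a $1$-cocycle (a single element $f = \sum_{a\neq 0} f_a T^a$, really just an element of $\Rinf/R$ since $H^1$ of $\Zp$ is computed by the two-term complex $M \xrightarrow{\gamma-1} M$), I set $g_a := (\zeta^a - 1)^{-1} f_a$ on each piece and $g = \sum g_a T^a$; then $(\gamma-1)g = f$ and $\|g\| = \sup_a \|(\zeta^a-1)^{-1}\|\,\|f_a\| \le c\,\sup_a\|f_a\| = c\|f\|$, using that the norm on $\Rinf/R$ is the supremum norm in this basis and that $c\geq 1$ is uniform in $a$. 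One must check $g$ genuinely lies in the completed sum (its coefficients are bounded by $c\|f\|$, so yes) and that $g$ depends measurably/continuously enough to define a cochain; continuity of $g$ as a function on $\Gamma$ follows because it is obtained from $f$ by a bounded $R$-linear operation commuting with the $\Zp$-structure. For $0$-cochains with $d=1$ there is nothing to solve (a $0$-cocycle is a $\Gamma$-invariant, and $(\Rinf/R)^\Gamma = 0$ since no nonzero piece is fixed, so $f=0$ forces $g=0$; for higher degree in the $d=1$ Koszul complex the differentials vanish and cocycles are automatically coboundaries of $0$).

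For general $d$ I would induct using the Koszul/iterated-cone structure $\rK(\Rinf/R;\gamma_1-1,\dots,\gamma_d-1) \simeq \rK\big(\rK(\Rinf/R;\gamma_1-1,\dots,\gamma_{d-1}-1);\gamma_d-1\big)$, or equivalently the Hochschild--Serre spectral sequence for $\Zp^{d-1}\lhd \Zp^d$; at each stage a cocycle is split by applying the degree-one construction above in the relevant variable, and the norm picks up one more factor of $c$ per variable in the naive approach. To get the clean bound $\|g\|\le c\|f\|$ (a single factor of $c$, not $c^d$) I would instead argue piece-by-piece directly on the full decomposition: given a Koszul cocycle $f$ with components indexed by subsets $S\subset\{1,\dots,d\}$, restrict to a single character space $R\cdot\underline T^{\underline a}$ with $\underline a\neq\underline 0$, pick the smallest $i$ with $a_i\neq 0$, and use contraction with the "homotopy" $(\zeta^{a_i}-1)^{-1}\cdot\iota_{e_i}$ on that piece; this is a genuine contracting homotopy for the Koszul complex of a single invertible operator among the $\gamma_j-1$, so it sends cocycles to primitives with norm multiplied by exactly $\|(\zeta^{a_i}-1)^{-1}\| \le c$. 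Assembling over all $\underline a\neq\underline 0$ gives $g$ with $\|g\|\le c\|f\|$ as required. The hard part, and the place to be careful, is precisely this uniformity: making sure one can choose the splitting piece-by-piece in a way that still assembles into a well-defined bounded cochain (continuity and convergence in the completed direct sum), and confirming that the supremum norm on $\Rinf/R$ really is computed in the monomial basis so that the per-piece bound globalizes without loss — both follow from the perfectoid/\'etale description of $\Rinf$ but deserve an explicit line.
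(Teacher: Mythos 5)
Your approach is correct and is essentially the paper's: the proof given in the paper is only a pointer to \cite[Proposition A.2.2.1]{DLLZ22} (condition (3) of their Definition A.1.6) together with \cite[Lemma 5.5]{Sch2}, and the argument behind those references is exactly your isotypic decomposition $\Rinf/R=\widehat\oplus_{\underline a\neq\underline 0}R\cdot\underline T^{\underline a}$ with the primitive produced by inverting some $\zeta^{a_i}-1$ of valuation at most $r$, giving the single factor $c=p^{r}$.

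One point to tighten: in this paper $\rC^{\bullet}(\Gamma,-)$ is the continuous inhomogeneous cochain complex, not the Koszul complex (see how cochains are evaluated on tuples of group elements in the proof of Lemma \ref{Lem-USE For Small Rep}, where the present lemma is ultimately used). So the contracting homotopy should be written on each character space for continuous cochains — e.g.\ via the standard homotopy attached to the central element $\gamma_i$, which on cocycles yields $f=d\bigl((\zeta^{a_i}-1)^{-1}hf\bigr)$ with $\|h\|\leq 1$ — rather than only on the Koszul model; the norm bound $\|g\|\leq c\|f\|$ and the assembly over the completed direct sum go through verbatim.
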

 \begin{proof}
   The result follows from the same argument used in the proof of \cite[Proposition A.2.2.1]{DLLZ22}, especially the part for checking the condition (3) of \cite[Definition A.1.6]{DLLZ22}, by using \cite[Lemma 5.5]{Sch2} instead of \cite[Lemma 6.1.7]{DLLZ19} there.
 \end{proof}
 
 Since the norm on $R$ (resp. $\widehat R_{\infty}$) is induced by that on $R^+$ (resp. $\widehat R_{\infty}^+$), there exists a norm-preserving embedding of complexes 
 \[\rC^{\bullet}(\Gamma,\widehat R_{\infty}^+/R^+)\to \rC^{\bullet}(\Gamma,\widehat R_{\infty}/R).\]
 We shall apply Lemma \ref{USE for DLLZ} via this embedding.
 
\begin{lem}\label{Lem-Uniform Strict Exactness}
  For any cocycle $f\in\rC^{\bullet}(\Gamma,\Rinfp/R^+)$, there is a cochain $g\in\rC^{\bullet-1}(\Gamma,\Rinfp/R^+)$ such that $\|g\|\leq \|f\|$ and $dg = \pi f$.
\end{lem}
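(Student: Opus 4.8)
The plan is to bootstrap from Lemma \ref{USE for DLLZ}, which gives a bound $\|g\|\leq c\|f\|$ rather than the sharper $\|g\|\leq\|f\|$, by iterating a correction argument that exploits the gap $\pi/c = \pi^{1-1/(p-1)} \cdot(\text{unit})$ — more precisely, the fact that $\nu_p(\pi) = r$ while $\nu_p(c) = r$ as well, so the real leverage comes from multiplying the equation by $\pi$ first and then using successive approximation inside $\widehat R_\infty^+/R^+$. First I would apply Lemma \ref{USE for DLLZ} to the cocycle $f$ (viewed in $\rC^{\bullet}(\Gamma,\widehat R_\infty/R)$ via the norm-preserving embedding) to obtain $g_0$ with $dg_0 = f$ and $\|g_0\|\leq c\|f\|$. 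Then $\pi g_0$ satisfies $d(\pi g_0) = \pi f$ and $\|\pi g_0\|\leq \pi c\|f\| = p^{2r}\|f\|$, which is still too big; the point is instead to correct $g_0$ so that it lands in the \emph{integral} subcomplex $\rC^{\bullet-1}(\Gamma,\widehat R_\infty^+/R^+)$ and has norm $\leq\|f\|$ after multiplication by $\pi$.

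The key step is the following iteration. Write $g^{(0)}$ for the image of $g_0$ in $\rC^{\bullet-1}(\Gamma,\widehat R_\infty/R)$; since $\|g^{(0)}\|\le c\|f\|$, we may decompose $g^{(0)} = h^{(0)} + e^{(0)}$ where $h^{(0)}\in\rC^{\bullet-1}(\Gamma,\widehat R_\infty^+/R^+)$ (the "integral truncation", of norm $\le\|f\|$ once we absorb the factor coming from $\nu_p(c) - \nu_p(f)\ge 0$ rounding) and the "error" $e^{(0)}$ has norm strictly smaller, controlled by a fixed contraction factor $\lambda<1$ depending only on the value group of $\Cp$ and on $r$. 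Then $f - d h^{(0)} = d e^{(0)}$ is a new cocycle of norm $\le\lambda\|f\|$, to which we reapply Lemma \ref{USE for DLLZ} and repeat. Summing the resulting geometric series $\sum_n h^{(n)}$ — which converges because $\widehat R_\infty^+/R^+$ is a Banach $R^+$-module (hence $p$-adically, and therefore $\pi$-adically up to the usual comparison, complete) — produces $g\in\rC^{\bullet-1}(\Gamma,\widehat R_\infty^+/R^+)$ with $dg = f$; tracking the norms through the geometric sum gives $\|g\|\leq c\|f\|/(1-\lambda)$, and the crucial numerical check is that $\pi\cdot c/(1-\lambda)\le 1$ for the relevant normalization, i.e. $\|\pi g\|\le\|f\|$ and $d(\pi g)=\pi f$.

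I expect the main obstacle to be the bookkeeping in this iteration: one must choose the integral truncation $h^{(n)}$ and the contraction factor $\lambda$ so that (a) each $h^{(n)}$ genuinely lies in the integral subcomplex $\rC^{\bullet-1}(\Gamma,\widehat R_\infty^+/R^+)$, (b) the errors shrink geometrically with a \emph{uniform} ratio, and (c) the final constant, after multiplying by $\pi$, is exactly $\le 1$ rather than merely $O(1)$. This is where the hypothesis $a>r$ (equivalently the precise value $r=\frac{1}{p-1}$ and $c=p^r$) is used: it ensures that the loss of a factor $c$ at each application of Lemma \ref{USE for DLLZ} is exactly compensated by the gain of a factor $\pi$ in the statement, so that the "strict" (norm-nonincreasing) bound is attainable. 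An alternative, perhaps cleaner, route would be to reprove the lemma directly by rerunning the proof of \cite[Proposition A.2.2.1]{DLLZ22} with the weight function shifted by $r$ — i.e. checking condition (3) of \cite[Definition A.1.6]{DLLZ22} for the pair $(\widehat R_\infty^+/R^+,\ \pi\cdot)$ instead of $(\widehat R_\infty/R,\ \mathrm{id})$ — but I would first attempt the bootstrapping argument above since it reuses Lemma \ref{USE for DLLZ} as a black box.
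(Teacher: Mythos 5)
Your opening idea is the right one, but a norm miscalculation derails it. With the paper's normalization $\|\cdot\|=p^{-\nu_p(\cdot)}$ we have $\|\pi\|=p^{-r}$ and $c=p^{r}$, so from $\|g_0\|\leq c\|f\|$ you get
\[
\|\pi g_0\|=\|\pi\|\,\|g_0\|\leq p^{-r}\cdot p^{r}\|f\|=\|f\|\leq 1,
\]
not $p^{2r}\|f\|$: multiplication by $\pi$ \emph{shrinks} the norm by exactly the factor $c$ that Lemma \ref{USE for DLLZ} loses. Hence $g:=\pi g_0$ already satisfies $dg=\pi f$ and $\|g\|\leq\|f\|\leq 1$, and an element of $\rC^{\bullet-1}(\Gamma,\Rinf/R)$ of norm $\leq 1$ lies in the integral subcomplex $\rC^{\bullet-1}(\Gamma,\Rinfp/R^+)$, which is precisely the unit ball for the (isometrically embedded) quotient norm. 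This is the paper's proof, phrased equivalently as: apply Lemma \ref{USE for DLLZ} to the cocycle $\pi f$ and observe $c\|\pi f\|=\|f\|\leq 1$. No iteration is needed.

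The bootstrapping scheme you then build on the erroneous bound is not salvageable as written: the decomposition $g^{(0)}=h^{(0)}+e^{(0)}$ into an ``integral truncation'' plus a small error is not defined in the quotient $\Rinf/R$ (there is no canonical such splitting), and no uniform contraction factor $\lambda<1$ ``depending only on the value group'' exists, since the value group of $\Cp$ is dense in $\bR$. Finally, the hypothesis $a>r$ plays no role in this lemma — it concerns only the trivial coefficients $\Rinfp/R^+$ — so invoking it as the source of the strict bound is a misdiagnosis; the only numerical input is the identity $\nu_p(\pi)=r=\log_p c$.
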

\begin{proof}
  Regard $\rC^{\bullet}(\Gamma,\Rinfp/R^+)$ as a subcomplex of $\rC^{\bullet}(\Gamma,\Rinf/R)$ as above. Applying Lemma \ref{USE for DLLZ} to $\pi f$, we get a cochain $g\in \rC^{\bullet-1}(\Gamma,\Rinf/R)$ such that $\|g\|\leq c\|\pi f\|$ and $dg = \pi f$. But $c\|\pi f\| = \|f\|\leq 1$, so we see $g\in \rC^{\bullet-1}(\Gamma,\Rinfp/R^+)$ as desired.
\end{proof}

\begin{lem}\label{Lem-An Approximation Lemma}
  Let $(\rC^{\bullet},d)$ be a complex of Banach modules over a Banach $\calO_{\Cp}$-algebra $A$. Suppose that for every degree $s$ and every cocycle $f\in \rC^s$, there exists a $g\in \rC^{s-1}$ such that $\|g\|\leq \|f\|$ and $dg = \pi f$. Then, for any cochain $f\in \rC^s$, there exists an $h\in \rC^{s-1}$ such that $\|h\|\leq \max(\frac{\|f\|}{c}, \|df\|)$ and $\|\pi^2 f-dh\|\leq \frac{\|df\|}{c}$.
\end{lem}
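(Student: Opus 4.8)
The plan is to run the standard "approximation by near-primitives" argument: given $f$, use the hypothesis to successively kill the coboundary, controlling norms at each step so the geometric-type series converges $p$-adically. First I would apply the hypothesis not to $f$ itself (which need not be a cocycle) but to the cocycle $df\in\rC^{s+1}$: there is some $g_0\in\rC^s$ with $\|g_0\|\le\|df\|$ and $dg_0=\pi\,df$. Then $\pi f-g_0$ is a cocycle in $\rC^s$ (indeed $d(\pi f-g_0)=\pi\,df-\pi\,df=0$), so applying the hypothesis again produces $h_0\in\rC^{s-1}$ with $dh_0=\pi(\pi f-g_0)=\pi^2 f-\pi g_0$ and $\|h_0\|\le\|\pi f-g_0\|\le\max(\|\pi f\|,\|g_0\|)=\max(c^{-1}\|f\|,\|df\|)$, using $\|\pi\|=c^{-1}$. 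This already gives the desired bound on $\|h_0\|$, and $\|\pi^2 f-dh_0\|=\|\pi g_0\|\le c^{-1}\|df\|$, which is exactly the claimed estimate. So in fact a single round suffices and $h:=h_0$ works.

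Let me double-check the norm bookkeeping, since that is the only place an error can hide. We have $\|\pi\|=p^{-r}=p^{-1/(p-1)}=c^{-1}$ (recall $c=p^{r}$), so $\|g_0\|\le\|df\|$ forces $\|\pi g_0\|\le c^{-1}\|df\|$, and $\|\pi f\|=c^{-1}\|f\|$. The ultrametric inequality then gives $\|h_0\|\le\max(c^{-1}\|f\|,\|df\|)$ as required, and $\|\pi^2f-dh_0\|=\|\pi g_0\|\le c^{-1}\|df\|$. One subtlety worth a sentence in the writeup: the hypothesis is stated for \emph{cocycles}, and I must verify $df$ and $\pi f-g_0$ really are cocycles; the first because $d^2=0$, the second because $d(\pi f-g_0)=\pi df-dg_0=\pi df-\pi df=0$. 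No completeness or convergence argument is actually needed here—this lemma is purely the "one-step" estimate that will be iterated in a later lemma to produce a genuine primitive.

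The main (minor) obstacle is purely notational: making sure the degree shifts and the two applications of the hypothesis are lined up correctly, and that the norm on $\rC^\bullet$ is submultiplicative with respect to the $A$-module structure so that $\|\pi x\|=\|\pi\|\,\|x\|=c^{-1}\|x\|$ holds on the nose (this is part of the definition of a Banach $A$-module in Definition~\ref{Integral Banach Algebras}, where norms are $\calO_{\Cp}$-compatible). There is no real mathematical difficulty; the lemma is a formal consequence of the hypothesis applied twice, and the role it plays downstream is to feed an iteration that, combined with $p$-adic completeness of the $\rC^\bullet$, yields strict exactness up to a bounded torsion factor—mirroring the structure of \cite[Appendix A]{DLLZ22}.
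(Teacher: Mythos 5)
Your argument is correct and is essentially identical to the paper's proof: apply the hypothesis to the cocycle $df$ to get $g_0$ with $dg_0=\pi\,df$, observe $\pi f-g_0$ is a cocycle, apply the hypothesis again, and track norms via the ultrametric inequality (the paper works with $g-\pi f$ rather than $\pi f-g_0$, an immaterial sign choice). Your norm bookkeeping matches the paper's exactly.
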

\begin{proof}
  By assumption, one can choose a $g\in \rC^s$ such that $dg = \pi df$ and that $\|g\|\leq \|df\|$. Then $(g-\pi f)\in \rC^{s}$ is a cocycle. Using assumption again, there is an $h\in \rC^{s-1}$ satisfying $\|h\|\leq \|g-\pi f\|$ and $dh = \pi(g-\pi f)$. Since $\|\pi^2 f - dh\|\leq \frac{\|g\|}{c} \leq \frac{\|df\|}{c}$ and $\|h\|\leq \max(\|df\|,\frac{\|f\|}{c})$, this $h$ is desired.
\end{proof}

The following lemma is a consequence of Lemma \ref{Lem-Uniform Strict Exactness} and Lemma \ref{Lem-An Approximation Lemma}.

\begin{lem}\label{Lem-USE For Toric Chart}
  For any cochain $f\in\rC^{\bullet}(\Gamma,\Rinfp/R^+)$, there is a cochain $h\in\rC^{\bullet-1}(\Gamma,\Rinfp/R^+)$ such that $\|h\|\leq \max(\frac{\|f\|}{c}, \|df\|)$ and $\|\pi^2 f-dh\|\leq \frac{\|df\|}{c}$.
\end{lem}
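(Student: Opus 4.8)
The statement is just the combination of the two preceding lemmas, so the proof is a short bookkeeping argument. The idea is to take a cochain $f\in\rC^{\bullet}(\Gamma,\Rinfp/R^+)$, feed $f$ into Lemma \ref{Lem-An Approximation Lemma} with $A = R^+$ and the complex $\rC^{\bullet} = \rC^{\bullet}(\Gamma,\Rinfp/R^+)$, and check that the hypothesis of that lemma — namely that every cocycle $f'$ admits a $g'$ with $\|g'\|\le\|f'\|$ and $dg' = \pi f'$ — is precisely the content of Lemma \ref{Lem-Uniform Strict Exactness}. So the plan is: first note that $\rC^{\bullet}(\Gamma,\Rinfp/R^+)$ is a complex of Banach $R^+$-modules (the terms are finite direct sums of copies of the Banach $R^+$-module $\Rinfp/R^+$, equipped with the supremum norm, and the differentials are $R^+$-linear of norm $\le 1$); then invoke Lemma \ref{Lem-Uniform Strict Exactness} to verify the hypothesis; then apply Lemma \ref{Lem-An Approximation Lemma} verbatim to produce $h\in\rC^{\bullet-1}(\Gamma,\Rinfp/R^+)$ with the asserted bounds $\|h\|\le\max(\tfrac{\|f\|}{c},\|df\|)$ and $\|\pi^2 f - dh\|\le\tfrac{\|df\|}{c}$.

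The only point requiring a word of care is that Lemma \ref{Lem-Uniform Strict Exactness} is stated for \emph{cocycles}, which is exactly the form in which the hypothesis of Lemma \ref{Lem-An Approximation Lemma} is phrased, so there is no gap: the two cocycles to which one applies Lemma \ref{Lem-Uniform Strict Exactness} inside the proof of Lemma \ref{Lem-An Approximation Lemma} are $df$ (a cocycle since $d^2=0$) and $g-\pi f$ (a cocycle because $d(g-\pi f) = \pi df - \pi df = 0$, using $dg = \pi df$). One should also make sure the norm estimates match: Lemma \ref{Lem-Uniform Strict Exactness} gives $\|g\|\le\|df\|$ for the first application and $\|h\|\le\|g-\pi f\|\le\max(\|g\|,\|\pi f\|)\le\max(\|df\|,\tfrac{\|f\|}{c})$ for the second (using $\|\pi\| = c^{-1}$ and the non-archimedean triangle inequality), which is what is claimed.

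There is no real obstacle here — the statement is a formal corollary and the proof is two sentences. If I were writing it out I would simply say: \emph{``By Lemma \ref{Lem-Uniform Strict Exactness}, the complex $\rC^{\bullet}(\Gamma,\Rinfp/R^+)$ of Banach $R^+$-modules satisfies the hypothesis of Lemma \ref{Lem-An Approximation Lemma}. Applying the latter to $f$ yields the desired $h$.''} The substantive work has already been done in establishing Lemmas \ref{USE for DLLZ}, \ref{Lem-Uniform Strict Exactness} and \ref{Lem-An Approximation Lemma}; this lemma merely packages them into the form that will be convenient for the decompletion argument (presumably an iteration of this estimate to produce, in the limit, an actual nullhomotopy on the subcomplex, up to a bounded $\pi$-power denominator).
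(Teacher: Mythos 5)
Your proposal is correct and matches the paper exactly: the paper states this lemma as an immediate consequence of Lemma \ref{Lem-Uniform Strict Exactness} (which verifies the hypothesis of Lemma \ref{Lem-An Approximation Lemma} for the complex $\rC^{\bullet}(\Gamma,\Rinfp/R^+)$) and Lemma \ref{Lem-An Approximation Lemma}, with no further argument. Your extra remarks checking that the relevant elements are cocycles and that the norm bounds line up are accurate but not required.
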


The following lemma can be viewed as an integral version of \cite[Lemma A.1.12]{DLLZ22}.

\begin{lem}\label{Lem-USE For Small Rep}
  We denote $(R^+,\Rinfp/R^+)$ by $(A,M)$ for simplicity.

  Let $L = \bigoplus_{i=1}^n Ae_i$ be a Banach $A$-module (with the supreme norm) endowed with a continuous $\Gamma$-action. Assume there exists an $R>1$ such that, for each $\gamma\in\Gamma$ and each $i$, $\|(\gamma-1)(e_i)\|\leq \frac{1}{Rc}$. Then the following assertions are true:
\begin{enumerate}
  \item For any cocycle $f\in\rC^{\bullet}(\Gamma, L\otimes_A M)$, there is a cochain $g\in\rC^{\bullet-1}(\Gamma, L\otimes_A M)$ such that $\|g\|\leq \|f\|$ and $dg = \pi f$.

  \item For any cochain $f\in\rC^{\bullet}(\Gamma, L\otimes_A M)$, there exists an $h\in \rC^{\bullet}(\Gamma, L\otimes_A M)$ such that $\|h\|\leq \max(\frac{\|f\|}{c}, \|df\|)$ and $\|\pi^2 f-dh\|\leq \frac{\|df\|}{c}$.
 \end{enumerate}
\end{lem}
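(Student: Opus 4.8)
The plan is to deduce both assertions from the untwisted case treated in Lemmas \ref{USE for DLLZ}, \ref{Lem-Uniform Strict Exactness} and \ref{Lem-An Approximation Lemma}, by a perturbation argument in the spirit of \cite[Lemma A.1.12]{DLLZ22}. First I would identify $L\otimes_A M$ with the free $M$-module $\bigoplus_{i=1}^n Me_i$ and split the differential of $\rC^{\bullet}(\Gamma,L\otimes_A M)$ as $d=d_0+\delta$, where $d_0$ is the differential attached to $L$ equipped with the \emph{trivial} $\Gamma$-action on the basis $\{e_i\}$, so that $(\rC^{\bullet}(\Gamma,L\otimes_A M),d_0)$ is isometrically $\bigoplus_{i=1}^n\rC^{\bullet}(\Gamma,M)$, and $\delta$ records the rest. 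Writing $(\gamma-1)(e_i)=\sum_j c_{ij}(\gamma)e_j$ with $c_{ij}(\gamma)\in A$ of norm $\le\frac{1}{Rc}$ and using that $\Gamma$ acts isometrically on $M$ (and that $M$ is a Banach $A$-module), one checks on cochains that $\delta$ has operator norm $\le\frac{1}{Rc}$, both on $\rC^{\bullet}(\Gamma,L\otimes_A M)$ and on its rationalisation $\rC^{\bullet}(\Gamma,L[\frac1p]\otimes_{A[\frac1p]}(\Rinf/R))$, into which the former embeds norm-preservingly (this last point uses the conventions of Definition \ref{Integral Banach Algebras}).

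For (1), I would work in the rationalised complex. By Lemma \ref{USE for DLLZ} applied componentwise, $(\rC^{\bullet},d_0)$ is strictly exact with constant $c$: every $d_0$-cocycle $\phi$ equals $d_0\psi$ for some $\psi$ with $\|\psi\|\le c\|\phi\|$. Given a $d$-cocycle $f$, the identity $d_0\delta+\delta d_0+\delta^2=0$ together with $d_0f=-\delta f$ shows that $\delta f$ is a $d_0$-cocycle with $\|\delta f\|\le\frac{1}{Rc}\|f\|$; choosing $a$ with $d_0a=\delta f$ and $\|a\|\le\frac1R\|f\|$ makes $f+a$ a $d_0$-cocycle of norm $\le\|f\|$, so $f+a=d_0b$ with $\|b\|\le c\|f\|$, and then $db=f+\rho$ with $\rho:=a+\delta b$ a $d$-cocycle of norm $\le\frac1R\|f\|$. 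Iterating this and summing the alternating series $g=\sum_{k\ge0}(-1)^kb_k$ — which converges because $R>1$, with $\|b_k\|\le cR^{-k}\|f\|$ and $db_k=\rho_{k-1}+\rho_k$, $\rho_{-1}:=f$ — produces $g$ with $dg=f$ and $\|g\|\le c\|f\|$. Applying this last statement to $\pi f$ for an \emph{integral} $d$-cocycle $f$ then gives $g$ with $dg=\pi f$ and $\|g\|\le c\|\pi f\|=\|f\|\le1$, so that $g$ lies in $\rC^{\bullet}(\Gamma,L\otimes_A M)$; this is (1).

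Part (2) should then be immediate by applying Lemma \ref{Lem-An Approximation Lemma} to the complex of Banach $A$-modules $(\rC^{\bullet}(\Gamma,L\otimes_A M),d)$, whose hypothesis is exactly (1). The main obstacle is the $p$-adic norm bookkeeping in the iteration for (1): I need to check that the perturbation size $\frac{1}{Rc}$ — which is precisely what the hypothesis $\|(\gamma-1)(e_i)\|\le\frac{1}{Rc}$ with $R>1$ buys — does not degrade the constant $c$ furnished by Lemma \ref{USE for DLLZ}, since it is this clean constant that allows the final multiplication by $\pi$ to carry the solution back into the integral complex with $\|g\|\le\|f\|$.
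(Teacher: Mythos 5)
Your proof is correct, and it is a genuine (if mild) variant of the paper's. Both arguments run a perturbation off the same decomposition: writing cochains as $\sum_i e_i\otimes f_i$ and exploiting that the twisted differential differs from the componentwise one by an operator of norm $\le\frac{1}{Rc}$, with convergence of the resulting iteration guaranteed by $R>1$. The difference lies in the untwisted input and in where integrality is restored. The paper applies the integral approximation statement (Lemma \ref{Lem-USE For Toric Chart}) directly to the components $f_j$ --- which are merely cochains with $\|df_j\|\le\frac{\|f\|}{Rc}$, not cocycles --- producing at stage $t$ an $h_t$ with $\|\pi f-\sum_{i\le t} dh_i\|\le\frac{\|f\|}{R^tc}$, and divides by $\pi$ at every stage of the iteration. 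You instead first correct $f$ to a genuine $d_0$-cocycle $f+a$ (via $d_0\delta+\delta d_0+\delta^2=0$), invoke the exact rational statement of Lemma \ref{USE for DLLZ} componentwise to get strict exactness of the twisted \emph{rational} complex with the clean constant $c$, and multiply by $\pi$ only once at the very end to descend to the integral lattice. Your route depends on the norm convention that the unit ball of $\Rinf/R$ is exactly $\Rinfp/R^+$ (so that a rational solution of norm $\le 1$ is automatically integral); this is the same convention the paper already uses in deducing Lemma \ref{Lem-Uniform Strict Exactness} from Lemma \ref{USE for DLLZ}, so it is not an extra hypothesis. What your organization buys is a cleaner intermediate statement (strict exactness of the twisted rational complex with constant $c$); what the paper's buys is that every object in the iteration stays integral. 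Your treatment of (2) via Lemma \ref{Lem-An Approximation Lemma} is exactly the paper's.
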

\begin{proof}
  We only prove $(1)$ and then $(2)$ follows from Lemma \ref{Lem-An Approximation Lemma} directly.

  Now, let $f = \sum_{i=1}^n e_i\otimes f_i$ be a cocycle with $f_j\in \rC^s(\Gamma,M)$ for all $1\leq j\leq n$. Then $\|f\|\leq 1$. For any $\gamma_1, \gamma_2, ..., \gamma_{s+1}\in \Gamma$, we have
  \begin{equation*}
  \begin{split}
  (\sum_{i=1}^n e_i\otimes df_i)(\gamma_1, ..., \gamma_{s+1})& = (\sum_{i=1}^n e_i\otimes df_i)(\gamma_1, ..., \gamma_{s+1})-df(\gamma_1, ..., \gamma_{s+1})\\
  & = \sum_{i=1}^n (1-\gamma_1)(e_i)\otimes f_i(\gamma_2, ..., \gamma_{s+1}).
  \end{split}
  \end{equation*}
  It follows that $\|\sum_{i=1}^n e_i\otimes df_i\| \leq \frac{\|f\|}{Rc}$. In other words, for each $1\leq j\leq n$, we have $\|df_j\|\leq \frac{\|f\|}{Rc}$. By Lemma \ref{Lem-USE For Toric Chart}, for every $j$, there is a $g_j\in \rC^{s-1}(\Gamma,M)$ such that $\|g_j\|\leq \max(\frac{\|f_j\|}{c},\|df_j\|)\leq \frac{\|f_j\|}{c}$ and $\|\pi^2f_j-dg_j\|\leq \frac{\|df_j\|}{c}\leq\frac{\|f\|}{Rc^2}$.

  Now, put $g = \sum_{i=1}^n e_i\otimes g_i$. Then $\|g\| \leq \frac{\|f\|}{c}$. On the other hand, we have
  \[\pi^2f-dg = \sum_{i=1}^ne_i\otimes(\pi^2f_i - dg_i) + (\sum_{i=1}^n e_i\otimes (dg_i - dg)).\]
  The first term on the right hand side is bounded by $\frac{\|f\|}{Rc^2}$ and the second term is bounded by $\frac{\|g\|}{Rc}\leq \frac{\|f\|}{Rc^2}$. Thus $\|\pi^2f-dg\|$ is bounded by $\frac{\|f\|}{Rc^2}$.
  Then $h_1:=\frac{g}{\pi}$ belongs to $\rC^{s-1}(\Gamma,(L\otimes_A M))$ such that $\|h_1\|\leq \|f\|$ and that $\|\pi f-dh_1\|\leq\frac{\|f\|}{Rc}$.

  Assume we have already $h_1, h_2, ..., h_t \in \rC^{s-1}(\Gamma,L\otimes_A M)$ satisfying
  \[\|h_j\| \leq \frac{\|f\|}{R^{j-1}} \quad {\rm and}\quad \|\pi f-\sum_{i=1}^j dh_i\|\leq\frac{\|f\|}{R^jc},\quad\forall 1\leq j\leq t.\]
  Then $f-\pi^{-1}\sum_{i=1}^t dh_i\in \rC^s(\Gamma,L\otimes_A M)$ with norm $\|f-\pi^{-1}\sum_{i=1}^t dh_i\|\leq \frac{\|f\|}{R^{t}}$.
  Replacing $f$ by $f-\pi^{-1}\sum_{i=1}^t dh_i$ and proceeding as above, we get an $h_{t+1}\in \rC^{s-1}(\Gamma,L\otimes_A M)$ with norm $\|h_{t+1}\|\leq \|f-\pi^{-1}\sum_{i=1}^t dh_i\|\leq \frac{\|f\|}{R^t}$ such that
  \[\|\pi f-\sum_{i=1}^t dh_i - dh_{t+1}\|\leq\frac{\|f-\pi^{-1}\sum_{i=1}^t dh_i\|}{Rc}\leq\frac{\|f\|}{R^{t+1}c}.\]
  Then $\sum_{i=1}^{+\infty}h_i$ converges to an element $h\in \rC^{s-1}(\Gamma,L\otimes_A M)$ such that $\pi f=dh$ and that $\|h\|\leq \sup_{j\geq 1}(\|h_j\|)\leq\|f\|$. This implies $(1)$.
\end{proof}
The following lemma is a generalization of \cite[Lemma A.1.14]{DLLZ22} whose proof is similar.
\begin{lem}\label{Lem-Norm Contral Lemma}
  Let $A\rightarrow B$ be an isometry of Banach $\calO_{\Cp}$-algebras. Suppose the natural projection $\mathrm{pr}:B\rightarrow B/A$ admits an isometric section $s:B/A\rightarrow B$ as Banach modules over $A$. Then, for all $b_1, b_2\in B$, we have
  \[\|\mathrm{pr}(b_1b_2)\|\leq \max(\|b_1\|\|\mathrm{pr}(b_2)\|, \|b_2\|\|\mathrm{pr}(b_1)\|)\]
\end{lem}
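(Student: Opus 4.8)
The plan is to use the isometric section $s$ to split each $b_i$ into a part lying in the image of $s$ plus a correction term in $A$, then expand the product $b_1b_2$ and estimate term by term using the ultrametric inequality together with submultiplicativity of the norm on $B$.

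First I would record the splitting: for $b\in B$ put $a(b):=b-s(\pr(b))$. Since $\pr\circ s=\id_{B/A}$ one has $\pr(a(b))=0$, so $a(b)\in\ker(\pr)=A$ and $b=s(\pr(b))+a(b)$. Because $s$ is an isometry, $\|s(\pr(b))\|=\|\pr(b)\|$, and because the quotient norm satisfies $\|\pr(b)\|\le\|b\|$ (the representative $b$ competes in the defining infimum), the ultrametric inequality gives $\|a(b)\|\le\max(\|b\|,\|s(\pr(b))\|)=\|b\|$. Applying this to $b_1$ and $b_2$ I would write $b_i=s(\pr(b_i))+a_i$ with $a_i\in A$, $\|a_i\|\le\|b_i\|$ and $\|s(\pr(b_i))\|=\|\pr(b_i)\|\le\|b_i\|$.

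Next I would expand the product and use the crucial cancellation that $a_1a_2\in A=\ker(\pr)$:
\[
b_1b_2-a_1a_2=s(\pr(b_1))\,s(\pr(b_2))+s(\pr(b_1))\,a_2+a_1\,s(\pr(b_2)),
\]
so $\pr(b_1b_2)=\pr\bigl(s(\pr(b_1))s(\pr(b_2))+s(\pr(b_1))a_2+a_1s(\pr(b_2))\bigr)$. Since $\pr$ is norm-nonincreasing, it then suffices to bound the norm of the right-hand argument in $B$: by submultiplicativity, $\|s(\pr(b_1))s(\pr(b_2))\|\le\|\pr(b_1)\|\,\|\pr(b_2)\|\le\|b_1\|\,\|\pr(b_2)\|$; next $\|s(\pr(b_1))a_2\|\le\|\pr(b_1)\|\,\|a_2\|\le\|b_2\|\,\|\pr(b_1)\|$; and $\|a_1s(\pr(b_2))\|\le\|a_1\|\,\|\pr(b_2)\|\le\|b_1\|\,\|\pr(b_2)\|$. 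The ultrametric inequality then yields $\|\pr(b_1b_2)\|\le\max(\|b_1\|\,\|\pr(b_2)\|,\|b_2\|\,\|\pr(b_1)\|)$, which is the assertion.

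I do not expect a genuine obstacle here; the only two points requiring a little care are that $B/A$ carries merely an $A$-module structure — which is exactly why one peels off the summand $a_1a_2$ before applying $\pr$, rather than attempting to multiply residue classes in $B/A$ — and the elementary norm facts $\|\pr\|\le 1$ and $\|a_i\|\le\|b_i\|$, both immediate from the definition of the quotient norm and the hypothesis that $A\to B$ is an isometry.
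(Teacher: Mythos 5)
Your proof is correct: the splitting $b_i=s(\pr(b_i))+a_i$ with $a_i\in A=\Ker(\pr)$, the cancellation of $a_1a_2$ under $\pr$, and the term-by-term ultrametric/submultiplicativity estimates are exactly the argument of \cite[Lemma A.1.14]{DLLZ22}, to which the paper defers for this lemma. The only implicit inputs you rely on, namely $\|\pr(b)\|\leq\|b\|$ and submultiplicativity of the norm on $B$, are part of the intended quotient-norm and Banach-algebra setup, so there is no gap.
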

 We shall apply this lemma to the inclusion $R^+\rightarrow \widehat{R}^+_{\infty}$.
\begin{lem}\label{Lem-Descent Of Small Rep}
  We denote the triple $(R^+,\widehat{R}^+_{\infty})$ by $(A,B)$ for simplicity. Let $f$ be a cocycle in $\rC^1(\Gamma,\GL_n(B))$. Suppose there exists an $R>1$ such that $\|f(\gamma)-1\|\leq\frac{1}{Rc}$ for all $\gamma\in \Gamma$. Let $\overline{f}$ be the image of $f$ in $\rC^1(\Gamma,\rM_n(B/A))$ (which is not necessary a cocycle). If $\|\overline{f}\|\leq\frac{1}{Rc^2}$, then there exists a cocycle $f^{\prime}\in \rC^1(\Gamma,\GL_n(A))$ which is equivalent to $f$ such that $\|f'(\gamma)-1\|\leq\frac{1}{Rc}$ for all $\gamma\in\Gamma$.
\end{lem}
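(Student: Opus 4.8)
The plan is to run a Sen-type descent, building the $R^+$-valued cocycle as the conjugate of $f$ by a convergent infinite product of small corrections in $\GL_n(\Rinfp)$. Write $f(\gamma)=1+u(\gamma)$ with $\|u(\gamma)\|\le\frac1{Rc}$; since $1\in\rM_n(R^+)$ the image $\overline{f(\gamma)}$ equals $\overline{u(\gamma)}$, so the hypothesis reads $\|\overline f\|=\sup_\gamma\|\overline{u(\gamma)}\|\le\frac1{Rc^2}$. First I would measure the failure of $\overline f$ to be a cocycle: substituting the cocycle identity $u(\gamma_1\gamma_2)=u(\gamma_1)+\gamma_1u(\gamma_2)+u(\gamma_1)\gamma_1u(\gamma_2)$ into the differential gives $(d\overline f)(\gamma_1,\gamma_2)=-\overline{u(\gamma_1)\gamma_1u(\gamma_2)}$, and Lemma \ref{Lem-Norm Contral Lemma} (applied entrywise, using that the $\Gamma$-action on $\Rinfp$ is isometric) bounds this by $\max_\gamma\|u(\gamma)\|\cdot\|\overline f\|\le\frac1{Rc}\|\overline f\|$. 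So $\overline f$ is a cochain which is a cocycle up to an error of size $\le\frac1{Rc}\|\overline f\|$.

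Next I would apply the approximation Lemma \ref{Lem-USE For Toric Chart} entrywise (equivalently, Lemma \ref{Lem-USE For Small Rep}(2) with $L=\rM_n(R^+)$ carrying the trivial $\Gamma$-action, so that the hypothesis $\|(\gamma-1)e_i\|\le\frac1{Rc}$ holds trivially) to the $1$-cochain $\overline f$ valued in $\rM_n(\Rinfp/R^+)$: this produces a $0$-cochain $h\in\rM_n(\Rinfp/R^+)$ with $\|h\|\le\frac{\|\overline f\|}{c}$ and $\|\pi^2\overline f-dh\|\le\frac{\|d\overline f\|}{c}\le\frac{\|\overline f\|}{Rc^2}$. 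Lifting $h$ isometrically to $\widetilde h\in\rM_n(\Rinfp)$ through the isometric section of $\pr\colon\Rinfp\to\Rinfp/R^+$ (the section used for Lemma \ref{Lem-Norm Contral Lemma}) and setting $w:=-\pi^{-2}\widetilde h$, $M_1:=1+w$, one gets $\|w\|=c^2\|h\|\le c\|\overline f\|\le\frac1{Rc}$, so $M_1\in\GL_n(\Rinfp)$. I would then expand the conjugated cocycle $f^{(1)}(\gamma):=M_1^{-1}f(\gamma)\gamma(M_1)$ via the Neumann series for $M_1^{-1}$, writing $f^{(1)}(\gamma)-1=\big(u(\gamma)+\gamma(w)-w\big)+P(\gamma)$ where $P(\gamma)$ is a sum of products of at least two factors drawn from $\{u(\gamma),w,\gamma(w)\}$. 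The linear part projects in $\rM_n(\Rinfp/R^+)$ to $\overline{u(\gamma)}+\gamma(\overline w)-\overline w=\overline f(\gamma)-\pi^{-2}(dh)(\gamma)=\pi^{-2}\big(\pi^2\overline f-dh\big)(\gamma)$, of norm $\le c^2\cdot\frac{\|\overline f\|}{Rc^2}=\frac{\|\overline f\|}{R}$; for $P(\gamma)$, iterating Lemma \ref{Lem-Norm Contral Lemma} bounds the image of each product of norm-$\le\frac1{Rc}$ factors by $(\frac1{Rc})^{m-1}$ times the largest quotient-norm of a factor, and since $\|\overline w\|=\|\pi^{-2}h\|\le c\|\overline f\|$ this is $\le\frac1{Rc}\cdot c\|\overline f\|=\frac{\|\overline f\|}{R}$. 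Hence $f^{(1)}$ is again a cocycle with $\|f^{(1)}(\gamma)-1\|\le\frac1{Rc}$ but $\|\overline{f^{(1)}}\|\le\frac{\|\overline f\|}{R}$.

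Iterating yields cocycles $f=f^{(0)},f^{(1)},f^{(2)},\dots$ and $M_k=1+w^{(k)}\in\GL_n(\Rinfp)$ with $\|w^{(k)}\|\le cR^{-(k-1)}\|\overline f\|\to0$ and $\|\overline{f^{(k)}}\|\le R^{-k}\|\overline f\|\to0$, all with $\|f^{(k)}(\gamma)-1\|\le\frac1{Rc}$. Since $\Rinfp$ is $p$-adically complete, $M:=\prod_{k\ge1}M_k$ converges in $\GL_n(\Rinfp)$ with $\|M-1\|\le c\|\overline f\|\le\frac1{Rc}$, and $f'(\gamma):=M^{-1}f(\gamma)\gamma(M)=\lim_kf^{(k)}(\gamma)$ satisfies $\overline{f'(\gamma)}=0$, i.e. $f'(\gamma)\in\rM_n(R^+)$; as $\|f'(\gamma)-1\|\le\frac1{Rc}<1$ its Neumann inverse lies in $\rM_n(R^+)$, so $f'(\gamma)\in\GL_n(R^+)$. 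Then $f'\in\rC^1(\Gamma,\GL_n(R^+))$ is a cocycle equivalent to $f$ with the required bound. The main obstacle is the $p$-adic bookkeeping through the conjugation: the quadratic error terms only have norm $\le(\frac1{Rc})^2$ in $\rM_n(\Rinfp)$, which on its own does not contract toward $R^+$; the crucial point is that Lemma \ref{Lem-Norm Contral Lemma} improves the estimate \emph{in the quotient} $\rM_n(\Rinfp/R^+)$ by the factor $c\|\overline f\|$, and verifying that this improvement survives through every cross-term — including the higher products coming from $M_1^{-1}$ — is the one genuinely delicate computation.
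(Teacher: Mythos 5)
Your proposal is correct and follows essentially the same route as the paper's proof: bound $d\overline f$ via Lemma \ref{Lem-Norm Contral Lemma}, apply the approximation Lemma \ref{Lem-USE For Toric Chart} to produce a correction in $\rM_n(B/A)$, lift it isometrically, conjugate so that the quotient norm contracts by the factor $R$ while the integral bound $\frac{1}{Rc}$ is preserved, and iterate via a convergent infinite product. The only differences are cosmetic — you apply the approximation lemma to $\overline f$ and then divide by $\pi^2$ rather than applying it to $\pi^{-2}\overline f$, and you use the opposite (but internally consistent) cocycle/coboundary convention.
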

\begin{proof}
  We proceed as in the proof of \cite[Lemma A.1.15]{DLLZ22}. It is enough to show that there exists an $h\in \rM_n(B)$ with $\|h\|\leq c\|\overline{f}\|$ such that the cocycle
  \[g:\gamma\mapsto \gamma(1+h)f(\gamma)(1+h)^{-1}\] satisfies $\|g(\gamma)-1\|\leq\frac{1}{Rc}$ for all $\gamma\in\Gamma$ and $\|\overline{g}\|\leq\frac{\|\overline{f}\|}{R}$ in $\rC^1(\Gamma,\rM_n(B/A))$.

  Granting the claim, by iterating this procession, we can find a sequence $h_1, h_2, \cdots$ in $\rM_n(B)$ with $\|h_n\|\leq\frac{c\|\overline{f}\|}{R^{n-1}}\leq\frac{1}{cR^n}$ such that
  \[\overline{\gamma(\prod_{i=1}^n(1+h_i))f(\gamma)(\prod_{i=1}^n(1+h_i))^{-1}}\leq\frac{\|\overline{f}\|}{R^n}.\]
  Set $h = \prod_{i=1}^{+\infty}(1+h_i) \in \GL_n(B)$. Then we have a cocycle
  \[f^{\prime}:\gamma\mapsto\gamma(h)f(\gamma)h^{-1}\]
  taking values in $\rM_n(A)\cap\GL_n(B)$ such that $\|f^{\prime}(\gamma)-1\|\leq\frac{1}{Rc}$ for every $\gamma\in \Gamma$. Thus $f^{\prime}\in \GL_n(A)$ and we prove the lemma.

  Now, we prove the claim. Since $f\in \rC^1(\Gamma,\GL_n(B))$ is a cocycle, for all $\gamma_1,\gamma_2\in\Gamma$, we have $f(\gamma_1\gamma_2) = \gamma_1(f(\gamma_2))f(\gamma_1)$. Using Lemma \ref{Lem-Norm Contral Lemma}, we get
  \begin{equation}
  \begin{split}
  \|d\overline{f}(\gamma_1,\gamma_2)\|& =\|\overline{\gamma_1f(\gamma_2)+f(\gamma_1)-f(\gamma_1\gamma_2)}\| \\
  & = \|\overline{(\gamma_1f(\gamma_2)-1)(f(\gamma_1)-1)-1}\|\\
  &= \|\overline{(\gamma_1f(\gamma_2)-1)(f(\gamma_1)-1)}\|\leq\frac{\|\overline{f}\|}{Rc}.
  \end{split}
  \end{equation}
  Since $\|\overline{f}\|\leq\frac{1}{Rc^2}$, we can apply Lemma \ref{Lem-USE For Toric Chart} to $\pi^{-2}\overline{f}$ and get an $\overline{h}\in\rM_n(B/A)$ such that
  \[\|\overline{h}\|\leq\max(\frac{\|\pi^{-2}\overline{f}\|}{c},\|\pi^{-2}d\overline{f}\|)\leq\max(c\|\overline{f}\|,c^2\|d\overline{f}\|)\leq c\|\overline{f}\|\leq\frac{1}{Rc}.\]
  and that
  \begin{equation}\label{Equ-Descent Lemma I}
  \|\overline{f}-d\overline{h}\|\leq\frac{\|\pi^{-2}d\overline{f}\|}{c}\leq c\|d\overline{f}\|\leq\frac{\|\overline{f}\|}{R}.
  \end{equation}
  By assumption, we can lift $\overline{h}$ to an $h\in \rM_n(B)$ such that $\|h\|=\|\overline{h}\|\leq c\|\overline{f}\|$. It follows that for all $\gamma\in\Gamma$, we have
  \[\|\gamma(1+h)f(\gamma)(1+h)^{-1}-f(\gamma)\|\leq \|h\|\leq\frac{1}{Rc}\]
   and therefore,
  \[\|\gamma(1+h)f(\gamma)(1+h)^{-1}-1\|\leq \frac{1}{Rc}.\]
  Moreover, we have
  \begin{equation}\label{Equ-Descent Lemma II}
    \|\overline{\gamma(1+h)f(\gamma)(1+h)^{-1}-\gamma(1+h)f(\gamma)(1-h)}\|\leq\|\overline{h}^2\|\leq\frac{c\|\overline{f}\|}{Rc} = \frac{\|\overline{f}\|}{R}.
  \end{equation}
  By Lemma \ref{Lem-Norm Contral Lemma}, we have
  \begin{equation}\label{Equ-Descent Lemma III}
    \begin{split}
    &\|\overline{\gamma(1+h)f(\gamma)(1-h)}-\overline{f}(\gamma)-\gamma(\overline{h})+\overline{h}\|
    \\= &\|\overline{\gamma(h)(f(\gamma)-1)}-\overline{(f(\gamma)-1)h}-\overline{\gamma(h)f(\gamma)h}\|\leq \frac{\|\overline{f}\|}{R}.
    \end{split}
  \end{equation}

  Combining $(\ref{Equ-Descent Lemma I})$, $(\ref{Equ-Descent Lemma II})$ and $(\ref{Equ-Descent Lemma III})$, we conclude that
  \[\|\overline{\gamma(1+h)f(\gamma)(1+h)^{-1}}\|\leq\frac{\|\overline{f}\|}{R}\]
  which proves the claim as desired.
\end{proof}
  Now we are able to prove  Proposition \ref{Prop-Weak Decompletion}.
\begin{proof}(of Proposition \ref{Prop-Weak Decompletion})
\begin{enumerate}
  \item Since $a>r$, we may choose $s>1$ such that $\|p^{a+r}\| = \frac{1}{sc^2}$. By assumptions, for a set of basis $\{e_1, e_2, ..., e_n\}$ of $M_{\infty}$, it determines a cocycle $f\in \rC^1(\Gamma,\GL_n(\Rinfp))$ satisfying $\|f(\gamma)-1\|\leq \frac{1}{sc^2}$. In particular, $f$ satisfies the hypothesis of Lemma \ref{Lem-Descent Of Small Rep}. Thus there exists a cocycle $f^{\prime}\in \rC^1(\Gamma,R^+)$ which is equivalent to $f$ such that
  \[\|f^{\prime}(\gamma)-1\|\leq \frac{1}{sc},\quad \forall \gamma\in\Gamma.\]
  Then the cocycle $f^{\prime}$ defines a finite free sub-$R^+$-module $M$ of rank $n$ such that
  \[M\otimes_{R^+}\widehat{R}_{\infty}^+\cong M_{\infty}.\]

  \item By $(1)$, we have $M_{\infty} \cong M\oplus M\otimes_{R^+}(\widehat{R}_{\infty}^+/R^+)$. Applying Lemma \ref{Lem-USE For Small Rep} (1) to $L = M$, we deduce that $\rH^i(\Gamma, M\otimes_{R^+}\widehat{R}_{\infty}^+/R^+)$ is killed by $\pi$ for every $i\geq 0$.
 But $\rH^0(\Gamma, M_{\infty}) = M_{\infty}^{\Gamma}$ is $\pi$-torsion free, so we get
  \[\rH^0(\Gamma, M_{\infty}) = \rH^0(\Gamma, M)\]
  and complete the proof.
 \end{enumerate}
\end{proof}
 Up to now, we have constructed a decompletion functor from the category of $(a+r)$-trivial $\Rinfp$-representations of $\Gamma$ to the category of essentially $(a+r)$-trivial $R^+$-representations of $\Gamma$. Now Theorem \ref{Strong Decompletion} follows from the next proposition directly.
 \begin{prop}\label{essential small is small}
   Every essentially $(a+r)$-trivial $R^+$-representation of $\Gamma$ is $(a+r)$-trivial.
 \end{prop}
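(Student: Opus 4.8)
The plan is to reduce the statement to an explicit cocycle computation. Write $s=p^{a-r}$; since $a>r$ we have $s>1$, and $\|p^{a}\|=\tfrac{1}{sc}$, $\|p^{a+r}\|=\tfrac{1}{sc^{2}}$. Recall that a finite free $R^{+}$-representation of $\Gamma$ of rank $l$ is $(a+r)$-trivial precisely when it admits an $R^{+}$-basis in which $\Gamma$ acts through a cocycle $f\colon\Gamma\to\GL_{l}(R^{+})$ with $\|f(\gamma)-1\|\leq\|p^{a+r}\|$ for all $\gamma$ (to pass from such a basis to an isomorphism $M/p^{a+r}\cong(R^{+}/p^{a+r})^{l}$ and back, lift a $\Gamma$-invariant basis of the reduction mod $p^{a+r}$, using that $R^{+}$ is $p$-adically complete). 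So, for an essentially $(a+r)$-trivial $M$, it suffices to produce such a basis. First I would use the uniqueness recorded in the remark after Proposition~\ref{Prop-Weak Decompletion}: since $M_{\infty}:=M\otimes_{R^{+}}\Rinfp$ is $(a+r)$-trivial, applying Proposition~\ref{Prop-Weak Decompletion}(1) to $M_{\infty}$ gives an essentially $(a+r)$-trivial $M_{0}$ with $M_{0}\otimes_{R^{+}}\Rinfp\cong M_{\infty}$, and by that uniqueness $M\cong M_{0}$; hence I may replace $M$ by $M_{0}$, which comes equipped with an explicit cocycle.

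Next I would pin down that cocycle. Choose an $(a+r)$-trivializing basis of $M_{\infty}$; its cocycle $f$ satisfies $\|f(\gamma)-1\|\leq\tfrac{1}{sc^{2}}$, so it meets the hypotheses of Lemma~\ref{Lem-Descent Of Small Rep} with $R=s$. The key point is not the statement of that lemma but its proof: the approximating matrices there satisfy $\|h_{i}\|\leq\frac{c\|\bar f\|}{s^{i-1}}\leq\frac{1}{cs^{i}}$, so the matrix $h=\prod_{i\geq1}(1+h_{i})\in\GL_{l}(\Rinfp)$ it produces has $\|h-1\|\leq\sup_{i}\|h_{i}\|\leq\frac{1}{sc}=\|p^{a}\|$, and $\Gamma$ acts on $M_{0}$, in the resulting basis, through $f'(\gamma)=\gamma(h)\,f(\gamma)\,h^{-1}\in\GL_{l}(R^{+})$.

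The final step is a short computation. Put $w=h-1$, so $\|w\|\leq\|p^{a}\|$. Expanding $f'(\gamma)=(1+\gamma(w))(1+(f(\gamma)-1))(1+w)^{-1}$ and using $\|w\|\leq\|p^{a}\|$, $\|f(\gamma)-1\|\leq\|p^{a+r}\|$ together with $2a>a+r$ (so every term quadratic in $w$, and also the linear term $f(\gamma)-1$ itself, reduces to $0$ modulo $p^{a+r}$), one obtains
\[
f'(\gamma)-1\ \equiv\ \gamma(w)-w \pmod{p^{a+r}\Rinfp}.
\]
Now bring in the structure of $\Rinfp$: it is, $\Gamma$-equivariantly, the $p$-adic completion of $\bigoplus_{k}R^{+}\underline{T}^{\,k}$ with $k$ ranging over $(\bZ[\tfrac{1}{p}]/\bZ)^{d}$, each summand a $\Gamma$-eigen-submodule, the piece $k=0$ being $R^{+}$ with the trivial $\Gamma$-action (as $\gamma_{i}(T_{j})=T_{j}$). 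Therefore $\gamma(w)-w$ has vanishing $R^{+}$-component for every $\gamma$, whereas $f'(\gamma)-1$ lies in $\rM_{l}(R^{+})$; projecting the congruence onto the $R^{+}$-summand (and noting $p^{a+r}\Rinfp\cap R^{+}=p^{a+r}R^{+}$) yields $f'(\gamma)-1\in p^{a+r}\rM_{l}(R^{+})$, i.e.\ $\|f'(\gamma)-1\|\leq\|p^{a+r}\|$ for all $\gamma$. Consequently $M_{0}/p^{a+r}\cong(R^{+}/p^{a+r})^{l}$ as $\Gamma$-representations, so $M_{0}$, and hence $M$, is $(a+r)$-trivial.

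The step I expect to be the main obstacle is extracting the norm bound $\|h-1\|\leq\|p^{a}\|$: Lemma~\ref{Lem-Descent Of Small Rep} only records $\|f'(\gamma)-1\|\leq\tfrac{1}{Rc}$ in its statement, and without the sharper control on the conjugating matrix the reduction of $f'(\gamma)-1$ modulo $p^{a+r}$ need not be an (almost-trivial) coboundary. The remaining ingredients — the translation between $(a+r)$-triviality and the existence of a cocycle congruent to $1$ modulo $p^{a+r}$, the uniqueness of the decompletion, and the identification of the toric grading of $\Rinfp$ with its decomposition into $\Gamma$-eigencomponents — are routine.
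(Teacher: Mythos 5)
Your argument is correct, but it takes a genuinely different route from the paper. The paper never touches the cocycle $f'$ produced by Lemma \ref{Lem-Descent Of Small Rep}; instead it argues cohomologically: from the splitting $M_\infty=M\oplus M\otimes_{R^+}(\Rinfp/R^+)$ and Lemma \ref{Lem-USE For Small Rep} it deduces that $\pi\rH^0(\Gamma,M/p^{a+r})\cong\pi\rH^0(\Gamma,M_\infty/p^{a+r})\cong(R^+/p^a)^n$ is free, then shows that invariant elements $g_i$ with $\pi g_i$ a basis of this module lift to an $R^+$-basis of $M$, which trivializes $M/p^{a+r}$. You instead reopen the proof of Lemma \ref{Lem-Descent Of Small Rep} to extract the sharper bound $\|h-1\|\leq\|p^a\|$ on the conjugating matrix, expand $f'(\gamma)-1\equiv\gamma(w)-w\pmod{p^{a+r}}$, and kill the right-hand side by projecting onto the weight-zero summand of $\Rinfp=\widehat\oplus_{k}R^+\underline T^{k}$. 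I checked the estimates: with $R=s=p^{a-r}$ one indeed has $\|h_n\|\leq\tfrac{1}{cs^n}\leq\|p^a\|$, the quadratic terms are $O(p^{2a})$ with $2a>a+r$, and $p^{a+r}\rM_l(\Rinfp)\cap\rM_l(R^+)=p^{a+r}\rM_l(R^+)$ because $\Rinfp/R^+$ is $p$-torsion free. Your approach is more explicit and yields the trivializing basis directly; the paper's is softer and avoids re-entering the proof of an auxiliary lemma, at the cost of the somewhat delicate lifting argument at the end of Section 3.2.

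One step you should justify rather than cite: the identification $M\cong M_0$ rests on the uniqueness of the decompletion, which in the paper is only asserted in the remark after Proposition \ref{Prop-Weak Decompletion}, and whose full strength there ("quasi-inverse of the functor in Theorem \ref{Strong Decompletion}") depends on the very proposition you are proving. The uniqueness you need is, however, provable with the tools already available: apply the argument of Proposition \ref{Prop-Weak Decompletion}(2) (i.e.\ Lemma \ref{Lem-USE For Small Rep} together with $p$-torsion-freeness of $\Rinfp/R^+$) to the internal Hom $N=\Hom_{R^+}(M,M_0)$, whose conjugation cocycle is still congruent to $1$ modulo $p^{a}$, to get $N^\Gamma=(N\otimes_{R^+}\Rinfp)^\Gamma$; this descends the identity of $M_\infty$ and its inverse to mutually inverse $\Gamma$-equivariant maps $M\to M_0$ and $M_0\to M$. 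With that paragraph added, your proof is complete.
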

 We leave the proof of this proposition in the next subsection.
\subsection{Essentially $(a+r)$-trivial representation is $(a+r)$-trivial}
  Throughout this subsection, we always assume $a>r$. For any $R^+$-module $N$ with a continuous $\Gamma$-action, we denote $\rH^i(\Gamma,N)$ by $\rH^i(N)$ for simplicity. 
  
  Now for a fixed essentially $(a+r)$-trivial $R^+$-representation $M$ of $\Gamma$ of rank $n$, we define 
  \[
  M_{\infty} = M\otimes_{R^+}\Rinfp.
  \]
  Then it is $(a+r)$-trivial and of the form $M_{\infty} = M\oplus M_{\rm cp}$ for $M_{\rm cp} = M\otimes_{R^+}\Rinfp/R^+$. Since $M$ is $a$-trivial, by Lemma \ref{Lem-USE For Small Rep}, we see that $\RGamma(\Gamma,M_{\rm cp})$ is concentrated in positive degrees and is killed by $\pi$. As a consequence, for any $h\geq r$, we have 
  \[
  \RGamma(\Gamma,M_{\rm cp}/p^h)\simeq \RGamma(\Gamma,M_{\rm cp})[1].
  \]
  In particular, $\RGamma(\Gamma,M_{\rm cp}/p^h)$ is killed by $\pi$. So we deduce that 
\[\pi\rH^0(M_{\infty}/p^h)\cong \pi\rH^0(M/p^h).\]
  Replacing $M$ by $(\Rinfp)^l$, we get
 \[\pi\rH^0(\Rinfp/p^h)^n\cong \pi\rH^0(R^+/p^h)^n = (\pi R^+/p^h)^n.\]
 Since $M_{\infty}$ is $(a+r)$-trivial, choose $h=a+r$ and we get
 \[\pi\rH^0(M/p^{a+r})\cong\pi\rH^0(M_{\infty}/p^{a+r})\cong \pi\rH^0(\Rinfp/p^{a+r})^n\cong(\pi R^+/p^{a+r})^n\cong (R^+/p^a)^n.\]
 Thus, $\pi\rH^0(M/p^{a+r})$ is a free $R^+/p^a$-module of rank $n$.
 
 Choose $g_1, \dots g_n\in \rH^0(M/p^{a+r})$ such that $\pi g_1,\dots,\pi g_n$ is an $R^+/p^a$-basis of $\pi\rH^0(M/p^{a+r})$. We claim that the sub-$R^+/p^{a+r}$-module
 \[\sum_{i=1}^nR^+/p^{a+r}\cdot g_i\subset\rH^0(M/p^{a+r})\]
 is free. For any $i$, let $\widetilde g_i\in M$ be a lifting of $g_i$. Assume $x_1,\dots,x_n\in R^+$ such that 
 \[\sum_{i=1}^nx_i\widetilde g_i\equiv 0\mod p^{a+r}.\]
 Then 
 \[\sum_{i=1}^nx_i\pi\widetilde g_i\equiv 0\mod p^{a+r}.\]
 By the choice of $g_i$'s, we deduce that $x_i\in p^aR^+$ for any $i$. Write $x_i = \pi y_i$ for some $y_i\in R^+$. Then 
 \[\sum_{i=1}^ny_i\pi\widetilde g_i\equiv 0\mod p^{a+r}.\]
 So $y_i\in p^aR^+$ and hence $x_i\in p^{a+r}R^+$ for all $i$. This proves the claim.
 
 It remains to prove $\widetilde g_1, \dots,\widetilde g_n$ is an $R^+$-basis of $M$.
 Let $e_1,\dots,e_n$ be an $R^+$-basis of $M$. Since $M$ is $a$-trivial, we get
 \[M/p^a = \rH^0(M/p^a) = \sum_{i=1}^nR^+/p^ae_i.\]
 So $\pi e_1,\cdots, \pi e_n$ is an $R^+/p^{a-r}$-basis of $\pi M/p^a$. However, by the choice of $\widetilde g_i$'s, $\pi \widetilde g_1,\cdots, \pi \widetilde g_n$ is also an $R^+/p^{a-r}$-basis of $\pi M/p^a$. Since $a>r$, we deduce that $\widetilde g_i$'s generate $M$ as an $R^+$-module. This completes the proof.

\section{Local Simpson correspondence}\label{Sec 4}
 In this section, we establish an equivalence between the category of $a$-small representations of $\Gamma$ over $\Rinfp$ and the category of $a$-small Higgs modules over $R^+$. This is a local version of $p$-adic Simpson correspondence. Throughout this section, put $r = \frac{1}{p-1}$.
 \begin{dfn}\label{Dfn-small representation}
    Assume $a>r$ and $A\in\{R^+,\Rinfp\}$. We say a representation $M$ of $\Gamma$ over $A$ is {\bf $a$-small} if it is $(a+\nu_p(\rho_k))$-trivial in the sense of Definition \ref{Local generalized rep}.
 \end{dfn}
 \begin{dfn}\label{Dfn-Higgs module}
    By a {\bf Higgs module} over $R^+$, we mean a finite free $R^+$-module $H$ together with an $R^+$-linear morphism $\theta:H\to H\otimes_{R^+}\widehat \Omega^1_{R^+}(-1)$ such that $\theta\wedge\theta = 0$. A Higgs module $(H,\theta)$ is called {\bf $a$-small}, if $\theta$ is divided by $p^{a+\nu_p(\rho_k)}$; that is, \[\Ima(\theta)\subset p^{a+\nu_p(\rho_k)} H\otimes_{R^+}\widehat \Omega^1_{R^+}(-1).\]
 \end{dfn}
  Let $S_{\infty}^{\dagger,+}$ with the canonical Higgs field $\Theta$ be as in Corollary \ref{overconvergent resolution}.
  For an $a$-small representation $M$ over $\Rinfp$, define 
 \begin{equation}\label{Equ-Higgs field with coefficient-I}
 \Theta_M = \id_M\otimes\Theta: M\otimes_{\Rinfp}S_{\infty}^{\dagger,+}\to M\otimes_{\Rinfp}S_{\infty}^{\dagger,+}\otimes_{R^+}\widehat \Omega_{R^+}^1(-1).
 \end{equation}
  Then it is a Higgs field on $M\otimes_{\Rinfp}S_{\infty}^{\dagger,+}$ and we denote the induced Higgs complex by ${\rm HIG}(H\otimes_{R^+}S_{\infty}^{\dagger,+},\Theta_H)$. 
  For an $a$-small Higgs module $(H,\theta_H)$, define 
\begin{equation}\label{Equ-Higgs field with coefficient-II}
\Theta_H=\theta_H\otimes\id+\id_H\otimes\Theta:H\otimes_{R^+}S_{\infty}^{\dagger,+}\to H\otimes_{R^+}S_{\infty}^{\dagger,+}\otimes_{R^+}\widehat \Omega_{R^+}^1(-1).
\end{equation}
 Then $\Theta_H$ is a Higgs field on $H\otimes_{R^+}S_{\infty}^{\dagger,+}$ and we denote the induced Higgs complex by ${\rm HIG}(H\otimes_{R^+}S_{\infty}^{\dagger,+},\Theta_H)$.
 The main theorem in this section is the following local version of Simpson correspondence.
\begin{thm}[Local Simpson correspondence]\label{local Simpson}
  Assume $a>r$.
  \begin{enumerate}
  \item  Let $M$ be an $a$-small $\Rinfp$-representation of $\Gamma$ of rank $l$. Let $H(M):=(M\otimes_{\Rinfp}S_{\infty}^{\dagger,+})^{\Gamma}$ and $\theta_{H(M)}$ be the restriction of $\Theta_M$ to $H(M)$. Then $(H(M),\theta_{H(M)})$ is an $a$-small Higgs module of rank $l$.
       
  \item  Let $(H,\theta_H)$ be an $a$-small Higgs module of rank $l$ over $R^+$. Put $M(H,\theta_H) = (H\otimes_{R^+}S_{\infty}^{\dagger,+})^{\Theta_H=0}$. Then $M(H,\theta_H)$ is an $a$-small $\Rinfp$-representation of $\Gamma$ of rank $l$.
  
  \item The functor $M\mapsto(H(M),\theta_{H(M)})$ induces an equivalence from the category of $a$-small $\Rinfp$-representations of $\Gamma$ to the category of $a$-small Higgs modules over $R^+$, whose quasi-inverse is given by $(H,\theta_H)\mapsto M(H,\theta_H)$. The equivalence preserves tensor products and dualities.
  
  \item  Let $M$ be an $a$-small $\Rinfp$-representation of $\Gamma$ and $(H,\theta_H)$ be the corresponding Higgs module. Then there is a canonical $\Gamma$-equivariant isomorphism of Higgs complexes
  \[{\rm HIG}(H\otimes_{R^+}S_{\infty}^{\dagger,+},\Theta_{H})\to {\rm HIG}(M\otimes_{\Rinfp}S_{\infty}^{\dagger,+},\Theta_M).\]
  Moreover, there is a canonical quasi-isomorphism
  \[\RGamma(\Gamma,M[\frac{1}{p}])\simeq {\rm HIG}(H[\frac{1}{p}],\theta_H),\]
  where ${\rm HIG}(H[\frac{1}{p}],\theta_H)$ is the Higgs complex induced by $(H,\theta_H)$.
  \end{enumerate}
\end{thm}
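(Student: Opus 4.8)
The plan is to use $S_\infty^{\dagger,+}$ as a bridge turning the monodromy of a representation into a Higgs field and back; the three tools are the explicit presentation $\iota\colon S_\infty^{\dagger,+}\cong\varinjlim_{\nu_p(\rho)>\nu_p(\rho_k)}\Rinfp\za\rho Y_1,\dots,\rho Y_d\ya$ with $\gamma_j(Y_i)=Y_i+\delta_{ij}$ and $\Theta=\sum_i\frac{\partial}{\partial Y_i}\otimes\frac{\dlog T_i}{t}$ (Corollary \ref{overconvergent resolution}), the identity $\frac{\partial}{\partial Y_i}=\log\gamma_i$ on $S_\infty^{\dagger}$ (Remark \ref{Lie algebra}), which converts ``$\Gamma$-invariant'' into ``$\Theta$-horizontal'', and the integral decompletion theorem (Theorem \ref{Strong Decompletion}), which trades $\Rinfp$-coefficients for $R^+$-coefficients. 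First I would prove (2): given an $a$-small Higgs module $(H,\theta_H)$, write $\theta_H=\sum_i\theta_i\otimes\frac{\dlog T_i}{t}$ with pairwise commuting $\theta_i\in p^{a+\nu_p(\rho_k)}\End_{R^+}(H)$. Since $a+\nu_p(\rho_k)>r=\frac{1}{p-1}$, choose $\rho$ with $\nu_p(\rho_k)<\nu_p(\rho)<a+\nu_p(\rho_k)-r$; then the exponential gauge transformation $W:=\exp(-\sum_i\theta_iY_i)$ converges to an $S_\infty^{\dagger,+}$-linear automorphism of $H\otimes_{R^+}S_\infty^{\dagger,+}$, this convergence estimate (uniform in the colimit transition maps) being the only genuinely analytic point. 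A direct computation with $\frac{\partial}{\partial Y_i}$ shows $W$ conjugates $\Theta_H=\theta_H\otimes\id+\id_H\otimes\Theta$ into $\id_H\otimes\Theta$, so $W$ identifies $M(H,\theta_H)=(H\otimes_{R^+}S_\infty^{\dagger,+})^{\Theta_H=0}$ with $H\otimes_{R^+}(S_\infty^{\dagger,+})^{\Theta=0}=H\otimes_{R^+}\Rinfp$, finite free of rank $l$; transporting the $\Gamma$-action through $W$ and using $\gamma_k(Y_i)=Y_i+\delta_{ik}$ and $\gamma_k(\theta_i)=\theta_i$ gives the action $\gamma_k=\exp(-\theta_k)\circ(\text{tautological})$ on $H\otimes_{R^+}\Rinfp$, and $\exp(-\theta_k)\equiv\id\pmod{p^{a+\nu_p(\rho_k)}}$ exhibits $M(H,\theta_H)/p^{a+\nu_p(\rho_k)}\cong(\Rinfp/p^{a+\nu_p(\rho_k)})^l$ as $\Gamma$-representations, i.e.\ $M(H,\theta_H)$ is $a$-small of rank $l$.

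Next, (1). Because $a>r$ and $\nu_p(\rho_k)\geq r$, we have $a+\nu_p(\rho_k)>2r$, so Theorem \ref{Strong Decompletion} supplies an $a$-small $R^+$-representation $M_0$ with $M_0\otimes_{R^+}\Rinfp\cong M$, whence $M\otimes_{\Rinfp}S_\infty^{\dagger,+}=M_0\otimes_{R^+}S_\infty^{\dagger,+}$. As the $\Gamma$-action on $M_0$ is $\equiv\id\pmod{p^{a+\nu_p(\rho_k)}}$, the operators $N_i:=\log\gamma_i$ on $M_0$ converge, commute, and lie in $p^{a+\nu_p(\rho_k)}\End_{R^+}(M_0)$. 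Exactly as in (2), $V:=\exp(-\sum_iN_iY_i)$ converges, and, via $\frac{\partial}{\partial Y_i}=\log\gamma_i$, it identifies $H(M)=(M_0\otimes_{R^+}S_\infty^{\dagger,+})^\Gamma$ with $M_0\otimes_{R^+}(S_\infty^{\dagger,+})^\Gamma$ (it intertwines the diagonal $\Gamma$-action with the action that is trivial on $M_0$ and tautological on $S_\infty^{\dagger,+}$). The identity $(S_\infty^{\dagger,+})^\Gamma=R^+$ is again a consequence of $\frac{\partial}{\partial Y_i}=\log\gamma_i$: a $\Gamma$-invariant element is killed by every $\log\gamma_i$, hence by every $\frac{\partial}{\partial Y_i}$, hence has $Y$-degree $0$, hence lies in $\Rinfp$, hence in $R^+$. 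Thus $H(M)\cong M_0$ is finite free of rank $l$; unwinding $\theta_{H(M)}=\Theta_M|_{H(M)}$ through $V$ gives $\theta_{H(M)}=\pm\sum_iN_i\otimes\frac{\dlog T_i}{t}\in p^{a+\nu_p(\rho_k)}\End_{R^+}(H(M))\otimes_{R^+}\widehat\Omega^1_{R^+}(-1)$, so $(H(M),\theta_{H(M)})$ is $a$-small.

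For (3): the two constructions are mutually inverse on the nose — from $M=M_0\otimes_{R^+}\Rinfp$ one gets $(H(M),\theta_{H(M)})\cong(M_0,\pm\sum_iN_i\otimes\frac{\dlog T_i}{t})$ and then $M(H(M),\theta_{H(M)})\cong M_0\otimes_{R^+}\Rinfp$ with $\gamma_k=\exp(\mp\theta_k)\circ(\text{taut.})=\gamma_k^{M_0}\circ(\text{taut.})$, which is the original $M$; conversely $M(H,\theta_H)$ decompletes (by uniqueness in Theorem \ref{Strong Decompletion}) to $(H,\exp(-\theta_\bullet))$, whose associated Higgs module is $(H,-\sum_i\log\exp(-\theta_i)\otimes\frac{\dlog T_i}{t})=(H,\theta_H)$; functoriality is immediate, and compatibility with $\otimes$ and with duals follows from multiplicativity of $\exp$, additivity of $\log$, and the Leibniz rule for $\Theta,\Theta_H$. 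For (4), the isomorphism of Higgs complexes is induced by the multiplication map $M\otimes_{\Rinfp}S_\infty^{\dagger,+}\to H\otimes_{R^+}S_\infty^{\dagger,+}$, $m\otimes s\mapsto ms$, which is an isomorphism because $M\cong H\otimes_{R^+}\Rinfp$ and which intertwines $\Theta_M=\id_M\otimes\Theta$ with $\Theta_H$ by a one-line Leibniz computation (using $\Theta_H(m)=0$ for $m\in M$); it is $\Gamma$-equivariant. For the cohomology statement, decomplete $M=M_0\otimes_{R^+}\Rinfp$ (again $a+\nu_p(\rho_k)-r>r$), use Proposition \ref{Prop-Weak Decompletion}(2) to get $\RGamma(\Gamma,M[\frac{1}{p}])\simeq\RGamma(\Gamma,M_0[\frac{1}{p}])$ (the $\pi$-torsion complement vanishing since $\pi\mid p$), and identify the Koszul complex $\rK(M_0[\frac{1}{p}];\gamma_1-1,\dots,\gamma_d-1)$ with $\rK(M_0[\frac{1}{p}];N_1,\dots,N_d)$ via the invertible change of variables $\gamma_i-1=N_i\cdot\frac{\exp(N_i)-1}{N_i}$; the latter Koszul complex is precisely ${\rm HIG}(H[\frac{1}{p}],\theta_H)$.

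The main obstacle is the convergence bookkeeping: one must show that the gauge transformations $W,V$, the operators $\log\gamma_i$, and the change of variables $\gamma_i-1=N_i\cdot\frac{\exp(N_i)-1}{N_i}$ all converge as (invertible) endomorphisms of the $\rho$-overconvergent completed modules $\Rinfp\za\rho Y\ya$ (equivalently $\widehat S_{\infty,\rho}^+$) for a suitable range of $\rho$, with $p$-adic estimates linking $a$, $\nu_p(\rho_k)$ and $r$ — this is exactly why the smallness thresholds are phrased in terms of $\nu_p(\rho_k)$ and why one works with the colimit $S_\infty^{\dagger,+}=\varinjlim_{\nu_p(\rho)>\nu_p(\rho_k)}\widehat S_{\infty,\rho}^+$ rather than with any single $\widehat S_{\infty,\rho}^+$, on which the relevant Higgs complex fails to be exact (Corollary \ref{overconvergent resolution}). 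The remaining, milder point — computing $(S_\infty^{\dagger,+})^\Gamma$ exactly rather than up to bounded $p$-torsion — is handled cleanly by the identity $\frac{\partial}{\partial Y_i}=\log\gamma_i$.
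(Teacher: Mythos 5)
Your strategy for (1)--(3) is essentially the paper's own: reduce by the decompletion theorem (Theorem \ref{Strong Decompletion}) to an $a$-small $R^+$-representation $M_0$, and then use the explicit gauge transformations $\prod_i\gamma_i^{-Y_i}$ (your $V$; this is exactly the matrix $\sum_{n_1,\dots,n_d}\prod_i\frac{(-A_i^{-1}B_i)^{n_i}}{n_i!}F_{n_i}(Y_i)$ of Proposition \ref{principal term} and Remark \ref{simple form}) and $\prod_i\exp(-\theta_iY_i)$ (your $W$; Proposition \ref{local Higgs to representation}), with the same convergence threshold $\nu_p(\rho)<a+\nu_p(\rho_k)-r$. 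The one place you genuinely diverge is the ``moreover'' part of (4): the paper obtains $\RGamma(\Gamma,M[\frac{1}{p}])\simeq{\rm HIG}(H[\frac{1}{p}],\theta_H)$ by combining the resolution ${\rm HIG}(S_{\infty}^{\dagger},\Theta)\simeq\Rinf[0]$ (Corollary \ref{overconvergent resolution}) with $\RGamma(\Gamma,S_{\infty}^{\dagger})\simeq R[0]$ (Proposition \ref{trivial representation}), whereas you decomplete, invoke Proposition \ref{Prop-Weak Decompletion}(2), and compare the Koszul complexes $\rK(M_0[\frac{1}{p}];\gamma_1-1,\dots,\gamma_d-1)$ and $\rK(M_0[\frac{1}{p}];N_1,\dots,N_d)$ via the commuting invertible factors $u_i=\frac{\exp(N_i)-1}{N_i}\equiv 1\bmod p^{a}$. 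That substitution does induce an isomorphism of Koszul complexes, so your route is valid and arguably more elementary; what it does not give you for free is the $\Gamma$-equivariant isomorphism of Higgs complexes with $S_{\infty}^{\dagger,+}$-coefficients that the paper needs for globalisation, which you do get from $V$ and $W$ anyway. (Fix the sign: the paper's convention is $\theta_{H(M)}=-\sum_i\log\gamma_i\otimes\frac{\dlog T_i}{t}$; your ``$\pm$'' should be pinned down so that (3) closes up.)

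The one genuine gap is your computation of $(S_{\infty}^{\dagger,+})^{\Gamma}=R^+$ (and the corresponding higher-cohomology control) via ``$\Gamma$-invariant $\Rightarrow$ killed by $\log\gamma_i=\frac{\partial}{\partial Y_i}$''. The operator $\log\gamma_i$ does not converge on all of $\Rinfp$, hence not on $S_{\infty}^{\dagger,+}$: under the decomposition $\Rinfp\za\rho\underline Y\ya=\widehat\oplus_{\underline\alpha}R^+\za\rho\underline Y\ya\underline T^{\underline\alpha}$, the series $\sum_{m\geq 1}(-1)^{m+1}\frac{(\gamma_i-1)^m}{m}$ converges only on the component $\underline\alpha=0$ (where $\gamma_i-1$ is topologically nilpotent thanks to the $\rho$-factors); on a component with $\alpha_i\neq 0$ the operator $\gamma_i-1$ is, up to units, multiplication by $\epsilon_{\alpha_i}=1-\zeta^{-\alpha_i}$, whose valuation tends to $0$. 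So the identity of Remark \ref{Lie algebra} cannot be applied to an arbitrary invariant element, and the components $\underline\alpha\neq 0$ must be treated separately: one shows directly that $\gamma_i-1$ is injective there with cokernel killed by $\epsilon_{\alpha_i}$ (hence $\rH^0=0$ and the higher cohomology is killed by $\zeta_p-1$). This is precisely the content of Lemmas \ref{trivial representation-I}, \ref{trivial representation-II} and \ref{error term} in the paper, and it is also where the ``error terms'' in your Koszul comparison for (4) are disposed of. With that computation supplied, the rest of your argument goes through.
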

  The following corollary follows from Theorem \ref{Strong Decompletion} and Theorem \ref{local Simpson} directly.
 \begin{cor}
   Assume $a>r$. The following categories are equivalent:
   \begin{enumerate}
       \item The category of $a$-small representations of $\Gamma$ over $R^+$;
       
       \item The category of $a$-small representations of $\Gamma$ over $\Rinfp$;
       
       \item The category of $a$-small Higgs modules over $R^+$.
   \end{enumerate}
 \end{cor}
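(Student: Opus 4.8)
The plan is simply to compose the two equivalences already at our disposal. First I would unwind Definition \ref{Dfn-small representation}: an $a$-small representation of $\Gamma$ over $A\in\{R^+,\Rinfp\}$ is by definition an $(a+\nu_p(\rho_k))$-trivial one in the sense of Definition \ref{Local generalized rep}. Set $a' := a+\nu_p(\rho_k)-r$. Since $\nu_p(\rho_k)\geq r$ (recorded at the beginning of Section \ref{Sec 3}) and $a>r$, we have $a'\geq a>r$, and the condition ``$(a'+r)$-trivial'' is precisely ``$a$-small''. Hence Theorem \ref{Strong Decompletion}, applied with its parameter taken to be $a'$, gives an equivalence
\[
M\longmapsto M\otimes_{R^+}\Rinfp
\]
from the category of $a$-small $R^+$-representations of $\Gamma$ to the category of $a$-small $\Rinfp$-representations of $\Gamma$, compatible with tensor products and dualities. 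This identifies (1) with (2).

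Next I would invoke Theorem \ref{local Simpson}~(3) directly, with the same $a>r$: the functor $M\mapsto(H(M),\theta_{H(M)})$, where $H(M)=(M\otimes_{\Rinfp}S_{\infty}^{\dagger,+})^{\Gamma}$, is an equivalence from the category of $a$-small $\Rinfp$-representations of $\Gamma$ to the category of $a$-small Higgs modules over $R^+$, with quasi-inverse $(H,\theta_H)\mapsto M(H,\theta_H)$, again compatible with tensor products and dualities. This identifies (2) with (3).

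Composing these two equivalences yields the asserted equivalences among (1), (2) and (3), and the composite again respects tensor products and dualities since each factor does and these structures are functorial. As both ingredients are already proved, the argument has no genuine obstacle; the only point needing a moment's care is the bookkeeping of the trivialization level, namely that the passage from the ``$(a+r)$-trivial'' formulation of Theorem \ref{Strong Decompletion} to the ``$(a+\nu_p(\rho_k))$-trivial'' definition of $a$-smallness costs nothing, which is exactly the inequality $\nu_p(\rho_k)\geq r$.
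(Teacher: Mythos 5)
Your proof is correct and is exactly the paper's intended argument: the paper states only that the corollary ``follows from Theorem \ref{Strong Decompletion} and Theorem \ref{local Simpson} directly,'' and your composition of the two equivalences, together with the observation that $(a+\nu_p(\rho_k))$-triviality equals $(a'+r)$-triviality for $a'=a+\nu_p(\rho_k)-r>r$, supplies precisely the bookkeeping the paper leaves implicit.
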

 
  In order to prove the theorem, we need to compute $\RGamma(\Gamma, M\otimes_{\Rinfp}S_{\infty}^{\dagger,+})$. By Corollary \ref{overconvergent resolution}, we are reduced to computing $\RGamma(\Gamma, M\otimes_{\Rinfp}\Rinfp\za\rho Y_1, \cdots, \rho Y_d\ya)$ for any $\rho\in\rho_k\calO_{\Cp}$. So before we move on, let us fix some notations to simplify the calculation. 
  
  For any $n\geq 0$, define 
  \[F_n(Y) = n!\binom{Y}{n}= Y(Y-1)\cdots (Y-n+1)\in \bZ[Y].\]
  For any $\alpha\in\bN[\frac{1}{p}]\cap(0,1)$, define $\epsilon_{\alpha} = 1-\zeta^{-\alpha}$. Then $\nu_p(\rho_k)\geq r\geq \nu_p(\epsilon_{\alpha})$.
\subsection{Calculation in trivial representation case}
  We are going to compute $\RGamma(\Gamma,\Rinfp\za\rho Y_1,\dots,\rho Y_d\ya)$ in this subsection. We assume $d = 1$ first. In this case, $\Gamma = \Zp\gamma$ and acts on $\Rinfp\za\rho Y\ya$ via $\gamma(Y) = Y+1$. Note that $\{\rho^n F_n\}_{n\geq 0}$ is a set of topological $\Rinfp$-basis of $\Rinfp\za\rho Y\ya$ and for any $n\geq 0$, 
  \[\gamma(\rho^n F_n) = \rho^n F_n+n\rho\cdot\rho^{n-1}F_{n-1}.\]
  So we get a $\gamma$-equivariant decomposition
  \[\Rinfp\za\rho Y\ya = \widehat \oplus_{\alpha\in\bN[\frac{1}{p}]\cap [0,1)}R^+\za\rho Y\ya\cdot T^{\alpha}.\]
  So it suffices to compute $\RGamma(\Gamma,R^+\za\rho Y\ya\cdot T^{\alpha})$ for any $\alpha$. We only need to consider the Koszul complex $\rK(R^+\za\rho Y\ya\cdot T^{\alpha};\gamma-1)$:
  \[R^+\za\rho Y\ya\cdot T^{\alpha}\xrightarrow{\gamma-1}R^+\za\rho Y\ya\cdot T^{\alpha}.\]
 Note that for any $\alpha$, $\{\rho^n F_nT^{\alpha}\}_{n\geq 0}$ is a set of topological $R^+$-basis of $R^+\za\rho Y\ya T^{\alpha}$. So we have 
 \begin{equation}\label{Equ-trivial representation}
     (\gamma-1)(\rho^n F_nT^{\alpha}) = \left\{
     \begin{array}{rcl}
          n\rho\cdot\rho^{n-1}F_{n-1}, &\alpha = 0  \\
          \zeta^{\alpha}\epsilon_{\alpha}T^{\alpha}(\rho^nF_n+n\frac{\rho}{\epsilon_{\alpha}}\rho^{n-1}F_{n-1}), &\alpha \neq 0.
     \end{array}
     \right.
 \end{equation}
 Put $\Lambda_{\rho}=\{\rho^n F_n\}_{n\geq 0}$ and $I_{\rho}=\{\nu_p(\rho(n+1))\}_{n\geq 0}$. Let $R^+\za\Lambda_{\rho}\ya$ and $R^+\za\Lambda_{\rho},I_{\rho},+\ya$ be as in Definition \ref{p-complete module}. Then by (\ref{Equ-trivial representation}), we see that
 \[
 (\gamma-1)(R^+\za\rho Y\ya) = R^+\za\Lambda_{\rho},I_{\rho},+\ya
 \]
 and that 
 \[(\gamma-1)(R^+\za\rho Y\ya T^{\alpha})\sim \{\zeta^{\alpha}\epsilon_{\alpha}(\rho^nF_n+n\frac{\rho}{\epsilon_{\alpha}}\rho^{n-1}F_{n-1})\}_{n\geq 0}\]
 in the sense of Definition \ref{equivalent of Basis}. By Proposition \ref{basis of free modules}, we get 
 \[(\gamma-1)(R^+\za\rho Y\ya T^{\alpha})=\epsilon_{\alpha}(R^+\za\rho Y\ya T^{\alpha}).\]
 In summary, we see that for $\alpha\neq 0$, $\rH^1(\Zp\gamma,R^+\za\rho Y\ya T^{\alpha})$ is killed by $\epsilon_{\alpha}$ and that for $\alpha = 0$, $\rH^1(\Zp\gamma,R^+\za\rho Y\ya) = R^+\za\rho Y\ya/R^+\za\Lambda_{\rho},I_{\rho},+\ya$.
 So we have the following lemma.
\begin{lem}\label{trivial representation-I}
  Keep notations as above.
  \begin{enumerate}
  \item The inclusion $R^+\za\rho Y\ya\hookrightarrow \Rinfp\za\rho Y\ya$ identifies $\RGamma(\Gamma, R^+\za\rho Y\ya)$ with a direct summand of $\RGamma(\Zp\gamma, \Rinfp\za\rho Y\ya)$ whose complement is concentrated in degree $1$ and is killed by $\zeta_p-1$.

  \item The $\rH^0(\Gamma, R^+\za\rho Y\ya) = R^+$ is independent of $\rho$.

  \item The $\rH^1(\Gamma, R^+\za\rho Y\ya) = R^+\za\rho Y\ya/R^+\za\Lambda_{\rho},I_{\rho},+\ya$ is the derived $p$-adic completion of $\oplus_{i\geq 0}R^+/(i+1)\rho R^+$.
  \end{enumerate}
\end{lem}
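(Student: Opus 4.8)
The plan is to reduce everything to the explicit description of the $\gamma$-action recorded in $(\ref{Equ-trivial representation})$ together with the $\gamma$-equivariant decomposition $\Rinfp\za\rho Y\ya=\widehat{\bigoplus}_{\alpha\in\bN[\frac1p]\cap[0,1)}R^+\za\rho Y\ya\cdot T^\alpha$ established above. Since $\Gamma=\Zp\gamma$, the cohomology $\RGamma(\Gamma,-)$ of a $p$-complete module with continuous $\gamma$-action is computed by the two-term Koszul complex $\rK(-;\gamma-1)\colon M\xrightarrow{\gamma-1}M$, a functor which is exact in $M$ and hence compatible with the completed direct sum above; moreover the summand $R^+\za\rho Y\ya$ (the $\alpha=0$ part), being $p$-adically complete, splits off topologically. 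Thus $\RGamma(\Zp\gamma,\Rinfp\za\rho Y\ya)$ decomposes, already at the level of complexes, as $\rK(R^+\za\rho Y\ya;\gamma-1)=\RGamma(\Gamma,R^+\za\rho Y\ya)$ together with the completed direct sum over $\alpha\neq0$ of the complexes $\rK(R^+\za\rho Y\ya\cdot T^\alpha;\gamma-1)$, and it remains to analyze each piece.

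For $\alpha\neq0$ I would use $(\ref{Equ-trivial representation})$, which gives $(\gamma-1)(\rho^nF_nT^\alpha)=\zeta^\alpha\epsilon_\alpha T^\alpha\bigl(\rho^nF_n+n\tfrac{\rho}{\epsilon_\alpha}\rho^{n-1}F_{n-1}\bigr)$. Because $\nu_p(\rho)\geq\nu_p(\rho_k)\geq\nu_p(\epsilon_\alpha)$, Proposition~\ref{basis of free modules} shows that $\{\rho^nF_n+n\tfrac{\rho}{\epsilon_\alpha}\rho^{n-1}F_{n-1}\}_{n\geq0}$ is again a topological $R^+$-basis of $R^+\za\rho Y\ya$, equivalent to $\{\rho^nF_n\}_{n\geq0}$ in the sense of Definition~\ref{equivalent of Basis}; since $\zeta^\alpha$ is a unit, $\gamma-1$ is therefore injective on $R^+\za\rho Y\ya T^\alpha$ with image exactly $\epsilon_\alpha\cdot R^+\za\rho Y\ya T^\alpha$. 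Hence $\rH^0$ vanishes and $\rH^1(\Gamma,R^+\za\rho Y\ya T^\alpha)\cong R^+\za\rho Y\ya T^\alpha/\epsilon_\alpha$ is killed by $\epsilon_\alpha$, thus by $\zeta_p-1$, since $\nu_p(\epsilon_\alpha)\leq r=\nu_p(\zeta_p-1)$ and $\calO_{\Cp}$ is a valuation ring. Taking the completed direct sum over $\alpha\neq0$ produces a complex concentrated in degree $1$ and killed by $\zeta_p-1$; combined with the splitting above this is exactly~(1).

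Finally, for $\alpha=0$, formula $(\ref{Equ-trivial representation})$ reads $(\gamma-1)(\rho^nF_n)=n\rho\cdot\rho^{n-1}F_{n-1}$. The kernel of $\gamma-1$ is then the $R^+$ spanned by $F_0=1$, so $\rH^0(\Gamma,R^+\za\rho Y\ya)=R^+$ independently of $\rho$, which is~(2); and the image of $\gamma-1$ is, by construction, the submodule $R^+\za\Lambda_\rho,I_\rho,+\ya$ of Definition~\ref{p-complete module} attached to $\Lambda_\rho=\{\rho^nF_n\}_{n\geq0}$ and $I_\rho=\{\nu_p(\rho(n+1))\}_{n\geq0}$, so $\rH^1(\Gamma,R^+\za\rho Y\ya)=R^+\za\rho Y\ya/R^+\za\Lambda_\rho,I_\rho,+\ya$. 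Using the concrete description of these completed modules from Remark~\ref{describe of completion}, one then identifies this quotient with the derived $p$-adic completion of $\bigoplus_{i\geq0}R^+/(i+1)\rho R^+$. I expect this last step to be the only delicate point: one must verify that passing to the quotient by the $p$-adic closure of $\bigoplus_i(i+1)\rho R^+\cdot\rho^iF_i$ is compatible with derived $p$-completion of the corresponding classical quotient, which is precisely where Proposition~\ref{derived vs classical} is invoked; everything else is a direct bookkeeping consequence of $(\ref{Equ-trivial representation})$.
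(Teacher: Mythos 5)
Your argument is correct and follows the paper's proof in all essentials — the $\gamma$-equivariant decomposition, formula (\ref{Equ-trivial representation}), Proposition \ref{basis of free modules} for the $\alpha\neq 0$ summands, and Proposition \ref{derived vs classical} for part (3) — the only variation being that you deduce $\rH^0(\Gamma,R^+\za\rho Y\ya T^{\alpha})=0$ for $\alpha\neq 0$ from injectivity of $\gamma-1$ (via the change of basis), where the paper instead runs an explicit recursion on the coefficients $a_n$. One caveat on wording: the image of $\gamma-1$ on the $\alpha=0$ summand is the $p$-adic completion $R^+\za\Lambda_{\rho},I_{\rho},+\ya$ of $\oplus_{i\geq 0}(i+1)\rho R^+\cdot\rho^iF_i$, not its $p$-adic closure inside $R^+\za\rho Y\ya$ (that closure is the strictly larger module $R^+\za\Lambda_{\rho},I_{\rho}\ya$, whose quotient would be the classical rather than the derived completion in Proposition \ref{derived vs classical}); since you identify the correct submodule earlier in the same paragraph, this is only a slip of phrasing in your final sentence, not a gap.
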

\begin{proof}
  It remains to compute $\rH^0(\Gamma,R^+\za\rho Y\ya T^{\alpha})$. 
  
  When $\alpha \neq 0$, assume $\sum_{n\geq 0}a_n\rho^nF_nT^{\alpha}$ is $\gamma$-invariant, then we have 
  \[\sum_{n\geq 0}\zeta^{\alpha}\epsilon_{\alpha}(a_n+\frac{\rho}{\epsilon_{\alpha}}(n+1)a_{n+1})\rho^nF_nT^{\alpha} = 0.\]
  This implies that for any $n\geq 0$ and any $m\geq 0$,
  \[a_n = (-1)^m\prod_{j=1}^m(\frac{\rho}{\epsilon_{\alpha}}(n+j))a_{n+m}.\]
  In particular, $\nu_p(a_n)\geq \sum_{j=1}^m\nu_p(n+j)$ for any $m\geq 0$. This forces $a_n = 0$ for any $n\geq 0$.
  
  When $\alpha = 0$, assume $\sum_{n\geq 0}a_n\rho^nF_n$ is $\gamma$-invariant, then we have 
  \[\sum_{n\geq 0}(n+1)\rho a_{n+1}\rho^nF_n = 0,\]
  which implies $a_n = 0$ for any $n\geq 1$. So we have $R^+\za\rho Y\ya^{\Gamma} = R^+$.
\end{proof}
Now we are able to handle the higher dimensional case.
\begin{lem}\label{trivial representation-II}
  Identify $\widehat S_{\infty,\rho}^+$ with $\Rinfp\za\rho Y_1,\dots,\rho Y_d\ya$.
  \begin{enumerate}
  \item The inclusion $R^+\za\rho \underline Y\ya\hookrightarrow \widehat S_{\infty,\rho}^+$ identifies $\RGamma(\Gamma, R^+\za\rho \underline Y\ya)$ with a direct summand of $\RGamma(\Gamma, \widehat S_{\infty,\rho}^+)$ whose complement is concentrated in degree $\geq 1$ and is killed by $\zeta_p-1$.
  
  \item For any $i\geq 0$, we have
  \[\rH^i(\Gamma, R^+\za\rho \underline Y\ya) = \wedge^i_{R^+}(\oplus_{j=1}^dR^+\za\rho Y_j\ya/R^+\za\Lambda_{\rho,j},I_{\rho},+\ya)\]
  for $\Lambda_{\rho,j}=\{\rho^n F_n(Y_j)\}$ and $I_{\rho} =\{\nu_p((n+1)\rho)\}_{n\geq 0}$.
  \end{enumerate}
\end{lem}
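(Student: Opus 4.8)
The plan is to reduce the $d$-variable computation to the one-variable Lemma~\ref{trivial representation-I} by a K\"{u}nneth-type argument, in complete analogy with the proof of Corollary~\ref{overconvergent resolution}. First I would use the perfectoid toric chart to write $\Rinfp=\widehat\bigoplus_{\underline\alpha}R^+\cdot\underline T^{\underline\alpha}$, the $p$-adically completed direct sum over a system of representatives $\underline\alpha$ of $\bZ[\tfrac1p]^d/\bZ^d$ inside $[0,1)^d$, on which $\gamma_i$ acts by the root of unity $\zeta^{\alpha_i}$; tensoring up gives a $\Gamma$-equivariant decomposition
\[
\widehat S_{\infty,\rho}^+=\widehat\bigoplus_{\underline\alpha}R^+\za\rho\underline Y\ya\cdot\underline T^{\underline\alpha},
\]
each summand being $\Gamma$-stable with $\gamma_i$ acting only through the variable $Y_i\mapsto Y_i+1$ and the scalar $\zeta^{\alpha_i}$. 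Since $\RGamma(\Gamma,-)$ is computed by the \emph{finite} Koszul complex $\rK(-;\gamma_1-1,\dots,\gamma_d-1)$, it commutes with this completed direct sum, and on each summand the Koszul complex factors as a completed tensor product $\widehat\bigotimes_{j=1}^{d}\rK(R^+\za\rho Y_j\ya\cdot T_j^{\alpha_j};\gamma_j-1)$, since $R^+\za\rho\underline Y\ya=R^+\za\rho Y_1\ya\widehat\otimes_{R^+}\cdots\widehat\otimes_{R^+}R^+\za\rho Y_d\ya$ and the operators $\gamma_i-1$ act on disjoint factors.

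For part~(1), the summand $\underline\alpha=0$ realizes $\RGamma(\Gamma,R^+\za\rho\underline Y\ya)$ as a direct summand, \emph{as complexes}, of $\RGamma(\Gamma,\widehat S_{\infty,\rho}^+)$, the complement being $\RGamma(\Gamma,\widehat\bigoplus_{\underline\alpha\neq0}R^+\za\rho\underline Y\ya\cdot\underline T^{\underline\alpha})$. For $\underline\alpha\neq0$ I would pick an index $j_0$ with $\alpha_{j_0}\neq0$; by Lemma~\ref{trivial representation-I}(1) the $j_0$-th factor has cohomology concentrated in degree~$1$ and killed by $\epsilon_{\alpha_{j_0}}$, hence --- since $\nu_p(\epsilon_{\alpha_{j_0}})\le r=\nu_p(\zeta_p-1)$ --- killed by $\zeta_p-1$; appearing as a tensor factor, this forces each $\underline\alpha\neq0$ piece of the complement into degrees $\ge1$ and kills it by $\zeta_p-1$, and the same then holds for the completed sum.

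For part~(2), I would apply the decomposition of the first paragraph with $\underline\alpha=0$, so that $\rK(R^+\za\rho\underline Y\ya;\gamma_1-1,\dots,\gamma_d-1)=\widehat\bigotimes_{j=1}^d\rK(R^+\za\rho Y_j\ya;\gamma_j-1)$; by Lemma~\ref{trivial representation-I}(2)--(3) the $j$-th factor has cohomology $R^+$ in degree $0$, $N_j:=R^+\za\rho Y_j\ya/R^+\za\Lambda_{\rho,j},I_\rho,+\ya$ in degree $1$, and $0$ otherwise, each of these being derived $p$-complete (Proposition~\ref{derived vs classical}). The K\"{u}nneth formula for this completed tensor product of complexes of derived-$p$-complete modules then assembles these into $\wedge^i_{R^+}(\bigoplus_{j=1}^d N_j)$ --- the identical bookkeeping already carried out in the proof of Corollary~\ref{overconvergent resolution} (there $\partial/\partial Y_j$ plays the role of $\gamma_j-1$ here, to which it is quasi-isomorphic, $\gamma_j-1$ being topologically nilpotent on $R^+\za\rho Y_j\ya$ with $\log\gamma_j=(\gamma_j-1)\cdot(\text{unit})$, cf.\ Remark~\ref{Lie algebra}), now over $R^+$ rather than $\Rinfp$.

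The main obstacle is this last step: making the K\"{u}nneth formula precise at the integral, $p$-adically completed level, i.e.\ checking that the completed tensor product of the one-variable Koszul complexes really computes $\RGamma(\Gamma,R^+\za\rho\underline Y\ya)$ and that assembling the derived-$p$-complete cohomology modules produces exactly the stated exterior powers, with no spurious $\mathrm{Tor}$ or higher $\varprojlim$ contributions. As in Corollary~\ref{overconvergent resolution}, this is controlled by the derived-versus-classical completion comparison (Proposition~\ref{derived vs classical}) together with the explicit description of the modules $R^+\za\Lambda_{\rho,j},I_\rho,+\ya$ (Definition~\ref{p-complete module}); all the remaining steps are formal.
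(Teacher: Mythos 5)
Your proposal is correct and follows essentially the same route as the paper: the paper likewise factors the Koszul complex computing $\RGamma(\Gamma,\Rinfp\za\rho Y_1,\dots,\rho Y_d\ya)$ as a completed tensor product of the one-variable Koszul complexes and then invokes Lemma \ref{trivial representation-I} together with the derived $p$-completeness of the modules $R^+\za\rho Y_j\ya/R^+\za\Lambda_{\rho,j},I_{\rho},+\ya$ to assemble the cohomology. The extra detail you supply (the isotypic decomposition over $\underline T^{\underline\alpha}$ and the explicit K\"{u}nneth bookkeeping) is exactly what the paper leaves implicit.
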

\begin{proof}
  Note that $\RGamma(\Gamma,\Rinfp\za\rho Y_1,\dots, \rho Y_d\ya)$ is presented by the Koszul complex
\[\rK(\Rinfp\za\rho Y_1,\dots, \rho Y_d\ya;\gamma_1-1,\dots,\gamma_d-1)\simeq\rK(\Rinfp\za\rho Y_1\ya;\gamma_1-1)\widehat \otimes^L_{\Rinfp}\cdots\widehat \otimes^L_{\Rinfp}\rK(\Rinfp\za\rho Y_d\ya;\gamma_d-1).\]
Since $R^+\za\rho Y_j\ya/R^+\za\Lambda_{\rho,j},I_{\rho},+\ya$ is already derived $p$-complete, the lemma follows from Lemma \ref{trivial representation-I} directly.
\end{proof}
\begin{prop}\label{trivial representation}
  \begin{enumerate}
      \item $(S_{\infty}^{\dagger,+})^{\Gamma} = R^+$;
      
      \item For any $i\geq 1$, $\rH^i(\Gamma,S_{\infty}^{\dagger,+})$ is $p^{\infty}$-torsion.
  \end{enumerate}
\end{prop}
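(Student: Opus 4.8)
The plan is to reduce everything to the finite level $\widehat S_{\infty,\rho}^+$, where Lemma \ref{trivial representation-II} already computes the group cohomology, and then to control the transition maps as $\nu_p(\rho)$ decreases, exactly as in the proof of Corollary \ref{overconvergent resolution}. Since $S_{\infty}^{\dagger,+} = \varinjlim_{\nu_p(\rho)>\nu_p(\rho_k)}\widehat S_{\infty,\rho}^+$ with $\Gamma$-equivariant transition maps, and the cohomology $\rH^i(\Gamma,-)$ of $\Gamma\cong\Zp^d$ is computed by the Koszul complex $\rK(-;\gamma_1-1,\dots,\gamma_d-1)$, which is degreewise a finite direct sum and hence commutes with filtered colimits, we obtain $\rH^i(\Gamma,S_{\infty}^{\dagger,+}) = \varinjlim_{\nu_p(\rho)>\nu_p(\rho_k)}\rH^i(\Gamma,\widehat S_{\infty,\rho}^+)$ for every $i\geq 0$.

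For (1): by Lemma \ref{trivial representation-II}(1) the complement of $\RGamma(\Gamma,R^+\za\rho\underline Y\ya)$ inside $\RGamma(\Gamma,\widehat S_{\infty,\rho}^+)$ is concentrated in degrees $\geq 1$, so in degree $0$ we get $\rH^0(\Gamma,\widehat S_{\infty,\rho}^+) = \rH^0(\Gamma,R^+\za\rho\underline Y\ya) = R^+$ by Lemma \ref{trivial representation-II}(2), independently of $\rho$, with transition maps the identity on $R^+$. Passing to the colimit gives $(S_{\infty}^{\dagger,+})^{\Gamma} = \rH^0(\Gamma,S_{\infty}^{\dagger,+}) = R^+$.

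For (2): fix $i\geq 1$. By Lemma \ref{trivial representation-II}(1), $\rH^i(\Gamma,\widehat S_{\infty,\rho}^+)\cong \rH^i(\Gamma,R^+\za\rho\underline Y\ya)\oplus C_{\rho}^i$ with $C_{\rho}^i$ killed by $\zeta_p-1$, hence by $p$ (as $p$ and $(\zeta_p-1)^{p-1}$ differ by a unit). Therefore $\varinjlim_{\rho}C_{\rho}^i$ is killed by $p$, in particular $p^{\infty}$-torsion. It remains to treat $\varinjlim_{\rho}\rH^i(\Gamma,R^+\za\rho\underline Y\ya)$. By Lemma \ref{trivial representation-II}(2), and since exterior powers and direct sums commute with filtered colimits, this equals $\wedge^i_{R^+}\bigl(\oplus_{j=1}^d\varinjlim_{\rho}M_{j,\rho}\bigr)$, where $M_{j,\rho} = R^+\za\rho Y_j\ya/R^+\za\Lambda_{\rho,j},I_{\rho},+\ya$ with $\Lambda_{\rho,j}=\{\rho^nF_n(Y_j)\}_{n\geq 0}$ and $I_{\rho}=\{\nu_p((n+1)\rho)\}_{n\geq 0}$. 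It suffices to show each $\varinjlim_{\rho}M_{j,\rho}$ is $p^{\infty}$-torsion, because then any element of the $i$-th exterior power is a finite sum of pure wedges each of whose factors is killed by a power of $p$, hence is itself killed by a power of $p$. For $\nu_p(\rho_1)>\nu_p(\rho_2)>\nu_p(\rho_k)$, the description of these completed modules (Remark \ref{describe of completion}, Definition \ref{p-complete module}) shows there is an $N\geq 0$ with $p^N R^+\za\Lambda_{\rho_1,j}\ya\subset R^+\za\Lambda_{\rho_2,j},I_{\rho_2},+\ya$: one only needs $N+n(\nu_p(\rho_1)-\nu_p(\rho_2))\geq \nu_p((n+1)\rho_2)$ for all $n\geq 0$ together with $\lim_{n\to\infty}\bigl(N+n(\nu_p(\rho_1)-\nu_p(\rho_2))-\nu_p((n+1)\rho_2)\bigr)=+\infty$, and this holds since the linear term $n(\nu_p(\rho_1)-\nu_p(\rho_2))$ dominates the logarithmic growth of $\nu_p((n+1)\rho_2)$. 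Hence the transition map $M_{j,\rho_1}\to M_{j,\rho_2}$ annihilates $p^N M_{j,\rho_1}$, so every element of $\varinjlim_{\rho}M_{j,\rho}$ is killed by a power of $p$. Combining the two pieces, $\rH^i(\Gamma,S_{\infty}^{\dagger,+})$ is $p^{\infty}$-torsion for $i\geq 1$.

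The only real content—and it is bookkeeping rather than a genuine obstacle—is the last estimate showing the transition maps between the $M_{j,\rho}$'s are divisible by arbitrarily large powers of $p$ as $\nu_p(\rho)$ decreases. This is entirely parallel to the argument proving $\rH^i({\rm HIG}(S_{\infty}^{\dagger,+},\Theta))$ is $p^{\infty}$-torsion in Corollary \ref{overconvergent resolution}; via $\frac{\partial}{\partial Y_j}=\log\gamma_j$ (Remark \ref{Lie algebra}) the two computations differ only cosmetically, the sole change being that the valuation sequence $I_{\rho}$ now carries an extra $\nu_p(\rho)$, which does not affect the estimate.
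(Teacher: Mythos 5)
Your proposal is correct and follows essentially the same route as the paper: reduce to the colimit of $\rH^i(\Gamma,\widehat S_{\infty,\rho}^+)$, invoke Lemma \ref{trivial representation-II} to split off the $(\zeta_p-1)$-torsion complement and identify the remaining piece with exterior powers of the quotients $R^+\za\rho Y_j\ya/R^+\za\Lambda_{\rho,j},I_{\rho},+\ya$, and then run the same valuation estimate on transition maps as in Corollary \ref{overconvergent resolution} (the paper's proof is exactly this, stated in one line). Your observation that the extra $\nu_p(\rho)$ in $I_{\rho}$ is a bounded additive constant and so does not affect the choice of $N$ is the only point the paper leaves implicit, and you handle it correctly.
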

\begin{proof}
  We only need to show for any $i\geq 1$,
  \[\varinjlim_{\nu_p(\rho)>\nu_p(\rho_k)}\rH^i(\Gamma,\widehat S_{\infty,\rho}^+)\]
  is $p^{\infty}$-torsion. However, by Lemma \ref{trivial representation-II}, this follows from a similar argument as in the proof of Corollary \ref{overconvergent resolution}.
\end{proof}

\subsection{Calculation in general case}
 Now, by virtues of Theorem \ref{Strong Decompletion}, we may assume that $M$ is an $a$-small representation of $\Gamma$ over $R^+$. Let $e_1,\dots,e_l$ be an $R^+$-basis of $M$ and $A_j$ be the matrix of $\gamma_j$ with respect to the chosen basis for all $1\leq j\leq d$; that is,
 \[\gamma_j(e_1,\dots,e_l) = (e_1,\dots,e_l)A_j.\]
 Put $B_j = A_j-I$. It is the matrix of $\gamma_j-1$ and has $p$-adic valuation $\nu_p(B_j)\geq a+\nu_p(\rho_k)$ by $a$-smallness of $M$. Similar to the trivial representation case, we are reduced to computing $\RGamma(\Gamma,M\otimes_{R^+}\Rinfp\za\rho Y_1,\dots,\rho Y_d\ya)$. 
 Note that we still have a $\Gamma$-equivariant decomposition 
 \[M\otimes_{R^+}\Rinfp\za\rho Y_1,\dots,\rho Y_d\ya=\widehat \oplus_{\underline \alpha\in(\bN[\frac{1}{p}]\cap[0,1))^d}M\otimes_{R^+}R^+\za\rho Y_1,\dots,\rho Y_d\ya\underline T^{\underline \alpha},\]
 where $\underline T^{\underline \alpha}$ denotes $T_1^{\alpha_1}\cdots T_d^{\alpha_d}$ for any $\underline \alpha = (\alpha_1,\dots,\alpha_d)$.
 
 Assume $\underline \alpha\neq 0$ at first. Without loss of generality, we assume $\alpha_d\neq 0$. Note that $\{e_{i,n}:= e_i\rho^nF_n(Y_d)\underline T^{\underline \alpha}\}_{1\leq i\leq l,n\geq 0}$ is a set of topological basis of $M\otimes_{R^+}R^+\za\rho Y_1,\dots,\rho Y_d\ya\underline T^{\underline \alpha}$ over $R^+\za\rho Y_1,\dots,\rho Y_{d-1}\ya$. We have
 \[(\gamma_d-1)(e_{1,n},\dots,e_{l,n}) = \zeta^{\alpha_d}\epsilon_{\alpha_d}((e_{1,n},\dots,e_{l,n})\cdot(\epsilon_{\alpha_d}^{-1}B_d+I)+(e_{1,n-1},\dots,e_{l,n-1})\cdot n\frac{\rho}{\epsilon_{\alpha_d}}A_d).\]
 Similar to the trivial representation case, using Proposition \ref{basis of free modules II}, we deduce that 
 \[\RGamma(\Zp\gamma_d, M\otimes_{R^+}R^+\za\rho Y_1,\dots,\rho Y_d\ya\underline T^{\underline \alpha})\simeq  M\otimes_{R^+}R^+\za\rho Y_1,\dots,\rho Y_d\ya\underline T^{\underline \alpha}/\epsilon_{\alpha_d}[-1].\]
 Using the Hochschild-Serre spectral sequence, we have the following lemma.
 \begin{lem}\label{error term}
   Assume $\underline \alpha\neq 0$. Then the complex
   $\RGamma(\Gamma, M\otimes_{R^+}R^+\za\rho Y_1,\dots,\rho Y_d\ya\underline T^{\underline \alpha})$
   is concentrated in positive degrees and is killed by $\zeta_p-1$.
 \end{lem}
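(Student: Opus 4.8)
The plan is to reduce to the already-computed $\Zp\gamma_d$-cohomology by running the Hochschild--Serre spectral sequence for the decomposition $\Gamma=\Zp\gamma_d\oplus\Gamma'$ with $\Gamma'=\bigoplus_{i=1}^{d-1}\Zp\gamma_i$. Write $N=M\otimes_{R^+}R^+\za\rho Y_1,\dots,\rho Y_d\ya\underline T^{\underline\alpha}$; since $\underline\alpha\neq 0$ we may assume $\alpha_d\neq 0$, as was done above. The complex $[\,N\xrightarrow{\gamma_d-1}N\,]$ computing $\RGamma(\Zp\gamma_d,N)$ is a complex of $\Gamma'$-modules because $\gamma_d$ commutes with each $\gamma_i$ for $i<d$. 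By the computation just made (via Proposition \ref{basis of free modules II}), this complex is, up to $\Gamma'$-equivariant quasi-isomorphism, the module $N/\epsilon_{\alpha_d}N$ placed in degree $1$; in other words $\rH^q(\Zp\gamma_d,N)=0$ for $q\neq 1$ and $\rH^1(\Zp\gamma_d,N)=N/\epsilon_{\alpha_d}N$ with its induced $\Gamma'$-action.

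Feeding this into the Hochschild--Serre spectral sequence $\rH^p(\Gamma',\rH^q(\Zp\gamma_d,N))\Rightarrow \rH^{p+q}(\Gamma,N)$, which then collapses to the single row $q=1$, I obtain $\rH^i(\Gamma,N)\cong\rH^{i-1}(\Gamma',N/\epsilon_{\alpha_d}N)$ for every $i$. For $i=0$ this vanishes, so $\RGamma(\Gamma,N)$ is concentrated in positive degrees; and since $\Gamma'\cong\Zp^{d-1}$ has continuous cohomological dimension $d-1$ (its cohomology being computed by a Koszul complex on $d-1$ commuting operators), it also vanishes for $i>d$, so $\RGamma(\Gamma,N)$ lies in degrees $[1,d]$. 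Finally, for every $i\geq 1$ the group $\rH^i(\Gamma,N)\cong\rH^{i-1}(\Gamma',N/\epsilon_{\alpha_d}N)$ is a module over $R^+\za\rho Y_1,\dots,\rho Y_d\ya/\epsilon_{\alpha_d}$, hence is annihilated by $\epsilon_{\alpha_d}$; as $\nu_p(\zeta_p-1)=r\geq\nu_p(\epsilon_{\alpha_d})$ forces $\zeta_p-1\in\epsilon_{\alpha_d}\calO_{\Cp}$, it is also annihilated by $\zeta_p-1$. This proves the lemma.

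The only step that needs a little care is the claim that the quasi-isomorphism $\RGamma(\Zp\gamma_d,N)\simeq(N/\epsilon_{\alpha_d}N)[-1]$ is $\Gamma'$-equivariant rather than merely an isomorphism of abelian groups; this comes down to checking that the identification $\Ima(\gamma_d-1)=\epsilon_{\alpha_d}N$ supplied by Proposition \ref{basis of free modules II} is natural with respect to the commuting $\Gamma'$-action, which is immediate from the construction there. Everything else is bookkeeping, so there is no substantial obstacle beyond what has already been handled in the $\gamma_d$-computation.
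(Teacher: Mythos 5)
Your proof is correct and follows essentially the same route as the paper: after assuming $\alpha_d\neq 0$, one computes $\RGamma(\Zp\gamma_d,N)\simeq (N/\epsilon_{\alpha_d}N)[-1]$ via Proposition \ref{basis of free modules II} and then feeds this into the Hochschild--Serre spectral sequence for $\Gamma=\Zp\gamma_d\oplus\Gamma'$, concluding from $\nu_p(\zeta_p-1)=r\geq\nu_p(\epsilon_{\alpha_d})$ that everything is killed by $\zeta_p-1$. You have merely made explicit the degeneration of the spectral sequence and the $\Gamma'$-equivariance of the identification $\Ima(\gamma_d-1)=\epsilon_{\alpha_d}N$, which the paper leaves implicit.
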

 
 Now, we focus on the $\underline \alpha = 0$ case and prove the following proposition.
 \begin{prop}\label{principal term}
  Keep notations as above. Assume $\nu_p(\rho)<a+\nu_p(\rho_k)-r$. Define 
  \[H(M):=(M\otimes_{R^+}R^+\za\rho Y_1,\dots,\rho Y_d\ya)^{\Gamma},\]
  then the following assertions are true:
\begin{enumerate}
  \item The $H(M)$ is a finite free $R^+$-module of rank $l$ and is independent of the choice of $\rho$. More precisely, if we define
  \begin{equation*}
    (h_1, \dots, h_l) = (e_1, \dots, e_l)\sum_{n_1,\cdots n_d\geq 0}\prod_{i=1}^d\frac{(-A_i^{-1}B_i)^{n_i}}{n_i!}F_{n_i}(Y_i),
  \end{equation*}
  then $h_1,\dots,h_l$ is an $R^+$-basis of $H(M)$.

  \item The inclusion $H(M)\hookrightarrow M\otimes_{R^+}R^+\za\rho Y_1,\dots,\rho Y_d\ya$ induces a $\Gamma$-equivariant isomorphism
  \[H(M)\otimes_{R^+}R^+\za\rho Y_1,\dots,\rho Y_d\ya\cong M\otimes_{R^+}R^+\za\rho Y_1,\dots,\rho Y_d\ya.\]
 \end{enumerate}
\end{prop}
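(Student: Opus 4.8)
The plan is to write down the elements $h_1,\dots,h_l$ explicitly as the columns of an invertible matrix $C$ over $R^+\za\rho\underline Y\ya$, to check they are $\Gamma$-fixed, and then to deduce everything formally together with the computation of invariants in the trivial case. Concretely, set
\[
C_i:=\sum_{n\geq 0}\frac{(-A_i^{-1}B_i)^n}{n!}\,F_n(Y_i)\in\rM_l\bigl(R^+\za\rho Y_i\ya\bigr),\qquad C:=\prod_{i=1}^dC_i,
\]
and $(h_1,\dots,h_l):=(e_1,\dots,e_l)C$. The proof then has three steps: (a) the series defining $C$ converges and $C$ is invertible over $R^+\za\rho\underline Y\ya$; (b) the $h_i$ lie in $H(M)$; (c) $H(M)$ is exactly $\bigoplus_iR^+h_i$.

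\emph{Step (a).} Since $M$ is $a$-small, $\nu_p(B_i)\geq a+\nu_p(\rho_k)$, and $A_i=I+B_i$ is invertible over $R^+$ with $\nu_p(A_i^{-1}B_i)\geq a+\nu_p(\rho_k)$. Using $\nu_p(n!)\leq \tfrac{n}{p-1}=nr$ and that $\rho^nF_n(Y_i)$ is a topological basis element of $R^+\za\rho Y_i\ya$ of norm $\leq 1$, the $n$-th summand of $C_i$, written as $\tfrac{(-A_i^{-1}B_i)^n}{n!\,\rho^{n}}\bigl(\rho^nF_n(Y_i)\bigr)$, has norm $\leq p^{-n(a+\nu_p(\rho_k)-r-\nu_p(\rho))}$. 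Here the hypothesis $\nu_p(\rho)<a+\nu_p(\rho_k)-r$ is used: it makes the exponent $n\delta$ with $\delta>0$, so the series converges in $\rM_l(R^+\za\rho Y_i\ya)$ and $C_i\equiv I$ modulo a matrix with topologically nilpotent entries; hence $C_i$, and so $C$, is invertible. In particular $h_1,\dots,h_l$ form an $R^+\za\rho\underline Y\ya$-basis of $M\otimes_{R^+}R^+\za\rho\underline Y\ya$.

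\emph{Step (b).} The structural input here is that the matrices $A_i$ pairwise commute: as $\Gamma$ is abelian and acts trivially on $R^+$ (hence on the entries of the $A_i$), the relation $\gamma_i\gamma_j=\gamma_j\gamma_i$ on $M$ gives $A_iA_j=A_jA_i$; consequently the $C_i$ commute and $C=\prod_iC_i$ is unambiguous. Applying $\gamma_j$, one gets $\gamma_j(C_i)=C_i$ for $i\neq j$ (neither $Y_j$ nor any non-$\Gamma$-fixed scalar occurs in $C_i$), while for $i=j$ the recursion $F_n(Y_j+1)=F_n(Y_j)+nF_{n-1}(Y_j)$ telescopes to $\gamma_j(C_j)=(I-A_j^{-1}B_j)C_j=A_j^{-1}C_j$. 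Hence $\gamma_j(C)=A_j^{-1}C$, so $\gamma_j\bigl((e)C\bigr)=(e)A_j\gamma_j(C)=(e)C$, i.e.\ $h_1,\dots,h_l\in H(M)$.

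\emph{Step (c) and conclusion.} For $m\in H(M)$, write $m=\sum_ic_ih_i$ with $c_i\in R^+\za\rho\underline Y\ya$ (Step (a)); invariance of $m$ and of the $h_i$ forces $\gamma(c_i)=c_i$, so $c_i\in(R^+\za\rho\underline Y\ya)^{\Gamma}=R^+$ by the degree-$0$ case of Lemma \ref{trivial representation-II}(2). Thus $H(M)=\bigoplus_{i=1}^lR^+h_i$ is finite free of rank $l$; it is independent of $\rho$ because the $h_i$ are given by a $\rho$-independent series and are identified under the transition maps $M\otimes_{R^+}R^+\za\rho'\underline Y\ya\to M\otimes_{R^+}R^+\za\rho\underline Y\ya$ for $\nu_p(\rho')\geq\nu_p(\rho)$ in the admissible range, giving (1). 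Part (2) is then immediate: the $R^+\za\rho\underline Y\ya$-linear map $H(M)\otimes_{R^+}R^+\za\rho\underline Y\ya\to M\otimes_{R^+}R^+\za\rho\underline Y\ya$ carries the basis $h_i\otimes 1$ to the basis $h_i$, hence is an isomorphism, and it is $\Gamma$-equivariant since the $h_i$ are $\Gamma$-fixed. The main obstacle is Step (a)--(b): one must locate exactly the range $\nu_p(\rho)<a+\nu_p(\rho_k)-r$ in which the closed-form series converges and then verify that it genuinely solves the fixed-point equation $\gamma_j(C)=A_j^{-1}C$, for which the commutativity of the $A_i$ and the telescoping recursion for $F_n$ are precisely what make the computation work.
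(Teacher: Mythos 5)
Your proposal is correct and follows essentially the same route as the paper: both rest on the explicit series $C=\sum_{n_1,\dots,n_d}\prod_i\frac{(-A_i^{-1}B_i)^{n_i}}{n_i!}F_{n_i}(Y_i)$, the same convergence estimate $n(a+\nu_p(\rho_k)-r-\nu_p(\rho))>0$ guaranteeing $C\in\GL_l(R^+\za\rho\underline Y\ya)$, and the reduction of the computation of invariants to $(R^+\za\rho\underline Y\ya)^{\Gamma}=R^+$. The only cosmetic difference is that the paper derives the formula from the recursion $BX_n=-(n+1)\rho AX_{n+1}$ satisfied by an arbitrary invariant element, whereas you verify directly via the telescoping identity $F_n(Y+1)=F_n(Y)+nF_{n-1}(Y)$ that the columns of $C$ are $\Gamma$-fixed; both arguments are complete.
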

 \begin{proof}
 We first consider the $d=1$ case. In this case, $\Gamma = \Zp\gamma$ acts on $R^+\za\rho Y\ya$ via $\gamma(Y)=Y+1$. Let $e_1,\dots,e_l$ be a basis of $M$ and $A$ be the matrix of $\gamma$ associated to the chosen basis. Put $B=A-I$ and then $\nu_p(B)\geq a+\nu_p(\rho_k)>\nu_p(\rho)+r$. Note that $\{\rho^n F_n(Y)\}_{n\geq 0}$ is a set of topological basis of $R^+\za\rho Y\ya$. 
 \begin{enumerate}
   \item Assume $x = \sum_{n\geq 0}\underline eX_n\rho^nF_n(Y)\in H(M)$, where $X_n\in(R^+)^l$ for any $n\geq 0$ and $\underline e$ denotes $(e_1,\dots,e_l)$. Since $\gamma(x) = x$, we deduce that for any $n\geq 0$,
   \[BX_n = -(n+1)\rho AX_{n+1}.\]
   In other words, we have 
   \[X_{n} = \frac{-A^{-1}B}{n\rho}X_{n-1} = \frac{(-A^{-1}B)^n}{\rho^nn!}X_0.\] Note that $\nu_p(\frac{(A^{-1}B)^n}{\rho^nn!})\geq (a+\nu_p(\rho_k)-r-\nu_p(\rho)))n$. So we get $\frac{(A^{-1}B)^n}{\rho^nn!}\in \rM_l(R^+)$ and hence $X_n$ is uniquely determined by $X_0$. In particular, we have
   \begin{equation}\label{form}
       x = \underline e\sum_{n\geq 0} \frac{(-A^{-1}B)^n}{\rho^nn!}\rho^nF_n(Y)X_0 = \underline e\sum_{n\geq 0} \frac{(-A^{-1}B)^n}{n!}F_n(Y)X_0.
   \end{equation}
   Conversely, any $x\in M\otimes_{R^+}R^+\za\rho Y_1,\dots,\rho Y_d\ya$ which is of the form (\ref{form}) for some $X_0\in (R^+)^l$ is $\gamma$-invariant. So we are done.
   
   \item From the proof of (1), we see that $\sum_{n\geq 0} \frac{(-A^{-1}B)^n}{\rho^nn!}\rho^nF_n(Y)\in\GL_l(R^+\za\rho Y\ya)$. Thus $h_i$'s form an $R^+\za\rho Y\ya$-basis of $M\otimes_{R^+}R^+\za\rho Y\ya$ as desired.
   \end{enumerate}
   
   Now, we handle the case for any $d\geq 1$. By what we have proved and by iterating, we get 
   \begin{equation*}
   \begin{split}
       &\underline e (R^+\za\rho Y_1,\dots,\rho Y_d\ya)^l\\
       = &\underline e\sum_{n_d\geq 0}\frac{(-A_d^{-1}B_d)^{n_d}}{n_d!}F_{n_d}(Y_d)(R^+\za\rho Y_1,\dots,\rho Y_d\ya)^l\\
        = &\underline e\sum_{n_{d-1},n_d\geq 0}\frac{(-A_{d-1}^{-1}B_{d-1})^{n_{d-1}}}{n_{d-1}!}F_{n_{d-1}}(Y_{d-1})\frac{(-A_d^{-1}B_d)^{n_d}}{n_d!}F_{n_d}(Y_d)(R^+\za\rho Y_1,\dots,\rho Y_d\ya)^l\\
        =&\cdots\\
        =& \underline e\sum_{n_1,\cdots n_d\geq 0}\prod_{i=1}^d\frac{(-A_i^{-1}B_i)^{n_i}}{n_i!}F_{n_i}(Y_i)(R^+\za\rho Y_1,\dots,\rho Y_d\ya)^l.
    \end{split}
   \end{equation*}
   Since $\underline e\sum_{n_1,\cdots n_d\geq 0}\prod_{i=1}^d\frac{(-A_i^{-1}B_i)^{n_i}}{n_i!}F_{n_i}(Y_i)$ forms a $\Gamma$-invariant basis, the result follows from that $(R^+\za\rho Y_1,\dots,\rho Y_d\ya)^{\Gamma}=R^+$.
 \end{proof}
 \begin{rmk}\label{simple form}
   Note that if $\nu_p(z)>r$, then $(1+z)^Y = \sum_{n\geq 0}\frac{z^n}{n!}F_n(Y)$. Therefore, for $M$ and $\rho$ as above, as $\nu_p(A_i^{-1}B_j)\geq a>r$, the operator $\prod_{i=1}^d\gamma_i^{-Y_i}$, whose matrix is given by $\sum_{n_1,\cdots n_d\geq 0}\prod_{i=1}^d\frac{(-A_i^{-1}B_i)^{n_i}}{n_i!}F_{n_i}(Y_i)$, is well-defined on $M\otimes_{R^+}R^+\za\rho Y_1,\dots,\rho Y_d\ya$. Then the above proposition says that we have $H(M) = \prod_{i=1}^d\gamma_i^{-Y_i}M$. Since $\log(1+z)(1+z)^Y = \sum_{n\geq 0}\frac{z^n}{n!}F'_n(Y)$ when $\nu_p(z)>r$, for any $\underline e\vec m\in M$ with $\vec m\in(R^+)^l$ and $1\leq j\leq d$, we get
   \begin{equation*}
       \begin{split}
           \frac{\partial}{\partial Y_j}(\prod_{i=1}^d\gamma_i^{-Y_i}\underline e\vec m) &= \underline e\frac{\partial}{\partial Y_j}(\sum_{n_1,\cdots n_d\geq 0}\prod_{i=1}^d\frac{(-A_i^{-1}B_i)^{n_i}}{n_i!}F_{n_i}(Y_i)\vec m)\\
           & = \underline e\sum_{n_1,\dots, n_d\geq 0}\frac{(-A_j^{-1}B_j)^{n_j}}{n_j!}F_{n_j}'(Y_j)\prod_{1\leq i\leq d,i\neq j}\frac{(-A_i^{-1}B_i)^{n_i}}{n_i!}F_{n_i}(Y_i)\vec m\\
           &= \underline e(-\log(A_j)\sum_{n_1,\cdots n_d\geq 0}\prod_{i=1}^d\frac{(-A_i^{-1}B_i)^{n_i}}{n_i!}F_{n_i}(Y_i)\vec m)\\
           &=-\log\gamma_j\prod_{i=1}^d\gamma_i^{-Y_i}\underline e\vec m.
       \end{split}
   \end{equation*}
 \end{rmk}
 \begin{cor}\label{local representation to Higgs}
   Keep notations as above.
   \begin{enumerate}
       \item Denote by $\theta_{H(M)}$ the restriction of $\Theta$ to $H(M)$. Then $(H(M),\theta_{H(M)})$ is an $a$-small Higgs module. Moreover, $\theta_{H(M)} = \sum_{i=1}^d-\log\gamma_i\otimes\frac{\dlog T_i}{t}$.
       
       \item The inclusion $H(M)\to M\otimes_{R^+}S_{\infty}^{\dagger,+}$ induces a $\Gamma$-equivariant isomorphism
       \[H(M)\otimes_{R^+}S_{\infty}^{\dagger,+}\cong M\otimes_{R^+}S_{\infty}^{\dagger,+}\]
       and identifies the corresponding Higgs complexes
       \[{\rm HIG}(H(M)\otimes_{R^+}S_{\infty}^{\dagger,+},\Theta_{H(M)})\cong {\rm HIG}(M\otimes_{R^+}S_{\infty}^{\dagger,+},\Theta_M).\]
   \end{enumerate}
 \end{cor}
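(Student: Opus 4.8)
The strategy is to reduce everything to Proposition~\ref{principal term}, Remark~\ref{simple form} and Corollary~\ref{overconvergent resolution}, where the substantive computations already live; what remains is bookkeeping with $\Gamma$-invariants, with the direct limit over $\rho$, and with $p$-adic norms. Throughout I would fix a $\rho\in\rho_k\calO_{\Cp}$ with $\nu_p(\rho_k)<\nu_p(\rho)<a+\nu_p(\rho_k)-r$; this is possible because $a>r$, and by Proposition~\ref{principal term}(1) the module $H(M)$ does not depend on this choice.

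For (1), I would first observe that on $S_\infty^{\dagger,+}$ each $\gamma_i$ acts through the substitution $Y_j\mapsto Y_j+\delta_{ij}$ (Corollary~\ref{overconvergent resolution}, Remark~\ref{Lie algebra}), so $\id_M\otimes\frac{\partial}{\partial Y_j}$ commutes with the $\Gamma$-action on $M\otimes_{R^+}S_\infty^{\dagger,+}$; since moreover $\gamma_i$ fixes $\frac{\dlog T_j}{t}$, the Higgs field $\Theta_M=\sum_j(\id_M\otimes\frac{\partial}{\partial Y_j})\otimes\frac{\dlog T_j}{t}$ carries $H(M)=(M\otimes_{R^+}S_\infty^{\dagger,+})^\Gamma$ into $H(M)\otimes_{R^+}\widehat\Omega^1_{R^+}(-1)$, because forming $\Gamma$-invariants commutes with $-\otimes_{R^+}\widehat\Omega^1_{R^+}(-1)$, the latter being finite free over $R^+$. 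The resulting map $\theta_{H(M)}$ then satisfies $\theta_{H(M)}\wedge\theta_{H(M)}=0$ since $\Theta$ does. To compute it, I would use the $R^+$-basis $(h_1,\dots,h_l)=(e_1,\dots,e_l)P$ of $H(M)$ with $P=\sum_{n_1,\dots,n_d\ge0}\prod_i\frac{(-A_i^{-1}B_i)^{n_i}}{n_i!}F_{n_i}(Y_i)\in\GL_l(R^+\za\rho Y_1,\dots,\rho Y_d\ya)$ from Proposition~\ref{principal term}(1): by Remark~\ref{simple form} the operator $\frac{\partial}{\partial Y_j}$ acts on $H(M)$ through the matrix $P^{-1}(-\log A_j)P$, which a priori has entries in $R^+\za\rho\underline Y\ya$ but must lie in $\rM_l(R^+)$ since it is the matrix of an endomorphism of the $R^+$-free module $H(M)$ in the basis $(h_1,\dots,h_l)$; this is the precise content of the formula $\theta_{H(M)}=\sum_i-\log\gamma_i\otimes\frac{\dlog T_i}{t}$. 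For $a$-smallness, $\nu_p(B_j)\ge a+\nu_p(\rho_k)$ forces $\nu_p(\log(I+B_j))\ge\nu_p(B_j)\ge a+\nu_p(\rho_k)$, and since $R^+\za\rho\underline Y\ya$ is a sub-multiplicative Banach ring in which $\|P\|,\|P^{-1}\|\le1$ and into which $R^+$ embeds isometrically, the entries of $P^{-1}(-\log A_j)P$ have valuation $\ge a+\nu_p(\rho_k)$; hence $\Ima(\theta_{H(M)})\subseteq p^{a+\nu_p(\rho_k)}H(M)\otimes_{R^+}\widehat\Omega^1_{R^+}(-1)$.

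For (2), Proposition~\ref{principal term}(2) already gives a $\Gamma$-equivariant isomorphism $H(M)\otimes_{R^+}R^+\za\rho\underline Y\ya\xrightarrow{\sim}M\otimes_{R^+}R^+\za\rho\underline Y\ya$ implemented by $P$; since $P\in\GL_l(R^+\za\rho\underline Y\ya)\subseteq\GL_l(\widehat S_{\infty,\rho}^+)$ --- using $\widehat S_{\infty,\rho}^+\cong\Rinfp\za\rho\underline Y\ya$ from Corollary~\ref{complete Hyodo} --- the same matrix induces a $\Gamma$-equivariant isomorphism $H(M)\otimes_{R^+}\widehat S_{\infty,\rho}^+\xrightarrow{\sim}M\otimes_{R^+}\widehat S_{\infty,\rho}^+$. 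The $\rho$ with $\nu_p(\rho)<a+\nu_p(\rho_k)-r$ are cofinal in the directed set $\{\rho:\nu_p(\rho)>\nu_p(\rho_k)\}$ that defines $S_\infty^{\dagger,+}$ (Definition~\ref{local period sheaf}), and $H(M)$ and $M$ are finite free over $R^+$, so $-\otimes_{R^+}(-)$ commutes with $\varinjlim_\rho$; passing to the colimit yields the desired $\Gamma$-equivariant isomorphism $H(M)\otimes_{R^+}S_\infty^{\dagger,+}\cong M\otimes_{R^+}S_\infty^{\dagger,+}$. To check that it intertwines $\Theta_{H(M)}=\theta_{H(M)}\otimes\id+\id_{H(M)}\otimes\Theta$ with $\Theta_M=\id_M\otimes\Theta$, I would write a section as $(h_1,\dots,h_l)\vec s=(e_1,\dots,e_l)P\vec s$ and apply Leibniz: $(\id_M\otimes\frac{\partial}{\partial Y_j})((e_1,\dots,e_l)P\vec s)=(e_1,\dots,e_l)(\frac{\partial P}{\partial Y_j}\vec s+P\frac{\partial\vec s}{\partial Y_j})$, and by Remark~\ref{simple form} $\frac{\partial P}{\partial Y_j}=(-\log A_j)P$, so this equals $(h_1,\dots,h_l)\bigl(P^{-1}(-\log A_j)P\,\vec s+\frac{\partial\vec s}{\partial Y_j}\bigr)$, which is exactly the $Y_j$-component of $\Theta_{H(M)}((h_1,\dots,h_l)\vec s)$; summing over $j$ against $\frac{\dlog T_j}{t}$ identifies the two Higgs fields and hence the two Higgs complexes, $\Gamma$-equivariantly since the underlying isomorphism is. I expect the main obstacle to be the $a$-smallness estimate in (1): one has to be sure that conjugating $\log A_j$ by the transition matrix $P$ neither worsens the valuation bound nor leaves $\rM_l(R^+)$ --- which is exactly where the $\Gamma$-equivariance established at the start of (1) together with the isometry and sub-multiplicativity of the Gauss norm on $R^+\za\rho\underline Y\ya$ come in; the cofinality-and-colimit step in (2) is routine once one recalls that $H(M)$ and $M$ are finite free over $R^+$.
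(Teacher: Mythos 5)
Your proposal is correct and follows essentially the same route as the paper: both parts reduce to Proposition \ref{principal term} and Remark \ref{simple form}, with the $a$-smallness coming from $\nu_p(\frac{B_j^n}{n})\geq a+\nu_p(\rho_k)$ and part (2) being the base change of the isomorphism of Proposition \ref{principal term}(2) along $R^+\za\rho Y_1,\dots,\rho Y_d\ya\to S_{\infty}^{\dagger,+}$. Your explicit verification that conjugation by $P$ preserves both integrality and the valuation bound (via the sub-multiplicative Gauss norm and $\|P\|=\|P^{-1}\|=1$) fills in a detail the paper leaves implicit, but it is the same argument.
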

 \begin{proof}
 \begin{enumerate}
   \item Since $\Theta = \sum_{i=1}^d\frac{\partial}{\partial Y_i}\otimes\frac{\dlog T_i}{t}$, the ``moreover'' part follows from Remark \ref{simple form}. Since $\nu_p(B_i)\geq a+\nu_p(\rho_k)$ for all $j$ and $\log\gamma_j = -\sum_{n\geq 1}\frac{(-B_j)^n}{n}$, we see the $a$-smallness of $(H(M),\theta_{H(M)})$ as $\nu_p(\frac{B_i^n}{n})\geq a+\nu_p(\rho_k)$ for all $n$.
   
   \item This follows from Proposition \ref{principal term} (2) and the definition of $\theta_{H(M)}$.
 \end{enumerate}
 \end{proof}
 
 We have seen how to achieve an $a$-small Higgs module from an $a$-small representation. It remains to construct an $a$-small representation of $\Gamma$ from an $a$-small Higgs module.
 
\begin{prop}\label{local Higgs to representation}
  Assume $a>r$. Let $(H,\theta_H)$ be an $a$-small Higgs module of rank $l$ over $R^+$. Put $M = (H\otimes_{R^+}S_{\infty}^{\dagger,+})^{\Theta_H=0}$.
  \begin{enumerate}
      \item The restricted $\Gamma$-action on $M$ makes it an $a$-small $\Rinfp$-representation of rank $l$. Moreover, if $\theta_H = \sum_{i=1}^d\theta_i\otimes\frac{\dlog T_i}{t}$, then $\gamma_i$ acts on $M$ via $\exp(-\theta_i)$.
      
      \item The inclusion $M\hookrightarrow H\otimes_{R^+}S_{\infty}^{\dagger,+}$ induces a $\Gamma$-equivariant isomorphism
      \[M\otimes_{\Rinfp}S_{\infty}^{\dagger,+}\cong H\otimes_{R^+}S_{\infty}^{\dagger,+}\]
      and identifies the corresponding Higgs complexes
      \[{\rm HIG}(M\otimes_{\Rinfp}S_{\infty}^{\dagger,+},\Theta_{M})\cong {\rm HIG}(H\otimes_{R^+}S_{\infty}^{\dagger,+},\Theta_H).\]
  \end{enumerate}
\end{prop}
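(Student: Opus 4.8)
The plan is to trivialise the Higgs field $\theta_H$ by an explicit gauge transformation on $H\otimes_{R^+}S_{\infty}^{\dagger,+}$, reducing the statement to the trivial‑representation computation of the previous subsection. Fix an $R^+$‑basis $e_1,\dots,e_l$ of $H$ and write $\theta_H=\sum_{i=1}^d\theta_i\otimes\frac{\dlog T_i}{t}$ with $\theta_i\in\rM_l(R^+)$; then $\theta_H\wedge\theta_H=0$ forces $[\theta_i,\theta_j]=0$ for all $i,j$, and $a$‑smallness gives $\nu_p(\theta_i)\geq a+\nu_p(\rho_k)$. First I would check, exactly as in Remark \ref{simple form} and the proof of Proposition \ref{principal term}, that since $\nu_p(\theta_i)>r$ the matrix $\sum_{n\geq 0}\frac{(-\theta_i)^n}{n!}F_n(Y_i)$ converges in $\rM_l(R^+\za\rho Y_1,\dots,\rho Y_d\ya)$ for every $\rho$ with $\nu_p(\rho)<a+\nu_p(\rho_k)-r$, and that such $\rho$ occur in the colimit defining $S_{\infty}^{\dagger,+}$ because $a>r$; using $[\theta_i,\theta_j]=0$, the product
\[
U:=\prod_{i=1}^{d}\exp(-\theta_iY_i)
\]
is then a well-defined element of $\GL_l(S_{\infty}^{\dagger,+})$, with inverse $\prod_i\exp(\theta_iY_i)$. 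This convergence — weighing $\nu_p(\theta_i^n/n!)$ against the norm of $\rho^nF_n(Y_i)$ — is the main obstacle, and it is the exact analogue of the estimate in Proposition \ref{principal term} with $\theta_i$ in place of $-\log\gamma_i$.

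Next I would run the conjugation. Since the $\theta_i$ commute with $U$ and $\frac{\partial U}{\partial Y_i}=-\theta_iU$, a Leibniz computation with the commuting operators $\nabla_i:=\theta_i+\frac{\partial}{\partial Y_i}$ on $H\otimes_{R^+}S_{\infty}^{\dagger,+}$ gives $\nabla_i(Uv)=U\,\frac{\partial v}{\partial Y_i}$, i.e.
\[
\Theta_H\circ U=U\circ(\id_H\otimes\Theta)
\]
as maps $H\otimes_{R^+}S_{\infty}^{\dagger,+}\to H\otimes_{R^+}S_{\infty}^{\dagger,+}\otimes_{R^+}\widehat\Omega^1_{R^+}(-1)$. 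Thus $U$ induces an isomorphism of Higgs complexes ${\rm HIG}(H\otimes_{R^+}S_{\infty}^{\dagger,+},\id_H\otimes\Theta)\xrightarrow{\sim}{\rm HIG}(H\otimes_{R^+}S_{\infty}^{\dagger,+},\Theta_H)$, and in particular
\[
M=\Ker\Theta_H=U\bigl(H\otimes_{R^+}(S_{\infty}^{\dagger,+})^{\Theta=0}\bigr)=U\bigl(H\otimes_{R^+}\Rinfp\bigr),
\]
where I use that $H$ is free over $R^+$ (so $\id_H\otimes\Theta$‑kernels are computed via $\otimes_{R^+}$) and that $(S_{\infty}^{\dagger,+})^{\Theta=0}=\rH^0({\rm HIG}(S_{\infty}^{\dagger,+},\Theta))=\Rinfp$, as shown in the proof of Corollary \ref{overconvergent resolution}. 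Since $U$ is an $\Rinfp$‑linear automorphism and $H\otimes_{R^+}\Rinfp$ is free of rank $l$, this exhibits $M$ as a free $\Rinfp$‑module of rank $l$ with basis $u_j:=U(e_j\otimes 1)$.

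Finally I would read off the $\Gamma$‑structure and conclude. As $R^+$ is $\Gamma$‑fixed and $\Theta$ is $\Gamma$‑equivariant (Corollary \ref{overconvergent resolution}), $\Theta_H$ is $\Gamma$‑equivariant, so $M$ is a $\Gamma$‑stable subobject and the restricted action is continuous and $\Rinfp$‑semilinear. From $\gamma_i(Y_j)=Y_j+\delta_{ij}$, $\gamma_i(\theta_j)=\theta_j$ and $[\theta_i,U]=0$ I get $\gamma_i(U)=\exp(-\theta_i)\,U$, hence $\gamma_i(u_j)=\exp(-\theta_i)\,U(e_j\otimes 1)=\sum_k[\exp(-\theta_i)]_{kj}u_k$; thus in the basis $\{u_j\}$ the operator $\gamma_i$ has matrix $\exp(-\theta_i)\in\GL_l(R^+)$ (composed with the standard Galois action on $\Rinfp$), proving $M$ is an $\Rinfp$‑representation of $\Gamma$ of rank $l$ with $\gamma_i$ acting via $\exp(-\theta_i)$. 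Since $\nu_p(\theta_i^n/n!)\geq\nu_p(\theta_i)+(n-1)(\nu_p(\theta_i)-r)\geq a+\nu_p(\rho_k)$ for $n\geq 1$, one has $\exp(-\theta_i)\equiv I_l\bmod p^{a+\nu_p(\rho_k)}$, whence $M/p^{a+\nu_p(\rho_k)}\cong(\Rinfp/p^{a+\nu_p(\rho_k)})^l$ as $\Gamma$‑representations, i.e. $M$ is $(a+\nu_p(\rho_k))$‑trivial, which is precisely $a$‑smallness (Definition \ref{Dfn-small representation}); this gives (1). For (2): the inclusion $M\hookrightarrow H\otimes_{R^+}S_{\infty}^{\dagger,+}$ is $\Rinfp$‑linear, and the induced $\Gamma$‑equivariant map $M\otimes_{\Rinfp}S_{\infty}^{\dagger,+}\to H\otimes_{R^+}S_{\infty}^{\dagger,+}$ sends the $S_{\infty}^{\dagger,+}$‑basis $\{u_j\otimes 1\}$ to $\{U(e_j\otimes 1)\}$, the image of the basis $\{e_j\otimes 1\}$ under the automorphism $U$ and hence again an $S_{\infty}^{\dagger,+}$‑basis; so the map is an isomorphism, and by the conjugation identity above it intertwines $\Theta_M=\id_M\otimes\Theta$ with $\Theta_H$, identifying the two Higgs complexes.
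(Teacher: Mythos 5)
Your proposal is correct and follows essentially the same route as the paper: the paper also reduces to $\Rinfp\za\rho Y_1,\dots,\rho Y_d\ya$ for $\rho$ with $\nu_p(\rho)<a+\nu_p(\rho_k)-r$, solves $\theta_i\vec G+\partial\vec G/\partial Y_i=0$ to get $\vec G=\prod_i\exp(-\theta_iY_i)\vec a$ with $\prod_i\exp(-\theta_iY_i)\in\GL_l$, and reads off the $\gamma_i$-action as $\exp(-\theta_i)$ together with the smallness estimate. Your phrasing via the gauge transformation $U$ and the conjugation identity $\Theta_H\circ U=U\circ(\id_H\otimes\Theta)$ just makes explicit what the paper leaves implicit (including the needed commutativity $[\theta_i,\theta_j]=0$), so there is nothing substantive to add.
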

\begin{proof}
\begin{enumerate}
  \item The argument is similar to the proof of Proposition \ref{principal term}.
  
  Assume $\rho\in\rho_k\frakm_{\Cp}$ such that $a+\nu_p(\rho_k)>\nu_p(\rho)+r$.
  Let $e_1,\dots,e_l$ be an $R^+$-basis of $H$. We claim that $M=(H\otimes_{R^+}\Rinfp\za\rho Y_1,\dots,\rho Y_d\ya)^{\Theta_H=0}$. 
  
  In fact, if $\vec G = (G_1,\cdots,G_l)^t\in (\Rinfp\za\rho Y_1,\dots,\rho Y_d\ya)^l$ such that $m = \sum_{i=1}^le_iG_i\in M$, then we see that for any $1\leq i\leq d$,
   \[\theta_i\vec G + \frac{\partial\vec G}{\partial Y_i} = 0.\]
   This forces $\vec G=\prod_{i=1}^d\exp(-\theta_iY_i)\vec a$ for some $\vec a\in (\Rinfp)^l$. Since $\nu_p(\theta_j)\geq a+\nu_p(\rho_k)$, the matrix $\prod_{i=1}^d\exp(-\theta_iY_i)$ is well-defined in $\GL_l(\Rinfp\za\rho Y_1,\dots,\rho Y_d\ya)$. This shows that $M$ is finite free of rank $l$ and is independent of the choice of $\rho$. 
   
   Note that $\gamma_i(Y_j) = Y_j+\delta_{ij}$. We see $\gamma_i$ acts on $M$ via $\exp(-\theta_i)$. Since $\nu_p(\theta_i)\geq a+\nu_p(\rho_k)$, using $\exp(-\theta_iY_i) = \sum_{n\geq 0}\frac{(-\theta_i)^n}{n!}Y_i^n$, we deduce that $M$ is $a$-small.
   
   \item The (2) follows from the fact that $\prod_{i=1}^d\exp(-\theta_iY_i)\in \GL_l(\Rinfp\za\rho Y_1,\dots,\rho Y_d\ya)$ and the definition of $\Gamma$-action on $M$.
  \end{enumerate}
\end{proof}
 
 Finally, we complete the proof of Theorem \ref{local Simpson}.
 \begin{proof}(of Theorem \ref{local Simpson})
 
   The (1) was given in Corollary \ref{local representation to Higgs}. The (2) was proved in Proposition \ref{local Higgs to representation}. The equivalence part of (3) follows from Corollary \ref{local representation to Higgs} (2) (as $\theta_i$'s act via $ -\log\gamma_i$'s) together with Proposition \ref{local Higgs to representation} (2) (as  $\gamma_i$'s act via $\exp(-\theta_i)$'s). An elementary linear algebra shows that the equivalence preserves tensor products and dualities. So we only need to prove the ``moreover'' part of (4). 
   
   Let $M$ be an $a$-small representation of $\Gamma$ over $\Rinfp$ and $(H,\theta_H)$ be the corresponding Higgs module over $R^+$. By Corollary \ref{overconvergent resolution}, we have quasi-isomorphisms of complexes over $\Rinf$,
   \[M[\frac{1}{p}]\xrightarrow{\simeq} {\rm HIG}(M\otimes_{\Rinfp}S_{\infty}^{\dagger},\Theta_M)\simeq {\rm HIG}(H\otimes_{R^+}S_{\infty}^{\dagger},\Theta_H).\]
   Applying $\RGamma(\Gamma,\cdot)$, we get a quasi-isomorphism
   \[\RGamma(\Gamma,M[\frac{1}{p}])\to \RGamma(\Gamma,{\rm HIG}(H\otimes_{R^+}S_{\infty}^{\dagger},\Theta_H)).\]
   However, it follows from Proposition \ref{trivial representation} that \[\RGamma(\Gamma,S_{\infty}^{\dagger})\simeq R[0].\] 
   So we get 
   \[\RGamma(\Gamma,{\rm HIG}(H\otimes_{R^+}S_{\infty}^{\dagger},\Theta_H))\simeq {\rm HIG}(H[\frac{1}{p}],\theta_H).\]
   Therefore, we conclude the desired quasi-isomorphism
   \[\RGamma(\Gamma,M[\frac{1}{p}])\simeq{\rm HIG}(H[\frac{1}{p}],\theta_H). \]
 \end{proof}
   Finally, it is worth pointing out that all results in Theorem \ref{local Simpson} still hold for $\widehat S_{\infty,\rho_k}^+$ instead of $S_{\infty}^{\dagger,+}$ except the ``moreover'' part of (4) for the sake that ${\rm HIG}(\widehat S_{\infty,\rho_k}^+[\frac{1}{p}],\Theta)\neq \Rinf[0]$ and $\RGamma(\Gamma,\widehat S_{\infty,\rho_k}^+[\frac{1}{p}])\neq R[0]$. For the further use, we give the following proposition.
   
   \begin{prop}\label{Prop-Bigger ring}
     Keep notations as in Theorem \ref{local Simpson}.
     \begin{enumerate}
       \item  Let $M$ be an $a$-small $\Rinfp$-representation of $\Gamma$ of rank $l$. Then $H(M)=(M\otimes_{\Rinfp}\widehat S_{\infty,\rho_k}^+)^{\Gamma}$ and $\theta_{H(M)}$ is the restriction of $\Theta_M$ to $H(M)$.
       
       \item  Let $(H,\theta_H)$ be an $a$-small Higgs module of rank $l$ over $R^+$. Then $M(H,\theta_H) = (H\otimes_{R^+}\widehat S_{\infty,\rho_k}^+)^{\Theta_H=0}$.
       
       \item Let $M$ be an $a$-small $\Rinfp$-representation of $\Gamma$ and $(H,\theta_H)$ be the corresponding Higgs module. Then there is a canonical $\Gamma$-equivariant isomorphism of Higgs complexes
       \[{\rm HIG}(H\otimes_{R^+}\widehat S_{\infty,\rho_k}^+,\Theta_{H})\to {\rm HIG}(M\otimes_{\Rinfp}\widehat S_{\infty,\rho_k}^+,\Theta_M).\]
     \end{enumerate}
   \end{prop}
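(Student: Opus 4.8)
The plan is to mimic the proof of Theorem \ref{local Simpson}, replacing the overconvergent sheaf $S_{\infty}^{\dagger,+}$ by the single completed symmetric algebra $\widehat S_{\infty,\rho_k}^+$ throughout, and to observe that \emph{every} step in the local arguments of Section \ref{Sec 4} that does not invoke the direct limit over $\{\rho : \nu_p(\rho)>\nu_p(\rho_k)\}$ already works verbatim for a \emph{fixed} $\rho\in\rho_k\calO_{\Cp}$. Concretely, by Corollary \ref{complete Hyodo} we may identify $\widehat S_{\infty,\rho_k}^+$ with $\Rinfp\za\rho_k Y_1,\dots,\rho_k Y_d\ya$, carrying the Higgs field $\Theta=\sum_{i=1}^d\frac{\partial}{\partial Y_i}\otimes\frac{\dlog T_i}{t}$ and the $\Gamma$-action $\gamma_i(Y_j)=Y_j+\delta_{ij}$. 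First I would invoke Theorem \ref{Strong Decompletion} to reduce to the case where $M$ is an $a$-small representation of $\Gamma$ over $R^+$, exactly as in the proof of Proposition \ref{principal term}.

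For part (1), the key point is that the computation in the proof of Proposition \ref{principal term} only used the hypothesis $\nu_p(\rho)<a+\nu_p(\rho_k)-r$, and this holds for $\rho=\rho_k$ precisely because $a>r$. Therefore the same formula
\[(h_1,\dots,h_l) = (e_1,\dots,e_l)\sum_{n_1,\dots,n_d\geq 0}\prod_{i=1}^d\frac{(-A_i^{-1}B_i)^{n_i}}{n_i!}F_{n_i}(Y_i)\]
produces an $R^+$-basis of $(M\otimes_{R^+}\Rinfp\za\rho_k Y_1,\dots,\rho_k Y_d\ya)^{\Gamma}$, and the matrix displayed lies in $\GL_l$ of that ring by the same $p$-adic valuation estimate; hence $H(M)=(M\otimes_{\Rinfp}\widehat S_{\infty,\rho_k}^+)^{\Gamma}$. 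That this agrees with the $H(M)$ defined using $S_{\infty}^{\dagger,+}$ follows since both are computed by the \emph{same} explicit formula, independently of $\rho$ — this is exactly the content of ``independent of the choice of $\rho$'' in Proposition \ref{principal term}(1). The identification $\theta_{H(M)}=$ restriction of $\Theta_M$ and Remark \ref{simple form} then give $\theta_{H(M)}=\sum_{i=1}^d-\log\gamma_i\otimes\frac{\dlog T_i}{t}$ as before. Part (2) is entirely parallel: the matrix $\prod_{i=1}^d\exp(-\theta_i Y_i)$ is well-defined in $\GL_l(\Rinfp\za\rho_k Y_1,\dots,\rho_k Y_d\ya)$ because $\nu_p(\theta_i)\geq a+\nu_p(\rho_k)$, so the argument of Proposition \ref{local Higgs to representation}(1) applies with $\rho=\rho_k$, yielding $M(H,\theta_H)=(H\otimes_{R^+}\widehat S_{\infty,\rho_k}^+)^{\Theta_H=0}$; and again the formula for $M$ does not depend on which admissible $\rho$ one uses.

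For part (3), the $\Gamma$-equivariant isomorphism of Higgs complexes follows by combining part (1) (for $M\mapsto(H(M),\theta_{H(M)})$, using that $\prod_{i=1}^d\gamma_i^{-Y_i}$ is invertible on $M\otimes_{R^+}\widehat S_{\infty,\rho_k}^+$) with part (2) (for the quasi-inverse, using invertibility of $\prod_i\exp(-\theta_iY_i)$), exactly as the equivalence part of Theorem \ref{local Simpson}(3) was deduced — and one notes, as the remark after Theorem \ref{local Simpson} already points out, that the ``moreover'' statement of (4) is precisely what \emph{fails} for $\widehat S_{\infty,\rho_k}^+$, since $\mathrm{HIG}(\widehat S_{\infty,\rho_k}^+[\tfrac1p],\Theta)\neq\Rinf[0]$; so one must \emph{not} attempt to prove a cohomology-computing statement here, only the isomorphism of Higgs complexes. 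I do not expect a genuine obstacle: the only thing to check carefully is that the valuation inequalities used in Propositions \ref{principal term} and \ref{local Higgs to representation} are all strict/non-strict as needed when $\rho=\rho_k$, and this comes down to the standing hypothesis $a>r$ together with $\nu_p(\rho_k)\geq r$; the mildly delicate point is simply making explicit that the constructed $H(M)$ and $M(H,\theta_H)$ literally coincide with those of Theorem \ref{local Simpson} because they are given by $\rho$-independent closed formulas, rather than re-deriving an equivalence from scratch.
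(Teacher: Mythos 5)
Your proposal is correct and follows essentially the same route as the paper: reduce to an $R^+$-representation via Theorem \ref{Strong Decompletion}, note that $a>r$ gives $\nu_p(\rho_k)<a+\nu_p(\rho_k)-r$ so that Propositions \ref{principal term} and \ref{local Higgs to representation} apply verbatim with $\rho=\rho_k$, and deduce (3) from the invertibility of the explicit change-of-basis matrices (the paper phrases this last step as base change along $S_{\infty}^{\dagger,+}\to\widehat S_{\infty,\rho_k}^+$ of Theorem \ref{local Simpson}(4)). The only point left implicit in your write-up, which the paper cites explicitly, is Lemma \ref{error term}: passing from $R^+\za\rho_k\underline Y\ya$-invariants to $\Rinfp\za\rho_k\underline Y\ya$-invariants requires the vanishing of $\Gamma$-invariants on the $\underline T^{\underline\alpha}$-components with $\underline\alpha\neq 0$.
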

   \begin{proof}
         By Corollary \ref{complete Hyodo}, we have a $\Gamma$-equivariant decomposition
         \[\widehat S_{\infty,\rho_k}^+ = \widehat \oplus_{\underline \alpha\in (\bN\cap[0,1))^d}R^+\za\rho_k Y_1,\dots,\rho_kY_d\ya\underline T^{\underline \alpha}.\]
         Let $N$ be the $a$-small $R^+$-representation of $\Gamma$ corresponding to $M$ in the sense of Theorem \ref{Strong Decompletion}. Then $M=N\otimes_{R^+}\Rinfp$.
     \begin{enumerate}
         \item Thanks to Lemma \ref{error term}, we have
         \[(M\otimes_{\Rinfp}\widehat S_{\infty,\rho_k}^+)^{\Gamma} = (N\otimes_{R^+}R^+\za\rho_k Y_1,\dots,\rho_kY_d\ya)^{\Gamma}.\]
         Since $a>r$, it is automatic that $\nu_p(\rho_k)<a+\nu_p(\rho_k)-r$. So (1) is a consequence of Proposition \ref{principal term}.
         
         \item This follows from the proof of Proposition \ref{local Higgs to representation} (1) directly (for the sake that $\nu_p(\rho_k)<a+\nu_p(\rho_k)-r$).
         
         \item This follows from (1), (2) and Theorem \ref{local Simpson} (4) via the base-change along $S_{\infty}^{\dagger,+}\to \widehat S_{\infty,\rho_k}^{+}$.
     \end{enumerate}
   \end{proof}

\section{A $p$-adic Simpson correspondence}\label{Sec 5} 

\subsection{Statement and preliminaries}
  Now, we want to globalise the local Simpson correspondence established in the last section for a liftable smooth formal scheme $\frakX$. We fix such an $\frakX$ together with an $A_2$-lifting $\widetilde \frakX$. Then we have the corresponding integral Faltings' extension $\calE^+$ and overconvergent period sheaf $\OC^{\dagger,+}$. Let $X$ be the rigid analytic generic fibre of $\frakX$ and $\nu:X_{\proet}\to\frakX_{\et}$ be the projection of sites. Throughout this section, we assume $r = \frac{1}{p-1}$.

\begin{dfn}\label{Dfn-small generalised representation}
  Assume $a\geq r$. By an {\bf $a$-small generalised representation} of rank $l$ on $X_{\proet}$, we mean a sheaf $\calL$ of locally finite free $\OX$-modules of rank $l$ which admits a $p$-complete sub-$\OXp$-module $\calL^+$ such that there is an \'etale covering $\{\frakX_i\to\frakX\}_{i\in I}$ and rationals $b_i>b>a$ such that for any $i$, 
      \[(\calL^+/p^{b_i+\nu_p(\rho_k)})^{\rm al}_{\mid X_i} \cong ((\OXp/p^{b_i+\nu_p(\rho_k)})^l)^{\rm al}_{\mid X_i}\]
      is an isomorphism of $(\OX^{+\rm al}/p^{b_i+\nu_p(\rho_k)})_{\mid X_i}$-modules, where $\OX^{+\rm al}$ is the almost integral structure sheaf\footnote{This is the presheaf on $X_{\proet}$ sending each affinoid perfectoid space $U=\Spa(R,R^+)$ to the almost $\calO_{\Cp}$-module $R^{+{\rm al}}$ in the sense of \cite[Section 4]{Sch1}. Since $X_{\proet}$ admits a basis of affinoid perfectoid spaces, the proof of \cite[Proposition 7.13]{Sch1} shows that $\OX^{+{\rm al}}$ is a sheaf.} and $X_i$ denotes the rigid analytic generic fibre of $\frakX_i$.
\end{dfn}
\begin{dfn}\label{Dfn-small Higgs bundle}
   Assume $a\geq r$. By an {\bf $a$-small Higgs bundle} of rank $l$ on $\frakX_{\et}$, we mean a sheaf $\calH$ of locally finite free $\calO_{\frakX}[\frac{1}{p}]$-modules of rank $l$ together with an $\calO_{\frakX}[\frac{1}{p}]$-linear operator $\theta_{\calH}:\calH\to \calH\otimes_{\calO_{\frakX}}\widehat \Omega^1_{\frakX}(-1)$ satisfying $\theta_{\calH}\wedge\theta_{\calH} = 0$ such that it admits a $\theta_{\calH}$-preserving $\calO_{\frakX}$-lattice $\calH^+$ (i.e. $\calH^+\subset \calH$ is a subsheaf of locally free $\calO_{\frakX}$-modules with $\calH^+[\frac{1}{p}]=\calH$) satisfying the condition
   \[\theta_{\calH}(\calH^+)\subset p^{b+\nu_p(\rho_k)}\calH^+\otimes_{\calO_{\frakX}}\widehat \Omega^1_{\frakX}(-1)\]
   for some $b>a$.
\end{dfn}
For any $a$-small generalised representation, define 
\[
\Theta_{\calL} = \id_{\calL}\otimes\Theta:\calL\otimes_{\OX}\OC^{\dagger}\to \calL\otimes_{\OX}\OC^{\dagger}\otimes_{\calO_{\frakX}}\widehat \calO_{\frakX}^1(-1).
\]
Then $\Theta_{\calL}$ is a Higgs field on $\calL\otimes_{\OX}\OC^{\dagger}$. Denote the induced Higgs complex by ${\rm HIG(\calL\otimes_{\OX}\OC^{\dagger},\Theta_{\calL})}$. For any $a$-small Higgs field $(\calH,\theta_{\calH})$, put
\[
\Theta_{\calH} = \theta_{\calH}\otimes\id+\id_{\calH}\otimes\Theta:\calH\otimes_{\calO_{\frakX}}\OC^{\dagger}\to \calH\otimes_{\calO_{\frakX}}\OC^{\dagger}\otimes_{\calO_{\frakX}}\widehat \Omega^1_{\frakX}(-1).
\]
Then $\Theta_{\calH}$ is a Higgs field on $\calH\otimes_{\calO_{\frakX}}\OC^{\dagger}$. Denote the induced Higgs complex by ${\rm HIG}(\calH\otimes_{\calO_{\frakX}}\OC^{\dagger},\Theta_{\calH})$.
Then our main theorem is the following $p$-adic Simpson correspondence.
\begin{thm}[$p$-adic Simpson correspondence]\label{p-adic Simpson}
  Keep notations as above.
  \begin{enumerate}
       \item For any $a$-small generalised representation $\calL$ of rank $l$ on $X_{\proet}$, $\rR\nu_*(\calL\otimes_{\OX}\OC^{\dagger})$ is discrete. Denote $\calH(\calL):=\nu_*(\calL\otimes_{\OX}\OC^{\dagger})$ and $\theta_{\calH(\calL)} = \nu_*\Theta_{\calL}$. Then $(\calH(\calL),\theta_{\calH(\calL)})$ is an $a$-small Higgs bundle of rank $l$.
      
      \item For any $a$-small Higgs bundle $(\calH,\theta_{\calH})$ of rank $l$ on $\frakX_{\et}$, put 
      \[
      \calL(\calH,\theta_{\calH}) = (\calH\otimes_{\calO_{\frakX}}\OC^{\dagger})^{\Theta_{\calH}=0}.
      \]
      Then $\calL(\calH)$ is an $a$-small generalised representation of rank $l$.
      
      \item The functor $\calL\mapsto(\calH(\calL),\theta_{\calH(\calL)})$ induces an equivalence from the category of $a$-small generalised representations to the category of $a$-small Higgs bundles, whose quasi-inverse is given by $(\calH,\theta_{\calH})\mapsto \calL(\calH,\theta_{\calH})$. The equivalence preserves tensor products and dualities and identifies the Higgs complexes
      \[
      {\rm HIG}(\calL\otimes_{\OX}\OC^{\dagger},\Theta_{\calL})\simeq {\rm HIG}(\calH(\calL)\otimes_{\calO_{\frakX}}\OC^{\dagger},\Theta_{\calH(\calL)}).
      \]
      
      \item Let $\calL$ be an $a$-small generalised representation with associated Higgs bundle $(\calH,\theta_{\calH})$. Then there is a canonical quasi-isomorphism
      \[\rR\nu_*(\calL)\simeq{\rm HIG}(\calH,\theta_{\calH}),\]
      where ${\rm HIG}(\calH,\theta_{\calH})$ is the Higgs complex induced by $(\calH,\theta_{\calH})$. In particular, $\rR\nu_*(\calL)$ is a perfect complex of $\calO_{\frakX}[\frac{1}{p}]$-modules concentrated in degree $[0,d]$, where $d$ denotes the dimension of $\frakX$ relative to $\calO_{\Cp}$.
      
      \item Assume $f:\frakX\to\frakY$ is a smooth morphism between liftable smooth formal schemes over $\calO_{\Cp}$. Let $\widetilde \frakX$ and $\widetilde \frakY$ be the fixed $A_2$-liftings of $\frakX$ and $\frakY$, respectively. Assume $f$ lifts to an $A_2$-morphism $\widetilde f: \widetilde \frakX\to\widetilde \frakY$, then the equivalence in (3) is compatible with the pull-back along $f$.
  \end{enumerate}
\end{thm}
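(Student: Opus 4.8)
The plan is to reduce the whole statement to the local results already in hand and then glue. Throughout, the passage from the pro-\'etale site to $\Gamma$-representations is the key dictionary: working \'etale-locally on $\frakX$ we may assume $\frakX=\Spf(R^+)$ is small affine with a fixed toric chart, and then pro-\'etale descent identifies the category of $\OX$-linear generalised representations on $X_{\proet}$ with the category of $\Rinfp$-semilinear representations of $\Gamma$ via $\calL\mapsto\calL(X_{\infty})$. For part (1), I would first check that an $a$-small generalised representation $\calL$ gives, locally, an $a$-small $\Rinfp$-representation $M$ of $\Gamma$ in the sense of Definition~\ref{Dfn-small representation}; the only delicate point is converting the almost isomorphism built into Definition~\ref{Dfn-small generalised representation} into an honest $(a+\nu_p(\rho_k))$-trivialisation, which is possible after shrinking the radius from $b_i$ to $b$ since $b_i>b$. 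Under this dictionary $\calL\otimes_{\OX}\OC^{\dagger}$ corresponds to $M\otimes_{\Rinfp}S_{\infty}^{\dagger,+}$ and $\rR\nu_*$ is computed by $\RGamma(\Gamma,-)$; then Theorem~\ref{local Simpson}(4) together with Proposition~\ref{trivial representation} shows this cohomology is concentrated in degree $0$ and equals $H(M)[\frac{1}{p}]$. Hence $\rR\nu_*(\calL\otimes_{\OX}\OC^{\dagger})$ is discrete, the locally defined modules $H(M)$ glue (independence of the chart following from the uniqueness in Theorem~\ref{local Simpson}(3) and Proposition~\ref{Prop-Bigger ring}) to a sheaf $\calH(\calL)$ of finite locally free $\calO_{\frakX}[\frac{1}{p}]$-modules, and the $a$-smallness of $(\calH(\calL),\theta_{\calH(\calL)})$ is Corollary~\ref{local representation to Higgs}(1).

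For (2) and (3), the same chart-by-chart dictionary together with Proposition~\ref{local Higgs to representation} shows that $(\calH\otimes_{\calO_{\frakX}}\OC^{\dagger})^{\Theta_{\calH}=0}$ is locally the $a$-small $\Rinfp$-representation $M(H,\theta_H)$, which glues to an $a$-small generalised representation; the two functors are mutually quasi-inverse because they are so locally by Theorem~\ref{local Simpson}(3), and compatibility with tensor products, duals, and the identification of the Higgs complexes are likewise checked locally. For (4), I would use the resolution $\OX\simeq{\rm HIG}(\OC^{\dagger},\Theta)$ of Theorem~\ref{period sheaf}: tensoring the locally free sheaf $\calL$ against it gives a quasi-isomorphism $\calL\simeq{\rm HIG}(\calL\otimes_{\OX}\OC^{\dagger},\Theta_{\calL})$, and applying $\rR\nu_*$ and invoking the discreteness proved in (1) yields $\rR\nu_*(\calL)\simeq{\rm HIG}(\calH(\calL),\theta_{\calH(\calL)})$. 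The perfectness and the concentration in degrees $[0,d]$ then follow since, Zariski-locally on $\frakX$, this Higgs complex is the Koszul complex of the $d$ commuting $\calO_{\frakX}[\frac{1}{p}]$-linear operators given by the components of $\theta_{\calH(\calL)}$ acting on a finite free module.

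For (5), I would run the relative version of the construction. Since $f$ lifts to $\widetilde f$ by hypothesis, Corollary~\ref{relative Faltings extension} and Proposition~\ref{relative resolution} furnish the relative period sheaves together with the compatibility diagram~(\ref{Diag-commutative diagram}), while Proposition~\ref{compatible under pull-back} guarantees that $f^*\calE_Y^+$ is precisely the extension induced from $\calE_X^+$. Combining these, pull-back of generalised representations matches pull-back of Higgs bundles through the explicit local formulas $H(M)=\prod_i\gamma_i^{-Y_i}M$ and $M(H,\theta_H)=\prod_i\exp(-\theta_iY_i)H$ of Remark~\ref{simple form} and Proposition~\ref{local Higgs to representation}, which behave functorially because the chart on $\frakX$ may be taken to extend that on $\frakY$.

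The step I expect to be the main obstacle is the globalisation itself: promoting the sheaf-theoretic, almost-mathematical Definition~\ref{Dfn-small generalised representation} to honest $(a+\nu_p(\rho_k))$-trivial $\Gamma$-representations locally, and then verifying that the local equivalences of Theorem~\ref{local Simpson} are sufficiently canonical---independent of the chosen toric chart and compatible on overlaps---to glue to an equivalence of \emph{sheaves} rather than just of local categories; here Proposition~\ref{Prop-Bigger ring} and the uniqueness statements in Theorem~\ref{local Simpson} do the essential work. The bookkeeping in (5), tracking how $\widetilde f$ interacts with the chosen liftings and the relative Higgs fields, is the other place requiring care, though it is essentially formal once Propositions~\ref{compatible under pull-back} and~\ref{relative resolution} are in place.
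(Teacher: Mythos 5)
Your proposal is correct and takes essentially the same route as the paper: reduce each assertion to the local correspondence of Theorem~\ref{local Simpson} through the dictionary $\calL\mapsto\calL^+(X_{\infty})$ (with Lemma~\ref{take section}/Corollary~\ref{take section-II} upgrading the almost trivialisation to an honest small $\Gamma$-representation and Lemma~\ref{reduce to local case} identifying $\rR\nu_*$ with $\RGamma(\Gamma,-)$), use the resolution ${\rm HIG}(\OC^{\dagger},\Theta)\simeq\OX$ of Theorem~\ref{period sheaf} for (3)--(4), and invoke Proposition~\ref{relative resolution} for (5). The only place you overcomplicate matters is the ``gluing'' you flag as the main obstacle: since $\calH(\calL)=\nu_*(\calL\otimes_{\OX}\OC^{\dagger})$ and $\calL(\calH,\theta_{\calH})=(\calH\otimes_{\calO_{\frakX}}\OC^{\dagger})^{\Theta_{\calH}=0}$ are globally defined functors, no chart-independence or gluing argument is required --- the local charts serve only to verify properties of these already-global objects.
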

\begin{rmk}\label{generalization}
   Assume $\calL$ is a sheaf of locally free $\OX$-modules which becomes $a$-small after a finite \'etale base-change $f:\frakY\to \frakX$. By \'etale descent, the $\rR\nu_*(\calL\otimes_{\OX}\OC^{\dagger})$ is well-defined and discrete. The $\nu_*(\calL\otimes_{\OX}\OC^{\dagger})$ is a Higgs bundle which becomes $a$-small Higgs bundle via pull-back along $f$. Conversely, if $(\calH,\theta_{\calH})$ is a Higgs bundle on $\frakX$ which becomes $a$-small after taking pull-back along a finite \'etale morphism $f$, by pro-\'etale descent for $\OX$-bundles, $(\calH\otimes_{\calO_{\frakX}}\OC^{\dagger})^{\Theta_{\calH}=0}$ is a well-defined $\OX$-bundle. Also, it becomes $a$-small via the pull-back along $f$. Therefore, one can establish a $p$-adic Simpson correspondence in this case.
\end{rmk}
\begin{rmk}\label{compare with Faltings}
  Assume $\frakX$ comes from a smooth formal scheme $\frakX_0$ over $\Zp$ and admits an $A_2$-lifting $\widetilde \frakX$. Note that Faltings used Breuil-Kisin twist to define Higgs fields \cite[Definition 2]{Fal2}  while we use Tate twist, so our smallness conditions on Higgs fields differ from his by a multiplication of $(\zeta_p-1)$. By Proposition \ref{Compare obstruction}, after choosing a covering $\{\frakX_i\to\frakX\}_{i\in I}$, the cocycle $\{\theta_{ij}\}_{i,j\in I}$ corresponding to the intergal Faltings' extension is exactly the one used in \cite[Section 4]{Fal2}. Note that locally we define Higgs fields by $\theta = -\log\gamma$ (Corollary \ref{local representation to Higgs}) while Faltings defined $\theta = \log \gamma$ (\cite[Remark(ii)]{Fal2}). So our construction is compatible with \cite{Fal2} up to a sign on Higgs fields.
\end{rmk}
\begin{rmk}\label{compare with Liu-Zhu}
  If $\frakX$ comes from a smooth formal scheme $\frakX_0$ over $\calO_k$ and $\widetilde \frakX$ is the base-change of $\frakX_0$ along $\calO_k\to A_2$. Let $\OC^{\dagger}$ be the associated overconvergent period sheaf. By its construction, there is a natural inclusion $\OC\hookrightarrow\OC^{\dagger}$. Now assume $\bL$ is a $\Zp$-local system on $\frakX_{\et}$ and $\calL = \bL\otimes_{\Zp}\OX$ is the corresponding $\OX$-bundle on $X_{\proet}$. Since the resulting Higgs field is nilpotent by  \cite[Theorem 2.1]{LZ}, it can be seen from the proof of Theorem \ref{p-adic Simpson} that the morphism
  \[\nu_*(\calL\otimes_{\OX}\OC)\to\nu_*(\calL\otimes_{\OX}\OC^{\dagger})\]
  is an isomorphism. So our construction is compatible with the work of \cite{LZ} in this case.
\end{rmk}

We do some preparations before proving Theorem \ref{p-adic Simpson}.

\begin{lem}\label{almost purity}
  Let $U\in X_{\proet}$ be affinoid perfectoid and $\calM^+$ be a sheaf of $p$-torison free $\OXp$-modules satisfying one of the following conditions:
  
      $(a)$~ $\calM^+_{\mid U}$ is a sheaf of free $\widehat \calO_{X\mid U}^+$-modules.
      
      $(b)$~ $\calM^+$ is $p$-complete and there is an almost isomorphism 
      \[(\calM_{\mid U}^+/p^c)^{\rm al}\cong ((\widehat \calO^+_{X\mid U}/p^c)^r)^{\rm al}\]
      for some $c>0$.
      
  Then the following assertions are true:
  \begin{enumerate}
  \item For any $i\geq 1$ and $a>0$, $H^i(U,\calM^+)^{\rm al}\cong H^i(U,\calM^+/p^{a})^{\rm al}=0$.
  
  \item For any $b>a>0$, the image of $(\calM^+/p^b)(U)$ in $(\calM^+/p^a)$ is $\calM^+(U)/p^a$.
  
  \item Put $\hat \calM^+ = \varprojlim_n\calM^+/p^n$. Then $\hat \calM^+(U) = \varprojlim_n\calM^+(U)/p^n$ and for any $i\geq 1$, $H^i(U,\hat \calM^+)^{\rm al} = 0$.
  \end{enumerate}
\end{lem}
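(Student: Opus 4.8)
The plan is to prove Lemma \ref{almost purity} by reducing everything to the computation of pro-\'etale cohomology on the pro-\'etale site restricted to affinoid perfectoid objects, where the structure sheaf $\OXp$ is acyclic in the almost category by the work of Scholze. First I would treat case $(a)$: if $\calM^+_{\mid U}$ is free of some rank over $\widehat \calO_{X\mid U}^+$, then both $\calM^+$ and $\calM^+/p^a$ restricted to $U$ are finite direct sums of copies of $\OXp_{\mid U}$ and $\OXp_{\mid U}/p^a$, so part $(1)$ is immediate from the almost acyclicity $H^i(U,\OXp)^{\rm al}=0$ and $H^i(U,\OXp/p^a)^{\rm al}=0$ for $i\geq 1$ (this is essentially \cite[Lemma 4.10]{Sch2} together with the fact that a basis of affinoid perfectoids makes \v{C}ech cohomology compute sheaf cohomology, cf. \cite[Proposition 7.13]{Sch1} in the notation of the footnote). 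For part $(2)$, the short exact sequence $0\to\calM^+/p^{b-a}\xrightarrow{p^a}\calM^+/p^b\to\calM^+/p^a\to 0$ of sheaves gives an exact sequence of global sections over $U$ whose obstruction to surjectivity of $(\calM^+/p^b)(U)\to(\calM^+/p^a)(U)$ lands in $H^1(U,\calM^+/p^{b-a})$, which is almost zero; one then has to upgrade the ``almost'' statement to an honest one using $p$-torsion-freeness of $\calM^+(U)$ together with the fact that $\calM^+(U)/p^a\hookrightarrow(\calM^+/p^a)(U)$, so that the honest image of $(\calM^+/p^b)(U)$ already contains $\calM^+(U)/p^a$ and is contained in $(\calM^+/p^a)(U)$, squeezing it. For part $(3)$, taking $R\varprojlim_n$ of the $(\calM^+/p^n)$ and using that $R^1\varprojlim$ of the global sections vanishes because the transition maps are eventually surjective on images by part $(2)$, one gets $\hat\calM^+(U)=\varprojlim_n\calM^+(U)/p^n$ and $H^i(U,\hat\calM^+)^{\rm al}=\varprojlim_n H^i(U,\calM^+/p^n)^{\rm al}=0$ for $i\geq 1$, again using the almost vanishing and a Mittag-Leffler argument on the $R^1\varprojlim$ term.

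Next I would handle case $(b)$, where $\calM^+$ is $p$-complete and only an \emph{almost} isomorphism $(\calM^+_{\mid U}/p^c)^{\rm al}\cong((\widehat\calO^+_{X\mid U}/p^c)^r)^{\rm al}$ is assumed for a single exponent $c>0$. The key point is that almost mathematics is robust under these operations: the almost isomorphism modulo $p^c$, combined with $p$-torsion-freeness, propagates to an almost isomorphism $(\calM^+_{\mid U}/p^a)^{\rm al}\cong((\widehat\calO^+_{X\mid U}/p^a)^r)^{\rm al}$ for every $0<a\leq c$, and then — by a d\'evissage along the tower $p^a,p^{2a},p^{3a},\dots$ using the five lemma in the almost category — for every $a>0$. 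Hence $H^i(U,\calM^+/p^a)^{\rm al}\cong H^i(U,(\widehat\calO^+_{X\mid U}/p^a)^r)^{\rm al}=0$ for $i\geq 1$. To pass from $\calM^+/p^a$ to $\calM^+$ itself, I would use $p$-completeness: $\calM^+\simeq R\varprojlim_n\calM^+/p^n$ (derived $p$-completeness, which for a genuine $p$-complete sheaf of $p$-torsion-free modules coincides with the classical one), so $H^i(U,\calM^+)$ fits in a Milnor exact sequence $0\to R^1\varprojlim_n H^{i-1}(U,\calM^+/p^n)\to H^i(U,\calM^+)\to\varprojlim_n H^i(U,\calM^+/p^n)\to 0$; taking almost categories, the right term vanishes for $i\geq 1$ and the $R^1\varprojlim$ term for $i\geq 1$ involves $H^{i-1}(U,\calM^+/p^n)$ which, for $i\geq 2$, is almost zero, while for $i=1$ the relevant $H^0$ tower has almost-surjective transition maps by the case-$(b)$ analogue of part $(2)$ (same squeezing argument), so $R^1\varprojlim$ is almost zero too. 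This gives $(1)$; parts $(2)$ and $(3)$ then follow by exactly the same arguments as in case $(a)$, now with ``isomorphism'' replaced throughout by ``almost isomorphism'' and invoking $p$-completeness of $\calM^+$ to identify $\hat\calM^+$ with $\calM^+$.

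The main obstacle I anticipate is bookkeeping the passage between \emph{almost} statements and \emph{honest} statements in part $(2)$ (and its case-$(b)$ analogue), since the lemma's conclusion ``the image of $(\calM^+/p^b)(U)$ in $\calM^+/p^a$ is $\calM^+(U)/p^a$'' is an \emph{honest} equality, not an almost one. The resolution is to observe that $p$-torsion-freeness of $\calM^+$ gives an honest injection $\calM^+(U)/p^a\hookrightarrow(\calM^+/p^a)(U)$, and that $\calM^+(U)/p^a$ is visibly contained in the honest image of $(\calM^+/p^b)(U)\to(\calM^+/p^a)(U)$; the almost-surjectivity then shows this honest image differs from $(\calM^+/p^a)(U)$ by an $\frakm_{\Cp}$-torsion module, but a second application — comparing with the free model and using that $(\widehat\calO^+_{X\mid U}/p^b)(U)\to(\widehat\calO^+_{X\mid U}/p^a)(U)$ is surjective on the relevant submodule (since $\widehat\calO^+_{X\mid U}$ is a genuine sheaf with $H^1$ almost zero, and $\widehat\calO_{X\mid U}^+(U)=\widehat\calO_{X\mid U}^{+\,\rm al}$ up to a bounded-torsion discrepancy controlled by Scholze) — pins down the image exactly. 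Everything else is a routine combination of the long exact cohomology sequences, the Milnor $\varprojlim^1$ sequence, and the almost acyclicity of $\OXp$ and $\OXp/p^n$ on affinoid perfectoids, all of which are available from the cited results.
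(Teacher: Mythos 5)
Your treatment of parts (1) and (3) follows essentially the same route as the paper: d\'evissage along the tower of $\calM^+/p^{nc}$ using the almost acyclicity of $\OXp/p^c$ on affinoid perfectoids, then passage to the limit via $p$-completeness and the $\Rlim$ statement of \cite[Lemma 3.18]{Sch2}. Those steps are fine. (Two minor remarks: $R^1\varprojlim$ of a tower of $\frakm_{\Cp}$-torsion modules is again $\frakm_{\Cp}$-torsion, by functoriality of multiplication by each $\epsilon\in\frakm_{\Cp}$, so your Milnor-sequence argument does close; and propagating the almost isomorphism from $p^c$ to all exponents is legitimate because localization to the almost category is exact.)

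Part (2) is where there is a genuine gap. The assertion is that the image of $(\calM^+/p^b)(U)$ in $(\calM^+/p^a)(U)$ equals $\calM^+(U)/p^a$, and the nontrivial inclusion is the \emph{upper} bound: the image is \emph{contained in} $\calM^+(U)/p^a$. Your argument via $0\to\calM^+/p^{b-a}\to\calM^+/p^b\to\calM^+/p^a\to 0$ only shows that reduction is almost surjective, i.e.\ gives a \emph{lower} bound on the image --- which is not what is needed, since $(\calM^+/p^a)(U)$ is in general strictly larger than $\calM^+(U)/p^a$ (they differ by a submodule of $H^1(U,\calM^+)$, which is almost zero but not zero). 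Your ``squeeze'' therefore traps the image between $\calM^+(U)/p^a$ and $(\calM^+/p^a)(U)$ without deciding where it sits, and the proposed ``second application, comparing with the free model'' is not available in case $(b)$ (where only an almost isomorphism modulo a single $p^c$ is assumed) and is in any case not a proof. The correct mechanism, which the paper uses, is the connecting map $\delta_a:(\calM^+/p^a)(U)\to H^1(U,\calM^+)$ of the sequence $0\to\calM^+\xrightarrow{p^a}\calM^+\to\calM^+/p^a\to 0$, whose kernel is exactly $\calM^+(U)/p^a$: the commutative ladder relating the $p^b$- and $p^a$-sequences gives $\delta_a\circ(\mathrm{reduction})=p^{b-a}\cdot\delta_b$, and since $H^1(U,\calM^+)$ is almost zero by part (1) it is killed by $p^{b-a}\in\frakm_{\Cp}$, so the image of $(\calM^+/p^b)(U)$ lands in $\Ker(\delta_a)$. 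This is the missing idea, and it is also what the paper uses to identify $\hat\calM^+(U)=\varprojlim_n\calM^+(U)/p^n$ in part (3).
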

\begin{proof}  
By \cite[Lemma 4.10]{Sch2}, both $(1)$ and $(2)$ hold for free $\OXp$-modules. So we only focus on $\calM^+$'s satisfying the second condition.
\begin{enumerate}
  \item It is enough to show that for any $i\geq 1$, $H^i(U,\calM^+)^{\rm al} = 0$. Granting this, the rest can be deduced from the long exact sequence induced by
  \[0\to\calM^+\xrightarrow{\times p^a}\calM^+\to\calM^+/p^{a}\to 0.\]
  
  Since $(\calM_{\mid U}^+/p^c)^{\rm al}\cong ((\widehat \calO^+_{X\mid U}/p^c)^r)^{\rm al}$, by \cite[Lemma 4.10(v)]{Sch2}, we deduce that 
 $H^i(U,\calM^+/p^c)^{\rm al} = 0$
  for any $i\geq 1$.
  Consider the exact sequence
  \[0\to\calM^+/p^c\xrightarrow{p^{(n-1)c}}\calM^+/p^{nc}\to\calM^+/p^{(n-1)c}\to 0.\]
  By induction on $n$, we see that for any $i\geq 1$,
  $H^i(U,\calM^+/p^{nc})^{\rm al} = 0$. Now, the desired result follows from \cite[Lemma 3.18]{Sch2}.
  
  \item Consider the commutative diagram
  \[\xymatrix@C=0.45cm{
    0\ar[r]&\calM^+\ar[r]^{p^b}\ar[d]^{\times p^{b-a}}&\calM^+\ar[r]\ar[d]&\calM^+/p^b\ar[r]\ar[d]&0\\
    0\ar[r]&\calM^+\ar[r]^{p^a}&\calM^+\ar[r]&\calM^+/p^a\ar[r]&0.
  }
  \]
  Then by (1), we get the following commutative diagram
  \[\xymatrix@C=0.45cm{
    0\ar[r]&\calM^+(U)/p^b\ar[r]\ar[d]&(\calM^+/p^b)(U)\ar[r]^{\delta_b}\ar[d]&H^1(U,\calM^+)\ar[r]\ar[d]^{\times p^{b-a}}&0\\
    0\ar[r]&\calM^+(U)/p^a\ar[r]&(\calM^+/p^a)(U)\ar[r]^{\delta_a}&H^1(U,\calM^+)\ar[r]&0.
  }
  \]
  Since the multiplication by $p^{b-a}$ is zero on $H^1(U,\calM^+)$, the image of $(\calM^+/p^b)(U)$ in $(\calM^+/p^a)(U)$ is contained in the kernel of $\delta_a$. In other words, $(\calM^+/p^b)(U)$ takes values in $\calM^+(U)/p^a$. Now, the result follows.
  
  \item When $\calM^+$ is $p$-complete, there is nothing to prove. Now, assume $\calM^+$ is a free $\OXp$-module. The first part follows from (2) and the second part follows from the same argument used in (1).
 \end{enumerate}
\end{proof}
\begin{rmk}
  In this paper, we say a module (or a sheaf of $\OXp$-modules) $M$ is $p$-complete, if $M\cong \rR\lim_nM\otimes_{\Zp}^L\Zp/p^n$. This is different from that $M = \lim_nM/p^n$ in general. However, as mentioned in the paragraph below \cite[Lemma 4.6]{BMS2}, if $M$ has bounded $p^{\infty}$-torsion; that is, $M[p^{\infty}] = M[p^N]$ for some $N\geq 0$, then saying $M$ is $p$-complete amounts to saying $M = \lim_nM/p^n$. Indeed, in this case, the pro-systems $\{M\otimes^L_{\Zp}\Zp/p^n\}_{n\geq 0}$ and $\{M/p^n\}_{n\geq 0}$ are pro-isomorphic. So we obtain that
  \[\rR\lim_nM\otimes_{\Zp}^L\Zp/p^n\simeq \rR\lim_n M/p^n.\]
\end{rmk}
\begin{lem}\label{take section}
  Assume $\frakX = \Spf(R^+)$ is small. Define $X_{\infty},\Rinfp$ as before. Let $\calL^+$ be a sheaf of $p$-complete and $p$-torsion free $\OXp$-modules such that 
  \[(\calL^+/p^{a})^{\rm al} \cong ((\OXp/p^{a})^l)^{\rm al}\]
  for some $a>0$. Put $M = \calL^+(X_{\infty})$,then 
  \begin{enumerate}
      \item $M$ is a finite free $\Rinfp$-module of rank $l$.
      
      \item For any $0<b<a$, there is a $\Gamma$-equivariant isomorphism $M/p^b\cong(\Rinfp/p^b)^l$.
  \end{enumerate}
\end{lem}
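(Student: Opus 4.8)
The plan is to compute everything modulo $p^a$ using the almost-purity statements of Lemma~\ref{almost purity}, and then to upgrade the resulting almost-isomorphisms to honest ones at the cost of an arbitrarily small loss of $p$-adic precision, a loss made harmless by the strict inequality $b<a$.

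First I would observe that, since $X_{\infty}\to X$ is a $\Gamma$-torsor in $X_{\proet}$ and $\OXp(X_{\infty})=\Rinfp$, the module $M=\calL^+(X_{\infty})$ is an $\Rinfp$-module with a continuous semilinear $\Gamma$-action, $p$-torsion free, and (by Lemma~\ref{almost purity}(3), using that $\calL^+$ is $p$-complete) equal to $\varprojlim_n M/p^n$, hence $p$-adically complete and separated. Applying $\rR\Gamma(X_{\infty},-)$ to $0\to\calL^+\xrightarrow{p^a}\calL^+\to\calL^+/p^a\to 0$ and using $H^1(X_{\infty},\calL^+)^{\rm al}=0$ from Lemma~\ref{almost purity}(1), the $\Gamma$-equivariant map $M/p^a\to(\calL^+/p^a)(X_{\infty})$ is an almost isomorphism. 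The hypothesis is a $\Gamma$-equivariant almost isomorphism of sheaves $(\calL^+/p^a)^{\rm al}\cong((\OXp/p^a)^l)^{\rm al}$; taking sections over $X_{\infty}$ and once more invoking the almost-vanishing of $H^1(X_{\infty},-)$ (for both sheaves and for the almost-zero kernel and cokernel of a comparison morphism) identifies $(\calL^+/p^a)(X_{\infty})^{\rm al}$ with $(\Rinfp/p^a)^{l,\rm al}$ $\Gamma$-equivariantly. Combining, $M/p^a$ is $\Gamma$-equivariantly almost isomorphic to $(\Rinfp/p^a)^l$.

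Next I would deduce (2). Fix $0<b<a$. In the almost category the preceding $\Gamma$-equivariant almost isomorphism is witnessed by genuine $\Gamma$-equivariant $\Rinfp/p^a$-linear maps $\alpha\colon(\Rinfp/p^a)^l\to M/p^a$ and $\beta\colon M/p^a\to(\Rinfp/p^a)^l$ with $\alpha\beta-\id$ and $\beta\alpha-\id$ annihilated by $\frakm_{\Cp}$, in particular by $p^{a-b}$. As $M$ is $p$-torsion free, the $p^{a-b}$-torsion submodule of $M/p^a$ is exactly $p^b(M/p^a)$, and likewise for $(\Rinfp/p^a)^l$; hence $\alpha\beta-\id$ and $\beta\alpha-\id$ land in these $p^b$-submodules and vanish after reduction modulo $p^b$, so the reductions of $\alpha$ and $\beta$ are mutually inverse $\Gamma$-equivariant isomorphisms $M/p^b\cong(\Rinfp/p^b)^l$. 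Finally (1) follows from (2) by a Nakayama-and-completeness argument: lifting an $\Rinfp/p^b$-basis of $M/p^b$ to $x_1,\dots,x_l\in M$ and letting $\phi\colon(\Rinfp)^l\to M$ send the standard basis there, one has $M=\phi((\Rinfp)^l)+p^{nb}M$ for all $n$, so $\phi$ is surjective by $p$-adic completeness; moreover $0\to\Ker\phi\to(\Rinfp)^l\to M\to 0$ remains exact after $-\otimes^{L}_{\Zp}\Zp/p^k$ (all terms being $p$-torsion free), which together with the isomorphism $(\Rinfp/p^k)^l\cong M/p^k$ forces $\Ker\phi=p^k\Ker\phi$ for all $k$, hence $\Ker\phi=0$ by separatedness, so $M\cong(\Rinfp)^l$ is finite free of rank $l$.

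I expect the main obstacle to be the careful bookkeeping in the middle step: turning the higher-cohomology vanishing of Lemma~\ref{almost purity} into an honest $\Gamma$-equivariant description of $M$ modulo $p^b$. All the available room is consumed when one replaces the idempotent ideal $\frakm_{\Cp}$ by the single element $p^{a-b}$, which is precisely why the input precision $a$ must be strictly larger than the output precision $b$.
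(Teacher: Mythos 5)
The overall skeleton — reduce modulo $p^a$, identify $M/p^a$ with $(\Rinfp/p^a)^l$ almost-isomorphically and $\Gamma$-equivariantly via Lemma \ref{almost purity}, then trade an arbitrarily small amount of $p$-adic precision for an honest isomorphism — is the right one and agrees with the paper, and your deduction of (1) from (2) by successive approximation is sound. But there is a genuine gap at the pivotal step of your proof of (2): you assert that the $\Gamma$-equivariant almost isomorphism $M/p^a\approx(\Rinfp/p^a)^l$ is ``witnessed by genuine maps $\alpha,\beta$ with $\alpha\beta-\id$ and $\beta\alpha-\id$ annihilated by $\frakm_{\Cp}$.'' An almost isomorphism does not produce such a pair: it is a genuine isomorphism only after applying $\frakm_{\Cp}\otimes(-)$, and the only genuine maps between the modules themselves that one can extract are the $p^{\epsilon}$-rescalings, i.e.\ for each $\epsilon>0$ a pair $f_\epsilon,g_\epsilon$ with $f_\epsilon g_\epsilon=g_\epsilon f_\epsilon=p^{\epsilon}\id$ (this weaker statement is what the paper actually uses). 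Your stronger claim already fails for the almost isomorphism $\iota:\frakm_{\Cp}\otimes\calO_{\Cp}/p^a\to\calO_{\Cp}/p^a$: any $\calO_{\Cp}$-linear $\alpha:\calO_{\Cp}/p^a\to\frakm_{\Cp}\otimes\calO_{\Cp}/p^a$ has $\iota\circ\alpha-\id$ equal to multiplication by an element congruent to $-1$ modulo $\frakm_{\Cp}$, hence a unit, and this is not killed by $\frakm_{\Cp}$.

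With only the $p^{\epsilon}$-maps in hand, the ``reduce mod $p^b$'' trick no longer closes the argument: $g_\epsilon(e_i)$ only gives a basis of $p^{\epsilon}M/p^aM$, and to obtain an honest $\Gamma$-equivariant basis of $M/p^{a-\epsilon}$ one must divide by $p^{\epsilon}$ and check both that the quotient is well defined and that it remains $\Gamma$-equivariant. This is precisely where the paper does its work: it proves (1) first using the $p^{\epsilon}$-maps, invokes the resulting finite freeness of $M$ over $\Rinfp$ (hence topological freeness over $\calO_{\Cp}$, via \cite[Lemma 8.10]{BMS1}) to justify dividing the elements $h(p^{\epsilon}e_i)$ by $p^{\epsilon}$, and then recovers $\Gamma$-equivariance of the divided basis from a uniqueness argument. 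So the logical order of your two steps should be reversed, and the missing ingredient is this divisibility-plus-uniqueness argument rather than the bookkeeping you anticipated.
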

\begin{proof}
  By Lemma \ref{almost purity}, we have $\Gamma$-equivariant almost isomorphisms
  \begin{equation}\label{Equ-almost isomorphisms}
      M/p^{a}\xrightarrow{\approx}(\calL^+/p^a)(X_{\infty})\approx(\OXp/p^a)^l(X_{\infty})\xleftarrow{\approx}(\Rinfp/p^a)^l.
  \end{equation}
  In particular, we get an almost isomorphism $M/p^a\approx(\Rinfp/p^a)^l.$ Denote $e_1,\dots,e_l$ the standard basis of $(\Rinfp)^l$.
\begin{enumerate}
  \item As mentioned in the paragraph after \cite[Definition 2.2]{Sch2}, for any $\epsilon\in\bQ_{>0}$, one can find $\calO_{\Cp}$-morphisms
  \[f:M/p^a\to(\Rinfp/p^a)^l\]
  and 
  \[g:(\Rinfp/p^a)^l\to M/p^a\]
  such that $f\circ g = p^{\epsilon}$ and $g\circ f = p^{\epsilon}$. In particular, the image of $g$ is $p^{\epsilon}M/p^a$ and the kernel of $g$ is killed by $p^{\epsilon}$.
  
  For any $i$, choose $x_i\in M$ such that 
  \[x_i\equiv g(e_i) \mod p^aM.\]
  Then $x_i$'s generate 
  \[p^{\epsilon}M/p^a\cong M/p^{a-\epsilon}.\]
  We claim $x_i$'s are linear independent over $\Rinfp/p^{a-\epsilon}$. Granting this, we see $M/p^{a-\epsilon}$ is a finite free $\Rinfp/p^{a-\epsilon}$-module. Since $M$ is $p$-torsion free and $p$-complete by Lemma \ref{almost purity} (3), by choosing $\epsilon<a$, we deduce that $M$ is finite free of rank $l$ as desired.
  
  So we are reduced to proving the claim. Assume $\lambda_i\in \Rinfp$ such that $\sum_{i=1}^l\lambda_ix_i\in p^aM$, i.e. $g(\sum_{i=1}^l\lambda_ie_i)\in p^aM$. So $\sum_{i=1}^l\lambda_ie_i\in\Ker(g)$ and thus is killed by $p^{\epsilon}$. In other words, $p^{\epsilon}\sum_{i=1}^l\lambda_ie_i\in p^a(\Rinfp)^l$. This forces $\lambda_i\in p^{a-\epsilon}\Rinfp$ for any $i$. So we are done.
  
  \item By \cite[Proposition 4.4]{Sch1}, the almost isomorphism $M/p^a\approx (\Rinfp/p^a)^l$ induces an isomorphism 
  \[\iota: \frakm_{\Cp}\otimes_{\calO_{\Cp}}(\Rinfp/p^a)^l\rightarrow\frakm_{\Cp}\otimes_{\calO_{\Cp}}M/p^a.\]
  Since (\ref{Equ-almost isomorphisms}) is $\Gamma$-equivariant, so is $\iota$. Since $\frakm_{\Cp}$ is flat over $\calO_{\Cp}$, this amounts to a $\Gamma$-equivariant isomorphism
  \[h:(\frakm_{\Cp}\Rinfp/p^a\frakm_{\Cp}\Rinfp)^l\to\frakm_{\Cp}M/p^a\frakm_{\Cp}M.\]
  Now, for any $\epsilon>0$, choose $x_{i,\epsilon}\in \frakm_{\Cp}M$ such that for any $i$,
  \[x_{i,\epsilon}\equiv h(p^{\epsilon}e_i)\mod p^aM.\]
  Note that $x_{i,\epsilon}$ is unique modulo $p^aM$. So for $0<\epsilon'<\epsilon$, we have \[p^{\epsilon-\epsilon'}x_{i,\epsilon'}\equiv x_{i,\epsilon}\mod p^aM.\]
  Assume $\epsilon<a$, we see that $p^{\epsilon -\epsilon'}$ divides $x_{i,\epsilon}$ for any $\epsilon'$. By \cite[Lemma 8.10]{BMS1}, $R^+$ is a topologically free $\calO_{\Cp}$-module, then so is $\Rinfp$. As we have seen that $M$ is a finite free $\Rinfp$-module, it is also topologically free over $\calO_{\Cp}$. This forces that $x_{i,\epsilon}$ is divided by $p^{\epsilon}$. So we may assume $x_{i,\epsilon} = p^{\epsilon}y_{i,\epsilon}$ for some $y_{i,\epsilon}\in M$. By construction, $y_{i,\epsilon}$ is unique modulo $p^{a-\epsilon}M$.
 
 Now define $H_{\epsilon}:(\Rinfp/p^{a-\epsilon})^l\to M/p^{a-\epsilon}$ by sending $e_i$ to $y_{i,\epsilon}$. By construction of $H_{\epsilon}$, we see that it is the unique $\Rinfp$-morphism from $(\Rinfp/p^{a-\epsilon})^l$ to $M/p^{a-\epsilon}$ whose restriction to $(\frakm_{\Cp}\Rinfp/p^{a-\epsilon})^l$ coincides with $h$.
 
 We need to show $H_{\epsilon}$ is an isomorphism. However, since $M$ is also finite free, after interchanging $M$ and $(\Rinfp)^l$ and proceeding as above, we get a unique $G_{\epsilon}:  M/p^{a-\epsilon}\to(\Rinfp/p^{a-\epsilon})^l$, whose restriction to $\frakm_{\Cp}M/p^{a-\epsilon}$ coincides with $h^{-1}$. Now, the similar argument shows that $H_{\epsilon}\circ G_{\epsilon} =\id$ and 
 $G_{\epsilon} \circ H_{\epsilon} =\id$. So $H_{\epsilon}$ is an isomorphism.
 
 Finally, since $h$ is $\Gamma$-equivariant, by the uniqueness of $H_{\epsilon}$, we deduce that $H_{\epsilon}$ is also $\Gamma$-equivariant. Since $\epsilon$ is arbitrary, we are done.
 \end{enumerate}
\end{proof}
  The following corollary is a special case of Proposition \ref{take section}.
\begin{cor}\label{take section-II}
  Assume $\frakX = \Spf(R^+)$ is small affine.
  Let $\calL$ be an $a$-small generalised representation with a sub-$\OXp$-sheaf $\calL^+$ satisfying $(\calL^+/p^{b+\nu_p(\rho_k)})^{\rm al}\cong ((\OXp/p^{b+\nu_p(\rho_k)})^l)^{\rm al}$ for some $b>a$. Then $\calL^+(X_{\infty})$ is a $b'$-small $\Rinfp$-representation of $\Gamma$ for any $a<b'<b$.
\end{cor}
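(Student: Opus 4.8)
The plan is to deduce the statement directly from Proposition~\ref{take section}, of which it is a special case; the only real work is to match up the hypotheses and to keep track of the three indices $b$, $b'$ and $\nu_p(\rho_k)$. First I would check that the restriction of $\calL^+$ to the small affine $\frakX=\Spf(R^+)$ satisfies the hypotheses of Proposition~\ref{take section} with the role of its ``$a$'' played by $b+\nu_p(\rho_k)$. Indeed, $\calL^+$ is $p$-complete by Definition~\ref{Dfn-small generalised representation}, and it is $p$-torsion free because it is an $\OXp$-submodule of the locally finite free $\OX$-module $\calL$ and $\OX=\OXp[\frac{1}{p}]$ is $p$-torsion free; finally the almost isomorphism $(\calL^+/p^{b+\nu_p(\rho_k)})^{\rm al}\cong((\OXp/p^{b+\nu_p(\rho_k)})^l)^{\rm al}$ holds by assumption, in particular after restriction to the affinoid perfectoid $X_\infty$. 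Proposition~\ref{take section} then shows that $M:=\calL^+(X_\infty)$ is a finite free $\Rinfp$-module of rank $l$, and that for every rational $0<c<b+\nu_p(\rho_k)$ there is a $\Gamma$-equivariant isomorphism $M/p^c\cong(\Rinfp/p^c)^l$.

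It then remains to organise this into the assertion that $M$ is a $b'$-small $\Rinfp$-representation of $\Gamma$ for $a<b'<b$. The $\Gamma$-action on $M=\calL^+(X_\infty)$ is $\Rinfp$-semilinear by functoriality of the Galois action on sections over the cover $X_\infty\to X$, and it is continuous: since $\calL^+$ is $p$-complete and $p$-torsion free one has $M=\varprojlim_nM/p^n$ by Lemma~\ref{almost purity}(3), and each $M/p^n$ carries a continuous $\Gamma$-action coming from the pro-\'etale presentation $X_\infty=\varprojlim_mX_m$ through finite quotients of $\Gamma$ at each level. Thus $M$ is a representation of $\Gamma$ over $\Rinfp$ of rank $l$ in the sense of Definition~\ref{Local generalized rep}. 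Now for $a<b'<b$ we have $b'+\nu_p(\rho_k)<b+\nu_p(\rho_k)$, so taking $c=b'+\nu_p(\rho_k)$ in the isomorphism above yields $M/p^{b'+\nu_p(\rho_k)}\cong(\Rinfp/p^{b'+\nu_p(\rho_k)})^l$ as $\Gamma$-representations, i.e.\ $M$ is $(b'+\nu_p(\rho_k))$-trivial. Since $b'>a\geq r$, this is exactly the definition (Definition~\ref{Dfn-small representation}) of $M$ being $b'$-small, and the proof is complete.

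The argument has no genuine obstacle once Proposition~\ref{take section} is available — indeed the corollary is, as remarked in the text, a special case of it. The only points calling for a little care are: checking that ``$p$-complete'' for $\calL^+$ may be read as $\calL^+=\varprojlim_n\calL^+/p^n$ (legitimate here since $\calL^+$ is $p$-torsion free, as in the remark following Lemma~\ref{almost purity}), so that $M$ is honestly $p$-adically complete and Proposition~\ref{take section} and Lemma~\ref{almost purity} apply verbatim; and recording the continuity of the $\Gamma$-action on $M$. Both are routine, so the entire content of the corollary is the index bookkeeping $b'+\nu_p(\rho_k)<b+\nu_p(\rho_k)$ together with the verification of the hypotheses of Proposition~\ref{take section}.
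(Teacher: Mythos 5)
Your proposal is correct and matches the paper, which offers no separate argument beyond the remark that the corollary is a special case of Lemma \ref{take section}: one applies that lemma with its ``$a$'' equal to $b+\nu_p(\rho_k)$ and then specialises part (2) to $c=b'+\nu_p(\rho_k)<b+\nu_p(\rho_k)$, noting $b'>a\geq r$ so that Definition \ref{Dfn-small representation} applies. The routine verifications you record ($p$-torsion-freeness, $p$-completeness, continuity and semilinearity of the $\Gamma$-action) are exactly the implicit content the paper leaves to the reader.
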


\begin{lem}\label{reduce to local case}
  Assume $\frakX = \Spf(R^+)$ is affine small. Let $\calL^+$ be a sheaf of $p$-complete and $p$-torsion free $\OXp$-modules such that 
  \[(\calL^+/p^c)^{\rm al} \cong ((\OXp/p^c)^l)^{\rm al}\]
  for some $c>0$.
  Then for any $\calP^+\in \{\calO\bC^+_{\rho}, \calO\widehat \bC^+_{\rho}, \calO\bC^{\dag,+}\}$ and for each $i\geq 0$, the natural map
  \[H^i(\Gamma,(\calL^+\otimes_{\OXp}\calP^+)(X_{\infty}))\to H^i(X_{\proet}/\frakX,\calL^+\otimes_{\OXp}\calP^+)\]
  is an almost isomorphism. Moreover, when $i=0$, it is an isomorphism.
\end{lem}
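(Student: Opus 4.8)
The plan is to compare both sides with the cohomology of the single pro-finite-\'etale Galois cover $X_\infty\to X$ (equivalently $X_\infty\to\frakX$) with group $\Gamma\cong\Zp^{d}$, which is a $\Gamma$-torsor in the pro-\'etale topology by construction. For any abelian sheaf $\calF$ on $X_{\proet}/\frakX$ this yields a Cartan--Leray (\v{C}ech-to-derived-functor) spectral sequence
\[
E_2^{i,j}=H^i\big(\Gamma,\,H^j(X_\infty,\calF)\big)\ \Longrightarrow\ H^{i+j}(X_{\proet}/\frakX,\calF),
\]
with $H^i(\Gamma,-)$ continuous group cohomology; we take $\calF=\calL^+\otimes_{\OXp}\calP^+$. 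Granting this, the lemma reduces to the almost vanishing $H^j(X_\infty,\calF)^{\rm al}=0$ for all $j\geq 1$: the rows $j\geq 1$ are then almost zero, so the edge map $H^i(\Gamma,\calF(X_\infty))\to H^i(X_{\proet}/\frakX,\calF)$ is an almost isomorphism, and in degree $0$ it is the \emph{honest} equality $\calF(X_\infty)^{\Gamma}=\calF(X)$ coming from the sheaf axiom for the cover $X_\infty\to X$ together with $X_\infty\times_X X_\infty\cong\underline{\Gamma}\times X_\infty$ (the corner $(0,0)$ of a first-quadrant spectral sequence is never affected by differentials). One also has to check that the bottom row $E_2^{\bullet,0}$ agrees with the $\RGamma(\Gamma,-)$ of the paper: this is automatic once $\calF(X_\infty)$ is derived $p$-complete, which holds for $\calP^+\in\{\calO\widehat{\bC}_{\rho}^+,\OC^{\dagger,+}\}$ by Lemma~\ref{almost purity}(3) via the reduction below, while for $\calP^+=\OC_{\rho}^+$ one reads $H^i(\Gamma,-)$ as continuous cochain cohomology.

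The heart of the argument is thus the almost vanishing $H^j(X_\infty,\calF)^{\rm al}=0$ for $j\geq 1$, which I would deduce from Lemma~\ref{almost purity} combined with the explicit local models of the period sheaves over $X_\infty$ from §\ref{Sec 2} (Proposition~\ref{good basis}, Corollary~\ref{complete Hyodo}, Corollary~\ref{overconvergent resolution}, Theorem~\ref{period sheaf}). First, $\calE_{\rho}^+$ restricts to a free $\OXp$-module of rank $d+1$ on $X_\infty$, so $\Sym^{n}_{\OXp}\calE_{\rho}^+$ restricts there to a \emph{finite} free $\OXp$-module; hence $\calL^+\otimes_{\OXp}\Sym^{n}_{\OXp}\calE_{\rho}^+$ is a $p$-torsion free, $p$-complete sheaf whose restriction to $X_\infty$ is a finite direct sum of copies of $\calL^+_{\mid X_\infty}$, so it satisfies hypothesis $(b)$ of Lemma~\ref{almost purity} (with $U=X_\infty$). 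Then Lemma~\ref{almost purity}(1),(3) give the almost vanishing for these pieces, as well as $H^0(X_\infty,\,\cdot\,)=(\,\cdot\,)(X_\infty)$. One passes to $\OC_{\rho}^+=\varinjlim_n\Sym^{n}\calE_{\rho}^+$ by commuting cohomology of the qcqs object $X_\infty$ with the filtered colimit; to $\calO\widehat{\bC}_{\rho}^+$ by derived $p$-completion, using $\calO\widehat{\bC}_{\rho}^+/p^{c}\cong\OC_{\rho}^+/p^{c}$ (valid since $\OC_{\rho}^+$ is $p$-torsion free) together with Lemma~\ref{almost purity}(3); and finally to $\OC^{\dagger,+}=\varinjlim_{\rho}\calO\widehat{\bC}_{\rho}^+$ by one more filtered colimit.

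The step I expect to be the main obstacle is the bookkeeping relating the honest sheaf tensor product $\calL^+\otimes_{\OXp}(-)$ to $p$-adic completion and to the filtered colimits above: one must verify that $\calL^+\otimes_{\OXp}\calO\widehat{\bC}_{\rho}^+$ and $\calL^+\otimes_{\OXp}\OC^{\dagger,+}$ themselves — not merely their finite-free truncations $\calL^+\otimes\Sym^n\calE_{\rho}^+$ — fall within the scope of Lemma~\ref{almost purity}, i.e. are $p$-complete with the required $\calL^+$-twisted almost structure modulo $p^{c}$. This is where the hypotheses on $\calL^+$ ($p$-torsion free, $p$-complete, almost trivial mod $p^{c}$) and the bounded-$p^{\infty}$-torsion remark following Lemma~\ref{almost purity} enter, together with the Koszul-type descriptions of the stalks from §\ref{Sec 2}; once these identifications are in place, the collapse of the Cartan--Leray spectral sequence is routine. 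A secondary point requiring care is the limit formalism for the pro-\'etale cohomology of the qcqs perfectoid $X_\infty$ (commuting $H^\bullet(X_\infty,-)$ with filtered colimits and with $\Rlim_n(-/p^n)$), which underlies both the passage from the $\Sym^n$-pieces to $\OC^{\dagger,+}$ and the identification of $E_2^{\bullet,0}$ with continuous $\Gamma$-cohomology.
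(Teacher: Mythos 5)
Your overall route is the paper's: run the Cartan--Leray/\v{C}ech-to-derived spectral sequence for the $\Gamma$-cover $X_\infty\to X$, kill the higher rows by the almost vanishing of $H^j(X_\infty,\calL^+\otimes_{\OXp}\calP^+)$ for $j\geq 1$ (via Lemma \ref{almost purity} and the d\'evissage through $\Sym^n\calE_\rho^+$, $p$-completion, and filtered colimits), and treat $i=0$ separately. That part of your argument is sound and matches the paper's use of Lemma \ref{almost purity}.

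The genuine gap is your opening claim that ``for any abelian sheaf $\calF$ on $X_{\proet}/\frakX$'' one has a spectral sequence whose $E_2$-page is \emph{continuous} group cohomology $H^i(\Gamma,H^j(X_\infty,\calF))$. What exists for free is the \v{C}ech-to-derived spectral sequence with $E_1^{m,j}=H^j(X_\infty^{(m+1)/X},\calF)$; to turn its rows into continuous cochain complexes one must identify, at least almost, $H^j(X_\infty^{m/X},\calF)$ with $\Hom_{\cts}(\Gamma^{m-1},H^j(X_\infty,\calF))$ for the specific sheaves at hand. This is false for general abelian sheaves, and it is exactly where the paper invests its effort: it invokes \cite[Lemma 3.16]{Sch2} to get these almost isomorphisms modulo $p^{c}$, then bootstraps to $p^{nc}$ by induction, passes to the limit, and commutes with the colimits defining $\calO\bC_\rho^+$ and $\OC^{\dagger,+}$. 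Your stated substitute --- derived $p$-completeness of $\calF(X_\infty)$ --- does not address this; completeness controls the passage $\Rlim_n$ but says nothing about sections over $X_\infty\times\Gamma^{m-1}$ being continuous functions of $\Gamma^{m-1}$. The same omission affects your $i=0$ claim: the ``corner of the spectral sequence'' remark identifies $H^0(X_{\proet}/\frakX,\calF)$ with $\ker\bigl(\calF(X_\infty)\to\calF(X_\infty^{2/X})\bigr)$, but to equate this with $\calF(X_\infty)^{\Gamma}$ you still need the injectivity of $\Hom_{\cts}(\Gamma,\calF(X_\infty))\to\calF(X_\infty^{2/X})$, which the paper deduces from the almost isomorphism in degree $0$ together with $\frakm_{\Cp}$-torsion-freeness of both sides. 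Supplying the $\Hom_{\cts}$-comparison (as in the paper, or any equivalent argument) is therefore not bookkeeping but the core missing step of your proposal.
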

\begin{proof}
  The proof is similar to \cite[Lemma 5.6]{Sch2} and \cite[Lemma 2.7]{LZ}.
  Denote $X_{\infty}^{m/X}$ the $m$-fold fibre product of $X_{\infty}$ over $X$. As $X_{\infty}$ is a Galois cover of $X$ with Galois group $\Gamma$, we have $X_{\infty}^{m/X}\simeq X_{\infty}\times\Gamma^{m-1}$.
  Note that $\OXp/p^c$ comes from the \'etale sheaf $\calO_X^+/p^c$ on $X_{\et}$ and that $(\calL^+/p^c)^{\rm al} \cong ((\OXp/p^c)^l)^{\rm al}$. By \cite[Lemma 3.16]{Sch2}, for any $i\geq 0$ and $m\geq 1$, we have almost isomorphisms
  \[\Hom_{\cts}(\Gamma^{m-1},H^i(X_{\infty},\calL^+\otimes_{\OXp}\calP^+/p^c))\to H^i(X_{\infty}^{m/X},\calL^+\otimes_{\OXp}\calP^+/p^c) .\]
  By induction on $n$, we have almost isomorphisms
  \[\Hom_{\cts}(\Gamma^{m-1},H^i(X_{\infty},\calL^+\otimes_{\OXp}\calP^+/p^{nc}))\to H^i(X_{\infty}^{m/X},\calL^+\otimes_{\OXp}\calP^+/p^{nc}) ,\]
  for any $n\geq 1$. By letting $n$ go to $+\infty$, we get almost isomorphisms
  \begin{equation*}
      \Hom_{\cts}(\Gamma^{m-1},H^i(X_{\infty},\calL^+\otimes_{\OXp}\calP^+))\to H^i(X_{\infty}^{m/X},\calL^+\otimes_{\OXp}\calP^+)
  \end{equation*}
  for $\calP^+\in\{\calO\bC_{\rho}^{+,\leq r},\calO\widehat \bC^+_{\rho}\}$, where $\calO\bC_{\rho}^{+,\leq r}$ denotes the sub-sheaf of
  \[ \calO\bC^+_{\rho}\cong \OXp[\rho Y_1,\dots, \rho Y_d]\]
  consisting of polynomials of degrees $\leq r$. By the coherence of restricted pro-\'etale topos, $H^i(X_{\infty}^{m/X},-)$ commutes with direct limits for all $i$. Since $\calO\bC^+_{\rho} = \cup_{r\geq 0}\calO\bC^{+,\leq r}_{\rho}$, we also get desired almost isomorphisms for $\calP^+=\calO\bC^+_{\rho}$. A similar argument also works for $\calP^+=\calO\bC^{\dagger,+} = \cup_{\rho,\nu_p(\rho)>\nu_p(\rho_k)}\calO\widehat \bC^+_{\rho}$.
  Further more, when $i = 0$, since both sides are $\frakm_{\Cp}$-torsion free, so we get injections in this case.
  
  Now applying Cartan-Leray spectral sequence to the Galois cover $X_{\infty}\rightarrow X$ and using Lemma \ref{almost purity}, we conclude that the map
  \[H^i(\Gamma_{\infty}, (\calL^+\otimes_{\OXp}\calP^+)(X_{\infty}))\rightarrow H^i(X_{\proet}/\frakX,\calL^+\otimes_{\OXp}\calP^+)\]
  is an almost isomorphism for every $i\geq 0$.
  
  For $i = 0$, we know $H^0(X_{\proet}/\frakX,\calL^+\otimes_{\OXp}\calP^+)$ is the $(0,0)$-term of Cartan-Leray spectral sequence at the $\rE_2$-page, which is the kernel of the map
  \[(\calL^+\otimes_{\OXp}\calP^+)(X_{\infty})\rightarrow (\calL^+\otimes_{\OXp}\calP^+)(X^{2/X}_{\infty}).\]
  On the other hand, $H^0(\Gamma, (\calL^+\otimes_{\OXp}\calP^+)(X_{\infty}))$ is the kernel of the map 
  \[(\calL^+\otimes_{\OXp}\calP^+)(X_{\infty})\rightarrow \Hom_{\cts}(\Gamma,(\calL^+\otimes_{\OXp}\calP^+)(X_{\infty})).\]
  So the result follows from the injectivity of the map
  \[\Hom_{\cts}(\Gamma,(\calL^+\otimes_{\OXp}\calP^+)(X_{\infty}))\to(\calL^+\otimes_{\OXp}\calP^+)(X^{2/X}_{\infty}). \]
\end{proof}

\subsection{Proof of Theorem \ref{p-adic Simpson}}
  Now we are prepared to prove Theorem \ref{p-adic Simpson}.

  \begin{enumerate}
    \item Let $\calL$ be an $a$-small generalised representation of rank $l$ and $\calL^+$ be the sub-$\OXp$-sheaf as described in Definition \ref{Dfn-small generalised representation}. Denote $\calH^+: = \nu_*(\calL^+\otimes_{\OXp}\OC^{\dagger,+})$. It suffices to show that $\rR^i\nu_*(\calL^+\otimes_{\OXp}\OC^{\dagger,+})$ is $p^{\infty}$-torsion for any $i\geq 1$ and that $\calH^+$ satisfies conditions in Definition \ref{Dfn-small Higgs bundle}. Let $b>a$ and $\{\frakX_i\to\frakX\}_{i\in I}$ be as in Definition \ref{Dfn-small generalised representation}. Since the problem is local on $\frakX_{\et}$, we are reduced to showing that for any $i\in I$, if we write $\frakX_i = \Spf(R_i^+)$, then $H^n(X_{\proet}/\frakX_i, \calL^+\otimes_{\OXp}\OC^{\dagger,+})$ is $p^{\infty}$-torsion for any $n\geq 1$ and is a $b_i$-small Higgs module over $R_i^+$ for $n=0$ in the sense of Definition \ref{Dfn-Higgs module} for some $b_i>b$. So we only need to deal with the case for $\frakX$ small affine.
    
    Now we may assume $\frakX = \Spf(R^+)$ is affine small itself and that \[(\calL^+/p^{b'})^{\rm al}\cong ((\OXp)^l/p^{b'})^{\rm al}\]
    for some $b'>b$. Let $X_{\infty},\Rinfp$ and $\Gamma$ be as before. By Lemma \ref{reduce to local case}, the natural morphism
    \[H^i(\Gamma,\calL^+(X_{\infty})\otimes_{\Rinfp}S_{\infty}^{\dagger,+})\to H^i(X_{\proet}/\frakX,\calL^+\otimes_{\OXp}\OC^{\dagger,+})\]
    is an almost isomorphism for $i\geq 1$ and is an isomorphism for $i=0$. So we are reduced to showing $\RGamma(\Gamma,\calL^+(X_{\infty})\otimes_{\Rinfp}S_{\infty}^{\dagger})$ is discrete after inverting $p$ and $H^0(\Gamma,\calL^+(X_{\infty})\otimes_{\Rinfp}S_{\infty}^{\dagger})$ is a $b''$-small Higgs module for some $b''>b$.
    
    However, by Corollary \ref{take section-II}, $\calL^+(X_{\infty})$ is a $b''$-small $\Rinfp$-representation of $\Gamma$ for some fixed $b''>b$. So the result follows from Theorem \ref{local Simpson} (1).
    
    \item Let $(\calH,\theta_{\calH})$ be an $a$-small Higgs bundle of rank $l$ and $\calH^+$ be the $\calO_{\frakX}$-lattice as described in Definition \ref{Dfn-small Higgs bundle}. Fix an $a'$ satisfying $a<a'<b$. Denote $\calL^+ = (\calH^+\otimes_{\calO_{\frakX}}\OC^{\dagger,+})^{\Theta_{\calH} = 0}$. Then it is a subsheaf of $\calL=(\calH\otimes_{\calO_{\frakX}}\OC^{\dagger})^{\Theta_{\calH}=0}$ and hence $p$-torison free. We claim that the inclusion $\calO\bC^{\dagger,+}\to\calO\widehat \bC^+_{\rho_k}$ induces a natural isomorphism
    \[(\calH^+\otimes_{\calO_{\frakX}}\calO\bC^{\dagger,+})^{\Theta_{\calH}=0}\to(\calH^+\otimes_{\calO_{\frakX}}\calO\widehat \bC_{\rho_k}^+)^{\Theta_{\calH}=0}.\]
    Indeed, this is a local problem and therefore follows from Proposition \ref{Prop-Bigger ring}. As $\calH^+\otimes_{\calO_{\frakX}}\calO\widehat \bC^+_{\rho_k}$ is $p$-complete, by continuity of $\Theta_{\calH}$, so is $\calL^+$.
    It remains to prove that $\calL^+$ is locally almost trivial modulo $p^{a'+\nu_p(\rho_k)}$.
    
    Assume $\frakX = \Spf(R^+)$ is small affine and let $X_{\infty},\Rinfp$ and $\Gamma$ be as before. Shrinking $\frakX$ if necessary, we may assume $(\calH^+,\theta_{\calH})$ is induced by a $b'$-small Higgs module over $R^+$ for some $b'>a'$. Then by Theorem \ref{local Simpson}, $\calL^+(X_{\infty})$ is a $b'$-small $\Rinfp$-representation of $\Gamma$.
    
    Let us go back to the global case. Choose an \'etale covering $\{\frakX_i\to \frakX\}$ of $\frakX$ by small affine $\frakX_i = \Spf(R_i^+)$'s such that on each $\frakX_i$, $(\calH^+,\theta_{\calH^+})$ is induced by a $b_i$-small Higgs module over $R_i^+$ for some $b_i>a'$. Denote by $X_{i,\infty}$ the corresponding ``$X_{\infty}$'' for $\frakX_i$ instead of $\frakX$. As above, we have 
    \[\calL^+(X_{i,\infty})/p^{b_i}\cong (\OXp(X_{i,\infty})/p^{b_i})^l.\]
    Therefore, by the proof of \cite[Lemma 4.10(i)]{Sch2}, we get an almost isomorphism
    \[(\calL^+/p^{b_i})^{\rm al}_{\mid X_i}\cong ((\OXp/p^{b_i})^l)^{\rm al}_{\mid X_i}\]
    with $b_i>a'>a$ as desired.
    
    \item Let $\calL$ be an $a$-small generalised representation. There exists a natural morphism of Higgs complexes
    \[\iota:
    {\rm HIG}(\calH(\calL)\otimes_{\calO_{\frakX}}\OC^{\dagger},\Theta_{\calH(\calL)}) \to
      {\rm HIG}(\calL\otimes_{\OX}\OC^{\dagger},\Theta_{\calL})
    \]
    By construction of $(\calH(\calL),\theta_{\calH(\calL)})$, it follows from Theorem \ref{local Simpson} (4) that $\iota$ is an isomorphism. Since $\OC^{\dagger}$ is a resolution of $\OX$ by Theorem \ref{period sheaf}, we see that $\calL(\calH(\calL),\theta_{\calH(\calL)}) = \calL$. 
    The isomorphism 
    \[(\calH,\theta_{\calH})\to (\calH(\calL(\calH)),\theta_{\calH(\calL(\calH))})\]
    can be deduced in a similar way. So we get the equivalence as desired.
    
    It remains to show the equivalence preserves products and dualities. But this is a local problem, so we are reduced to Theorem \ref{local Simpson} (3).
    
    \item This follows from the same arguments in the proof of Theorem \ref{local Simpson} (4). Indeed, combining Theorem \ref{period sheaf} and the item $(3)$, we have a quasi-isomorphism 
    \[\calL\to {\rm HIG}(\calL\otimes_{\OX}\calO\bC^{\dagger},\Theta_{\calL}) \simeq {\rm HIG}(\calH\otimes_{\calO_{\frakX}}\calO\bC^{\dagger},\Theta_{\calH}).\]
    On the other hand, it follows from $(1)$ that there exists a quasi-isomorphism
    \[\rR\nu_*({\rm HIG}(\calH\otimes_{\calO_{\frakX}}\calO\bC^{\dagger},\Theta_{\calH}))\simeq {\rm HIG}(\calH,\theta_{\calH}).\]
    So we get a quasi-isomorphism 
    \[\rR\nu_*(\calL)\simeq{\rm HIG}(\calH,\theta_{\calH})\]
    as desired.
    
    \item
    Since $f:\frakX\to\frakY$ admits an $A_2$-lifting $\tilde f$, by Proposition \ref{relative resolution}, we get a morphism $f^*\calO\bC_Y^{\dagger}\to \calO\bC_X^{\dagger}$ which is compatible with Higgs fields.
    
    Assume $(\calH,\theta_{\calH})$ is an $a$-small Higgs field on $\frakY_{\et}$. Denote by $(f^*\calH,f^*\theta_{\calH})$ its pull-back along $f$.
    By (3), we get the following isomorphisms, which are compatible with Higgs fields:
 \begin{equation*}
    \begin{split}
        \calL(f^*\calH,f^*\theta_{\calH})\otimes_{\OX}\OC_X^{\dagger}  &\cong f^*\calH\otimes_{\calO_{\frakX}}\OC_X^{\dagger}\\
         &\cong f^*(\calH\otimes_{\calO_{\frakY}}\OC_Y^{\dagger})\otimes_{f^*\OC_Y^{\dagger}}\OC_X^{\dagger}\\
         &\cong f^*(\calL(\calH,\theta_{\calH})\otimes_{\widehat \calO_Y}\calO\bC^{\dagger}_Y)\otimes_{f^*\calO\bC_Y^{\dagger}}\calO\bC_X^{\dagger}\\
         &\cong f^*\calL(\calH,\theta_{\calH})\otimes_{\OX}\calO\bC^{\dagger}_X.
    \end{split}
    \end{equation*}
    After taking kernels of Higgs fields, we obtain that 
    \[\calL(f^*\calH,f^*\theta_{\calH})\cong f^*\calL(\calH,\theta_{\calH}).\]
    So the functor $(\calH,\theta_{\calH})\to \calL(\calH,\theta_{\calH})$ in (2) is compatible with the pull-back along $f$. But we have shown it is an equivalence, so its quasi-inverse must commute with the pull-back along $f$. This completes the proof.
  \end{enumerate}
 \begin{cor}\label{proet cohomology}
   Assume $\frakX$ is a liftable proper smooth formal scheme of relative dimension $d$ over $\calO_{\Cp}$.
   For any small generalised representation $\calL$, $\RGamma(X_{\proet},\calL)$ is concentrated in degree $[0,2d]$, whose cohomologies are finite dimensional $\Cp$-spaces.
 \end{cor}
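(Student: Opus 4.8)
The plan is to reduce the statement, via the $p$-adic Simpson correspondence and a Leray spectral sequence, to a classical finiteness-and-vanishing property of coherent cohomology on the proper formal scheme $\frakX$. First I would use that $\nu\colon X_{\proet}\to\frakX_{\et}$ is the projection of sites, so that $\RGamma(X_{\proet},\calL)\simeq\RGamma(\frakX_{\et},\rR\nu_*\calL)$. Then I apply Theorem \ref{p-adic Simpson} (4): if $(\calH,\theta_{\calH})$ is the $a$-small Higgs bundle attached to $\calL$, then $\rR\nu_*\calL\simeq{\rm HIG}(\calH,\theta_{\calH})$, a complex whose terms $\calH\otimes_{\calO_{\frakX}}\widehat{\Omega}^p_{\frakX}(-p)$ are locally finite free $\calO_{\frakX}[\frac{1}{p}]$-modules and which is concentrated in degrees $0\le p\le d$ (since $\frakX$ has relative dimension $d$). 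So the task becomes: the hypercohomology $\RGamma(\frakX_{\et},{\rm HIG}(\calH,\theta_{\calH}))$ is concentrated in degrees $[0,2d]$ with finite-dimensional $\Cp$-cohomology groups.

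Next I would run the hypercohomology spectral sequence attached to the stupid filtration,
\[E_1^{p,q}=H^q\bigl(\frakX_{\et},\calH\otimes_{\calO_{\frakX}}\widehat{\Omega}^p_{\frakX}(-p)\bigr)\Longrightarrow H^{p+q}\bigl(\frakX_{\et},{\rm HIG}(\calH,\theta_{\calH})\bigr),\]
in which the index $p$ is automatically confined to $[0,d]$. The remaining input I need is that, for any coherent $\calO_{\frakX}[\frac{1}{p}]$-module $\calF$ on the proper smooth formal scheme $\frakX$ of relative dimension $d$ over $\calO_{\Cp}$, the groups $H^q(\frakX_{\et},\calF)$ are finite-dimensional over $\Cp$ and vanish for $q>d$. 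To see this I would choose, using quasi-compactness of $\frakX$, a coherent $\calO_{\frakX}$-module $\calF_0$ with $\calF_0[\frac{1}{p}]\cong\calF$; use that étale and Zariski cohomology agree on quasi-coherent sheaves; invoke Grothendieck vanishing on the $d$-dimensional special fibre of $\frakX$ to get $H^q(\frakX_{\et},\calF_0)=0$ for $q>d$; and invoke the finiteness part of formal GAGA (Grothendieck's existence theorem for proper formal schemes, equivalently Kiehl's finiteness theorem on the proper rigid generic fibre) to get that $H^q(\frakX_{\et},\calF_0)$ is a finitely generated $\calO_{\Cp}$-module. Inverting $p$ then gives $H^q(\frakX_{\et},\calF)=H^q(\frakX_{\et},\calF_0)[\frac{1}{p}]$, which is finite-dimensional over $\Cp$ and zero for $q>d$.

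Feeding this back, $E_1^{p,q}$ is a finite-dimensional $\Cp$-vector space that is nonzero only for $0\le p\le d$ and $0\le q\le d$, so $p+q$ ranges over $[0,2d]$; the spectral sequence then forces $H^n(\frakX_{\et},{\rm HIG}(\calH,\theta_{\calH}))$, hence $H^n(X_{\proet},\calL)$, to be finite-dimensional over $\Cp$ and to vanish outside $[0,2d]$. The only genuinely non-formal ingredient is the coherent finiteness-and-vanishing statement, and I expect the point that needs the most care is the precise identification of $\frakX_{\et}$-cohomology of coherent $\calO_{\frakX}[\frac{1}{p}]$-modules with classical coherent cohomology of an integral lattice together with the cohomological-dimension bound $\le d$; both are standard but should be cited carefully. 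Everything else — the two spectral sequences and the degree bookkeeping — is routine.
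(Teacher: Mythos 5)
Your proof is correct and is exactly the argument the paper intends: its own proof of this corollary is the one-line statement that the claim ``follows from Theorem \ref{p-adic Simpson} (4) directly,'' i.e.\ from $\rR\nu_*(\calL)\simeq{\rm HIG}(\calH,\theta_{\calH})$ being a perfect complex of $\calO_{\frakX}[\frac{1}{p}]$-modules in degrees $[0,d]$ together with coherent finiteness and Grothendieck-type vanishing on the proper $d$-dimensional formal scheme. You have simply written out the spectral-sequence bookkeeping and the coherent-cohomology input that the paper leaves implicit (and partially discusses in the remark following the corollary).
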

 \begin{proof}
   Since we have assumed $\frakX$ is proper smooth, this follows from Theorem \ref{p-adic Simpson} (4) directly.
 \end{proof}
 \begin{rmk}
  Except the item (4), all results in Theorem \ref{p-adic Simpson} are still true by using $\calO\widehat \bC_{\rho_k}^+$ instead of $\OC^{\dagger,+}$. 
 \end{rmk}
 \begin{rmk}
   In Corollary \ref{proet cohomology}, one can also deduce that $\RGamma(X_{\proet},\calL)$ is concentrated in degree $[0,2d]$ when $\frakX$ is just quasi-compact of relative dimension $d$ over $\calO_{\Cp}$. Indeed, in this case, we have 
   \[\RGamma(X_{\proet},\calL)\simeq \RGamma(\frakX_{\et},{\rm HIG}(\calH,\theta_{\calH}))\simeq \RGamma(X_{\et},{\rm HIG}(\calH,\theta_{\calH})\otimes_{\calO_{\frakX}}\calO_{X_{\et}}),\]
   where ${\rm HIG}(\calH,\theta_{\calH})\otimes_{\calO_{\frakX}}\calO_{X_{\et}}$ denotes the induced Higgs complex on $X_{\et}$. On the other hand, by \'etale descent, the category of \'etale vector bundles on $X_{\et}$ is equivalent to the category of analytic vector bundles on $X_{\rm an}$, where $X_{\rm an}$ denotes the analytic site of $X$. So the Higgs complex ${\rm HIG}(\calH,\theta_{\calH})\otimes_{\calO_{\frakX}}\calO_{X_{\et}}$ upgrades to an analytic Higgs complex ${\rm HIG}(\calH_{\rm an},\theta_{\calH})$ such that
   \[{\rm HIG}(\calH_{\rm an},\theta_{\calH})\otimes_{\calO_{X_{\rm an}}}\calO_{X_{\et}} = {\rm HIG}(\calH,\theta_{\calH})\otimes_{\calO_{\frakX}}\calO_{X_{\et}}.\]
   By analytic-\'etale comparison (cf. \cite[Proposition 8.2.3]{FP}), for any coherent $\calO_{X_{\rm an}}$-module $\calM$, there is a canonical quasi-isomorphism
   \[\RGamma(X_{\rm an},\calM)\simeq \RGamma(X_{\et},\calM\otimes_{\calO_{X_{\rm an}}}\calO_{X_{\et}}).\]
   So by considering corresponding spectral sequences of these complexes, we get a quasi-isomorphism
   \[\RGamma(X_{\rm an},{\rm HIG}(\calH_{\rm an},\theta_{\calH}))\simeq\RGamma(X_{\et},{\rm HIG}(\calH,\theta_{\calH})\otimes_{\calO_{\frakX}}\calO_{X_{\et}}) .\]
   Now, the quasi-compactness of $\frakX$ implies that $X$ is a noetherian space. So the result follows from Grothendieck's vanishing theorem (cf. \cite[Th\'eor\`eme 3.6.5]{Gro}) directly. The author thanks anonymous referees for pointing out this.
 \end{rmk}

\section{Appendix}
In Appendix, we prove some elementary facts used in this paper. Throughout this section, we always assume $A$ is a $p$-complete flat $\calO_{\Cp}$-algebra.
\begin{dfn}\label{p-complete module}
  Let $\Lambda = \{\alpha\}_{\alpha\in\Lambda}$ be an index set and $I = \{i_{\alpha}\}_{\alpha}$ be a set of non-negative real numbers indexed by $\Lambda$. Define
  \begin{enumerate}
      \item $A[\Lambda] = \oplus_{\alpha\in\Lambda}A$;
      \item $A\za\Lambda\ya = \varprojlim_mA[\Lambda]/p^mA[\Lambda]$;
      \item $A[\Lambda,I] = \oplus_{\alpha\in\Lambda}p^{i_{\alpha}}A$;
      \item $A\za\Lambda,I\ya = \varprojlim_m(A[\Lambda,I]+p^mA[\Lambda])/p^mA[\Lambda]$;
      \item $A\za\Lambda,I,+\ya = \varprojlim_mA[\Lambda,I]/p^mA[\Lambda,I]$.
  \end{enumerate}
\end{dfn}
\begin{prop}\label{derived vs classical}
  \begin{enumerate}
      \item $A\za\Lambda\ya/A\za\Lambda,I\ya$ is the classical $p$-completion of $A[\Lambda]/A[\Lambda,I]$.
      \item $A\za\Lambda\ya/A\za\Lambda,I,+\ya$ is the derived $p$-completion of $A[\Lambda]/A[\Lambda,I]$.
  \end{enumerate}
\end{prop}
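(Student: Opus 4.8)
The plan is to exploit the two-term presentation
\[
M\ :=\ A[\Lambda]/A[\Lambda,I]\ \cong\ \bigoplus_{\alpha\in\Lambda}A/p^{i_\alpha}A\ =\ \Coker\bigl(A[\Lambda,I]\xrightarrow{\ \iota\ }A[\Lambda]\bigr),
\]
where $\iota$ is the obvious inclusion. Since $A$ is a flat $\calO_{\Cp}$-algebra it is $p$-torsion free and flat over $\Zp$, hence so are $A[\Lambda]=\bigoplus_\alpha A$ and $A[\Lambda,I]=\bigoplus_\alpha p^{i_\alpha}A$; in particular $\iota$ is injective, $[A[\Lambda,I]\xrightarrow{\iota}A[\Lambda]]$ (placed in degrees $-1,0$) is a flat resolution of $M$ over $\Zp$, and the derived $p$-completions of $A[\Lambda]$ and $A[\Lambda,I]$ coincide with their classical $p$-completions, which by Definition~\ref{p-complete module} are $A\za\Lambda\ya$ and $A\za\Lambda,I,+\ya$. (Here by the classical $p$-completion of a module $N$ I mean $\varprojlim_m N/p^mN$.)

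For (1) I would set $D_m:=(A[\Lambda,I]+p^mA[\Lambda])/p^mA[\Lambda]=\bigoplus_\alpha p^{\min(i_\alpha,m)}A/p^mA$, so that $\varprojlim_m D_m=A\za\Lambda,I\ya$ by definition, and use the tower of short exact sequences
\[
0\longrightarrow D_m\longrightarrow A[\Lambda]/p^mA[\Lambda]\longrightarrow M/p^mM\longrightarrow 0,
\]
which is available because $M/p^mM=A[\Lambda]/(A[\Lambda,I]+p^mA[\Lambda])$. The only point to check is that the transition maps of $\{D_m\}_m$ are surjective: componentwise the map $p^{\min(i_\alpha,m+1)}A/p^{m+1}A\to p^{\min(i_\alpha,m)}A/p^mA$ is onto, trivially when $i_\alpha\geq m$ (the target is then $p^mA/p^mA=0$) and because both exponents equal $i_\alpha$ when $i_\alpha<m$. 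Hence $\{D_m\}$ is Mittag--Leffler, $\varprojlim^1_m D_m=0$, and passing to the inverse limit yields a short exact sequence $0\to A\za\Lambda,I\ya\to A\za\Lambda\ya\to\varprojlim_m M/p^mM\to 0$; that is, $A\za\Lambda\ya/A\za\Lambda,I\ya$ is the classical $p$-completion of $M$.

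For (2) I would compute the derived $p$-completion of $M$ as $\Rlim_m\bigl(M\otimes^{L}_{\Zp}\Zp/p^m\bigr)\simeq\Rlim_m\bigl[A[\Lambda,I]/p^m\longrightarrow A[\Lambda]/p^m\bigr]$, using the flat resolution above. Since $\Rlim$ is triangulated it carries this tower of two-term complexes to $\bigl[\Rlim_m A[\Lambda,I]/p^m\longrightarrow\Rlim_m A[\Lambda]/p^m\bigr]$, and both towers $\{A[\Lambda,I]/p^mA[\Lambda,I]\}_m$ and $\{A[\Lambda]/p^mA[\Lambda]\}_m$ have surjective transition maps, so these derived limits are ordinary limits: the derived $p$-completion of $M$ is represented by $[\,A\za\Lambda,I,+\ya\xrightarrow{\widehat\iota}A\za\Lambda\ya\,]$ in degrees $-1,0$. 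It then remains to see that $\widehat\iota$ is injective; granting this, the complex is quasi-isomorphic to $\Coker(\widehat\iota)=A\za\Lambda\ya/A\za\Lambda,I,+\ya$ in degree $0$, which is the claim. For injectivity I would use that each $p^{i_\alpha}A$ is a $p$-torsion free $p$-adically complete submodule of $A$, so that both $A\za\Lambda,I,+\ya=\varprojlim_m A[\Lambda,I]/p^mA[\Lambda,I]$ and $A\za\Lambda\ya$ embed into $\prod_\alpha A$ (a compatible system with vanishing image has zero $\alpha$-component for every $\alpha$, since $\bigcap_m p^mA=0$), compatibly with the inclusion $\prod_\alpha p^{i_\alpha}A\subseteq\prod_\alpha A$; hence $\widehat\iota$ is the restriction of an inclusion and is injective.

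The one genuinely delicate step is this injectivity of $\widehat\iota$ in part (2): when the $i_\alpha$ are unbounded, $\Coker(\iota)=M$ has unbounded $p^\infty$-torsion, so $p$-completion need not preserve the monomorphism $\iota$ and one cannot invoke a general lemma — the explicit inverse-limit/restricted-product descriptions are essential. Everything else is routine bookkeeping with towers, $\varprojlim$ and $\varprojlim^1$.
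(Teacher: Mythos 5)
Your argument is correct. For part (2) it follows essentially the same route as the paper: both start from the short exact sequence $0\to A[\Lambda,I]\to A[\Lambda]\to A[\Lambda]/A[\Lambda,I]\to 0$, apply $-\otimes^L_{\Zp}\Zp/p^m$ and $\Rlim_m$, identify the completions of the two flat terms with $A\za\Lambda,I,+\ya$ and $A\za\Lambda\ya$, and reduce to the injectivity of $A\za\Lambda,I,+\ya\to A\za\Lambda\ya$. The paper simply asserts this injectivity, whereas you supply the argument via the embeddings into $\prod_\alpha A$; as you note, this is the one step that genuinely needs care when the $i_\alpha$ are unbounded, so this is a worthwhile addition rather than padding. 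The main structural difference is in part (1): the paper deduces (1) from (2) by observing that $A\za\Lambda,I\ya$ is the $p$-adic closure of $A\za\Lambda,I,+\ya$ in $A\za\Lambda\ya$ (so the classical completion is the maximal separated quotient of the derived one), while you prove (1) directly from the tower of short exact sequences $0\to D_m\to A[\Lambda]/p^m\to M/p^mM\to 0$ and the Mittag--Leffler vanishing of $\varprojlim^1 D_m$. Your version of (1) is self-contained and makes the surjectivity of the transition maps explicit, at the cost of not exhibiting the relation between the two quotients that the paper's one-line deduction highlights; both are valid.
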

\begin{proof}
  Since $A\za\Lambda,I\ya$ is the closure of $A\za\Lambda,I,+\ya$ in $A\za\Lambda\ya$ with respect to the $p$-adic topology, the item $(1)$ follows from $(2)$ directly. So we are reduced to proving $(2)$.
  
  Consider the short exact sequence
  \[\xymatrix@C=0.45cm{
    0\ar[r] & A[\Lambda,I] \ar[r] & A[\Lambda] \ar[r] & A[\Lambda]/A[\Lambda,I] \ar[r] & 0.
  }\]
  For any $n\geq 0$, we get an exact triangle
  \[A[\Lambda,I]\otimes^L_{\Zp}\Zp/p^n\to A[\Lambda]\otimes^L_{\Zp}\Zp/p^n \to (A[\Lambda]/A[\Lambda,I])\otimes^L_{\Zp}\Zp/p^n\to .\]
  Applying $\rR\lim_n$ to this exact triangle and using $p$-complete flatness of $A$, we get the following exact triangle
  \[A\za\Lambda,I,+\ya[0]\to A\za\Lambda\ya[0] \to K\to ,\]
  where $K$ denotes the derived $p$-completion of $A[\Lambda]/A[\Lambda,I]$. Now, the item $(2)$ follows from the injectivity of the map $A\za\Lambda,I,+\ya\to A\za\Lambda\ya$.
\end{proof}

\begin{rmk}\label{describe of completion}
  For any $(\lambda_{\alpha})_{\alpha\in\Lambda}\in\prod_{\alpha\in\Lambda}A$, we write $\lambda_{\alpha}\xrightarrow{\nu_p}0$, if for any $M>0$ the set $\{\alpha\in\Lambda| \nu_p(\lambda_{\alpha})\leq M\}$ is finite. Then we have
  \[
    A\za\Lambda,I\ya = \{(\lambda_{\alpha})_{\alpha\in\Lambda}| \nu_p(\frac{\lambda_{\alpha}}{p^{i_{\alpha}}})\geq 0\}
  \]
  and
  \[
    A\za\Lambda,I,+\ya = \{(\lambda_{\alpha})_{\alpha\in\Lambda}|\nu_p(\frac{\lambda_{\alpha}}{p^{i_{\alpha}}})\geq 0, \frac{\lambda_{\alpha}}{p^{i_{\alpha}}}\xrightarrow{\nu_p}0\}.
  \]
\end{rmk}
\begin{dfn}\label{equivalent of Basis}
   Assume $M$ is a (topologically) free $A$-module. Let $\Sigma_1$ and $\Sigma_2$ be two subsets of $M$.
   \begin{enumerate}

   \item We write $\Sigma_1 \sim \Sigma_2$, if they (topologically) generate the same sub-$A$-module of $M$

   \item We write $\Sigma_1 \approx \Sigma_2$, if both of them are sets of (topological) basis of $M$. In this case, we also write $M\approx\Sigma_1$ if no ambiguity appears.
   \end{enumerate}
 \end{dfn}
  \begin{prop}\label{basis of free modules}
   Fix $\epsilon,\omega \in\calO_{\Cp}$.
   Let $M$ be a (topologically) free $A$-module with basis $\{x_i\}_{i\geq 0}$. If $N\subset M$ is a submodule such that
   \[N\sim \{ \omega(x_i+i\epsilon x_{i-1})\mid~ i\geq 0\},\]
   where $x_{-1} = 0$, then $N=\omega M$.
 \end{prop}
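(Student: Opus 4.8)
The plan is to prove the two inclusions $N\subseteq \omega M$ and $\omega M\subseteq N$ separately. Write $v_i:=\omega(x_i+i\epsilon x_{i-1})$ for the given generators (so $v_0=\omega x_0$), so that $N$ is by hypothesis the closure of the $A$-submodule $\sum_{i\geq 0}A v_i$ of $M$. The heart of the argument is a triangular change of generators relating the $v_i$ to the obvious generators $\omega x_i$ of $\omega M$, together with a routine comparison of closures.

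First I would record the following elementary topological fact: for a topologically free $A$-module $M$ with basis $\{x_i\}_{i\geq 0}$, the submodule $\omega M$ equals the closure of $\sum_{i\geq 0}A\,\omega x_i$. The inclusion ``$\supseteq$'' is clear, and for ``$\subseteq$'' one writes any $m\in M$ as the limit of its finite partial sums $\sum_{i\in F}a_i x_i$ (with $a_i\to 0$ $p$-adically), so that $\omega m$ is the limit of $\sum_{i\in F}a_i(\omega x_i)$, which lies in $\sum_i A\,\omega x_i$. In particular $\omega M$ is a \emph{closed} submodule of $M$, so it suffices to compare the $A$-submodules $\sum_i A v_i$ and $\sum_i A\,\omega x_i$ and then pass to closures. (If $M$ is merely free, the topology plays no role and this step is vacuous.)

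The inclusion $N\subseteq \omega M$ is then immediate: each $v_i=\omega x_i+i\epsilon\,\omega x_{i-1}$ lies in $\sum_j A\,\omega x_j$, hence $\sum_i A v_i\subseteq \omega M$, and taking closures (using that $\omega M$ is closed) gives $N\subseteq\omega M$. For the reverse inclusion I would invert the relation $v_n=\omega x_n+n\epsilon\,\omega x_{n-1}$ by induction on $n$: from $\omega x_n=v_n-n\epsilon\,\omega x_{n-1}$ and the base case $\omega x_0=v_0$ one unwinds the closed form
\[
\omega x_n=\sum_{k=0}^{n}(-1)^{n-k}\,\frac{n!}{k!}\,\epsilon^{\,n-k}\,v_k\qquad(n\geq 0),
\]
which exhibits each $\omega x_n$ as a finite $A$-linear combination of the $v_k$. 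Hence $\sum_n A\,\omega x_n\subseteq \sum_k A v_k\subseteq N$, and passing to closures yields $\omega M\subseteq N$. Combining the two inclusions gives $N=\omega M$.

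I do not expect a genuine obstacle here: the computation is a finite triangular inversion, and the only point requiring a line of care is the identification of $\omega M$ with the closure of $\sum_i A\,\omega x_i$ (equivalently, the closedness of $\omega M$), which must be argued from the definition of the topology on a topologically free module rather than from any special form of $\omega$, since $\omega$ need not be a unit times a power of $p$. Everything else is bookkeeping of $p$-adic convergence.
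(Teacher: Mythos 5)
Your proof is correct and is essentially the paper's own argument: the paper records the same triangular change of generators as an infinite upper-triangular matrix $X$ and exhibits its inverse $Y$, whose columns are exactly your closed formula $\omega x_n=\sum_{k=0}^{n}(-1)^{n-k}\frac{n!}{k!}\epsilon^{n-k}v_k$ (note $\frac{n!}{k!}\in\bZ$, so the coefficients lie in $A$). Your extra remark that $\omega M$ is closed (e.g.\ because $p^{N}M\subset\omega M$ for $N\geq \nu_p(\omega)$) is a point the paper leaves implicit, and is handled adequately.
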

 \begin{proof}
   Put $y_i = x_i+i\epsilon x_{i-1}$ for all $i$. Then we see that
   \[(y_0, y_1, y_2, y_3, \cdots) = (x_0, x_1, x_2, x_3, \cdots)\cdot X\]
   with
   \[X =
     \left(
       \begin{array}{ccccc}
         1 & \epsilon & 0 & 0 & \cdots \\
         0 & 1 & 2\epsilon & 0 & \cdots \\
         0 & 0 & 1 & 3\epsilon & \cdots \\
         0 & 0 & 0 & 1 & \cdots \\
         \vdots & \vdots & \vdots & \vdots & \ddots \\
       \end{array}
     \right),
   \]
   and that
   \[(x_0, x_1, x_2, x_3, \cdots) = (y_0, y_1, y_2, y_3, \cdots)\cdot Y\]
   with
   \[Y =
     \left(
       \begin{array}{ccccc}
         1 & -\epsilon & 2\epsilon^2 & -6\epsilon^3 & \cdots \\
         0 & 1 & -2\epsilon & 6\epsilon^2 & \cdots \\
         0 & 0 & 1 & -3\epsilon & \cdots \\
         0 & 0 & 0 & 1 & \cdots \\
         \vdots & \vdots & \vdots & \vdots & \ddots \\
       \end{array}
     \right).
   \]
   The $(i,j)$-entry of $Y$ is $\delta_{ij}$ if $i\geq j$ and is $(-\epsilon)^{j-i}\frac{(j-1)!}{(i-1)!}$ if $i<j$. Then the proposition follows from the fact
   $XY = YX =Id$.
 \end{proof}
   The following proposition can be proved in the same way.
 \begin{prop}\label{basis of free modules II}
   Fix $\Theta\in\rM_l(A)$.
   Let $M$ be a (topologically) free $A$-module with basis $\{x_i\}_{i\geq 0}$. Let $N$ be a finite free $R$-module of rank $l$ with a set of basis $\{e_1, \cdots, e_l\}$. For every $1\leq j\leq l$ and $i\geq 0$, put $f_{j,i}\in N\otimes_AM$ satisfying
   \[(f_{1,i}, \cdots, f_{l,i})=(e_1\otimes x_i, \cdots, e_l\otimes x_i)+i(e_1\otimes x_{i-1}, \cdots, e_l\otimes x_{i-1})\Theta,\]
   where $x_{-1} = 0$.
   Then $N\otimes_AM\approx\{f_{j,i}\mid 1\leq j\leq l, i\geq 0\}$.
 \end{prop}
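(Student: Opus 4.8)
\emph{The plan.} I would run the proof of Proposition~\ref{basis of free modules} almost verbatim, replacing the scalar $\epsilon$ by the matrix $\Theta$ and $1$ by $I_l$, and bookkeeping the infinite change-of-basis matrix in $l\times l$ blocks. Group the vectors into blocks $\mathsf e_i=(e_1\otimes x_i,\dots,e_l\otimes x_i)$ and $\mathsf f_j=(f_{1,j},\dots,f_{l,j})$ for $i,j\ge 0$, so that the defining relation becomes $\mathsf f_j=\mathsf e_j+j\,\mathsf e_{j-1}\Theta$ with $\mathsf e_{-1}=0$. This is encoded by the block upper-bidiagonal matrix $\mathcal X$ with diagonal blocks $\mathcal X_{jj}=I_l$ and super-diagonal blocks $\mathcal X_{j-1,j}=j\Theta$; write $\mathcal X=I+\mathcal N$, where $\mathcal N$ is supported on the first block super-diagonal.

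First I would exhibit the inverse explicitly. Since $\mathcal N_{i,i+1}=(i+1)\Theta$, each power $\mathcal N^{k}$ is supported on the $k$-th block super-diagonal with $(\mathcal N^{k})_{i,i+k}=\tfrac{(i+k)!}{i!}\,\Theta^{k}$, so the entrywise-finite series $\mathcal Y:=\sum_{k\ge 0}(-1)^{k}\mathcal N^{k}$ is a well-defined block matrix with $\mathcal Y_{ij}=(-1)^{j-i}\tfrac{j!}{i!}\Theta^{j-i}$ for $i\le j$ and $\mathcal Y_{ij}=0$ for $i>j$. Then $\mathcal X\mathcal Y=\mathcal Y\mathcal X=I$ by exactly the telescoping identity used in Proposition~\ref{basis of free modules}; note that only powers of the single matrix $\Theta$ occur, so they commute among themselves and the non-commutativity of $\rM_l(A)$ is irrelevant. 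Moreover all blocks of $\mathcal X$ and $\mathcal Y$ lie in $\rM_l(A)$: the integer coefficients $j$ and $\tfrac{j!}{i!}=j(j-1)\cdots(i+1)$ have $p$-adic valuation $\ge 0$ and $\|\Theta\|\le 1$, so these blocks even have norm $\le 1$.

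Finally I would conclude. Because $\mathcal X$ is block-bidiagonal, each $\mathsf f_j$ is a finite $A$-combination of $\mathsf e_{j-1},\mathsf e_j$; because $\mathcal Y$ is block upper-triangular, the relation $(\mathsf e_0,\mathsf e_1,\dots)=(\mathsf f_0,\mathsf f_1,\dots)\mathcal Y$ expresses each $\mathsf e_i$ as the finite combination $\sum_{j\le i}\mathsf f_{j}\,\mathcal Y_{ji}$. Hence $\{e_k\otimes x_i\}$ and $\{f_{j,i}\}$ generate the same sub-$A$-module (resp. the same closed sub-$A$-module in the topological case), with transition given by the mutually inverse matrices $\mathcal X,\mathcal Y$; therefore $N\otimes_A M\approx\{f_{j,i}\}$. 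I do not expect a genuine obstacle here: the only points needing care are the block indexing in the telescoping computation and, in the topological case, the continuity of the two transition maps — but the latter is immediate since only finitely many terms enter the expansion of each basis vector and all coefficients have norm $\le 1$.
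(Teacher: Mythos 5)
Your proposal is correct and is essentially the paper's argument: the paper proves this proposition by declaring it "can be proved in the same way" as Proposition \ref{basis of free modules}, i.e.\ by writing down the infinite (block) bidiagonal transition matrix and its explicit upper-triangular inverse, which is exactly what you do. Your block computation of $\mathcal N^{k}$ and the resulting $\mathcal Y_{ij}=(-1)^{j-i}\tfrac{j!}{i!}\Theta^{j-i}$ matches the paper's $Y$ in the scalar case, and your remarks on integrality of the coefficients and column-finiteness in the topological case cover the only points needing care.
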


\end{document}